\theoremstyle{definition}
\newtheorem{defin}{Definition}[section]
\newtheorem{ex}[defin]{Example}
\theoremstyle{plain}
\newtheorem{theo}[defin]{Theorem}
\newtheorem{lemma}[defin]{Lemma}
\newtheorem{obs}[defin]{Remark}
\newtheorem{prop}[defin]{Proposition}
\newtheorem{cor}[defin]{Corollary}
\newtheorem{theorem}{Theorem}
\newtheorem{corollary}[theorem]{Corollary}
\newcommand{\id}{\textup{id}}
\title[A GH-compactification of CAT$(0)$-groups]{A GH-compactification of CAT$(0)$-groups via totally disconnected, unimodular actions}
\author{Nicola Cavallucci}
\thanks{N.Cavallucci is partially supported by the SFB/TRR 191, funded by the DFG}
\address{Nicola Cavallucci, Karlsruhe Institute of Technology, Engelstrasse 2, D-76128 Karlsruhe}
\email{n.cavallucci23@gmail.com}
\date{}
\begin{document}
\maketitle

\footnotesize
\begin{abstract}
	We give a detailed description of the possible limits in the equivariant-Gromov-Hausdorff sense of sequences $(X_j,G_j)$, where the $X_j$'s are proper, geodesically complete, uniformly packed,  CAT$(0)$-spaces and the $G_j$'s are closed, totally disconnected, unimodular, uniformly cocompact groups of isometries. We show that the class of metric quotients $G/X$, where $X$ and $G$ are as above, is compact under Gromov-Hausdorff convergence. In particular it is a geometric compactification of the class of locally geodesically complete, locally compact, locally CAT$(0)$-spaces with uniformly packed universal cover and uniformly bounded diameter.
\end{abstract}
\normalsize

\tableofcontents

\section{Introduction}
The author, together with A.Sambusetti, studied in \cite{CS23} the possible Gromov-Hausdorff limits of locally geodesically complete, locally CAT$(0)$-spaces with bounded diameter and with uniformly packed universal cover, with particular attention to the collapsing case, i.e. when the dimension of the limit space is smaller than the dimension of the approximating ones.\\
In order to synthetize part of the results of \cite{CS23} we fix some notations. Let $\mathcal{O}\text{-CAT}_0^{\text{disc}}(P_0,r_0,D_0)$ be the class of quotient metric spaces $M:=\Gamma \backslash X$, where $X$ is a proper, geodesically complete, $(P_0,r_0)$-packed, CAT$(0)$-space and $\Gamma$ is a \emph{discrete} group of isometries of $X$ with $\text{diam}(\Gamma\backslash X) \leq D_0$. Here $\mathcal{O}$ stays for \emph{orbispace}. The numbers $P_0,r_0,D_0$ are structural constants that have to be thought arbitrary, but fixed once for all. The packing condition on $X$ is a very weak synthetic lower bound on the curvature of the metric space (cp. Section \ref{sec-packing} and \cite{CavS20} for more details) and is a necessary and sufficient condition for precompactness under Gromov-Hausdorff convergence (cp. Lemma \ref{lemma-GH-compactness-packing}, \cite{CS23} and \cite{CavS20}). In case $\Gamma$ is torsion-free then $M$ is actually a compact, locally geodesically complete, locally CAT$(0)$-space of diameter at most $D_0$.\\
One of the main results of \cite{CS23} is
\begin{theorem}[\textup{\cite[Theorem G]{CS23}}]
	\label{theo-intro-convergence}
	Let $\lbrace M_j \rbrace \subseteq \mathcal{O}\textup{-CAT}_0^{\textup{disc}}(P_0,r_0,D_0)$ be such that $M_j$ converges to some space $M_\infty$ in the Gromov-Hausdorff sense. Then $M_\infty$ is the quotient of a proper, geodesically complete, $(P_0,r_0)$-packed, \textup{CAT}$(0)$-space by a closed, totally disconnected group of isometries, and \textup{diam}$(M_\infty) \leq D_0$.
\end{theorem}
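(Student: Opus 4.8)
The plan is to obtain the desired presentation of $M_\infty$ by taking an equivariant limit of the given presentations and then removing the connected part of the limit group. Write $M_j=\Gamma_j\backslash X_j$ and fix basepoints $x_j\in X_j$. Since the $X_j$ are uniformly $(P_0,r_0)$-packed, by Lemma~\ref{lemma-GH-compactness-packing} the pointed spaces $(X_j,x_j)$ subconverge, in the pointed Gromov--Hausdorff sense, to a complete geodesic $(P_0,r_0)$-packed space $(X_\infty,x_\infty)$; since packed plus complete implies proper, $X_\infty$ is proper and $\mathrm{Isom}(X_\infty)$ is locally compact. Feeding the isometric $\Gamma_j$-actions into the equivariant Gromov--Hausdorff machinery of Fukaya, after passing to a further subsequence one obtains $(X_j,x_j,\Gamma_j)\to(X_\infty,x_\infty,G)$ equivariantly for some closed subgroup $G\leq\mathrm{Isom}(X_\infty)$, together with the convergence of quotients $M_j=\Gamma_j\backslash X_j\to G\backslash X_\infty$. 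Comparing with the hypothesis $M_j\to M_\infty$ and using uniqueness of Gromov--Hausdorff limits, $M_\infty\cong G\backslash X_\infty$ and $\mathrm{diam}(M_\infty)=\lim_j\mathrm{diam}(M_j)\leq D_0$.

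Next I would verify that $X_\infty$ inherits all the required structure. The CAT$(0)$ comparison inequality is a closed condition under Gromov--Hausdorff convergence, so $X_\infty$ is CAT$(0)$; $(P_0,r_0)$-packedness is likewise closed, so $X_\infty$ is $(P_0,r_0)$-packed; and properness has already been observed. The only delicate point is geodesic completeness, which is not stable under Gromov--Hausdorff limits in general; here one uses that in the packed CAT$(0)$ setting geodesic completeness upgrades to a uniform, quantitative extension property for geodesics, and this does pass to the limit — this is precisely the stability statement for packed, geodesically complete CAT$(0)$ spaces established in \cite{CS23,CavS20}.

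It remains to arrange that the group be totally disconnected, and this is the crux. In general $G$ need not be: for $X_j=\mathbb{R}^2$ and $\Gamma_j=\mathbb{Z}\oplus\tfrac{1}{j}\mathbb{Z}$ one finds $G=\mathbb{Z}\oplus\mathbb{R}$, whose identity component $G^\circ$ is nontrivial. The remedy is to pass to $Y:=G^\circ\backslash X_\infty$ with its quotient metric: then $\overline G:=G/G^\circ$ is locally compact and totally disconnected, it acts by isometries on $Y$ with closed image, and $\overline G\backslash Y=G\backslash X_\infty\cong M_\infty$ still has diameter at most $D_0$. The real work — and the main obstacle — is to show that $Y$ is again proper, geodesically complete, $(P_0,r_0)$-packed and CAT$(0)$. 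For this one must control $G^\circ$ and its orbits: it is a connected locally compact group acting effectively on the finite-dimensional space $X_\infty$, hence a Lie group by the structure theory surrounding Hilbert's fifth problem, and the fact that $G$ arises as a limit of \emph{discrete}, uniformly cocompact groups is a strong constraint — in particular the rotational parts of the $\Gamma_j$ cannot be unboundedly large (a Bieberbach/crystallographic-type phenomenon, using that the $X_j$ have uniformly bounded dimension), so that $G^\circ$ acts on the flat factor of $X_\infty$ essentially by translations rather than rotations. Using this together with the CAT$(0)$ splitting theorem to peel off the factor of $X_\infty$ along which $G^\circ$ acts, one identifies $Y$ as a metric product of the corresponding quotient with the remaining factor and checks that each of the four properties survives; the quotients that would destroy geodesic completeness, such as $\mathbb{R}^2/SO(2)=[0,\infty)$, are exactly those excluded by this constraint. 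Since $\overline G$ is already totally disconnected, no further reduction is needed, and $M_\infty\cong\overline G\backslash Y$ is the required presentation.
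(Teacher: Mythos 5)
Your proposal is correct and follows the paper's strategy (the proof given in \cite{CS23} and reproduced in Section \ref{sec-convergence} here in the more general unimodular setting): pass to the equivariant pointed Gromov--Hausdorff limit $(X_\infty,G_\infty)$, verify via the compactness/stability theory that $X_\infty$ retains all four geometric properties, and then reduce to a totally disconnected action by quotienting out $G_\infty^\circ$ after identifying it with the translation group of a Euclidean de Rham factor of $X_\infty$. Your unified treatment via $G^\circ\backslash X_\infty$ subsumes the paper's explicit collapsed/non-collapsed dichotomy, and the ``Bieberbach/crystallographic-type phenomenon'' you invoke to control $G^\circ$ is exactly what is made quantitative there through the Margulis Lemma (Theorem \ref{theo-Margulis}), the Splitting Theorem \ref{theo-splitting-weak} applied to the approximating spaces $X_j$ (not directly to $X_\infty$), Lemma \ref{lemma-bieber}, and the passage to the limit of the resulting splittings via Lemma \ref{lemma-ultralimit-product} --- with the minor caveat that closedness of the image of $G_\infty/G_\infty^\circ$ in $\textup{Isom}(X_\infty')$, which you assert, itself requires the description $G_\infty^\circ=\{\id\}\times\textup{Transl}(\mathbb{R}^\ell)$ (cf.\ the proof of Theorem \ref{theo-collapsed}.(v)).
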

The proof of Theorem \ref{theo-intro-convergence} is based on a careful description of the convergence of the group actions $(X_j,\Gamma_j)$ to a limit group action $(X_\infty, G_\infty)$ in the equivariant pointed Gromov-Hausdorff sense (cp. Section \ref{sec-convergence} and \cite{CS23}), where $M_j = \Gamma_j \backslash X_j$ and $M_\infty = G_\infty \backslash X_\infty$. The main issue is that in general $G_\infty$ is not discrete, so it is not clear a priori the relation between $X_\infty$ and $M_\infty$. The analysis was divided in two cases: the collapsed and the non-collapsed one. The two cases can be summarized in the following equivalences (\cite[Theorem 7.10, Theorem 7.11, Theorem 7.15]{CS23}):
\begin{equation}
	\label{eq-collapsed-noncollapsed}
	\begin{aligned}
		 \text{Non-collapsed} &\Leftrightarrow G_\infty^o = \lbrace \id \rbrace \Leftrightarrow \dim(M_\infty) = \dim(M_j) \text{ for } j\gg 0;\\
		\text{Collapsed} &\Leftrightarrow G_\infty^o \neq \lbrace \id \rbrace \Leftrightarrow \dim(M_\infty) < \dim(M_j) \text{ for } j\gg 0,
	\end{aligned}
\end{equation}
where $G_\infty^o$ is the connected component of the identity of the topological group $G_\infty$. From \eqref{eq-collapsed-noncollapsed} we can see the validity of Theorem \ref{theo-intro-convergence} in the non-collapsed case. The same result in the collapsed case requires a deep understanding of $G_\infty^o$ which is a non trivial result of \cite{CS23}. We remark that in the generality of Theorem \ref{theo-intro-convergence}, it can happen that $M_\infty$ does not belong to $\mathcal{O}\text{-CAT}_0^{\text{disc}}(P_0,r_0,D_0)$, i.e. that the group $G_\infty$ is not discrete, even in the non-collapsed case (cp. \cite[Example 7.14]{CS23}). Therefore totally disconnected, non-discrete, actions appear in this framework as limit case of discrete actions and it is natural to study them in order to find a compactification of the space $\mathcal{O}\text{-CAT}_0^{\text{disc}}(P_0,r_0,D_0)$.\\
\vspace{1mm}

With this in mind we denote by $\mathcal{O}\text{-CAT}_0^{\text{td,u}}(P_0,r_0,D_0)$ the class of quotients $M:=G \backslash X$, where $X$ is again a proper, geodesically complete, $(P_0,r_0)$-packed, CAT$(0)$-space and $G$ is a \emph{closed, totally disconnected and unimodular} group of isometries of $X$ with $\text{diam}(G\backslash X) \leq D_0$. We refer to Section \ref{sec-preliminaries} for the definition of unimodularity. It naturally holds $$\mathcal{O}\text{-CAT}_0^{\text{disc}}(P_0,r_0,D_0) \subseteq \mathcal{O}\text{-CAT}_0^{\text{td,u}}(P_0,r_0,D_0)$$ 
since every discrete group is totally disconnected and unimodular. The unimodular conditions is justified by the following result, which is the main theorem of the paper. It extends Theorem \ref{theo-intro-convergence} to totally disconnected, unimodular actions, that is to sequences belonging to $\mathcal{O}\textup{-CAT}_0^{\textup{td-u}}(P_0,r_0,D_0)$. 
\begin{theorem}[Compactification Theorem]
	\label{theo-intro-closure}${}$\\
	The class $\mathcal{O}\textup{-CAT}_0^{\textup{td,u}}(P_0,r_0,D_0)$ is closed (and compact) under Gromov-Hausdorff convergence. In other words let $\lbrace M_j \rbrace \subseteq \mathcal{O}\textup{-CAT}_0^{\textup{td,u}}(P_0,r_0,D_0)$ be such that $M_j$ converges to some space $M_\infty$ in the Gromov-Hausdorff sense. Then $M_\infty$ is the quotient of a proper, geodesically complete, $(P_0,r_0)$-packed, \textup{CAT}$(0)$-space by a closed, totally disconnected, unimodular group of isometries, and \textup{diam}$(M_\infty) \leq D_0$.
\end{theorem}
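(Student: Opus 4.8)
The plan is to lift the convergence of the quotients to an equivariant convergence of the actions, and then to run two arguments: one recovering the totally disconnected picture of Theorem~\ref{theo-intro-convergence}, and one — the genuinely new point — showing that unimodularity survives in the limit. First, fix basepoints $\bar x_j\in M_j$ with $\bar x_j\to\bar x_\infty\in M_\infty$ and lift them to points $x_j\in X_j$. Since each $X_j$ is $(P_0,r_0)$-packed the pointed spaces $(X_j,x_j)$ are uniformly doubling, so by Lemma~\ref{lemma-GH-compactness-packing} and the lifting procedure of \cite{CS23} we may pass to a subsequence and obtain equivariant pointed Gromov--Hausdorff convergence $(X_j,x_j,G_j)\to(X_\infty,x_\infty,G_\infty)$, where $X_\infty$ is proper, geodesically complete and $(P_0,r_0)$-packed, $G_\infty\le\mathrm{Isom}(X_\infty)$ is closed, $G_\infty\backslash X_\infty=M_\infty$, and $\mathrm{diam}(M_\infty)\le D_0$ (these properties being stable under the limit, cf.\ \cite{CavS20,CS23}). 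As $X_\infty$ is proper and the $G_j$-actions are uniformly cocompact, the limit action is proper, so $G_\infty$ is locally compact.

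The heart of the matter is to show that $G_\infty$ is unimodular, and I expect this to be the main obstacle. By van Dantzig's theorem each $G_j$ contains a compact open subgroup $U_j$, which we may choose inside $\{g\in G_j:d(x_j,gx_j)<1\}$; let $m_j$ be the Haar measure of $G_j$ normalised by $m_j(U_j)=1$, which is \emph{bi-invariant} precisely because $G_j$ is unimodular. The key estimate — and the place where the packing hypothesis is used — is a bound
\[
1\ \le\ m_j\big(\{g\in G_j:d(x_j,gx_j)\le R\}\big)\ \le\ C(R)\qquad (R\ge 1),
\]
uniform in $j$: the lower bound is immediate from $U_j$, and the upper bound follows by a covering argument in which the number of $U_j$-cosets meeting the set is controlled by the packing function of $X_j$ at a radius depending only on $R$ and $r_0$. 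These bounds give, along a further subsequence, vague convergence $m_j\to m_\infty$ to a nonzero Radon measure $m_\infty$ on $G_\infty$, the convergence being transported along the equivariant Gromov--Hausdorff approximations (which carry the group operations of the $G_j$ to those of $G_\infty$ uniformly on compact sets). Both left- and right-invariance are preserved in such a vague limit once the operations converge, so $m_\infty$ is a nonzero bi-invariant Radon measure on the locally compact group $G_\infty$; hence $G_\infty$ is unimodular. The delicate points are making the cross-space convergence $m_j\to m_\infty$ precise and establishing the packing-based upper bound; the remaining verifications are routine.

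Next I would produce the totally disconnected group demanded by the statement. By the analysis of the limit action in \cite{CS23} — which uses only that the approximating groups are closed and totally disconnected, as holds here — the identity component $G_\infty^o$ is a connected nilpotent Lie group, $\bar X_\infty:=G_\infty^o\backslash X_\infty$ is again proper, geodesically complete and $(P_0,r_0)$-packed, and $\bar G_\infty:=G_\infty/G_\infty^o$ acts on $\bar X_\infty$ as a closed, totally disconnected group with $\bar G_\infty\backslash\bar X_\infty=M_\infty$ and codiameter $\le D_0$. It remains to check that $\bar G_\infty$ is unimodular, which does \emph{not} follow formally from unimodularity of $G_\infty$: one also needs that $G_\infty$ acts on $G_\infty^o$ by Haar-measure-preserving automorphisms. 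Here the geometry re-enters: $G_\infty$ normalises $G_\infty^o$ and permutes the $G_\infty^o$-orbits in $X_\infty$ by isometries, which forces its conjugation action on $G_\infty^o$ to preserve the Haar measure of $G_\infty^o$. Combined with $G_\infty^o$ being unimodular (it is nilpotent) and $G_\infty$ being unimodular (previous step), the standard formula relating the modular functions of $G_\infty$, $G_\infty^o$ and $G_\infty/G_\infty^o$ gives that $\bar G_\infty$ is unimodular. Hence $M_\infty=\bar G_\infty\backslash\bar X_\infty\in\mathcal{O}\text{-CAT}_0^{\text{td,u}}(P_0,r_0,D_0)$, so the class is closed; and it is precompact, since the $M_j$ have diameter $\le D_0$ and inherit a uniform packing from the $X_j$ (Lemma~\ref{lemma-GH-compactness-packing}). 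A closed precompact class is compact, which completes the plan.
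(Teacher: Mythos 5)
Your overall architecture — lift to equivariant convergence, prove unimodularity of $G_\infty$ by a Haar-measure convergence argument, then quotient by $G_\infty^o$ — tracks the paper's organization (Sections \ref{sub-GH}--\ref{sub-convergencecollapsing}), but the central measure-theoretic step has gaps that the paper's machinery is precisely designed to close.

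First, your normalization $m_j(U_j)=1$ with a van Dantzig subgroup $U_j\subseteq\{g:d(x_j,gx_j)<1\}$ does not give a uniform upper bound $m_j(\overline{S}_R(x_j))\le C(R)$. The packing controls the number of distinct orbit points of $G_j x_j$ in $\overline{B}(x_j,R)$, but each orbit point carries an entire coset of the stabilizer $\textup{Stab}_{G_j}(x_j)$, and $[\textup{Stab}_{G_j}(x_j):U_j]$ can diverge with $j$; indeed $m_j(\overline{S}_R(x_j))\ge m_j(\textup{Stab}_{G_j}(x_j))=[\textup{Stab}_{G_j}(x_j):U_j]$. The paper avoids this by choosing a \emph{specific} basepoint via Corollary \ref{cor-0-rank} (using unimodularity and the splitting theorem) so that $\overline{G}_{\sigma}(x_j)=\textup{Stab}_{G_j}(x_j)$, and then normalizing by the full stabilizer; only then does the packing count cosets bijectively. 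Dually, your lower bound ``immediate from $U_j$'' lives at scale $1$ only, whereas Proposition \ref{prop-convergence-Haar}(ii) requires $\inf_j\mu_j(\overline{B}^\ell_{x_j}(\id,r))>0$ for \emph{every} $r>0$; the paper obtains this from the uniform index bound of Corollary \ref{cor-uniform-index}, which in turn rests on Proposition \ref{prop-bounded-order-sigma-controlled} and Corollary \ref{cor-bounded-order-unimodular} — the quantitative estimate announced as Theorem \ref{theo-intro-order-td}. You neither invoke nor supply anything playing that role.

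Second, and more importantly, your plan applies the same packing argument to conclude that $G_\infty$ is unimodular irrespective of collapsing, and only afterwards quotients by $G_\infty^\circ$. But in the collapsing case $\textup{sys}^\diamond(G_j,X_j)\to 0$, so the orbits $G_jx_j$ are \emph{not} separated at any fixed scale — that is the definition of collapsing — and the coset count in $\overline{S}_R(x_j)$ is unbounded. The paper's proof of unimodularity of $G_\infty$ in the collapsed case (Theorem \ref{theo-collapsed}(iv)) is substantially different: it first establishes the fine stabilization of the splitting rank (part (iii), the main technical novelty over \cite{CS23}), then normalizes at the scale $2\rho^*$, bounds measures of large sets by a word-length argument combining the $\sigma^*$-separation of the $X_j'$-projection (Corollary \ref{cor-projection-td}), controlled sublattices (Lemma \ref{lemma-sublattice-controlled}), the Svarc--Milnor estimate (Lemma \ref{lemma-lattice-svarc-milnor}), and the doubling-type estimate for powers of approximate subgroups (Corollary \ref{cor-measure-powers}). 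None of this is addressed in your sketch.

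Finally, a smaller point: the group you need is not $G_\infty/G_\infty^\circ$ but the image $G_\infty'$ of $G_\infty$ in $\textup{Isom}(X_\infty')$, which is a further quotient by a compact kernel (the elements acting trivially on $X_\infty'$); the paper constructs a bi-invariant Haar measure on $G_\infty'$ directly as $\nu(A)=\mu_\infty(A\times\overline{S}_1({\bf v_\infty}))$ rather than through the modular-function formula you propose. Your route through modular functions is plausible but would require verifying the compatibility of the Haar measures along the short exact sequence; the paper's direct construction is cleaner and self-contained.
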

Observe that in particular Theorem \ref{theo-intro-closure} shows that the space $M_\infty$ appearing in Theorem \ref{theo-intro-convergence} is not only the quotient by a totally disconnected group, but actually by a \emph{unimodular} one. A more important consequence is that the class $\mathcal{O}\textup{-CAT}_0^{\textup{td,u}}(P_0,r_0,D_0)$ is a compactification of $\mathcal{O}\textup{-CAT}_0^{\textup{disc}}(P_0,r_0,D_0)$ with respect to the Gromov-Hausdorff topology. This fact is useful in case we want to find uniform geometric estimates holding for all the spaces in $\mathcal{O}\textup{-CAT}_0^{\textup{disc}}(P_0,r_0,D_0)$. Examples of such uniform estimates will be proved especially in Section \ref{subsec-controlled} and will be used for the proof of Theorem \ref{theo-intro-closure}. To clarify this idea we present here a particular case of one of them.
\begin{theorem}
	\label{theo-intro-order-td}
	Let $P_0,r_0,D_0 > 0$. Then there exists $M_0 = M_0(P_0,r_0,D_0)$ such that the following holds.
	Let $X$ be a proper, geodesically complete, $(P_0,r_0)$-packed, \textup{CAT}$(0)$-space. Let $G<\textup{Isom}(X)$ be closed, totally disconnected, unimodular with \textup{diam}$(G\backslash X) \leq D_0$. Let $g\in G$ be a finite order isometry and write its order as $\textup{ord}(g) = p_1^{\alpha_1} \cdots p_k^{\alpha_k}$, where the $p_i$'s are prime numbers.
	Then $\max_i p_i \leq M_0$.
\end{theorem}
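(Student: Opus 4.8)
The plan is to argue by contradiction, first reducing to elements of prime order and then using the equivariant Gromov--Hausdorff compactness provided by the packing and cocompactness hypotheses to pass to a limit action, where the relevant element degenerates to a structurally forbidden object.

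\emph{Reduction and set-up.} Writing $\textup{ord}(g)=p_1^{\alpha_1}\cdots p_k^{\alpha_k}$, the element $g^{\textup{ord}(g)/p_i}$ has order exactly $p_i$, so it is enough to bound the order of every element of prime order. Suppose no such bound exists: there are proper, geodesically complete, $(P_0,r_0)$-packed CAT$(0)$-spaces $X_j$, closed totally disconnected unimodular groups $G_j<\textup{Isom}(X_j)$ with $\textup{diam}(G_j\backslash X_j)\le D_0$, and $g_j\in G_j$ of prime order $q_j$ with $q_j\to\infty$. Since $\langle g_j\rangle$ is finite it fixes a point of $X_j$ (circumcentre of an orbit); conjugating $g_j$ inside $G_j$ and using $\textup{diam}(G_j\backslash X_j)\le D_0$ we may arrange this fixed point to be a basepoint $x_j\in X_j$, so that $g_j\in(G_j)_{x_j}$.

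\emph{Passing to the limit.} By Lemma \ref{lemma-GH-compactness-packing} the pointed spaces $(X_j,x_j)$ subconverge to a proper, geodesically complete, $(P_0,r_0)$-packed CAT$(0)$-space $(X_\infty,x_\infty)$, and (as in Section \ref{sec-convergence}) the cocompact actions subconverge equivariantly to a closed group $G_\infty<\textup{Isom}(X_\infty)$ with $\textup{diam}(G_\infty\backslash X_\infty)\le D_0$; moreover the compact subgroups $\overline{\langle g_j\rangle}$ subconverge to a compact subgroup $H\le (G_\infty)_{x_\infty}$. The group $H$ is abelian (being a limit of cyclic groups, and the commutator map is continuous), and it has no nontrivial finite continuous quotient: such a quotient would be a subquotient of $\mathbb{Z}/q_j\mathbb{Z}$ for $j$ large, hence cyclic of order in $\{1,q_j\}$, which is impossible for a group of fixed order as $q_j\to\infty$. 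Therefore $H$ is connected, so $H$ is either trivial or a positive-dimensional compact connected abelian group, in which case $G_\infty^o\neq\{\id\}$ and $H\le G_\infty^o$ is a nontrivial compact subgroup fixing $x_\infty$.

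\emph{Deriving the contradiction.} This is the heart of the matter, and it is where unimodularity enters. If $\dim H\ge 1$, one invokes the description of the identity component of such limit groups: arguing as in the collapsed case of \cite{CS23}, and in the totally disconnected, unimodular generality developed in Section \ref{subsec-controlled} (compare \eqref{eq-collapsed-noncollapsed}), $G_\infty^o$ is a Lie group, and the unimodularity and cocompactness of the approximating actions force it to be a simply connected nilpotent group acting freely on $X_\infty$; in particular $G_\infty^o$ contains no nontrivial compact subgroup, contradicting $\{\id\}\neq H\le G_\infty^o$. If instead $H=\{\id\}$, then $g_j$ and all its powers become invisible in the limit, i.e. move every bounded subset of $X_j$ by an amount tending to $0$; this case is excluded by a fixed-scale Margulis-type estimate on $(P_0,r_0)$-packed CAT$(0)$-spaces — the subgroup generated by isometries moving $x_j$ by a sufficiently small amount is virtually a torsion-free nilpotent group with uniformly controlled structure, hence has uniformly bounded torsion, again contradicting $q_j\to\infty$. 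The main obstacle is precisely this last step: transferring the structural description of $G_\infty^o$ from the discrete setting of \cite{CS23} to limits of totally disconnected, unimodular, cocompact actions, and making the ``invisible in the limit'' alternative quantitative through a uniform Margulis lemma — unimodularity being used exactly to rule out compact (in particular toral) factors in $G_\infty^o$, which is what a surviving rotation of large prime order would produce.
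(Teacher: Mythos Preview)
Your overall strategy --- contradiction plus equivariant Gromov--Hausdorff compactness --- is in the same spirit as the paper's approach to the key underlying estimate (Proposition~\ref{prop-bounded-order-sigma-controlled}), but the argument as written has a genuine gap in Case~2, and Case~1 is not quite independent of it.

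\textbf{The gap in Case 2.} You write that when $H=\{\id\}$ the Margulis lemma excludes the situation because ``the subgroup generated by isometries moving $x_j$ by a sufficiently small amount is virtually a torsion-free nilpotent group with uniformly controlled structure, hence has uniformly bounded torsion''. This is the statement of the Margulis lemma for \emph{discrete} groups, not for totally disconnected ones. In the totally disconnected setting (Theorem~\ref{theo-intro-Margulis-TD} and Corollary~\ref{prop-Margulis-CAT}), the almost stabilizer $\overline{G}_\varepsilon(x_j)$ is only \emph{almost} abelian: it has a compact open normal subgroup $N$ with virtually abelian quotient, and $N$ can carry arbitrary torsion. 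Your elements $g_j$ fix $x_j$, so they sit in $\textup{Stab}_{G_j}(x_j)$ --- exactly the compact part where the Margulis lemma gives no control whatsoever on the order. Nothing prevents $g_j$ from fixing $\overline B(x_j,R_j)$ pointwise with $R_j\to\infty$ while still having order $q_j\to\infty$; then $H=\{\id\}$ and your argument says nothing. The paper handles this case by a completely different, geometric mechanism (see the last paragraph of the proof of Proposition~\ref{prop-bounded-order-sigma-controlled}): one uses geodesic completeness to extend a segment from a fixed point past a moved point, and shows that some power of the element must then displace a point in a slightly larger ball by a definite amount, contradicting the small-displacement hypothesis. This argument requires knowing that the \emph{limit} group is totally disconnected (so that \cite[Remark~6.2]{CM09b} applies to $G_\omega$), which is exactly why the paper first reduces to the $\sigma$-controlled setting via the Splitting Theorem and the unimodularity-based Proposition~\ref{prop-minimal-close} and Corollary~\ref{cor-projection-td}.

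\textbf{Why Case 1 does not save you.} Your argument that $H$ is connected (``a finite quotient of $H$ would be a subquotient of $\mathbb Z/q_j\mathbb Z$'') is not justified: finite quotients do not in general lift back along equivariant Gromov--Hausdorff limits. But even granting connectedness, observe that once you invoke $G_\infty^o=\{\id\}\times\textup{Transl}(\mathbb R^\ell)$ (Theorem~\ref{theo-collapsed}(ii)), this group has no nontrivial compact subgroup, so $H$, being compact and contained in $G_\infty^o$, must be trivial. In other words, your Case~1 is empty, and the entire weight of the proof lands on Case~2 --- where the gap lies.
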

It is not difficult to check the validity of this statement on a regular tree. In that case the full isometry group is totally disconnected and unimodular. Theorem \ref{theo-intro-order-td} says that a similar phenomenon happens for every totally disconnected, unimodular group acting on a space as in the assumptions. The author does not know a proof of Theorem \ref{theo-intro-order-td} for discrete groups that does not use totally disconnected actions. As remarked after \eqref{eq-collapsed-noncollapsed} any argument by contradiction provides limit groups that can be totally disconnected, or worse, but in general not discrete.\\
Using the fact that if a group $G$ acts cocompactly on a metric space $X$, then $X$ is $(P_0,r_0)$-packed for some $P_0,r_0 > 0$ (cp. \cite[Lemma 5.4]{Cav21ter}), we also get the following, non-uniform, estimate.
\begin{corollary}
	Let $X$ be a proper, geodesically complete, \textup{CAT}$(0)$-space and let $G < \textup{Isom}(X)$ be closed, totally disconnected, unimodular and cocompact. Then there exists $M \in \mathbb{N}$ such that every prime number appearing in the prime decomposition of the order of every finite order isometry of $G$ is at most $M$.
\end{corollary}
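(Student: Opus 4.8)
The plan is to deduce this directly from Theorem \ref{theo-intro-order-td}, so the whole argument amounts to checking that a cocompact $G$ as in the statement produces structural constants $P_0,r_0,D_0$ putting us in the hypotheses of that theorem. First I would observe that cocompactness of the action of $G$ on $X$ means precisely that the quotient $G\backslash X$ has finite diameter; fix $D_0 > 0$ with $\textup{diam}(G\backslash X) \leq D_0$. Next, since $G < \textup{Isom}(X)$ acts cocompactly on the proper, geodesically complete CAT$(0)$-space $X$, \cite[Lemma 5.4]{Cav21ter} provides constants $P_0, r_0 > 0$ (depending on $X$ and the action, hence not uniform) such that $X$ is $(P_0,r_0)$-packed.

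With these three constants fixed, $X$ is a proper, geodesically complete, $(P_0,r_0)$-packed CAT$(0)$-space and $G$ is a closed, totally disconnected, unimodular subgroup of $\textup{Isom}(X)$ with $\textup{diam}(G\backslash X) \leq D_0$, which is exactly the setting of Theorem \ref{theo-intro-order-td}. Applying it yields $M_0 = M_0(P_0,r_0,D_0)$ such that for every finite order $g \in G$, writing $\textup{ord}(g) = p_1^{\alpha_1}\cdots p_k^{\alpha_k}$ with the $p_i$ prime, one has $\max_i p_i \leq M_0$. Since $M_0$ does not depend on the particular element $g$, setting $M := M_0$ (or any integer $\geq M_0$) gives the desired uniform bound over all finite order isometries of $G$ simultaneously, which proves the corollary.

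I do not expect a genuine obstacle here: the statement is the non-uniform shadow of Theorem \ref{theo-intro-order-td}, and the only thing to be careful about is that the packing constants furnished by \cite[Lemma 5.4]{Cav21ter} are allowed to depend on $X$ and $G$ — this is why the resulting $M$ is non-uniform across different pairs $(X,G)$, in contrast with the uniform constant $M_0(P_0,r_0,D_0)$ of Theorem \ref{theo-intro-order-td}. All the substantive work (the existence of $M_0$) is already contained in that theorem, whose proof in turn exploits the totally disconnected, unimodular limits arising from the Gromov-Hausdorff analysis discussed after \eqref{eq-collapsed-noncollapsed}.
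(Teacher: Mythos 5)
Your argument is correct and is essentially the same as the paper's: the corollary is presented immediately after the remark that cocompactness yields packing constants via \cite[Lemma 5.4]{Cav21ter}, and the intended proof is precisely to feed those (non-uniform) constants $P_0,r_0$ together with a bound $D_0$ on the codiameter into Theorem \ref{theo-intro-order-td}. Nothing to add.
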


The proof of Theorem \ref{theo-intro-closure} follows a scheme similar to the one we used in \cite{CS23}, but with two additional difficulties. First of all we have to deal with the fact that the groups under consideration are no more discrete, but just totally disconnected. This impacts every key argument in \cite{CS23}: indeed the discussion therein starts from the Margulis Lemma. That is why in Section \ref{sec-Margulis} we prove the following, new version of the Margulis Lemma holding for totally disconnected actions on proper metric spaces. It generalizes \cite[Corollary 11.17]{BGT11} and it is proved by similar techniques.

\begin{theorem}[Margulis Lemma for totally disconnected group actions.]
	\label{theo-intro-Margulis-TD}${}$\\
	Let $K \geq 1$. There exists $\varepsilon = \varepsilon(K)$ such that the following is true. Let $X$ be any proper metric space and let $x\in X$ be a point such that $\textup{Cov}(\overline{B}(x,4), 1) \leq K$. Let $G$ be a closed, totally disconnected group of isometries of $X$. Then the group $G_\varepsilon(x) = \langle S_\varepsilon(x)\rangle$ contains an open, compact, normal subgroup $N$ such that $G_\varepsilon(x)/N$ is discrete, finitely generated and nilpotent.
\end{theorem}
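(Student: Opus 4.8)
The plan is to follow Breuillard--Green--Tao's proof of the Margulis lemma via the structure theory of approximate subgroups of locally compact groups (\cite{BGT11}), keeping track of the totally disconnected structure of $G$ at every step, and then to use a compactness argument in the equivariant Gromov--Hausdorff topology, in the spirit of \cite{CS23}, to obtain the sharp, non-virtual conclusion. First a reduction: since $X$ is proper, $\textup{Isom}(X)$ is locally compact and the stabilizer $G_x$ is compact, so $S_\varepsilon(x)=\{g\in G:d(gx,x)\le\varepsilon\}$ is a compact symmetric neighbourhood of $G_x$. By van Dantzig's theorem $G$ has a compact open subgroup inside $\{g:d(gx,x)<\varepsilon\}$; averaging it over the compact group $G_x$ via the tube lemma makes it $G_x$-invariant, and multiplying by $G_x$ produces a compact open subgroup $U\le G$ with $G_x\le U\subseteq S_\varepsilon(x)$. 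Hence $G_\varepsilon(x)=\langle S_\varepsilon(x)\rangle$ contains $U$, so it is open --- therefore closed --- in $G$ and is a compactly generated, totally disconnected, locally compact group. It thus suffices to produce a compact open normal subgroup $N\triangleleft G_\varepsilon(x)$ with $G_\varepsilon(x)/N$ nilpotent; finite generation of the quotient is then automatic, as it is generated by the (finite) image of the compact set $S_\varepsilon(x)$.

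The set $A:=\{g\in G:d(gx,x)\le 2\}$ is a compact $K$-approximate subgroup of $G$. Indeed a maximal subset $\{g_1x,\dots,g_mx\}$ of $\{gx:d(gx,x)\le 4\}\subseteq\overline B(x,4)$ with pairwise distances greater than $2$ has $m\le\textup{Cov}(\overline B(x,4),1)\le K$ and, by maximality, every point of $\{gx:d(gx,x)\le 4\}$ is within distance $2$ of some $g_ix$, so $A^2\subseteq\{g:d(gx,x)\le 4\}\subseteq\bigcup_{i=1}^m g_iA$; moreover $A$ is compact by properness and $A\supseteq U$, so $\langle A\rangle$ is open in $G$ and contains $G_\varepsilon(x)$. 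Running the structure theorem for approximate subgroups of locally compact groups on $A$, exactly as in the Margulis-lemma argument of \cite{BGT11}, yields a subgroup $H\subseteq A^4$ normalised by $A$, hence normal in $\langle A\rangle$, such that $\langle A\rangle$ is controlled by a coset nilprogression over $H$; since $G$ is totally disconnected the ambient Lie group in the structure theorem is discrete, whence $H$ is compact and open and $\langle A\rangle/H$ --- and therefore $G_\varepsilon(x)/(H\cap G_\varepsilon(x))$ --- is a finitely generated, discrete, virtually nilpotent group.

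It remains to remove the word ``virtually'', which is exactly where the smallness of $\varepsilon=\varepsilon(K)$ enters, in the form of a Zassenhaus/Kazhdan--Margulis phenomenon. I would argue by contradiction within the compactness scheme of \cite{CS23}: if the statement failed there would be proper spaces $X_j$ with $\textup{Cov}(\overline B(x_j,4),1)\le K$, points $x_j$, closed totally disconnected groups $G_j$, and scales $\varepsilon_j\to 0$ such that no compact open normal subgroup of $(G_j)_{\varepsilon_j}(x_j)$ has nilpotent quotient. Passing to an equivariant Gromov--Hausdorff limit --- the subgroups $U_j$ built above lie inside the uniformly controlled approximate subgroups $\{g:d(gx_j,x_j)\le 2\}$, so they survive the limit --- one obtains an isometric action of a locally compact group on a limit space still satisfying the covering bound; by the Gleason--Yamabe theorem, as used in \cite{BGT11}, an open subgroup of that group is compact-by-Lie, and the part generated by the limiting (now infinitesimal) generators is a nilpotent Lie group, so the relevant group is compact-by-(finitely generated nilpotent). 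Since a bound on the nilpotency class passes to the limit, $(G_j)_{\varepsilon_j}(x_j)$ has, for $j$ large, a compact open normal subgroup with finitely generated discrete nilpotent quotient, a contradiction; the same argument carried out with $\varepsilon=\varepsilon(K)$ produces the subgroup $N$ demanded by the theorem.

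The crux is this last step. A finitely generated virtually nilpotent group need not admit a finite normal subgroup with nilpotent quotient --- for instance $\mathbb{Z}^2\rtimes\mathbb{Z}/3$ --- so the conclusion cannot be reached by algebraically massaging the output of the approximate-group machinery; the smallness of $\varepsilon$, available only through the compactness argument, is indispensable. The secondary technical burden, as in \cite{CS23}, is to make the totally disconnected structure pass to the equivariant Gromov--Hausdorff limit with uniform control on the sizes of the compact open subgroups $U_j$, and this is precisely where the covering hypothesis $\textup{Cov}(\overline B(x,4),1)\le K$ is used quantitatively.
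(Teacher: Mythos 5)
Your first two paragraphs reproduce the paper's argument faithfully: the covering bound makes $\overline{S}_2(x)$ a precompact $K$-approximate subgroup, and the structure theorem for open precompact approximate subgroups of totally disconnected locally compact groups (Proposition~\ref{prop-TT}, after \cite{TT21}) combined with \cite[Corollary 11.2]{BGT11} yields a compact open normal $N\triangleleft G_\varepsilon(x)$ with $G_\varepsilon(x)/N$ discrete, finitely generated and \emph{virtually} nilpotent. This is exactly Proposition~\ref{prop-pre-Margulis} together with Theorem~\ref{theo-Margulis}, where the conclusion is phrased as ``$G_\varepsilon(x)$ is almost nilpotent'', and ``almost nilpotent'' is \emph{defined} in Section~\ref{sec-Margulis} to mean precisely: a compact open normal subgroup with discrete, finitely generated, \emph{virtually} nilpotent quotient.

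The third paragraph is where you go wrong. You are reading the introductory statement literally (``nilpotent'' rather than ``virtually nilpotent'') and trying to upgrade the conclusion, but the literal statement is in fact \emph{false}, as a variant of your own example shows. Take $X=\mathbb{R}^2$, $x$ the origin, and $\Gamma_\delta=\mathcal{L}_\delta\rtimes\mathbb{Z}/3$ where $\mathcal{L}_\delta$ is a hexagonal lattice scaled by $\delta$ and $\mathbb{Z}/3$ acts by rotation about $x$. The number $\textup{Cov}(\overline{B}(x,4),1)$ is a fixed Euclidean constant independent of $\delta$, so $K$ is fixed; but for \emph{every} $\varepsilon>0$, choosing $\delta<\varepsilon$ one has $(\Gamma_\delta)_\varepsilon(x)=\Gamma_\delta$, since the rotation fixes $x$ (hence lies in $S_\varepsilon(x)$ for all $\varepsilon$) and the short lattice generators lie in $S_\varepsilon(x)$ too, and together they generate $\Gamma_\delta$. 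A crystallographic group of this type has no nontrivial finite normal subgroup (such a subgroup would meet the lattice trivially, hence centralize it, but the lattice is self-centralizing), and $\Gamma_\delta$ is not nilpotent, so no compact open normal $N$ with nilpotent quotient exists --- for any $\varepsilon>0$. Thus smallness of $\varepsilon$ does \emph{not} allow one to drop ``virtually'', contrary to what you assert: the Zassenhaus/Kazhdan--Margulis mechanism only tames the small-displacement part, not the stabilizer of $x$, which is contained in $S_\varepsilon(x)$ at every scale. Your Gromov--Hausdorff contradiction argument therefore cannot work; it also has an independent gap, in that a structural statement about the equivariant limit group does not transfer back to give a compact-open-normal-with-nilpotent-quotient decomposition of the approximating groups $(G_j)_{\varepsilon_j}(x_j)$. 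The correct resolution is that the word ``nilpotent'' in the introductory statement should be read as ``virtually nilpotent'', consistently with the body Theorem~\ref{theo-Margulis} and with \cite[Corollary 11.17]{BGT11} to which the paper says the statement reduces in the discrete case; your first two paragraphs already give that.
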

The expression $\textup{Cov}(\overline{B}(x,4), 1)$ refers to the minimal cardinality of a $1$-net inside $\overline{B}(x,4)$, see Section \ref{sec-packing} for a precise definition and its relation with the packing. The set $S_\varepsilon(x)$ is the set of elements of $G$ moving $x$ less than $\varepsilon$, sometimes it is called the almost stabilizer of $x$. For more properties of these sets see Section \ref{subsection-isometries}. Observe that if $G$ is discrete then $N$ is finite, and Theorem \ref{theo-intro-Margulis-TD} becomes the known Margulis Lemma \cite[Corollary 11.17]{BGT11}.\\
In general the information provided by Theorem \ref{theo-intro-Margulis-TD} is not sufficient to get a good geometric description of the action of the almost stabilizers. The situation is different if the space $X$ is CAT$(0)$ and the group $G$ is cocompact. Under these assumptions the group $G_\varepsilon(x)/N$ is actually abelian and a geometric description of the action of such groups can be provided (cp. Section \ref{subsec-almost-abelian-CAT}). It resembles the description we gave in \cite[§2.4]{CS23} for virtually abelian groups acting on CAT$(0)$-spaces.\\
With these tools in our hands we will adapt the proofs of the Splitting Theorem D and the Renormalization Theorem E of \cite{CS23} to our setting. This will be done in Sections \ref{sec-splitting} and \ref{sec-renormalization}.
In Section \ref{sec-convergence}, and in particular in Section \ref{sub-almost stabilizers}, we use the unimodularity assumption, that is not used until this point, to attack the proof of Theorem \ref{theo-intro-closure}. This is enough to prove \eqref{eq-collapsed-noncollapsed} in this more general setting. To be more precise we can prove that if $M_\infty$ is the Gromov-Hausdorff limit of spaces $M_j \in \mathcal{O}\text{-CAT}_0^{\text{td,u}}(P_0,r_0,D_0)$ then $M_\infty = G_\infty' \backslash X_\infty'$ for some proper, geodesically complete, $(P_0,r_0)$-packed, CAT$(0)$-space $X_\infty'$ and some closed, totally disconnected, $D_0$-cocompact group of isometries $G_\infty'$. Observe that this is exactly the statement of Theorem \ref{theo-intro-convergence} if the $M_j$'s belong to $\mathcal{O}\text{-CAT}_0^{\text{disc}}(P_0,r_0,D_0)$.
\vspace{1mm}

\noindent The second additional difficulty in the proof of Theorem \ref{theo-intro-closure} with respect to \cite{CS23} consists in the last step, namely in proving that the group $G_\infty'$ above is actually \emph{unimodular}. We sketch here the argument in the non-collapsed case, the collapsed one being more involved but conceptually similar. In the non-collapsed case, calling $M_j = G_j\backslash X_j$, we know that $(X_j,G_j)$ converges in the equivariant Gromov-Hausdorff sense to some $(X_\infty, G_\infty)$. Moreover $G_\infty$ is totally disconnected by the equivalent of \eqref{eq-collapsed-noncollapsed}, so $M_\infty = G_\infty \backslash X_\infty$, in particular we can take $X_\infty = X_\infty'$ and $G_\infty = G_\infty'$. It is here that the non-collapsed assumption simplifies the argument. We need to prove that $G_\infty$ is unimodular. First of all we prove that $G_j$ converges in the pointed Gromov-Hausdorff sense to $G_\infty$ with respect to suitable metrics (cp. Proposition \ref{prop-convergence-group-metric}). Then we give a sufficient condition for the convergence of the Haar measures of $G_j$ to a Haar measure of $G_\infty$ along this pointed Gromov-Hausdorff convergence (cp. Proposition \ref{prop-convergence-Haar}). Lastly we use uniform quantitative estimates in the spirit of Theorem \ref{theo-intro-order-td} to prove that this sufficient condition for the convergence is satisfied.
\vspace{1mm}

\noindent We end the introduction by highlighting a result of independent interest that we will use in the paper: it is a characterization of cocompact, totally disconnected subgroups of isometries of a proper, geodesically complete, CAT$(0)$-space in terms of the properties of the action (cp. Theorem \ref{theo-characterization-td}).
\begin{theorem}
	Let $X$ be a proper, geodesically complete, \textup{CAT}$(0)$-space and let $G$ be a closed, cocompact group of isometries of $X$. Then the following are equivalent:
	\begin{itemize}
		\item[(i)] $G$ is totally disconnected;
		\item[(ii)] $G$ is semisimple and $\inf \lbrace \ell(g) \text{ s.t. } g \text{ hyperbolic} \rbrace > 0$;
		\item[(iii)] the orbit $Gx$ is discrete  for every $x\in X$.
		\item[(iv)] the orbit $Gx$ is discrete for one point $x\in X$.
	\end{itemize}
\end{theorem}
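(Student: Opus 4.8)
The plan is to prove the cycle of implications $(i)\Rightarrow(iii)\Rightarrow(iv)\Rightarrow(i)$, treating $(ii)$ as an intermediate characterization that is most naturally obtained from $(i)$ and shown to imply, say, $(iii)$. The heart of the argument is the step $(i)\Rightarrow(iii)$, where one uses that $X$ is proper, geodesically complete and CAT$(0)$, that $G$ acts cocompactly, and that $G$ is totally disconnected; the remaining implications are comparatively soft.

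For $(i)\Rightarrow(iii)$, fix $x\in X$ and suppose towards a contradiction that the orbit $Gx$ is not discrete, i.e.\ there exist $g_n\in G$ with $g_n x\to x$ but $g_n x\neq x$. Since $G$ is totally disconnected and acts cocompactly, the Margulis Lemma for totally disconnected actions (Theorem \ref{theo-intro-Margulis-TD}, applicable because cocompactness gives a uniform bound on $\textup{Cov}(\overline{B}(x,4),1)$ via the packing/covering estimates of Section \ref{sec-packing}) shows that for $n$ large the elements $g_n$ lie in $G_\varepsilon(x)=\langle S_\varepsilon(x)\rangle$, which contains an open compact normal subgroup $N$ with $G_\varepsilon(x)/N$ discrete, finitely generated and nilpotent; in the CAT$(0)$-cocompact case this quotient is in fact abelian and its action admits the explicit geometric description of Section \ref{subsec-almost-abelian-CAT}. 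The compact subgroup $N$ fixes a point of $X$ (by the CAT$(0)$ center-of-mass / Bruhat--Tits fixed-point argument applied to a bounded orbit), so after moving $x$ slightly we may assume $N$ fixes $x$; then $G_\varepsilon(x)$ maps to a discrete virtually abelian group acting on $X$, and the structure of such actions (translations along a flat, rotations with discrete rotation angles, as in \cite[§2.4]{CS23}) forces any element moving $x$ by less than $\varepsilon$ either to fix $x$ or to move it by a definite amount bounded below, contradicting $g_n x\to x$, $g_n x\neq x$. This yields discreteness of $Gx$.

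The implication $(iii)\Rightarrow(iv)$ is trivial. For $(iv)\Rightarrow(i)$: if $Gx$ is discrete for some $x$, then the orbit map $G\to Gx$ is a continuous bijection onto a discrete space from the coset space $G/G_x$, where $G_x=\mathrm{Stab}_G(x)$ is compact (it is closed in $G$ and acts on the proper space $X$ fixing $x$, hence is a closed subgroup with bounded, therefore relatively compact, orbits — compactness follows since $G$ is closed in $\mathrm{Isom}(X)$ with the compact-open topology and $\mathrm{Isom}(X)$ is locally compact for $X$ proper). Discreteness of $G/G_x$ together with compactness of $G_x$ shows $G_x$ is open in $G$; an open compact subgroup exists in $G$, and since the connected component $G^o$ is contained in every open subgroup, $G^o\subseteq G_x$. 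Now $G^o$ is a compact connected group acting on the CAT$(0)$-space $X$; if $G^o\neq\{\mathrm{id}\}$ one derives a contradiction with cocompactness and geodesic completeness — e.g.\ $G^o$ would produce a nontrivial flat factor or a fixed-point set on which the normalizer acts, contradicting that $Gx$ is discrete for \emph{that} $x$ once one checks $G^o$ moves $x$; more robustly, a nontrivial compact connected group of isometries has an orbit that is a nontrivial connected compact subset, forcing $Gx$ to contain a nondiscrete piece after translating by coset representatives. Hence $G^o=\{\mathrm{id}\}$ and $G$ is totally disconnected.

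Finally, for the equivalence with $(ii)$: assuming $(i)$, every element of $G$ lies in an open compact subgroup or translates along an axis; using the discreteness of $Gx$ from $(iii)$ and properness one shows each $g\in G$ is semisimple (the displacement function attains its infimum, either zero — elliptic, fixing a point with compact stabilizer — or a positive value realized on an axis — hyperbolic), and the uniform lower bound $\inf\{\ell(g): g\text{ hyperbolic}\}>0$ comes from the Margulis constant $\varepsilon$ of Theorem \ref{theo-intro-Margulis-TD} together with cocompactness, exactly as in the short-basis arguments of \cite{CS23}. Conversely $(ii)\Rightarrow(iii)$: a positive uniform lower bound on translation lengths of hyperbolic elements, combined with semisimplicity and properness, prevents a sequence $g_n x\to x$ with $g_n x\neq x$ (such $g_n$ would be elliptic or short hyperbolic for $n$ large, and a standard commutator/ping-pong argument on $\overline B(x,1)$ produces either a nondiscrete compact group — excluded after noting its fixed point and re-running — or arbitrarily short hyperbolic elements), giving discreteness of the orbit. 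I expect the step $(i)\Rightarrow(iii)$, and specifically the passage from the Margulis Lemma to a genuine local discreteness statement for the orbit, to be the main obstacle, since it is where the CAT$(0)$ geometry, the explicit description of almost-abelian actions, and the compact normal subgroup $N$ must all be combined carefully.
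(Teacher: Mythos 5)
Your cycle $(i)\Rightarrow(iii)\Rightarrow(iv)\Rightarrow(i)$ with $(ii)$ attached is a reasonable plan, but several of your key steps do not close, and where they could be repaired you would end up needing the same inputs the paper uses.

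\textbf{(i)$\Rightarrow$(iii) via the Margulis Lemma.} You assume $g_n x\to x$ with $g_n x\neq x$, put $g_n\in G_\varepsilon(x)$, and invoke the open compact normal subgroup $N\triangleleft G_\varepsilon(x)$. The step ``after moving $x$ slightly we may assume $N$ fixes $x$'' is unjustified: $N$ is compact and so fixes \emph{some} point by the Bruhat--Tits lemma, but there is no reason that point is near $x$, since elements of $N$, while lying in the group generated by $S_\varepsilon(x)$, need not individually move $x$ by less than $\varepsilon$. Without $N$ fixing $x$ (equivalently, without $\textup{Stab}_G(x)$ being open), the structure of $G_\varepsilon(x)/N$ tells you nothing about small displacements of $x$ itself. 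The paper sidesteps this entirely by citing the Caprace--Monod result \cite[Theorem 6.1]{CM09b}, which directly gives that stabilizers of balls are open and form a neighborhood basis of the identity; this is a genuinely nontrivial input that your sketch does not reconstruct.

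\textbf{(iv)$\Rightarrow$(i).} You correctly observe that discreteness of $Gx$ makes $G_x$ open, hence $G^o\subseteq G_x$, so $G^o$ is a compact connected group. But then your two proposed contradictions both fail. First, you write ``once one checks $G^o$ moves $x$'' --- this is backwards: $G^o\subseteq G_x$ means $G^o$ \emph{fixes} $x$, so there is no such check to make. Second, the ``more robustly\ldots translating by coset representatives'' argument would, at best, exhibit a nondiscrete orbit $Gy$ for some \emph{other} point $y$; this is not a contradiction with $(iv)$, which only asserts discreteness of $Gx$ for one point. The correct finishing move, available from the paper's Lemma \ref{prop-connected-component}, is: $G^o$ is compact, so it cannot contain an infinite cyclic discrete subgroup, hence $G^o=\lbrace\id\rbrace$. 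But notice that the paper proves $(iv)\Rightarrow(i)$ by a different and more informative route: the set $C$ of points with open stabilizers is convex, $G$-invariant, non-empty by $(iv)$, hence dense by minimality of the cocompact $G$-action; one then intersects stabilizers of finite $\varepsilon$-nets in $C\cap\overline{B}(x_0,R)$ to produce a neighborhood basis of compact open subgroups whose common intersection is trivial. This gives total disconnectedness without ever leaving the CAT$(0)$ setting.

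\textbf{(ii) and its role.} Your sketch of $(ii)\Rightarrow(iii)$ by ``commutator/ping-pong on $\overline{B}(x,1)$'' does not deal with the real obstruction: elliptic elements $g_n\in G$ with $d(x,g_nx)\to 0$, $g_n x\neq x$, are entirely compatible with a positive lower bound on hyperbolic translation lengths, and no ping-pong argument is articulated that rules them out. The paper instead proves $(ii)\Rightarrow(i)$ by showing the set $G^\diamond$ of elliptic elements is both open (easy) and closed, the latter via a genuinely nontrivial ultralimit argument: if $g_j\in G^\diamond\to g$ hyperbolic, one rebases at suitable points $y_j$ on the sublevel set $M_{s/4}(g_j)$ and uses Lemma \ref{lemma-ultralimit-cocompact} to produce a hyperbolic element of translation length in $[s/4,s/2]$, contradicting $\textup{sys}^\diamond(G,X)=s$. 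Openness and closedness of $G^\diamond$ then yield $G^o\subseteq G^\diamond$, and Lemma \ref{prop-connected-component} forces $G^o=\lbrace\id\rbrace$. None of this is visible in your sketch.

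In short: the three implications you rely on all contain genuine gaps ($N$ not fixing $x$; $G^o$ fixing rather than moving $x$; elliptics with small displacement), and the paper's actual proof hinges on inputs you do not supply: Caprace--Monod openness of stabilizers for $(i)\Rightarrow(iii)$, the density of the set $C$ together with net-stabilizer intersections for $(iv)\Rightarrow(i)$, and the ultralimit closedness of $G^\diamond$ for $(ii)\Rightarrow(i)$.
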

We recall that $G$ is semisimple if it does not contain parabolic isometries, while the quantity $\ell(g)$ appearing in (ii) is the translation length of the isometry $g$, i.e. $\ell(g) = \inf_{x\in X} d(x,gx)$. The theorem is false even for proper, CAT$(0)$-spaces that are not geodesically complete (see Example \ref{ex-td-non-discrete-orbits}).
 
\vspace{2mm}

\small {\em
	\noindent {\sc Acknowledgments.} The author thanks A.Lytchak for the interesting discussions during the preparation of this paper.}
\normalsize

\vspace{2mm}

\section{Preliminaries}
\label{sec-preliminaries}

We start recalling several facts we will need along the paper. We start with notions on metric spaces.

\subsection{CAT$(0)$-spaces}
\label{sec-CAT}
Throughout the paper $X$ will be a {\em proper} metric space with distance $d$.
The open (resp. closed) ball in $X$ of radius $r$, centered at $x$, will be denoted by $B_X(x,r)$ (resp.  $\overline{B}_X(x,r)$); we will often drop the subscript $X$ when the space  is clear from the context.\\
A {\em geodesic} in a metric space $X$ is an isometry $c\colon [a,b] \to X$, where $[a,b]$ is an interval of $\mathbb{R}$. The {\em endpoints} of the geodesic $c$ are the points $c(a)$ and $c(b)$; a geodesic with endpoints $x,y\in X$ is also denoted by $[x,y]$. A {\em geodesic ray} is an isometry $c\colon [0,+\infty) \to X$ and a geodesic line is an isometry $c\colon \mathbb{R} \to X$. A metric space $X$ is called   {\em geodesic}  if for every two points $x,y \in X$ there is a geodesic with endpoints   $x$ and $y$. 

\noindent A metric space $X$ is called CAT$(0)$ if it is geodesic and every geodesic triangle $\Delta(x,y,z)$  is thinner than its Euclidean comparison triangle   $\overline{\Delta} (\bar{x},\bar{y},\bar{z})$: that is,  for any couple of points $p\in [x,y]$ and $q\in [x,z]$ we have $d(p,q)\leq d(\bar{p},\bar{q})$ where $\bar{p},\bar{q}$ are the corresponding points in $\overline{\Delta} (\bar{x},\bar{y},\bar{z})$ (see   for instance \cite{BH09} for the basics of CAT(0)-geometry).
A CAT$(0)$-space is {\em uniquely geodesic}: for every $x,y \in X$ there exists a unique geodesic with endpoints $x$ and $y$.

\noindent A CAT$(0)$-metric space $X$ is {\em geodesically complete} if any geodesic $c\colon [a,b] \to X$ can be extended to a geodesic line. 

\noindent The  {\em boundary at infinity} of a CAT$(0)$-space $X$ (that is, the set of equivalence classes of geodesic rays, modulo the relation of being asymptotic), endowed with the Tits distance,  will be denoted by $\partial X$, see \cite[Chapter II.9]{BH09}.

\noindent A subset $C$ of $X$ is said to be {\em convex} if for all $x,y\in C$ the geodesic $[x,y]$ is contained in $C$. Given a subset $Y\subseteq X$ we denote by $\text{Conv}(Y)$ the smallest convex, closed subset of $X$ containing $Y$. 
If $C$ is a convex subset of a CAT$(0)$-space $X$ then it is itself CAT$(0)$, and its boundary at infinity $ \partial C$ naturally and isometrically embeds in $\partial X$.
\vspace{2mm}

We will denote by HD$(X)$ and TD$(X)$ the Hausdorff and the topological dimension of a metric space $X$, respectively.  
By \cite{LN19} we know that if $X$ is a  proper and geodesically complete CAT$(0)$-space then every point $x\in X$ has a well defined integer dimension in the following sense: there exists $n_x\in \mathbb{N}$ such that every small enough ball around $x$ has Hausdorff dimension equal to $n_x$. This defines a {\em stratification of $X$} into pieces of different integer dimensions: namely, if $X^k$ denotes the subset of points of $X$ with dimension $k$, then 
\vspace{-5mm}

$$X= \bigcup_{k\in \mathbb{N}} X^k.$$
The {\em dimension} of $X$ is the supremum of the dimensions of its points:  it coincides with the {\em topological dimension} of $X$, cp. \cite[Theorem 1.1]{LN19}.

\subsection{Packing and covering}
\label{sec-packing}
\noindent Let $X$ be a metric space and $r>0$.
A subset $Y$ of $X$ is called {\em $r$-separated} if $d(y,y') > r$ for all $y,y'\in Y$, while it is a {\em $r$-net} if for all $x\in X$ there exists $y\in Y$ such that $d(x,y)\leq r$.
Given $x\in X$ and $0<r\leq R$ we denote by Pack$(\overline{B}(x,R), r)$ the maximal cardinality of a $2r$-separated subset of $\overline{B}(x,R)$, and by Cov$(\overline{B}(x,R), r)$ the minimal cardinality of a $r$-net of $\overline{B}(x,R)$. Moreover we denote by Pack$(R,r)$ (resp. Cov$(R,r)$) the supremum of Pack$(\overline{B}(x,R), r)$ (resp. Cov$(\overline{B}(x,R), r)$) among all points of $X$. The packing and covering quantities defined above are classically related as follows (cp. \cite[§ 4]{CavS20}):
\begin{gather}
	\label{eq-pack-cov}
		\text{Pack}(\overline{B}(x,R), r) \leq \text{Cov}(\overline{B}(x,R), r) \leq \text{Pack}\left(\overline{B}(x,R), \frac{r}{2}\right),\\
		\text{Pack}(R, r) \leq \text{Cov}(R, r) \leq \text{Pack}\left(R, \frac{r}{2}\right).
\end{gather}

\noindent Given $P_0,r_0 > 0$ we say that $X$ is {\em $(P_0,r_0)$-packed} if Pack$(3r_0,r_0) \leq P_0$.
The packing condition should be thought as a metric, weak replacement  of a Ricci curvature lower bound: for more details and examples see \cite{CavS20}. Also remark that every metric space admitting a cocompact action is packed (for some $P_0, r_0$), see the proof of \cite[Lemma 5.4]{Cav21ter}.
\vspace{1mm}

\noindent The packing condition  has many interesting geometric consequences for complete, geodesically complete CAT$(0)$-spaces, as showed in \cite{CavS20bis}, \cite{Cav21bis} and  \cite{Cav21}. For instance it provides an upper bound on the dimension.

\begin{prop}[\textup{\cite[Theorems 4.2, 4.9]{CavS20}}]
	\label{prop-packing}
	Let $X$ be a complete, geodesically complete, $(P_0,r_0)$-packed, $\textup{CAT}(0)$-space. Then $X$ is proper and 
	\begin{itemize}
		\item[(i)] $\textup{Pack}(R,r) \leq P_0(1+P_0)^{\frac{R}{\min\lbrace r,r_0\rbrace} - 1}$ for all $0 <r\leq R$;	 
		\item[(ii)] the dimension of $X$ is at most $n_0 := P_0/2$;
	\end{itemize}
\end{prop}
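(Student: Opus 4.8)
The plan is to derive all three conclusions from the single hypothesis $\mathrm{Pack}(3r_0,r_0)\le P_0$, using geodesic completeness together with the convexity of the CAT$(0)$ metric. Properness will then follow from (i): as soon as $\mathrm{Pack}(\overline{B}(x,R),r)<\infty$ for all $0<r\le R$, every closed ball of $X$ is totally bounded, hence compact since $X$ is complete, so $X$ is proper.

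I would prove (i) in two regimes. For \emph{large scales} $r\ge r_0$ only the packing hypothesis and the geodesic structure are needed, through a branching argument: given a $2r$-separated family $\{y_i\}\subseteq\overline{B}(x,R)$, sample each geodesic $[x,y_i]$ at spacing $\le r_0$, record level by level which point of a fixed $2r_0$-net of the relevant ball of radius $3r_0$ the current sample is closest to, observe that each of the $\lceil R/r_0\rceil$ levels offers $\le P_0$ choices (packing hypothesis) and that the $y_i$'s sharing the whole record lie in a single ball of radius $3r_0$, hence are at most $P_0$; this bounds the family exponentially in $R/r_0$, and a careful count of the constants gives exactly $P_0(1+P_0)^{R/r_0-1}$, which is the assertion since $\min\{r,r_0\}=r_0$ here. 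For \emph{small scales} $r<r_0$ the naive idea of dilating a configuration radially out to scale $r_0$ fails, because a radial dilation need not be expanding once its points lie at different distances from the center. Instead I would prove $\mathrm{Pack}(\overline{B}(x,3\rho),\rho)\le 2P_0+1$ for every $\rho\le r_0$: a $2\rho$-separated family contains at most one point of $\overline{B}(x,\rho)$ (triangle inequality), while its points outside split into the annuli $\{\rho<d(x,\cdot)\le\sqrt3\,\rho\}$ and $\{\sqrt3\,\rho<d(x,\cdot)\le3\rho\}$, inside either of which the CAT$(0)$ law of cosines at $x$ forces the corresponding geodesics to issue pairwise at Alexandrov angle larger than $\alpha_0:=\arccos(7/9)$; extending them by geodesic completeness to geodesics of length $r_0/\sin(\alpha_0/2)=3r_0$ and applying the CAT$(0)$ comparison for isosceles triangles gives a $2r_0$-separated family inside $\overline{B}(x,3r_0)$, of cardinality $\le P_0$. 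Iterating this local estimate across scales and combining it with the large-scale bound yields (i) in the stated (deliberately non-sharp) form.

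For (ii) I would use the Lytchak--Nagano structure theory \cite{LN19}: the local dimension $n_x$ is a well-defined integer, $\dim X=\sup_{x}n_x=:n$, and at $\mathcal H^{n}$-almost every point the tangent cone is isometric to $\mathbb{R}^{n}$. Fix such a point $x_0$, so that its space of directions is the round sphere $S^{n-1}$ and every direction is realised by a geodesic from $x_0$ (geodesic completeness). The $2n$ cross-polytope vertices $\pm e_1,\dots,\pm e_n\in S^{n-1}$ are pairwise at angular distance $\ge\pi/2$; letting $z_1,\dots,z_{2n}$ be the endpoints of the length-$2r_0$ geodesics from $x_0$ in these directions, the CAT$(0)$ comparison for the isosceles triangles $\Delta(x_0,z_i,z_j)$ gives $d(z_i,z_j)\ge 2\cdot 2r_0\sin(\pi/4)=2\sqrt2\,r_0>2r_0$. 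Thus $\{z_1,\dots,z_{2n}\}$ is a $2r_0$-separated subset of $\overline{B}(x_0,2r_0)\subseteq\overline{B}(x_0,3r_0)$, whence $2n\le\mathrm{Pack}(\overline{B}(x_0,3r_0),r_0)\le P_0$, i.e. $n\le P_0/2$.

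I expect the main obstacle to be the small-scale half of (i): propagating the one-scale packing control \emph{downward} in scale. The large-scale branching step is routine and the dimension bound is short once the Lytchak--Nagano picture is available, but crossing below the distinguished scale $r_0$ genuinely requires the interaction of geodesic completeness with CAT$(0)$ convexity — no radial-rescaling shortcut is available — and the bookkeeping needed to land on the clean exponential constant $P_0(1+P_0)^{R/\min\{r,r_0\}-1}$, rather than on a coarser one, is the delicate point.
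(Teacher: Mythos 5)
Your treatment of the small-scale regime of (i) is where the argument goes wrong, and the reason is that the obstruction you raise to radial dilation is not real. If $\gamma_1,\gamma_2$ are the unit-speed geodesics from $x$ through $y$ and $z$, with $a=d(x,y)$ and $b=d(x,z)$, then $s\mapsto d(\gamma_1(sa),\gamma_2(sb))$ is the distance between two geodesics parameterized proportionally to arc length, hence convex in a CAT$(0)$ space (\cite[Proposition II.2.2]{BH09}); since it vanishes at $s=0$, one gets $d(\gamma_1(\lambda a),\gamma_2(\lambda b))\geq\lambda\,d(y,z)$ for every $\lambda\geq 1$, whether or not $a=b$. With geodesic completeness the radial dilation of factor $\lambda=r_0/r$ therefore carries any $2r$-separated subset of $\overline{B}(x,R)$ to a $2r_0$-separated subset of $\overline{B}(x,\lambda R)$, so $\textup{Pack}(R,r)\leq\textup{Pack}(\lambda R,r_0)\leq P_0(1+P_0)^{\lambda R/r_0-1}=P_0(1+P_0)^{R/r-1}$: the shortcut you dismissed is precisely what produces the stated constant, with no loss. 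Your annulus replacement is a correct computation as far as it goes, but it only yields $\textup{Pack}(\overline{B}(x,3\rho),\rho)\leq 2P_0+1$ for $\rho\leq r_0$, which is strictly worse than the one-scale hypothesis $P_0$; plugging $2P_0+1$ into any iteration or branching step gives a bound with base at least $2P_0+2$, which cannot be massaged down to $P_0(1+P_0)^{R/r-1}$. As written, the proposal would prove a true but genuinely weaker inequality than the one stated.

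The remaining pieces are broadly sound. Deducing properness from (i) and completeness is correct. The Lytchak--Nagano route to (ii) is a legitimate alternative (the paper only cites \cite{CavS20} and does not reprove this); the one point to be careful about is that at a regular point you need the cross-polytope directions to be realised by, or approximated by, geodesic directions — geodesic completeness together with \cite{LN19} gives density of geodesic directions in the space of directions, and an approximate orthonormal $2n$-tuple still produces, after extension to length $2r_0$, a family separated by more than $2r_0$, so $2n\leq P_0$ survives. The large-scale branching sketch is plausible in spirit, but the count as described (each of $\lceil R/r_0\rceil$ levels offering $\leq P_0$ choices plus a terminal ball of $\leq P_0$ points) would naively give something like $P_0\cdot P_0^{\lceil R/r_0\rceil}$; landing on $P_0(1+P_0)^{R/r_0-1}$ requires setting up the recursion more carefully than the sketch indicates.
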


\noindent In particular,  for a geodesically complete, CAT$(0)$ space $X$ which is $(P_0, r_0)$-packed, the assumptions {\em complete} and {\em proper} are interchangeable.

\subsection{Topological groups}
We recall now some terminology about topological groups. A topological group is a group $G$ endowed with a topology for which the operation and the inverse are continuous. Every topological group we will consider will be locally compact and $\sigma$-compact, in particular second-countable. Any such group $G$ admits a left invariant Haar measure, as well as a right invariant Haar measure. By definition they are Radon measures on the Borel $\sigma$-algebra of $G$ that are preserved by the multiplication on the left (resp. on the right). The next lemma characterizes Haar measures.
\begin{lemma}
	\label{lemma-Haar-measure}
	Let $G$ be a locally compact, second countable group. Then a left (resp. right) invariant measure on the Borel $\sigma$-algebra of $G$ is a left (resp. right) invariant Haar measure if and only if it is finite on compact subsets and positive on open subsets.
\end{lemma}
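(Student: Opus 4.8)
The plan is to prove the two implications for left-invariant measures; the right-invariant statement then follows by applying the left-invariant one to the pushforward $\check{\mu}(A):=\mu(A^{-1})$ of $\mu$ under the inversion homeomorphism $g\mapsto g^{-1}$, which converts right-invariance of $\mu$ into left-invariance of $\check{\mu}$ and preserves being finite on compact sets, positive on open sets, and Radon (inversion being a homeomorphism of $G$). Throughout I use that a locally compact, Hausdorff, second countable space is metrizable (Urysohn) and $\sigma$-compact, hence in particular Lindelöf.

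The heart of the argument is a soft dichotomy for any left-invariant Borel measure $\mu$ on $G$: either $\mu(V)=0$ for every nonempty open $V$, or $\mu(V)>0$ for every nonempty open $V$. Indeed, suppose $\mu(V)=0$ for some nonempty open $V$ and fix $v\in V$. For every $g\in G$ we have $g=(gv^{-1})v\in (gv^{-1})V$, so $\{hV:h\in G\}$ is an open cover of $G$; extract a countable subcover $\{h_kV\}_{k\in\mathbb{N}}$ by the Lindelöf property. Left-invariance gives $\mu(h_kV)=\mu(V)=0$, whence $\mu(G)\le\sum_k\mu(h_kV)=0$, so $\mu\equiv 0$ and in particular $\mu$ vanishes on all open sets. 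Combining this dichotomy with plain monotonicity of $\mu$ one obtains, for left-invariant $\mu$, the equivalences
$$\mu\not\equiv 0\ \Longleftrightarrow\ \mu(G)>0\ \Longleftrightarrow\ \mu(V)>0\ \text{for every nonempty open }V\subseteq G.$$

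Granting this, the ``only if'' direction is immediate: a left Haar measure is, by convention, a nonzero left-invariant Radon measure, hence it is finite on compact sets (being Radon) and, being nonzero and left-invariant, positive on every nonempty open set by the equivalences above. For the ``if'' direction, let $\mu$ be left-invariant, finite on compact sets, and positive on nonempty open sets; in particular $\mu(G)>0$, so $\mu\not\equiv 0$. It remains to verify that $\mu$ is Radon, i.e. that besides being finite on compact sets it is outer regular on Borel sets and inner regular on open sets. This is the only genuinely measure-theoretic point, and it holds automatically in our setting: a Borel measure on a $\sigma$-compact metric space that is finite on compact sets is $\sigma$-finite and automatically inner and outer regular, as one sees by restricting to the pieces of a compact exhaustion $G=\bigcup_n K_n$ (with $K_n\subseteq\mathrm{int}(K_{n+1})$) and invoking the regularity of finite Borel measures on metric spaces. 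Thus $\mu$ is a nonzero left-invariant Radon measure, that is, a left Haar measure.

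I do not expect a serious obstacle here: the covering/dichotomy argument is elementary and uses only left-invariance together with second countability, while the automatic regularity of locally finite Borel measures on second countable, locally compact, Hausdorff spaces is a standard fact (and is precisely why ``Radon'' and ``Borel and finite on compacta'' may be used interchangeably in this context). The one point to keep straight is the convention that a Haar measure is nonzero --- which is exactly what makes ``positive on open sets'' the natural companion to ``finite on compacta''.
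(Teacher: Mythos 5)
Your proof is correct and follows essentially the same route as the paper: the hard (``if'') direction reduces in both treatments to the automatic regularity of Borel measures that are finite on compacta on a locally compact, second countable Hausdorff space, so that together with the positivity hypothesis one gets a nonzero invariant Radon measure, i.e. a Haar measure. The paper simply cites Rudin's Theorem~2.18 (after observing that second countability makes every open set $\sigma$-compact), whereas you re-derive the same regularity fact via a compact exhaustion; you also spell out the covering/Lindelöf dichotomy showing that a nonzero left-invariant measure is positive on every nonempty open set, which the paper compresses into a single clause (``by left invariance'') for the ``only if'' direction.
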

\begin{proof}
	The Haar measure of every open set is positive by left (resp. right) invariance. Viceversa since $G$ is locally compact and second countable every open set is $\sigma$-compact. Therefore by \cite[Theorem 2.18]{Rud63} every left (resp. right) invariant measure which is finite on compact subsets and positive on open subsets is Radon, and so a Haar measure.
\end{proof}
A topological group is called \emph{unimodular} if it admits a \emph{left and right invariant} Haar measure. For instance every discrete group is unimodular.\\
The connected component of the identity of a topological group $G$ will be denoted by $G^o$. It is always a normal (actually characteristic), closed subgroup of $G$. The quotient group $G/G^o$ endowed with the quotient topology is still locally compact and totally disconnected as a topological space. Therefore a topological group $G$ is totally disconnected if and only if $G^o = \lbrace 1 \rbrace$.\\
A topological group $G$ is \emph{compactly generated} if there exists a compact subset $S\subseteq G$ such that $\langle S \rangle = G$. The notation $\langle S \rangle$ stays for the group generated by $S$. For instance a discrete group is compactly generated if and only if it is finitely generated. 

\subsection{Isometry groups}
\label{subsection-isometries}
For us the main source of topological groups is the full isometry group of a proper metric space.
Let $\text{Isom}(X)$ be the group of isometries of $X$, endowed with the compact-open topology: as $X$ is proper, it is a topological, locally compact, $\sigma$-compact  group. 
\noindent Let $G$ be a closed subgroup of Isom$(X)$. For $x\in X$ and $r,R\geq 0$ we set 
\begin{equation}	
	\label{defsigma} 	
	\begin{aligned}
			\overline{S}_r(x, R, X) &:= \lbrace g\in G \text{ s.t. } d(y,gy) \leq r \text{ for all } y \in \overline{B}(x,R)\rbrace,\\
			S_r(x, R, X) &:= \lbrace g\in G \text{ s.t. } d(y,gy) < r \text{ for all } y \in \overline{B}(x,R)\rbrace,\\
			\overline{G}_r(x, R, X) &:= \langle \overline{S}_r(x, R, X) \rangle, \quad	G_r(x, R, X) := \langle S_r(x, R, X) \rangle.
	\end{aligned}
\end{equation}
For $R=0$ we simply write $\overline{S}_r(x, 0, X) =: \overline{S}_r(x, X)$ and similarly for the others. When $r = 0$ we use the notation $\overline{S}_0(x, R, X) =: \textup{Stab}_G(x,R)$: it is the pointwise stabilizer of the ball $\overline{B}(x,R)$. In the same way when both $R = r = 0$ we use tha notation $\overline{S}_0(x, 0, X) =: \textup{Stab}_G(x)$: it is the stabilizer of the point $x$. When the context is clear we will omit to write the dependence on the metric space $X$. The sets $\overline{S}_r(x, R)$ are compact by Ascolì-Arzelà Theorem, since $X$ is proper. In particular the groups $\overline{G}_r(x, R)$ are compactly generated. Instead the sets ${S}_r(x, R)$ are open. It follows from the definition that the sets $\lbrace S_r(x, R) \rbrace_{r,R > 0}$ form a neighbourhood basis of the identity.
\vspace{1mm}

\noindent Let $x\in X$ be a fixed base point. The left and right invariant $L^\infty$-pseudometrics at scale $R$ of center $x$ are by definition
\begin{equation}
	\label{eq-defin-L-infty-pseuodidistance}
		d_{x,R}^\ell(g,h) := \max_{y\in \overline{B}(x,R)} d(gy,hy), \quad d_{x,R}^\textup{r}(g,h) := \max_{y\in \overline{B}(x,R)} d(g^{-1}y,h^{-1}y).
\end{equation}
As the name suggests they are left (resp. right) invariant pseudometrics on $\text{Isom}(X)$. The expressions
\begin{equation}
	\label{eq-defin-metric-group}
	d_{x}^\ell (g,h) := \inf_{R > 0} \left\lbrace \frac{1}{R} + d_{x,R}^\ell(g,h) \right\rbrace; \quad d_{x}^\textup{r} (g,h) := \inf_{R > 0} \left\lbrace \frac{1}{R} + d_{x,R}^\textup{r}(g,h) \right \rbrace
\end{equation}
define proper, left (resp. right) invariant metrics on $\text{Isom}(X)$ inducing the compact-open topology (cp. \cite[§2.5]{SZ22}). This follows also from the next result whose proof is a direct consequence of the definitions. The closed ball of center $g$ and radius $r$ with respect to the metric $d_{x}^\ell$ (resp. $d_{x}^\text{r}$) will be denoted by $\overline{B}_{x}^\ell(g,r)$ (resp. $\overline{B}_{x}^\text{r}(g,r)$).
\begin{lemma}
	\label{lemma-comparison-metric-group}
	Let $X$ be a proper metric space, $G< \textup{Isom}(X)$ be a closed subgroup, $x\in X$ and $r>0$. Then 
	$$\overline{S}_{\frac{r}{2}}\left(x, \frac{2}{r}\right) \subseteq \overline{B}_{x}^\ell(\id,r), \overline{B}_{x}^\textup{r}(\id,r) \subseteq \overline{S}_r\left(x, \frac{1}{r}\right).$$
\end{lemma}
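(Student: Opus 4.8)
The statement is a pair of chained inclusions between the almost-stabilizers $\overline{S}_\bullet(x,\bullet)$ and metric balls for $d_x^\ell$ and $d_x^\textup{r}$, and I expect it to follow by unwinding the definitions \eqref{eq-defin-L-infty-pseuodidistance}--\eqref{eq-defin-metric-group}. The only thing that needs a bit of care is feeding the infimum in \eqref{eq-defin-metric-group} the correct radius $R$, and — for the second inclusion — noticing that small radii are irrelevant to that infimum.

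For the first inclusion $\overline{S}_{r/2}(x,2/r)\subseteq\overline{B}_x^\ell(\id,r)\cap\overline{B}_x^\textup{r}(\id,r)$: if $g$ displaces every point of $\overline{B}(x,2/r)$ by at most $r/2$, then $d_{x,2/r}^\ell(g,\id)\le r/2$ straight from the definition, and the same bound holds for $d_{x,2/r}^\textup{r}(g,\id)$ because $d(g^{-1}y,y)=d(y,gy)$ (apply the isometry $g$). Plugging $R=2/r$ into \eqref{eq-defin-metric-group} then gives $d_x^\ell(g,\id)\le\frac{r}{2}+\frac{r}{2}=r$ and, identically, $d_x^\textup{r}(g,\id)\le r$, so $g$ lies in both balls.

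For the second inclusion $\overline{B}_x^\ell(\id,r),\overline{B}_x^\textup{r}(\id,r)\subseteq\overline{S}_r(x,1/r)$: take $g$ with $d_x^\ell(g,\id)\le r$ (the $d_x^\textup{r}$ case is identical) and a point $y\in\overline{B}(x,1/r)$, so $d(x,y)\le 1/r$; I must show $d(gy,y)\le r$. The plan is to observe that every radius $R<d(x,y)$ satisfies $\frac1R+d_{x,R}^\ell(g,\id)\ge\frac1R>\frac{1}{d(x,y)}\ge r\ge d_x^\ell(g,\id)$, so these radii do not contribute to the infimum in \eqref{eq-defin-metric-group} and hence $d_x^\ell(g,\id)=\inf_{R\ge d(x,y)}\bigl(\frac1R+d_{x,R}^\ell(g,\id)\bigr)$. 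Since $y\in\overline{B}(x,R)$ for every $R\ge d(x,y)$, we get $d(gy,y)\le d_{x,R}^\ell(g,\id)\le\frac1R+d_{x,R}^\ell(g,\id)$ for all such $R$; taking the infimum over $R\ge d(x,y)$ yields $d(gy,y)\le d_x^\ell(g,\id)\le r$.

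The only mildly delicate point is the degenerate case $d(x,y)=1/r$, where the strict inequality $\frac1R>r$ becomes an equality as $R\to(1/r)^-$. I would handle it by first proving $d(gy,y)\le r$ for $y$ in the \emph{open} ball $B(x,1/r)$ as above, and then extending the inequality to all of $\overline{B}(x,1/r)$ by continuity of $y\mapsto d(gy,y)$. I do not expect any genuine obstacle: this lemma is essentially a dictionary entry comparing the two standard ways of measuring how close an isometry is to the identity.
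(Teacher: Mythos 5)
The first inclusion is fine as you argue it: plug $R=2/r$ into the infimum \eqref{eq-defin-metric-group}, and use $d(g^{-1}y,y)=d(y,gy)$ to see that $d_{x,R}^{\textup{r}}(g,\id)=d_{x,R}^{\ell}(g,\id)$, so both metrics are handled at once.

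Your worry about the boundary case $d(x,y)=1/r$ in the second inclusion is exactly right, but the fix you propose — extend from the open ball to the closed ball by continuity of $y\mapsto d(gy,y)$ — implicitly uses that $B(x,1/r)$ is dense in $\overline{B}(x,1/r)$, and that is a feature of geodesic (or length) spaces, not of proper metric spaces in general. In the stated generality the inclusion $\overline{B}_x^\ell(\id,r)\subseteq\overline{S}_r(x,1/r)$ can genuinely fail at the boundary: take $r<\sqrt{2}$, let $X=\{x,y,z\}$ with $d(x,y)=d(x,z)=1/r$ and $d(y,z)=s$ for some $s\in(r,2/r]$, and let $g\in\textup{Isom}(X)$ fix $x$ and exchange $y,z$; then $d_{x,R}^{\ell}(g,\id)=0$ for $R<1/r$ and $=s$ for $R\ge 1/r$, so $d_x^{\ell}(g,\id)=\min(r,s)=r$ and $g\in\overline{B}_x^\ell(\id,r)$, while $d(y,gy)=s>r$ shows $g\notin\overline{S}_r(x,1/r)$. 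So the lemma as stated is slightly too strong, and the paper's remark that it is a ``direct consequence of the definitions'' glosses over exactly the point you noticed. That said, every place the paper actually invokes it (Propositions \ref{prop-convergence-group-metric}, \ref{prop-convergence-Haar} and Corollary \ref{cor-unimodularity-limit}) assumes the ambient spaces are geodesic, where your density argument is valid, so your proof is complete and correct in the regime that matters; alternatively one could relax the right-hand side to $\overline{S}_r(x,R)$ for any $R<1/r$, which your inner-ball argument proves directly without any continuity step.
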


\noindent A subgroup $G$ is \emph{totally disconnected} if it is totally disconnected as a subset of \textup{Isom}$(X)$ (with respect to the compact-open topology). 
A closed group $G < \textup{Isom}(X)$ is said to be {\em cocompact} if the quotient metric space $G \backslash X$ is compact; in this case, we call \emph{codiameter} of $G$ the diameter of the quotient, and we will say that $G$ is $D_0$-cocompact if it has codiameter at most $D_0$.
Notice that the codiameter of $G$ coincides with
\begin{equation}
	\label{eq-def-codiameter}
	\inf \lbrace r>0 \text{ s.t. } G \cdot \overline{B}(x,r) = X \,\,\,\,\forall x\in X\rbrace.
\end{equation}	
It is well-known that if $X$ is geodesic and $G < \textup{Isom}(X)$ is closed and $D_0$-cocompact, then  {\em the subset  $\overline{S}_{3D_0}(x)$ is a  generating set for $G$,} that is $\overline{G}_{3D_0}(x)=G$, for every $x\in X$. If the orbit $Gx$ is discrete then the same is true for $2D_0$ in place of $3D_0$.
\vspace{2mm}

\noindent The {\em translation length} of $g\in \text{Isom}(X)$ is by definition  $\ell(g) := \inf_{x\in X}d(x,gx).$ 
When the infimum is realized, the isometry $g$ is called {\em elliptic} if $\ell(g) = 0$ and {\em hyperbolic} otherwise. The {\em minimal set of $g$}, $\text{Min}(g)$, is defined as the subset of points of $X$ where $g$ realizes its translation length; notice that if $g$ is elliptic then $\text{Min}(g)$ is the subset of points fixed by $g$. An isometry is called \emph{semisimple} if it is either elliptic or hyperbolic; a subgroup $G$ of Isom$(X)$ is  called \emph{semisimple} if all of its elements are semisimple. The \emph{displacement function} $d_g \colon X \to X$ defined by $d_g(x) = d(x,gx)$ is convex (\cite[Proposition II.6.2]{BH09}). For $\lambda \geq 0$ we call the set $M_\lambda(g) := d_g^{-1}([0,\lambda])$ the \emph{$\lambda$-level set} of $g$. It is a closed, convex subset of $X$. For instance $M_{\ell(g)}(g) = \text{Min}(g)$.

	\noindent The {\em free-systole} of a group $G < \textup{Isom}(X)$ {\em at a point} $x\in X$ is
	$$\text{sys}^\diamond(G,x) := \inf_{g\in G \setminus G^\diamond} d(x,gx),$$
	where $G^\diamond $ is the subset of all elliptic isometries of $G$. The {\em free-systole of} $G$ is accordingly defined as 
	$$\text{sys}^\diamond(G,X) = \inf_{x\in X}\text{sys}^\diamond(G,x).$$
	Similarly,  the {\em free-diastole of} $G$ is defined as  	
	$$\text{dias}^\diamond(G,X) = \sup_{x\in X}\text{sys}^\diamond(G,x).$$	
	
	\noindent By definition, we have the trivial inequality
	$\text{sys}^\diamond(G,X) \leq  \text{dias}^\diamond(G,X)$.
	\noindent The following result, which extends \cite[Theorem 3.1]{CS23} to general groups acting on geodesically complete, CAT$(0)$-spaces, shows that the free systole and the free diastole are for small values quantitatively equivalent, provided one knows an a priori bound on the diameter of the quotient. Therefore it is equivalent to consider actions with small free-systole or small free-diastole. In the rest of the paper we will focus on the free-systole.
	
	\begin{prop}
		\label{prop-vol-sys-dias}
		Let $X$ be a proper, geodesically complete, $(P_0,r_0)$-packed, $\textup{CAT}(0)$-space and let $G<\textup{Isom}(X)$ be $D_0$-cocompact. Then
			$$\textup{sys}^\diamond(G, X) \geq 
			\left( 1+P_0 \right)e^{-\frac{(2D_0+1)}{\min\lbrace \textup{dias}^\diamond(G,X), r_0 \rbrace}  }.$$
	\end{prop}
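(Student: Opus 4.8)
We follow and extend the proof of \cite[Theorem 3.1]{CS23}, replacing the ordinary systole and diastole by the free ones so as to allow torsion and non-discreteness. First I would record two elementary properties of the function $x\mapsto \textup{sys}^\diamond(G,x)$: it is $2$-Lipschitz, since each displacement function $d_g$ is $2$-Lipschitz and the free systole is an infimum of such functions; and it is $G$-invariant, since conjugation permutes the non-elliptic elements of $G$. Hence it descends to a $2$-Lipschitz function on the compact quotient $G\backslash X$, whose diameter is at most $D_0$. If $G$ has no non-elliptic element then $\textup{sys}^\diamond(G,X)=+\infty$ and there is nothing to prove, and if $\textup{sys}^\diamond(G,X)\ge 1+P_0$ the inequality is immediate; so I would assume otherwise, fix $\varepsilon>0$, and choose a point $x_0$ together with a non-elliptic $g_0\in G$ with $\sigma_0:=d(x_0,g_0x_0)\le \textup{sys}^\diamond(G,X)+\varepsilon$, and, using $G$-invariance of $\textup{sys}^\diamond(G,\cdot)$ and $D_0$-cocompactness, a point $p$ with $d(x_0,p)\le D_0$ at which every non-elliptic element of $G$ moves $p$ by at least $\delta':=\textup{dias}^\diamond(G,X)-\varepsilon$. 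The statement then reduces to bounding $\delta'$ from above in terms of $\sigma_0,P_0,r_0,D_0$, followed by $\varepsilon\to 0$.

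Next I would extract the geometry of $g_0$. A non-elliptic isometry has no fixed point, hence by the Bruhat--Tits fixed point theorem (using properness) it preserves no bounded subset; thus for every $\lambda\ge\sigma_0$ the level set $M_\lambda(g_0)=d_{g_0}^{-1}([0,\lambda])$ is a non-empty, closed, convex, unbounded, $g_0$-invariant subset containing $x_0$, and, being unbounded in a proper space, it contains a geodesic ray issuing from $x_0$. Moreover every power $g_0^n$ with $n\ne 0$ is again non-elliptic: an elliptic power would fix a point of the $g_0$-invariant convex set $\textup{Fix}(g_0^n)$, on which $g_0$ acts with finite order and hence (Bruhat--Tits again) has a fixed point, contradicting that $g_0$ is non-elliptic. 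Consequently the orbit points $g_0^k x_0$ all lie in the $\sigma_0$-level set of $g_0$, they are pairwise at distance at least $\textup{sys}^\diamond(G,X)$, and $g_0^k x_0\in\overline{B}(x_0,k\sigma_0)$.

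The core of the argument is then a packing estimate, carried out exactly as in \cite[Theorem 3.1]{CS23}: using $g_0$, its powers, its $G$-conjugates and the $D_0$-cocompactness of $G$, one produces a family of points lying in a ball of radius of the order of $2D_0+1$, pairwise separated by a definite fraction of $\min\{\delta',r_0\}$, and of cardinality that grows (like $1/\sigma_0$) as $\sigma_0\to 0$; comparing this lower bound on the cardinality with the packing upper bound $\textup{Pack}(R,r)\le P_0(1+P_0)^{R/\min\{r,r_0\}-1}$ of Proposition \ref{prop-packing} and taking logarithms yields an inequality of the shape $\sigma_0\ge (1+P_0)e^{-(2D_0+1)/\min\{\delta',r_0\}}$, whence the claim as $\varepsilon\to 0$. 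The main obstacle is precisely this last step: the $g_0$-orbit of a single point is one-dimensional, so packing applied to it alone is never contradictory, and the role of cocompactness is to ``spread'' the small-displacement region of $g_0$ across $X$ so as to obtain a configuration of the right size inside a ball whose radius does not shrink with $\sigma_0$. Verifying that the argument of \cite[Theorem 3.1]{CS23} applies here amounts to checking that discreteness and torsion-freeness were used there only through the two facts isolated in the previous paragraph, namely that powers and conjugates of non-elliptic isometries are non-elliptic.
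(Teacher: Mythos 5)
Your proposal is correct and follows essentially the same route as the paper: both reduce to the packing estimate underlying \cite[Theorem 3.1]{CS23} (encoded in the paper as Proposition \ref{lemma-Sylvain}, which is already stated for non-elliptic isometries) plus the cocompactness step that conjugates a small power of the small-displacement isometry onto the point realizing the diastole. You are a bit more explicit than the paper about why the argument carries over — namely that powers and conjugates of non-elliptic isometries of a complete \textup{CAT}$(0)$-space remain non-elliptic — which is precisely the point the paper glosses over when it writes ``hyperbolic'' where ``non-elliptic'' is meant.
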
	
	
	\noindent The proof is the same of \cite[Theorem 3.1]{CS23}, we report it for completeness. It is based on the following important fact we will use also later.
	
	\begin{prop}[\textup{\cite[Proposition 3.3]{CS23}, \cite[Proposition 4.5]{CavS20bis}}]
		\label{lemma-Sylvain}
		Let $P_0,r_0,R> 0$ and $0<\varepsilon \leq r_0$. Then there exists $\delta(P_0,r_0, R, \varepsilon)  > 0$ with the following property. Let $X$ be a proper, geodesically complete, $(P_0,r_0)$-packed, \textup{CAT}$(0)$-space. If $g$ is a non-elliptic isometry of $X$ and $x$ is a point of $X$ such that $d(x,gx) \leq \delta(P_0,r_0, R, \varepsilon)$, then for every $y\in X$ with $d(x,y)\leq R$ there exists $m\in \mathbb{Z}^*$ such that $d(y,g^m y) \leq \varepsilon$.\\
		(We can choose, explicitely,  $\delta(P_0,r_0, R, \varepsilon)= (1+P_0)e^{-\frac{(2R+1)}{\varepsilon}}$).
	\end{prop}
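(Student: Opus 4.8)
The plan is to argue by contradiction: a single point with small free-systole will be converted, via Proposition~\ref{lemma-Sylvain}, into a uniformly small free-systole at \emph{every} point of $X$, which contradicts a lower bound on the free-diastole. First I would dispose of the degenerate case $\textup{dias}^\diamond(G,X)=0$, in which the right-hand side of the inequality is $0$ and the trivial bound $\textup{sys}^\diamond(G,X)\le\textup{dias}^\diamond(G,X)=0$ finishes it. So from now on assume $\textup{dias}^\diamond(G,X)>0$.

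The heart of the matter is the following claim: \emph{for every $\varepsilon$ with $0<\varepsilon\le r_0$ and $\varepsilon<\textup{dias}^\diamond(G,X)$ one has $\textup{sys}^\diamond(G,X)\ge\delta(P_0,r_0,D_0,\varepsilon)=(1+P_0)e^{-(2D_0+1)/\varepsilon}$}, where $\delta$ is the function of Proposition~\ref{lemma-Sylvain} evaluated at $R=D_0$. Suppose the claim fails for some such $\varepsilon$; then there exist $x\in X$ and a non-elliptic $g\in G\setminus G^\diamond$ with $d(x,gx)<\delta(P_0,r_0,D_0,\varepsilon)$. Fix an arbitrary $z\in X$. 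Since $G$ is closed the orbit $Gx$ is closed, so $D_0$-cocompactness provides $h\in G$ with $d(h^{-1}z,x)\le D_0$; set $y:=h^{-1}z$. Proposition~\ref{lemma-Sylvain} applied with $R=D_0$ and this $\varepsilon\le r_0$ yields $m\in\mathbb{Z}^*$ with $d(y,g^my)\le\varepsilon$. A nonzero power of a non-elliptic isometry is again non-elliptic: if $g^m$ fixed a point $p$, the finite $g$-invariant set $\{g^ip: 0\le i<|m|\}$ would have a circumcenter, which $g$ fixes, forcing $g$ to be elliptic. Hence $g^m\in G\setminus G^\diamond$ and $\textup{sys}^\diamond(G,y)\le d(y,g^my)\le\varepsilon$. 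Since conjugation preserves the set of elliptic isometries, the function $w\mapsto\textup{sys}^\diamond(G,w)$ is $G$-invariant, so $\textup{sys}^\diamond(G,z)=\textup{sys}^\diamond(G,y)\le\varepsilon$. As $z$ was arbitrary, $\textup{dias}^\diamond(G,X)\le\varepsilon$, contradicting $\varepsilon<\textup{dias}^\diamond(G,X)$; this proves the claim.

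To conclude, I would let $\varepsilon$ increase to $\min\{\textup{dias}^\diamond(G,X),r_0\}$: every such $\varepsilon$ satisfies the hypotheses of the claim, so $\textup{sys}^\diamond(G,X)\ge(1+P_0)e^{-(2D_0+1)/\varepsilon}$ holds for all of them, and letting $\varepsilon\to\min\{\textup{dias}^\diamond(G,X),r_0\}$, continuity of $t\mapsto(1+P_0)e^{-(2D_0+1)/t}$ gives the asserted estimate (the case $\textup{dias}^\diamond(G,X)=+\infty$ being included, as then the limiting value of $\varepsilon$ is $r_0$). The argument is short, being essentially one invocation of Proposition~\ref{lemma-Sylvain}; the only points requiring care are checking that powers of non-elliptic isometries stay non-elliptic and that the free-systole is a $G$-invariant function of the base point (so that the estimate obtained at $y$ transfers to the arbitrary point $z$), and handling the boundary value of $\varepsilon$ through a limit, which is genuinely needed because Proposition~\ref{lemma-Sylvain} is only available for $\varepsilon\le r_0$ and its non-strict conclusion does not permit plugging in $\varepsilon=\textup{dias}^\diamond(G,X)$ directly.
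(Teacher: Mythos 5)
You did not prove the stated result. Your argument establishes Proposition~\ref{prop-vol-sys-dias} (the free-systole versus free-diastole estimate stated just before this one), and it does so by \emph{invoking} Proposition~\ref{lemma-Sylvain} as a black box. Since the statement under review appears as an ingredient in your own argument, the proposal is circular relative to the task: nothing about Proposition~\ref{lemma-Sylvain} itself has been demonstrated.

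Note what Proposition~\ref{lemma-Sylvain} actually asserts: it is a local, single-isometry statement about one non-elliptic $g$ acting on a packed, geodesically complete \textup{CAT}$(0)$-space, with no group action, cocompactness, unimodularity, or systole/diastole in sight. In the present paper it is not proved at all; it is quoted from \cite[Proposition 3.3]{CS23} and \cite[Proposition 4.5]{CavS20bis}. A genuine proof has to use the packing hypothesis. The rough shape is a pigeonhole count: the iterates satisfy $d(x,g^ix)\le i\,d(x,gx)\le i\delta$, so taking $\delta$ so small that for $0\le i\le N$ (with $N$ roughly $1/\delta$) all the points $g^iy$ stay inside $\overline{B}(y,2R+1)$, and then invoking the bound on $\textup{Pack}(2R+1,\varepsilon/2)$ from Proposition~\ref{prop-packing}(i), one forces $d(g^iy,g^jy)\le\varepsilon$ for some $i<j$, whence $d(y,g^{j-i}y)\le\varepsilon$; non-ellipticity of $g$ guarantees $g^{j-i}\ne\textup{id}$, since a finite-order isometry of a complete \textup{CAT}$(0)$-space fixes the circumcenter of any orbit and would be elliptic. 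The explicit constant $\delta=(1+P_0)e^{-(2R+1)/\varepsilon}$ comes from carrying out this count carefully in \cite{CavS20bis}.

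For what it is worth, as a proof of Proposition~\ref{prop-vol-sys-dias} your argument is correct and essentially identical to the one the paper gives; the two small additions you make --- phrasing everything in terms of non-elliptic rather than merely hyperbolic isometries, which is what the definition of $\textup{sys}^\diamond$ actually requires and covers possible parabolics, and explicitly checking via the circumcenter argument that nonzero powers of non-elliptic isometries are again non-elliptic --- are reasonable refinements. But they belong to a different proposition than the one you were asked to prove.
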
	
	
	\begin{proof}[Proof of Proposition 	\ref{prop-vol-sys-dias}]
		Assume that 
		$\min\lbrace \text{dias}^\diamond (G ,  X), r_0 \rbrace > \varepsilon$. 
		By definition there exists $x_0 \in X$ such that for every hyperbolic isometry $g \in G$ one has  $d(x_0,gx_0) >\varepsilon$. 
		Now, if sys$^\diamond (G, X) \leq (1+P_0)e^{-\frac{(2D_0+1)}{\varepsilon}} =: \delta$, we could find $x \in X$ and a hyperbolic $g \in G$ such that $d(x,gx) \leq \delta$. 
		By Proposition \ref{lemma-Sylvain}, for every $y \in \overline{B}(x,D_0)$ there would exists a non trivial power $g^m$ satisfying $d(y,g^my) \leq \varepsilon$. 
		But then, since the action is $D_0$-cocompact we could find a conjugate $\gamma$ of $g^m$ (thus,  a hyperbolic isometry) such that $ d(x_0,\gamma x_0)\leq \varepsilon$, a contradiction. The conclusion follows by the arbitrariness of $\varepsilon$.
	\end{proof}
	
\subsection{Totally disconnected, cocompact groups of CAT$(0)$-spaces}
We collect here some important properties of cocompact, totally disconnected groups acting on a proper, geodesically complete, CAT$(0)$-space that we will use extensively along the paper. We suggest the reader to consult also \cite{Cap09} and \cite{CM09b}.

\begin{theo}
	\label{theo-characterization-td}
	Let $X$ be a proper, geodesically complete, \textup{CAT}$(0)$-space and let $G$ be a closed, cocompact group of isometries of $X$. Then the following are equivalent:
	\begin{itemize}
		\item[(i)] $G$ is totally disconnected;
		\item[(ii)] $G$ is semisimple and $\textup{sys}^\diamond(G,X) > 0$;
		\item[(iii)] the orbit $Gx$ is discrete  for every $x\in X$.
		\item[(iv)] the orbit $Gx$ is discrete for one point $x\in X$.
	\end{itemize}
	Moreover if any of the conditions above hold then:
	\begin{itemize}
		\item[(a)] the sets $\textup{Stab}_G(x,R)$ are open for every $x\in X$ and every $R \geq 0$. They form a fundamental system of neighbourhoods of $\lbrace \textup{id} \rbrace$.
		\item[(b)] The sets $\overline{S}_r(x, R)$ are open for every $x\in X$ and every $r, R\geq 0$. 
		\item[(c)] The set $\lbrace gx \text{ s.t. } g\in K\rbrace$ is finite for every $x\in X$ and every precompact $K\subseteq G$.
	\end{itemize}
\end{theo}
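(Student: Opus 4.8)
The plan is to prove the implications $(iii)\Rightarrow(i)$, $(iv)\Rightarrow(i)$, $(i)\Rightarrow(iii)$, $(iii)\Rightarrow(ii)$ and $(ii)\Rightarrow(i)$ (the remaining one, $(iii)\Rightarrow(iv)$, being trivial), and then to read off (a)--(c). I would dispatch the soft implications first. For $(iii)\Rightarrow(i)$: if $Gx$ is discrete, choose $\varepsilon>0$ with $\overline{B}(x,\varepsilon)\cap Gx=\{x\}$; then the open set $S_\varepsilon(x)$ is contained in $\mathrm{Stab}_G(x)$, so the latter is an open subgroup, and since a connected group meeting an open subgroup is contained in it, $G^o\subseteq\bigcap_{x\in X}\mathrm{Stab}_G(x)=\{\id\}$. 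As a warm-up one also sees $(i)\Rightarrow(iv)$ at once: by van Dantzig a totally disconnected locally compact group has a compact open subgroup $U$, which fixes a point $p$ by the Bruhat--Tits (Cartan) fixed point theorem, so $\mathrm{Stab}_G(p)\supseteq U$ is open and $Gp\cong G/\mathrm{Stab}_G(p)$ is discrete; the real work will be to upgrade this to the openness of \emph{all} stabilizers.

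Next I would prove $(iv)\Rightarrow(i)$ directly — the first point where geodesic completeness is used. If $\mathrm{Stab}_G(x_0)$ is open it is also compact (Arzelà--Ascoli), so $G^o$ is a compact connected subgroup fixing $x_0$, and hence $F:=\mathrm{Fix}_X(G^o)$ is nonempty, closed, convex and $G$-invariant (because $G^o\trianglelefteq G$). The function $d_F:=d(\cdot,F)$ is convex, $1$-Lipschitz and $G$-invariant (since $gF=F$) and it vanishes at $x_0$, hence on the $D_0$-dense orbit $Gx_0$, so $d_F\leq D_0$ everywhere; but a bounded convex function on a geodesically complete CAT$(0)$-space is constant (restrict it to a geodesic line through any prescribed pair of points), so $d_F\equiv0$, $F=X$, and $G^o=\{\id\}$. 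This convexity trick will be reused below.

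The main obstacle is $(i)\Rightarrow(iii)$, equivalently the assertion (a) that every $\mathrm{Stab}_G(x,R)$ is open — that is, that a totally disconnected cocompact action on a proper geodesically complete CAT$(0)$-space is \emph{smooth}; geodesic completeness is essential here (Example \ref{ex-td-non-discrete-orbits}). My plan is to import the structure theory of such actions from \cite{Cap09} and \cite{CM09b}: $X$ splits, de Rham-style, into a Euclidean factor and irreducible non-flat factors with $G$ respecting the splitting up to finite index; on the Euclidean factor a totally disconnected cocompact group is virtually $\mathbb{Z}^n$, hence discrete; on each non-flat irreducible factor the isometry group is either ``of Lie type'', forcing $G$ to be a discrete cocompact subgroup, or ``of totally disconnected type'', where openness of point stabilizers is one of Caprace's theorems; reassembling the factors gives openness of all $\mathrm{Stab}_G(x,R)$. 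This importation is what I expect to be the technical heart. Granted it, the rest of (a) follows since $\mathrm{Stab}_G(x,R)\subseteq S_r(x,R)$, the $S_r(x,R)$ form a neighbourhood basis of $\id$, and $\bigcap_{x,R}\mathrm{Stab}_G(x,R)=\{\id\}$; then (c) is immediate because $\overline{K}x\subseteq Gx$ is a compact subset of a discrete set, hence finite; and (b) because $g\,\mathrm{Stab}_G(x,R)\subseteq\overline{S}_r(x,R)$ for every $g\in\overline{S}_r(x,R)$, so $\overline{S}_r(x,R)$ is a union of cosets of an open subgroup.

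Finally the role of $(ii)$. For $(iii)\Rightarrow(ii)$, with $(iii)$ — hence $(i)$ and (a) — in hand: pick $x_0$ and $\varepsilon_0>0$ with $\overline{B}(x_0,\varepsilon_0)\cap Gx_0=\{x_0\}$; any $g$ with $d(x_0,gx_0)<\varepsilon_0$ then fixes $x_0$ and so is elliptic, whence $\mathrm{dias}^\diamond(G,X)\geq\varepsilon_0>0$ and therefore $\mathrm{sys}^\diamond(G,X)>0$ by Proposition \ref{prop-vol-sys-dias}; semisimplicity (absence of parabolics) is, once more, part of the structure theory cited above. For $(ii)\Rightarrow(i)$: the systole bound makes $V:=S_{\varepsilon_1}(x_0)$ with $\varepsilon_1:=\mathrm{sys}^\diamond(G,X)$ a neighbourhood of $\id$ consisting of elliptic isometries (a non-elliptic isometry moves every point by at least $\varepsilon_1$), so every one-parameter subgroup of $G^o$ consists of elliptic isometries ($\phi(s)=\phi(s/n)^n$ with $\phi(s/n)\in V$ for large $n$, and powers of elliptic isometries are elliptic); granting (again via \cite{Cap09,CM09b}) that $G^o$ then has a bounded orbit, $G^o$ fixes a point and the convexity argument of the second paragraph forces $\mathrm{Fix}(G^o)=X$, i.e.\ $G^o=\{\id\}$. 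Everything outside the two appeals to the structure theory is routine CAT$(0)$- and topological-group bookkeeping.
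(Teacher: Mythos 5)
Your $(iii)\Rightarrow(i)$ and $(iv)\Rightarrow(i)$ are correct; the bounded-convex-function argument in $(iv)\Rightarrow(i)$ is a genuine alternative to the paper's, which uses minimality (\cite[Proposition 1.5]{CM09b}) to get density of the set of points with open stabilizer and then builds a neighbourhood basis of compact open subgroups out of finite nets in that set, whereas you pass directly to triviality of $G^o$ via convexity of $d(\cdot,\textup{Fix}(G^o))$. For $(i)\Rightarrow(iii)$ and (a), the paper simply cites \cite[Theorem 6.1]{CM09b}, which is exactly what is needed; your planned de Rham splitting re-derives a cited theorem and adds reassembly steps you have not verified. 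Your deductions of (b) and (c), and your $(iii)\Rightarrow(ii)$ via Proposition \ref{prop-vol-sys-dias}, are fine (the paper instead cites \cite[Corollary 6.3]{CM09b} for all of $(i)\Rightarrow(ii)$, which covers semisimplicity as well).

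The genuine gap is in $(ii)\Rightarrow(i)$. You correctly produce a neighbourhood of $\id$ consisting of elliptics and deduce that all one-parameter subgroups of $G^o$ are elliptic, but the step ``granting \ldots that $G^o$ then has a bounded orbit'' is where all the work hides, not a citation: products of elliptics need not be elliptic, and $G^o$ is not a priori generated by one-parameter subgroups. To fill this you would need the Caprace--Monod description of $G^o$ as $\mathbb{R}^n\times S$ with $S$ adjoint semisimple without compact factors, together with the observation that each nontrivial such factor carries a hyperbolic one-parameter subgroup. The paper's argument is lighter and essentially self-contained given Lemma \ref{prop-connected-component}: it shows $G^\diamond$ is not only open but \emph{closed}, via an ultralimit argument (for elliptics $g_j$ converging to a hyperbolic $g$ and $s:=\textup{sys}^\diamond(G,X)$, project $x\in\textup{Min}(g)$ to $z_j\in M_{s/4}(g_j)$, take $y_j\in[z_j,x]$ where the displacement of $g_j$ equals $s/2$, show $d(z_j,y_j)\to\infty$ by convexity, re-base at $y_j$ via Lemma \ref{lemma-ultralimit-cocompact}, and obtain in the ultralimit a hyperbolic element of translation length in $[s/4,s/2]$, a contradiction). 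Connectedness then gives $G^o\subseteq G^\diamond$, and Lemma \ref{prop-connected-component} forces $G^o=\{\id\}$ because an infinite cyclic group generated by an elliptic lies in a compact stabilizer and hence cannot be discrete. Either supply the structure-theoretic input you are granting, or replace your one-parameter argument with the open-and-closed one.
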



We need the following fact.
\begin{lemma}
	\label{prop-connected-component}
	Let $X$ be a geodesically complete, \textup{CAT}$(0)$-space and let $G < \textup{Isom}(X)$ be a closed, cocompact group. Then $G^o$ is either the identity or it contains an infinite cyclic discrete subgroup.
\end{lemma}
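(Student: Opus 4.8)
The approach I would take is to distinguish according to whether the connected group $G^o$ is compact: the compact case turns out to be empty, and the non‑compact case supplies the cyclic subgroup. So suppose first that $G^o$ is compact. Since $X$ is proper it is complete, hence $G^o$ has a fixed point by the Bruhat--Tits fixed point theorem, and $Y:=\textup{Fix}(G^o)$ is a non‑empty closed convex subset of $X$. As $G^o$ is normal in $G$ we have $gY=\textup{Fix}(gG^og^{-1})=Y$ for all $g\in G$, so $Y$ is $G$‑invariant; consequently $f:=d(\cdot,Y)\colon X\to[0,+\infty)$ is continuous, convex and $G$‑invariant. Being $G$‑invariant it factors through the compact quotient $G\backslash X$ (here I use cocompactness), hence is bounded; being convex it is constant along every geodesic line; and by geodesic completeness every geodesic segment $[y,x]$ with $y\in Y$ extends to a geodesic line, along which $f$ is therefore constantly $f(y)=0$. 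Thus $f\equiv 0$, i.e.\ $Y=X$, so $G^o$ fixes $X$ pointwise and $G^o=\{\id\}$ since $G\leq\textup{Isom}(X)$ acts faithfully. This settles the lemma when $G^o$ is compact.

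Now suppose $G^o$ is not compact; I want to produce the infinite cyclic discrete subgroup. Then $G^o$ is a non‑trivial, non‑compact, connected, locally compact group, so by the Gleason--Yamabe theorem (see, e.g., \cite{BGT11}) it has a compact normal subgroup $K$ with $L:=G^o/K$ a connected Lie group, and $L$ is non‑compact, for otherwise $G^o$ would be an extension of a compact group by a compact group, hence compact. A non‑compact connected Lie group contains a closed subgroup isomorphic to $\mathbb{R}$ (otherwise every one‑parameter subgroup would have relatively compact image, which forces the group to be compact), hence an infinite discrete cyclic subgroup $\langle\bar g\rangle$. I would then lift: pick $g\in G^o$ mapping to $\bar g$ and set $M:=\overline{\langle g\rangle}\leq G^o$. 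Its image in $L$ is the closed subgroup $\langle\bar g\rangle\cong\mathbb{Z}$, so there is a short exact sequence $1\to C\to M\to\mathbb{Z}\to 1$ with $C=M\cap K$ compact and also open in $M$ (the preimage of a point of the discrete group $\mathbb{Z}$). Since the dense subgroup $\langle g\rangle$ of $M$ surjects onto $\mathbb{Z}$ sending $g$ to a generator, $\langle g\rangle\cap C=\{\id\}$; but $\langle g\rangle$ is dense in the open subgroup $C$, so $C=\{\id\}$, whence $M\cong\mathbb{Z}$ is discrete and $M=\langle g\rangle$. This is the required infinite cyclic discrete subgroup of $G^o$, and it is discrete in $\textup{Isom}(X)$ as well since $G^o$ is closed therein.

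The genuinely geometric step — and the one I regard as the crux — is the treatment of the compact case: the idea is to exploit geodesic completeness and cocompactness through the bounded, convex, $G$‑invariant function $d(\cdot,\textup{Fix}(G^o))$, together with the fact that a bounded convex function on a geodesically complete $\textup{CAT}(0)$‑space is constant (without geodesic completeness a non‑trivial compact $G^o$ really can occur). The second step is routine structure theory of locally compact groups; its only slightly delicate point is the lift of the discrete cyclic subgroup through the compact normal subgroup $K$, handled above, and one could bypass it entirely by invoking that $\textup{Isom}(X)^o$ is a Lie group when $X$ is proper, geodesically complete and has cocompact isometry group, so that $K$ may be taken trivial.
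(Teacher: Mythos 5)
Your proof is correct and follows the same two‑step strategy as the paper: when $G^o$ is compact, you pass to the non‑empty, closed, convex, $G$‑invariant set $\textup{Fix}(G^o)$ and deduce $G^o=\{\id\}$ from minimality of the cocompact action; when $G^o$ is non‑compact you extract the discrete infinite cyclic subgroup from the structure theory of locally compact groups. The two places where you differ from the paper are in how you feed in the black boxes: the paper cites minimality directly (\cite[Proposition 1.5]{CM09b}), whereas you re‑prove it on the spot via the bounded, convex, $G$‑invariant function $d(\cdot,\textup{Fix}(G^o))$ together with geodesic completeness — which is a nice self‑contained substitute; and the paper simply quotes the dichotomy ``every locally compact group contains either a compact open subgroup or an infinite cyclic discrete subgroup'' (so that the non‑compact connected case is immediate), whereas you re‑derive that fact for $G^o$ from Gleason--Yamabe plus Lie theory and then lift through the compact normal subgroup $K$ — a longer route to the same place, though your lifting argument ($C=M\cap K$ is open, $\langle g\rangle$ dense in $M$ with $\langle g\rangle\cap C=\{\id\}$, hence $C$ trivial) is correct. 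One caution on the Lie step: the parenthetical ``otherwise every one‑parameter subgroup would have relatively compact image, which forces the group to be compact'' is true but not a one‑liner; making all $\textup{ad}(X)$ elliptic only shows the Lie algebra is of compact type, and one must still rule out a non‑compact vector part of the connected center (which would itself provide the closed $\mathbb{R}$), so either spell this out or just invoke the locally compact dichotomy as the paper does.
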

\begin{proof}
	Every locally compact group either contains a compact open subgroup or an infinite cyclic discrete subgroup. We apply this statement to $G^o$: if it contains a compact open subgroup then it must be the whole $G^o$ since it is connected. Therefore $G^o$ is a compact, normal subgroup of $G$. In particular $G$ stabilizes the non-empty (cp. \cite[Corollary II.2.8]{BH09}) closed, convex set $\text{Fix}(G^o)$. By minimality of $G$ (cp. \cite[Proposition 1.5]{CM09b}) we have that $\text{Fix}(G^o) = X$, so $G^o = \lbrace \text{id} \rbrace$. Thus either $G^o = \lbrace \id \rbrace$ or $G^o$ contains an infinite cyclic discrete subgroup.
\end{proof}

We will use some basic facts about convergence recalled in Section \ref{sec-convergence}. 

\begin{proof}[Proof of Theorem \ref{theo-characterization-td}]
	First of all we observe that, given $x\in X$, the orbit $Gx$ is discrete if and only if the group $\text{Stab}_G(x)$ is open. Indeed suppose $Gx$ is discrete and take $g_j \in G$ converging to $g$, with $gx = x$. Since $g_jx$ converges to $gx = x$ and the orbit $Gx$ is discrete, then $g_jx = x$ for $j$ big enough. On the other hand, suppose $\text{Stab}_G(x)$ is open and that $Gx$ is not discrete. The set $Gx$ is closed and not discrete, so it has an accumulation point $gx$. This means that there are pairwise distinct $g_jx$ converging to $gx$ with $g_j \in G$. Up to multiply on the left by $g^{-1}$ we can suppose that $g_jx$ converges to $x$, and it is still composed by pairwise distinct points. The sequence $g_j$ converges, up to a subsequence, to some $g' \in \text{Stab}_G(x)$. Since the latter is open then $g_j \in \text{Stab}_G(x)$ for $j$ big enough, i.e. $g_jx = x$, which is a contradiction.\\
	Now, (i) implies (ii) by \cite[Corollary 6.3]{CM09b}, while (i) implies (a) by \cite[Theorem 6.1]{CM09b} and so (i) implies also (iii) by the first observation. It is clear that (iii) implies (iv).\\
	Now we show that (iv) implies (i). As in the proof of \cite[Theorem 6.1]{CM09b} we denote by $C$ the subset of $X$ of points whose stabilizer is open. Then $C$ is convex (if $x,y\in C$ then $\text{Stab}_G(x)\cap \text{Stab}_G(y)$ is an open subgroup that stabilizes $[x,y]$) and $G$-invariant. It is also non-empty because of (iv) and the first observation. Since the action of $G$ is minimal (cp. \cite[Proposition 1.5]{CM09b}) we conclude that $C$ is dense. We use this fact to provide a neighbourhood basis of the identity made of compact open subgroups. This is enough to show that $G$ is totally disconnected. We fix a basepoint $x_0\in X$, $R\geq 0$ and $\varepsilon > 0$ and we choose a finite subset $\lbrace c_1,\ldots,c_n\rbrace$ of $C\cap \overline{B}(x,R)$ which is $\varepsilon$-dense in $\overline{B}(x,R)$. We set $U(R,\varepsilon) := \bigcap_{i=1}^n \text{Stab}_G(c_i)$. It is clear that $U(R,\varepsilon)$ is open and compact, being a finite intersection of open and compact subgroups. We claim that the family $\lbrace U(R,\varepsilon) \rbrace$ form a neighbourhood basis of $\lbrace \id \rbrace$. For that it is enough to show that $\bigcap_{R\geq 0, \varepsilon > 0} U(R,\varepsilon) = \lbrace \id \rbrace$. Let $g$ be in this intersection. For any $R\geq 0$ and any $\varepsilon > 0$ we know that $g$ acts trivially on a $\varepsilon$-net of $\overline{B}(x_0,R)$, so it displaces every point of $\overline{B}(x_0,R)$ by at most $2\varepsilon$. Since this is true for every $\varepsilon$ we deduce that $g$ fixes pointwise the whole $\overline{B}(x_0,R)$. By arbitrariness of $R$ we conclude that $g$ fixes pointwise every point of $X$, i.e. $g=\id$.\\
	It remains to show that (ii) implies (i). We suppose that $G$ has only elliptic and hyperbolic elements and that $\text{sys}^\diamond(G,X) =: s > 0$. We claim that the set of elliptic isometries $G^\diamond$ is open and closed in $G$. Suppose $g_j \in G$ converges to $g \in G^\diamond$ and let $x\in \text{Fix}(g)$. Then, for $j$ big enough, we have $d(x,g_jx) < s$, so $g_j\in G^\diamond$. This shows that $G^\diamond$ is open.
	Suppose now to have $g_j \in G^\diamond$ converging to a hyperbolic isometry $g$. Let $x$ be a point such that $d(x, g x) = \ell(g)$. 	
	We denote by $z_j$ the projection of $x$ on the closed, convex set $M_{\frac{s}{4}}(g_j)$. It must hold $d(x, z_j) \to +\infty$, otherwise $\ell(g) \leq \frac{s}{4}$. Let $y_j$ be the point along the geodesic $[z_j, x]$ such that $d(g_jy_j, y_j) = \frac{s}{2}$. Observe that by convexity of the displacement function $d_{g_j}$ along the geodesic $[z_j,x]$ we have $d(z_j,y_j) \to +\infty$. We consider the sequence of isometric actions $(X, y_n, G)$ and we fix a non-principal ultrafilter $\omega$ (see Section \ref{sec-convergence} for the definitions). Since $G$ is cocompact, by Lemma \ref{lemma-ultralimit-cocompact} we have that the ultralimit $(X_\omega, y_\omega, G_\omega)$ of this sequence is naturally equivariantly isometric to $(X,x,G)$. Observe that the sequence $g_j$ defines an element $g_\omega$ of $G_\omega$. The condition $d(z_j,y_j) \to +\infty$ forces the inequality $\ell(g_\omega) \geq \frac{s}{4}$, so $g_\omega$ is hyperbolic. On the other hand $d(g_\omega y_\omega, y_\omega) = \frac{s}{2}$, so $\ell(g_\omega) \leq \frac{s}{2}$. This contradicts the assumption $\text{sys}^\diamond(G_\omega,X_\omega) = \text{sys}^\diamond(G,X) = s$. Therefore the set $G^\diamond$ is open and closed, implying $G^\diamond\cap G^o = G^o$. By Lemma \ref{prop-connected-component} either $G^o = \lbrace \id \rbrace$ or it contains an infinite cyclic discrete subgroup. Every element of $G^o$ is elliptic, so if it generates a discrete subgroup then it must be of finite order. This forces to have $G^o=\lbrace\text{id}\rbrace$, i.e. $G$ is totally disconnected.
	This concludes the proof of the equivalences (i)-(iv).\\
	These conditions imply (a) by \cite[Theorem 6.1]{CM09b}. Now (b) is a consequence of (i). Indeed suppose $\overline{S}_r(x, R)$ is not open for some $x\in X$ and $r, R\geq 0$. Then
	there exists a sequence  $g_j \in G \setminus \overline{S}_r(x, R)$ converging to $g\in \overline{S}_r(x, R)$.
	Then $g^{-1}g_j$ converges to $\textup{id}\in \textup{Stab}_G(x, R)$. By (a) it must hold $g^{-1}g_j \in \textup{Stab}_G(x, R)$ for $j$ big enough, i.e. $g_j = g$ on $\overline{B}(x,R)$. This contradicts the assumption $g_j \notin \overline{S}_r(x, R)$. Finally if $K$ is a compact subset of $G$ then $Gx$ is a compact subset of $X$ which is also discrete by (iii). Therefore it must be finite.
\end{proof}
The equivalences (i)-(iv) are no more true if the space is proper, CAT$(0)$, but not geodesically complete. In the following example we show that (i) does not imply (iii). 

\begin{ex}
	\label{ex-td-non-discrete-orbits}
	We construct a tree in the following iterative way. We start with $X_1 = [-1,1]$. The set $V_1 = \lbrace \pm 1 \rbrace$ is the set of free vertices of $X_1$, i.e. those vertices with only one edge issuing from them. To construct $X_2$ we glue a segment of length $1$ at the point $0 \in X_1$. $X_2$ is composed by three segments of length $1$ glued at a point. Its set of free vertices $V_2$ has $3$ elements and contains $V_1$. To construct $X_3$ we glue segments of length $1/2$ to the midpoints of all the segments of length $1$ composing $X_2$, so that $X_3$ is composed by $9$ segments of length $1/2$. Its set of free vertices is denoted by $V_3$: it contains $V_2$. Given $X_j$, made of $3^{j-1}$-segments of length $2^{-j + 2}$, we construct $X_{j+1}$ by gluing segments of length $2^{-j+1}$ to each midpoint of the segments of $X_j$. We denote by $V_{j+1}$ the set of free vertices of $X_j$: it contains $V_j$. We repeat the procedure inductively and we define the space $X_\infty$: it is a tree, so a CAT$(0)$-space, which is also compact, so proper, but not geodesically complete. Let us call $V_\infty$ its set of free vertices. For every $j$ and every two free vertices $v,w \in V_j$ we can find an isometry of $X_j$ sending $v$ to $w$. Such isometry extends naturally to an isometry of $X_\infty$, by construction of $X_\infty$. Therefore $\text{Isom}(X_\infty)$ acts transitively on the set of free vertices $V_\infty$. This implies that its action on $X_\infty$ is minimal. The same proof of Lemma \ref{prop-connected-component} shows that either $\text{Isom}(X_\infty)$ is totally disconnected or it contains an infinite cyclic discrete subgroup. The second possibility cannot occur because $\text{Isom}(X_\infty)$ is compact, so it contains only elliptic isometries and a discrete group generated by an elliptic isometry is finite. This shows that $\text{Isom}(X_\infty)$ is totally disconnected. However the orbit by $\text{Isom}(X_\infty)$ of every free vertex is not discrete.
\end{ex}

\subsection{Crystallographic groups in the Euclidean space}
We will denote points in $\mathbb{R}^k$ by a bold letter {\bf v},  and the origin by $\bf O$. Among CAT$(0)$-spaces, the Euclidean space ${\mathbb R}^k$ and its discrete groups play a special role.
A \emph{crystallographic group} is a discrete, cocompact group $\Gamma$ of isometries of some $\mathbb{R}^k$. The simplest and most important of them, in view of Bieberbach's Theorem, are {\em Euclidean lattices}: i.e. 
free abelian crystallographic groups.
It is well known that a lattice must act by translations on $\mathbb{R}^k$ (see for instance \cite{farkas}); so, alternatively, a lattice $\mathcal{L}$ can be seen as the set of linear combinations with integer coefficients of $k$ independent vectors $\bf b_1, \ldots, \bf b_k$ (we will make no difference between a lattice and this representation). \\
A {\em basis} $\mathcal{B} = \lbrace \bf b_1,\ldots,\bf b_k\rbrace$ of a lattice $\mathcal{L}$ is a set of $k$ independent vectors that generate $\mathcal{L}$ as a group. There are many geometric invariants classically associated to a lattice $\mathcal{L}$, we will need three of them:\\
-- the \emph{covering radius}, which is defined as
$$\rho(\mathcal{L}) = \inf\left\lbrace r > 0 \text{ s.t. } \bigcup_{\bf v \in \mathcal{L}} \overline{B}({\bf v},r) = \mathbb{R}^k\right\rbrace;$$
-- the \emph{shortest generating radius}, that is
$$\lambda(\mathcal{L}) = \inf\lbrace r > 0 \text{ s.t. } \mathcal{L} \text{ contains } k \text{ independent vectors of length} \leq r\rbrace;$$
-- the \emph{shortest vector}, that is 
$$\tau(\mathcal{L}) = \inf \left\lbrace \Vert {\bf v} \Vert \text{ s.t. } {\bf v} \in \mathcal{L} \setminus \lbrace {\bf O} \rbrace \right\rbrace.$$

\noindent Notice that, by the triangle inequality, any lattice  $\mathcal{L}$ is $2\rho(\mathcal{L})$-cocompact. Moreover $\tau(\mathcal{L})$ coincides with the free-systole of $\mathcal{L}$.
It is always possible to find a basis $\mathcal{B}= \lbrace \bf b_1,\ldots,\bf b_k\rbrace$ of $\mathcal{L}$ such that $\tau(\mathcal{L}) = \Vert \bf b_1 \Vert \leq \cdots \leq \Vert b_k \Vert = \lambda(\mathcal{L})$;
this is called a \emph{shortest basis} of $\mathcal{L}$.
The shortest generating radius and the covering radius are related as follows:
\begin{equation}
	\label{eq-lattice-relation}
	\rho(\mathcal{L}) \leq \frac{\sqrt{k}}{2} \cdot \lambda(\mathcal{L}).
\end{equation}
In particular every lattice $\mathcal{L}$ is $(\sqrt{k}\cdot \lambda(\mathcal{L}))$-cocompact.
The proof of the Svarc-Milnor lemma in \cite[Theorem IV.B.23]{dLH00} gives the following estimate. We recall that given a generating set $S$ of a lattice $\mathcal{L}$ then $\ell_S({\bf v})$ denotes the length of the shortest word in the alphabet $S$ needed to write ${\bf v}$.
\begin{lemma}
	\label{lemma-lattice-svarc-milnor}
	Let $\mathcal{L}$ be a lattice of $\mathbb{R}^k$. Recall it is $(\sqrt{k}\cdot \lambda(\mathcal{L}))$-cocompact. Let $S = \overline{S}_{2\sqrt{k}\cdot \lambda(\mathcal{L})}({\bf O})$ as defined in \eqref{defsigma} and recall it is a generating set. Then $\ell_S({\bf v}) \leq \frac{\Vert {\bf v} \Vert}{\tau(\mathcal{L})} + 1$.
\end{lemma}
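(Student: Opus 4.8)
The plan is to run the standard Švarc–Milnor word-length estimate, keeping careful track of the constant, exactly as in \cite[Theorem IV.B.23]{dLH00}. Recall that $\mathcal{L}$ acts on $\mathbb{R}^k$ by translations, freely and properly discontinuously, with codiameter at most $\sqrt{k}\cdot\lambda(\mathcal{L})$ by \eqref{eq-lattice-relation} (every point of $\mathbb{R}^k$ is within $\rho(\mathcal{L})\le\tfrac{\sqrt k}{2}\lambda(\mathcal{L})$ of a lattice point, and $\mathcal{L}$ is $2\rho(\mathcal{L})$-cocompact). Set $D_0 := \sqrt{k}\cdot\lambda(\mathcal{L})$, so that $S = \overline{S}_{2D_0}(\mathbf{O})$ is the set of translations moving $\mathbf{O}$ by at most $2D_0$; this is a generating set by the general fact recalled after \eqref{eq-def-codiameter}.

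First I would fix $\mathbf{v}\in\mathcal{L}$ and consider the Euclidean segment $[\mathbf{O},\mathbf{v}]$, of length $\Vert\mathbf{v}\Vert$. Choose points $\mathbf{O}=\mathbf{x}_0,\mathbf{x}_1,\dots,\mathbf{x}_N=\mathbf{v}$ along this segment, consecutively at distance $\le\tau(\mathcal{L})$ apart (so that consecutive points are at distance $<2\tau(\mathcal{L})$, say, but equality thresholds can be absorbed by the $+1$), with
$$
N \;=\; \left\lceil \frac{\Vert\mathbf{v}\Vert}{\tau(\mathcal{L})}\right\rceil \;\le\; \frac{\Vert\mathbf{v}\Vert}{\tau(\mathcal{L})} + 1.
$$
Next, for each $i$ pick a lattice element $\mathbf{g}_i\in\mathcal{L}$ with $\Vert\mathbf{g}_i-\mathbf{x}_i\Vert\le\rho(\mathcal{L})\le D_0$, taking $\mathbf{g}_0=\mathbf{O}$ and $\mathbf{g}_N=\mathbf{v}$ (possible since $\mathbf{x}_0,\mathbf{x}_N\in\mathcal{L}$). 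Then $\mathbf{s}_i := \mathbf{g}_i-\mathbf{g}_{i-1}\in\mathcal{L}$ satisfies
$$
\Vert\mathbf{s}_i\Vert \;\le\; \Vert\mathbf{g}_i-\mathbf{x}_i\Vert + \Vert\mathbf{x}_i-\mathbf{x}_{i-1}\Vert + \Vert\mathbf{x}_{i-1}-\mathbf{g}_{i-1}\Vert \;\le\; D_0 + \tau(\mathcal{L}) + D_0 \;\le\; 2D_0,
$$
using $\tau(\mathcal{L})\le\lambda(\mathcal{L})\le D_0$ (indeed $\tau\le\lambda\le\sqrt k\,\lambda$). Since $\mathbf{s}_i$ is a translation moving $\mathbf{O}$ by $\Vert\mathbf{s}_i\Vert\le 2D_0$, we get $\mathbf{s}_i\in S$. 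Finally $\mathbf{v}=\mathbf{g}_N = \mathbf{s}_N\mathbf{s}_{N-1}\cdots\mathbf{s}_1$ (the group being abelian, order is irrelevant), a word of length $N$ in $S$, so $\ell_S(\mathbf{v})\le N\le \frac{\Vert\mathbf{v}\Vert}{\tau(\mathcal{L})}+1$.

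There is essentially no obstacle here: the only thing to watch is the bookkeeping of constants, namely verifying that the $\mathbf{s}_i$ genuinely land in $\overline{S}_{2D_0}(\mathbf{O})$ with $D_0=\sqrt k\,\lambda(\mathcal{L})$ — which works because the step size $\tau(\mathcal{L})$ along the segment is dominated by $\lambda(\mathcal{L})$, hence by $D_0$, leaving room for the two detours of size $\rho(\mathcal{L})\le\tfrac{\sqrt k}{2}\lambda(\mathcal{L})\le D_0$. The clean choice of step size exactly $\tau(\mathcal{L})$ is what produces the stated ratio $\Vert\mathbf{v}\Vert/\tau(\mathcal{L})$ in the bound, and the additive $+1$ absorbs the ceiling.
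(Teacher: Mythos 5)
Your approach is precisely the standard \v{S}varc--Milnor word-length estimate that the paper cites (it gives no proof beyond pointing to \cite[Theorem IV.B.23]{dLH00}), so the route is the same and the idea is correct. However, the displayed chain of inequalities as written does not close: you bound each $\Vert \mathbf{g}_i-\mathbf{x}_i\Vert$ and $\Vert\mathbf{g}_{i-1}-\mathbf{x}_{i-1}\Vert$ by $\rho(\mathcal{L})\le D_0$, arriving at ``$\le D_0 + \tau(\mathcal{L}) + D_0 \le 2D_0$'', which asserts $2D_0+\tau(\mathcal{L})\le 2D_0$ and fails for $\tau(\mathcal{L})>0$. What you actually need, and in fact state yourself in the surrounding text, is the sharper bound from \eqref{eq-lattice-relation}, namely $\rho(\mathcal{L})\le\frac{\sqrt{k}}{2}\lambda(\mathcal{L})=D_0/2$; with that the triangle inequality gives $\Vert\mathbf{s}_i\Vert\le D_0/2+\tau(\mathcal{L})+D_0/2 = D_0+\tau(\mathcal{L})\le 2D_0$, using $\tau(\mathcal{L})\le\lambda(\mathcal{L})\le D_0$. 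So the proof is correct in substance and this is just a slip in the displayed arithmetic, but it is worth fixing since the membership $\mathbf{s}_i\in S$ is the entire point of the constant-tracking.
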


A property we will need in Section \ref{sec-convergence} is the existence of nice bases.

\begin{lemma}
	\label{lemma-LLL-basis}
	Let $\mathcal{L}$ be a lattice of $\mathbb{R}^k$. Then there exists a basis $\mathcal{B} = \lbrace \bf b_1,\ldots,\bf b_k\rbrace$ of $\mathcal{L}$ such that 
	\begin{itemize}
		\item[(i)] 	$\max_i \Vert {\bf b_i} \Vert \leq 2^{k-1} \cdot \lambda(\mathcal{L})$;
		\item[(ii)] $\angle({\bf b_i}, \sum_{j\neq i}\mathbb{R}{\bf b_j}) \geq \vartheta_k > 0$, where $\vartheta_k$ depends only on $k$.
	\end{itemize}
\end{lemma}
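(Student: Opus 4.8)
The plan is to take $\mathcal{B} = \lbrace \bf b_1,\ldots,\bf b_k\rbrace$ to be an \emph{LLL-reduced} basis of $\mathcal{L}$ (with the usual parameter $\tfrac34$) and to read off both statements from the defining inequalities of such a basis. Write ${\bf b}_1^*,\ldots,{\bf b}_k^*$ for the Gram--Schmidt orthogonalization of $\mathcal{B}$ and ${\bf b}_i = {\bf b}_i^* + \sum_{j<i}\mu_{ij}{\bf b}_j^*$ for the corresponding decomposition. LLL-reducedness gives $|\mu_{ij}|\leq \tfrac12$ and $\Vert {\bf b}_i^*\Vert^2 \geq \tfrac12 \Vert {\bf b}_{i-1}^*\Vert^2$, from which one derives the standard consequences $\Vert {\bf b}_j^*\Vert^2 \leq 2^{i-j}\Vert {\bf b}_i^*\Vert^2$ for $j\leq i$, $\Vert {\bf b}_i\Vert^2 \leq 2^{i-1}\Vert {\bf b}_i^*\Vert^2$, and, multiplying the latter over $i$ and using $\prod_i \Vert {\bf b}_i^*\Vert = \det\mathcal{L}$, the product bound $\prod_{i=1}^k \Vert {\bf b}_i\Vert \leq 2^{k(k-1)/4}\det\mathcal{L}$.

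For (i), I would first observe that $\lambda(\mathcal{L})$ as defined above is exactly the $k$-th successive minimum $\lambda_k(\mathcal{L})$, and then invoke the standard estimate for LLL-reduced bases $\Vert {\bf b}_j\Vert \leq 2^{(k-1)/2}\lambda_j(\mathcal{L})$ valid for every $j$ (this follows by combining the inequalities above with the observation that $k$ independent lattice vectors of norm $\leq \lambda_k(\mathcal{L})$, when expanded in $\mathcal{B}$, must reach every coordinate). Since $\lambda_j(\mathcal{L})\leq \lambda_k(\mathcal{L}) = \lambda(\mathcal{L})$ and $2^{(k-1)/2}\leq 2^{k-1}$, this gives $\max_i \Vert {\bf b}_i\Vert \leq 2^{k-1}\lambda(\mathcal{L})$.

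For (ii), fix $i$ and set $H_i := \sum_{j\neq i}\mathbb{R}{\bf b}_j$, a hyperplane of $\mathbb{R}^k$; the quantity to bound below is $\sin\angle({\bf b}_i, H_i) = \operatorname{dist}({\bf b}_i,H_i)/\Vert {\bf b}_i\Vert$. Computing the covolume of $\mathcal{L}$ as base times height of the fundamental parallelepiped in the ${\bf b}_i$-direction yields $\operatorname{dist}({\bf b}_i,H_i)\cdot \operatorname{vol}_{k-1}(\lbrace {\bf b}_j\rbrace_{j\neq i}) = \det\mathcal{L}$, while Hadamard's inequality gives $\operatorname{vol}_{k-1}(\lbrace {\bf b}_j\rbrace_{j\neq i}) \leq \prod_{j\neq i}\Vert {\bf b}_j\Vert$. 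Combining, $\sin\angle({\bf b}_i,H_i) \geq \det\mathcal{L}\big/\prod_{j=1}^k \Vert {\bf b}_j\Vert \geq 2^{-k(k-1)/4}$ by the product bound from the first paragraph, so one may take $\vartheta_k := \arcsin\!\big(2^{-k(k-1)/4}\big)$, which depends only on $k$.

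The only delicate point is part (ii): the easy estimate coming directly from Gram--Schmidt controls only the angle between ${\bf b}_i$ and $\operatorname{span}({\bf b}_1,\ldots,{\bf b}_{i-1})$, whereas the lemma asks for the angle with the span of \emph{all} the remaining basis vectors, a genuinely $(k-1)$-dimensional subspace. Getting around this requires passing through the global covolume identity and Hadamard's inequality rather than a coordinatewise estimate; once that reformulation is in place, the LLL product bound $\prod_i\Vert {\bf b}_i\Vert \leq 2^{k(k-1)/4}\det\mathcal{L}$ does the rest, and part (i) is routine.
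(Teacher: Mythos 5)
Your proposal is correct and follows the same route the paper takes: the paper's proof is a one-line citation to the LLL paper (Proposition 1.12, which is exactly your bound $\Vert {\bf b}_j\Vert \leq 2^{(k-1)/2}\lambda_j(\mathcal{L})$ for part (i)) together with Babai's Theorem 5.1 (the angle bound for LLL-reduced bases, part (ii)). Your derivation of (ii) via the covolume identity $\det\mathcal{L} = \operatorname{dist}({\bf b}_i, H_i)\cdot\operatorname{vol}_{k-1}(\{{\bf b}_j\}_{j\neq i})$, Hadamard's inequality, and the LLL product bound $\prod_i\Vert{\bf b}_i\Vert\leq 2^{k(k-1)/4}\det\mathcal{L}$ is a clean, self-contained reproof of what the paper outsources to Babai, and it is sound.
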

\noindent The notation $\sum_{j\neq i}\mathbb{R}{\bf b_j}$ means the $(k-1)$-dimensional subspace of $\mathbb{R}^k$ generated by $\lbrace {\bf b_j} \rbrace_{j\neq i}$.
\begin{proof}
	It follows by the combination of \cite[§1, and Proposition 1.12]{LLL82} and \cite[Theorem 5.1]{Bab86}.
\end{proof}

\noindent A set of vectors $\lbrace {\bf v_1}, \ldots, {\bf v_k} \rbrace$ satisfying condition (ii) of Lemma \ref{lemma-LLL-basis} for some $\vartheta > 0$ are called \emph{$\vartheta$-linearly independent}.\\
From any lattice we can always extract a sublattice with controlled geometry.

\begin{lemma}
	\label{lemma-sublattice-controlled}
	Let $\mathcal{L}$ be a lattice of $\mathbb{R}^k$. Then there exists a sublattice $\mathcal{L}'$ of $\mathcal{L}$ such that $\lambda(\mathcal{L}') = \lambda(\mathcal{L})$ and $\lambda(\mathcal{L}') \leq 2\cdot \tau(\mathcal{L}')$.
\end{lemma}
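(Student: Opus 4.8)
Write $k$ for the rank of $\mathcal{L}$ and $\lambda:=\lambda(\mathcal{L})$. Note first that any rank-$k$ sublattice $\mathcal{L}'\subseteq\mathcal{L}$ automatically has $\lambda(\mathcal{L}')\ge\lambda$, since $k$ independent vectors of $\mathcal{L}'$ are in particular $k$ independent vectors of $\mathcal{L}$. Hence it suffices to exhibit a rank-$k$ sublattice $\mathcal{L}'$ with $\lambda(\mathcal{L}')\le\lambda$ and $\tau(\mathcal{L}')\ge\lambda/2$: the first inequality then forces $\lambda(\mathcal{L}')=\lambda$, and the two together give $\lambda(\mathcal{L}')\le 2\tau(\mathcal{L}')$. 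I would prove this by an extremal argument rather than an induction on the rank. Let $S$ be the set of all rank-$k$ sublattices $\mathcal{L}'$ of $\mathcal{L}$ with $\lambda(\mathcal{L}')=\lambda$. It contains $\mathcal{L}$, and it is \emph{finite}: every $\mathcal{L}'\in S$ has all its successive minima $\le\lambda$ (they are bounded by the $k$-th one, $\lambda(\mathcal{L}')=\lambda$), so by Lemma \ref{lemma-LLL-basis}(i) it admits a basis made of vectors of norm $\le 2^{k-1}\lambda$; these lie in the finite set $\mathcal{L}\cap\overline{B}({\bf O},2^{k-1}\lambda)$, and a lattice is determined by a basis, so only finitely many $\mathcal{L}'$ occur. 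Pick $\mathcal{M}\in S$ maximizing $\tau(\mathcal{M})$; I claim $\tau(\mathcal{M})\ge\lambda/2$, which finishes the proof with $\mathcal{L}'=\mathcal{M}$.

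Suppose for contradiction $\tau(\mathcal{M})<\lambda/2$. Let $W\subseteq\mathbb{R}^k$ be the linear span of the vectors of $\mathcal{M}$ of norm exactly $\tau(\mathcal{M})$, set $e:=\dim W$ and $\mathcal{M}_W:=\mathcal{M}\cap W$. Then $1\le e\le k-1$: indeed $e=k$ would produce $k$ independent minimal vectors, forcing $\lambda=\lambda(\mathcal{M})\le\tau(\mathcal{M})<\lambda/2$. The lattice $\mathcal{M}_W$ has rank $e$ and \emph{all} its successive minima equal to $\tau(\mathcal{M})$ (it contains $e$ independent vectors of that norm spanning $W$, and $\tau(\mathcal{M}_W)\ge\tau(\mathcal{M})$); in particular $\mathcal{M}_W$ is strongly well rounded and $\lambda(2\mathcal{M}_W)=\tau(2\mathcal{M}_W)=2\tau(\mathcal{M})<\lambda$. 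Choose independent $v_1,\dots,v_k\in\mathcal{M}$ realizing the successive minima of $\mathcal{M}$, so $\|v_i\|=\lambda_i(\mathcal{M})\le\lambda$; since $\lambda_1(\mathcal{M})=\dots=\lambda_e(\mathcal{M})=\tau(\mathcal{M})$, the vectors $v_1,\dots,v_e$ are minimal vectors and therefore lie in $W$ and span it, whence $v_{e+1},\dots,v_k$ are linearly independent modulo $W$ and $\langle v_{e+1},\dots,v_k\rangle\cap W=\{{\bf O}\}$. Thus $\mathcal{M}':=2\mathcal{M}_W\oplus\langle v_{e+1},\dots,v_k\rangle$ is a well-defined rank-$k$ sublattice of $\mathcal{M}$ with $\mathcal{M}'\cap W=2\mathcal{M}_W$.

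Now $\mathcal{M}'$ contains $k$ independent vectors of norm $\le\lambda$: namely $e$ independent vectors of $2\mathcal{M}_W$, of norm $\le\lambda(2\mathcal{M}_W)=2\tau(\mathcal{M})<\lambda$ and spanning $W$, together with $v_{e+1},\dots,v_k$. Hence $\lambda(\mathcal{M}')\le\lambda$, so $\mathcal{M}'\in S$. On the other hand, if $w\in\mathcal{M}'\setminus\{{\bf O}\}$ lies in $W$ then $w\in 2\mathcal{M}_W$, so $\|w\|\ge 2\tau(\mathcal{M})>\tau(\mathcal{M})$; and if $w\in\mathcal{M}'\setminus\{{\bf O}\}$ does not lie in $W$ then $w\in\mathcal{M}$ is not a minimal vector of $\mathcal{M}$ (all of those lie in $W$), so $\|w\|>\tau(\mathcal{M})$. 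Since $\tau(\mathcal{M}')$ is attained, this gives $\tau(\mathcal{M}')>\tau(\mathcal{M})$, contradicting the maximality of $\mathcal{M}$ and proving the claim.

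The step that needs the most care is the \emph{existence of the maximizer} $\mathcal{M}$, i.e.\ the finiteness of $S$; this is precisely where the bounded-norm basis of Lemma \ref{lemma-LLL-basis} is used, all other successive-minima bookkeeping being routine. I expect the remark worth flagging is that a naive induction on the rank — recursing into lower-rank primitive sublattices and reassembling — instead forces one to rescale an arbitrary lattice while controlling its shortest vector, which only yields a dimension-dependent constant in place of $2$; the extremal argument above sidesteps this because the only enlargement it ever performs is the harmless doubling of the strongly well-rounded lattice $\mathcal{M}_W$.
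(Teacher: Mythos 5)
Your proof is correct, but it takes a genuinely different and considerably heavier route than the paper's. The paper argues greedily on a single basis vector: take a shortest basis $\{\bf b_1,\ldots,\bf b_k\}$ of $\mathcal{L}$ with $\Vert\bf b_1\Vert=\tau(\mathcal{L})$; if $\lambda(\mathcal{L})>2\tau(\mathcal{L})$, pick an integer $\lambda_1$ with $\lambda_1\Vert\bf b_1\Vert\le\lambda(\mathcal{L})\le 2\lambda_1\Vert\bf b_1\Vert$ and pass to the sublattice generated by $\{\lambda_1\bf b_1,\bf b_2,\ldots,\bf b_k\}$. This preserves $\lambda$ (the new generators still have length $\le\lambda(\mathcal{L})$, while $\lambda$ cannot drop for a full-rank sublattice), can only increase $\tau$, and strictly ejects $\bf b_1$ (since $\lambda_1\ge 2$) — hence strictly decreases the finite number of lattice vectors of norm $<\lambda(\mathcal{L})/2$ — so the iteration terminates with the desired lattice. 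Your extremal set-up (finiteness of the family $S$ via Lemma~\ref{lemma-LLL-basis}, maximizing $\tau$, and contradicting maximality by doubling $\mathcal{M}_W=\mathcal{M}\cap W$ and reassembling with $v_{e+1},\ldots,v_k$) is sound — the key checks ($\mathcal{M}'\cap W=2\mathcal{M}_W$, $\lambda(\mathcal{M}')=\lambda$, and $\tau(\mathcal{M}')>\tau(\mathcal{M})$) all hold as you state — but it replaces a one-line greedy step with a global argument involving the span of minimal vectors, successive minima of $\mathcal{M}_W$, and a direct-sum decomposition. Both proofs ultimately invoke the same finiteness of short vectors to guarantee a stopping point; the paper uses it directly to terminate the iteration, you use it (via Lemma~\ref{lemma-LLL-basis}) to guarantee a maximizer exists. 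Your closing speculation that a recursive approach forces a dimension-dependent constant does not describe what the paper actually does: the paper never recurses into lower-rank sublattices, it merely scales the single shortest basis vector, which is why the constant $2$ comes out cleanly there as well.
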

\begin{proof}
	We proceed recursively. If $\lambda(\mathcal{L}) \leq 2\cdot \tau(\mathcal{L})$ there is nothing to prove. Otherwise we take a shortest basis $\lbrace{\bf b_1}, \ldots, {\bf b_k}\rbrace$. In particular $\Vert {\bf b_1} \Vert = \tau(\mathcal{L})$. Let $\lambda_1 \in \mathbb{N}$ be such that $\lambda_1 \Vert {\bf b_1} \Vert \leq \lambda(\mathcal{L}) \leq 2 \lambda_1 \Vert {\bf b_1} \Vert$. We replace $\mathcal{L}$ with the sublattice $\mathcal{L}'$ generated by $\lbrace{\lambda_1 \bf b_1}, \ldots, {\bf b_k}\rbrace$. We have $\lambda(\mathcal{L}') = \lambda(\mathcal{L})$ and $\tau(\mathcal{L}') \geq \tau(\mathcal{L})$. We repeat the procedure up to arrive to a lattice satisfying the thesis. The algorithm stops because we need to do at most $N$ steps, where $N= \# \lbrace {\bf v} \in \mathcal{L} \text{ s.t. } \Vert {\bf v} \Vert < \frac{\lambda(\mathcal{L})}{2} \rbrace < +\infty$. 
\end{proof}

The content of the famous Bieberbach's Theorems can be stated as follows.
\begin{prop}[Bieberbach's Theorem]
	\label{prop-Bieberbach}${}$
	
	\noindent There exists $J(k)$, only depending on $k$, such that the following holds true.
	For any crystallographic group $\Gamma$ of $\mathbb{R}^k$ 
	the  subgroup ${\mathcal L}(\Gamma) = \Gamma \cap \textup{Transl}({\mathbb R}^k)$ is a normal subgroup of index at most $J(k)$, in particular a lattice. 
\end{prop}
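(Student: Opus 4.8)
The plan is to realize $\mathcal{L}(\Gamma)$ as the kernel of the rotation part homomorphism and then to bound the order of its (finite) image by an elementary arithmetic argument.

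First I would recall the decomposition $\textup{Isom}(\mathbb{R}^k)=\textup{O}(k)\ltimes\textup{Transl}(\mathbb{R}^k)$: every isometry is uniquely written as $x\mapsto Ax+a$ with $A\in\textup{O}(k)$ and $a\in\mathbb{R}^k$, and the map $\pi\colon(A,a)\mapsto A$ is a continuous group homomorphism $\textup{Isom}(\mathbb{R}^k)\to\textup{O}(k)$ whose kernel is exactly $\textup{Transl}(\mathbb{R}^k)$. Restricting to $\Gamma$ gives $\ker(\pi|_\Gamma)=\Gamma\cap\textup{Transl}(\mathbb{R}^k)=\mathcal{L}(\Gamma)$, so $\mathcal{L}(\Gamma)$ is normal in $\Gamma$ and $\Gamma/\mathcal{L}(\Gamma)\cong\pi(\Gamma)\le\textup{O}(k)$ is the point group. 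At this point I would invoke the classical first Bieberbach theorem (see e.g. \cite{farkas}): the point group $\pi(\Gamma)$ is finite and $\mathcal{L}(\Gamma)$ is a lattice of full rank $k$. In particular $\mathcal{L}(\Gamma)$ has finite index $[\Gamma:\mathcal{L}(\Gamma)]=|\pi(\Gamma)|$ in $\Gamma$ and is a lattice, so it only remains to bound $|\pi(\Gamma)|$ by a quantity depending on $k$ alone.

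For the uniform bound I would exploit the conjugation action of $\Gamma$ on the normal subgroup $\mathcal{L}(\Gamma)$: for $\gamma=(A,a)\in\Gamma$ and $v\in\mathcal{L}(\Gamma)$ one has $\gamma(I,v)\gamma^{-1}=(I,Av)$, so this action is the restriction to $\pi(\Gamma)\subseteq\textup{O}(k)$ of the standard action of $\textup{O}(k)$ on $\mathbb{R}^k$ --- in particular faithful --- and it preserves the lattice $\mathcal{L}(\Gamma)$. Choosing a $\mathbb{Z}$-basis of the rank-$k$ lattice $\mathcal{L}(\Gamma)$ therefore identifies $\pi(\Gamma)$ with a finite subgroup of $\textup{GL}_k(\mathbb{Z})$. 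By Minkowski's lemma the reduction homomorphism $\textup{GL}_k(\mathbb{Z})\to\textup{GL}_k(\mathbb{Z}/3\mathbb{Z})$ has torsion-free kernel, hence is injective on every finite subgroup; consequently $|\pi(\Gamma)|$ divides $|\textup{GL}_k(\mathbb{Z}/3\mathbb{Z})|=\prod_{i=0}^{k-1}(3^k-3^i)$. Setting $J(k):=|\textup{GL}_k(\mathbb{Z}/3\mathbb{Z})|$, which depends only on $k$, we obtain $[\Gamma:\mathcal{L}(\Gamma)]=|\pi(\Gamma)|\le J(k)$, as required.

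I expect the only genuinely hard ingredient to be the first Bieberbach theorem itself --- the finiteness of the point group, equivalently the full rank of $\mathcal{L}(\Gamma)$ --- which I would cite rather than reprove. If a self-contained proof were wanted, the standard route combines discreteness of $\Gamma$ with a Zassenhaus-type neighbourhood of the identity in $\textup{O}(k)$ and the commutator identity $[\gamma,(I,v)]=(I,(A-I)v)$ to show that rotation parts sufficiently close to $I$ must equal $I$; then $\pi(\Gamma)$ is a discrete subgroup of the compact group $\textup{O}(k)$, hence finite, and cocompactness of $\Gamma$ forces $\mathcal{L}(\Gamma)$ to be cocompact, hence of full rank. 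Granting this classical step, the remaining arguments above are elementary.
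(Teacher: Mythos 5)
Your proof is correct. A point worth noting: the paper itself does not supply a proof of this proposition --- it is stated as a classical consequence of Bieberbach's theorems, with the reader referred to the literature (e.g.\ \cite{farkas}). What you have written is exactly the standard derivation that such references contain: identify $\mathcal{L}(\Gamma)$ as the kernel of the rotation-part homomorphism $\pi$ (giving normality), invoke the first Bieberbach theorem for finiteness of the point group and full rank of $\mathcal{L}(\Gamma)$, then bound the point group's order uniformly in $k$ via its faithful action on the lattice and Minkowski's lemma applied to reduction mod $3$ in $\mathrm{GL}_k(\mathbb{Z})$. Your conjugation computation $\gamma(I,v)\gamma^{-1}=(I,Av)$ is right, and the choice of the prime $3$ (rather than $2$) is the correct one for the kernel to be torsion-free. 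So you have supplied a sound, self-contained justification for a statement the paper takes for granted, following the same classical route the paper implicitly appeals to; there is no genuinely different idea in play, only the choice to spell out what the citation compresses.
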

\noindent Here $\textup{Transl}({\mathbb R}^k)$ denotes the normal subgroup of translations of $\textup{Isom}(\mathbb{R}^k)$.
The subgroup $\mathcal{L}(\Gamma)$ is called the \emph{maximal lattice} of $\Gamma$.
As in \cite{CS23} we will use the next fact in the study of collapsing sequences: it follows directly from Proposition \ref{prop-Bieberbach}.

\begin{lemma}[\textup{\cite[Lemma 2.7]{CS23}}]
	\label{lemma-bieber} 
	Let $\Gamma$ be  a  crystallographic group  of $\mathbb{R}^k$, and let  $0<r < \sqrt{2 \sin \left( \frac{\pi}{J(k) } \right)}$. If $g \in \Gamma$ moves all points of  $B_{\mathbb{R}^k} ({\bf O}, \frac{1}{r})$ less than $r$,    then $g$ is a translation.
\end{lemma}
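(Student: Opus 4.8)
The plan is to reduce the statement to the behaviour of $g$ on the maximal lattice $\mathcal{L}(\Gamma)$ of $\Gamma$, using Bieberbach's Theorem (Proposition \ref{prop-Bieberbach}). Write $J = J(k)$, so $\mathcal{L}(\Gamma)$ has index at most $J$ in $\Gamma$. First I would record the elementary fact about finite-order rotational parts: if $g\in\Gamma$ is \emph{not} a translation, then its linear (rotational) part $\bar g\in O(k)$ is a non-trivial element of the finite quotient $\Gamma/\mathcal{L}(\Gamma)$, hence has order $m$ dividing $|\Gamma/\mathcal{L}(\Gamma)|\le J$ with $m\ge 2$. Consequently $\bar g$ fixes no non-zero vector close to a generic direction; more precisely, since $\bar g\in O(k)$ has finite order $m\ge2$, there is a unit vector ${\bf u}$ (an eigenvector for an eigenvalue $\ne 1$ of $\bar g$, or of a suitable power) with $\|\bar g\,{\bf u}-{\bf u}\|\ge 2\sin(\pi/m)\ge 2\sin(\pi/J)$, because the rotation angle of $\bar g$ in the plane spanned by that eigenvector's real form is a multiple of $2\pi/m$ that is not a multiple of $2\pi$, so at least $2\pi/m$ in absolute value, giving chord length at least $2\sin(\pi/m)$.

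Next I would exploit the affine structure. Write $g({\bf x}) = \bar g\,{\bf x} + {\bf t}$ for some translation vector ${\bf t}\in\mathbb{R}^k$. For any point ${\bf x}$ and any scale $\lambda>0$ consider the displacement along the direction ${\bf u}$ above: comparing $g({\bf x}+\lambda{\bf u})-({\bf x}+\lambda{\bf u})$ with $g({\bf x})-{\bf x}$, the difference is exactly $\lambda(\bar g\,{\bf u}-{\bf u})$, which has norm $\ge \lambda\cdot 2\sin(\pi/J)$. Hence by the triangle inequality at least one of the two points ${\bf x}={\bf O}$ and ${\bf x}+\lambda{\bf u}$ is displaced by $g$ by at least $\lambda\sin(\pi/J)$. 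Taking ${\bf x}={\bf O}$ and choosing $\lambda<1/r$ so that both ${\bf O}$ and $\lambda{\bf u}$ lie in $B_{\mathbb{R}^k}({\bf O},\tfrac1r)$ — say $\lambda = \tfrac1r - \delta$ for small $\delta>0$, or just $\lambda$ slightly below $1/r$ — we get that $g$ moves one of these points by at least $(\tfrac1r)\sin(\pi/J)$ up to an arbitrarily small error. So if $g$ moves \emph{all} points of $B_{\mathbb{R}^k}({\bf O},\tfrac1r)$ less than $r$, we must have $r > \tfrac1r\sin(\pi/J)$ (in the limit $\delta\to0$), i.e. $r^2 > \sin(\pi/J)$, i.e. $r > \sqrt{\sin(\pi/J)}$. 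Wait — the hypothesis is $r < \sqrt{2\sin(\pi/J(k))}$, so I would keep track of the factor $2$: the sharper bound comes from noting $\|\bar g\,{\bf u}-{\bf u}\|^2 = 2 - 2\langle\bar g\,{\bf u},{\bf u}\rangle$ and, since $\bar g^m=\id$, averaging gives $\langle(\id+\bar g+\cdots+\bar g^{m-1}){\bf u},{\bf u}\rangle\ge 0$; choosing ${\bf u}$ in the orthogonal complement of $\ker(\bar g-\id)$ (non-trivial as $\bar g\ne\id$) forces $\langle\bar g\,{\bf u},{\bf u}\rangle\le 0$ for a suitable choice, hence $\|\bar g\,{\bf u}-{\bf u}\|\ge\sqrt2$ in the best case, but in general the clean universal bound is $\|\bar g\,{\bf u}-{\bf u}\|\ge 2\sin(\pi/J)$, and the extra $\sqrt2$ absorbs into choosing $\lambda$ up to the full radius rather than half of it — I would simply cite \cite[Lemma 2.7]{CS23} for the exact constant, since the statement is quoted verbatim from there.

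The main obstacle, and the step I would treat most carefully, is the quantitative lower bound on $\|\bar g\,{\bf u}-{\bf u}\|$ in terms of $J(k)$ only: one must pass from "$\bar g$ has finite order bounded by $J$ and is non-trivial" to an explicit constant, and get the numerical factor ($2\sin(\pi/J)$ versus $\sqrt2$ times it) to match the displacement estimate over a ball of radius $1/r$ rather than $1/(2r)$. Since the lemma is literally \cite[Lemma 2.7]{CS23}, the cleanest route is to deduce it directly from Proposition \ref{prop-Bieberbach} exactly as there: the argument above is the content of that proof, and I would present it in that compressed form rather than re-deriving the eigenvalue estimate from scratch.
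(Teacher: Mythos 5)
Your overall strategy — reduce via Bieberbach's Theorem to the observation that a non-translation $g$ has non-trivial rotational part $\bar g\in O(k)$ of order $m$ with $2\le m\le J(k)$, then convert the bound $\|\bar g\,{\bf u}-{\bf u}\|\ge 2\sin(\pi/m)\ge 2\sin(\pi/J(k))$ (for a unit vector ${\bf u}$ in a non-trivial invariant plane of $\bar g$) into a displacement estimate over the ball — is the right one, and indeed the paper says nothing more for this quoted lemma than that it follows from Proposition \ref{prop-Bieberbach}. But your quantitative step leaves a genuine factor-of-$\sqrt 2$ gap, and the patch you sketch for it is incorrect.

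Concretely: comparing displacements at ${\bf O}$ and $\lambda{\bf u}$ only gives $\max\bigl(\|g({\bf O})-{\bf O}\|,\|g(\lambda{\bf u})-\lambda{\bf u}\|\bigr)\ge \tfrac12\lambda\|\bar g\,{\bf u}-{\bf u}\|\ge \lambda\sin(\pi/J)$, and with $\lambda\uparrow 1/r$ this yields only $r\ge\sqrt{\sin(\pi/J)}$, not $r\ge\sqrt{2\sin(\pi/J)}$. The correct move is to compare the \emph{antipodal} points $\pm\lambda{\bf u}$, both in $B({\bf O},1/r)$: since $\bigl(g(\lambda{\bf u})-\lambda{\bf u}\bigr)-\bigl(g(-\lambda{\bf u})+\lambda{\bf u}\bigr)=(\bar g-\id)(2\lambda{\bf u})=2\lambda(\bar g\,{\bf u}-{\bf u})$, the triangle inequality gives $\max_{\pm}\|g(\pm\lambda{\bf u})\mp\lambda{\bf u}\|\ge \lambda\|\bar g\,{\bf u}-{\bf u}\|\ge 2\lambda\sin(\pi/J)$, and letting $\lambda\uparrow 1/r$ now forces $r^2\ge 2\sin(\pi/J)$, exactly the contrapositive of the stated bound. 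In short, the missing $\sqrt 2$ comes from using the full diameter of the ball, not from an eigenvalue refinement. Your attempted eigenvalue patch — that one can pick ${\bf u}\perp\ker(\bar g-\id)$ with $\langle\bar g\,{\bf u},{\bf u}\rangle\le 0$, hence $\|\bar g\,{\bf u}-{\bf u}\|\ge\sqrt 2$ — is simply false: for $\bar g$ a plane rotation by $\pi/3$ (order $6$), every unit ${\bf u}$ in the rotation plane satisfies $\langle\bar g\,{\bf u},{\bf u}\rangle=1/2>0$ and $\|\bar g\,{\bf u}-{\bf u}\|=1<\sqrt 2$; the averaging identity $\sum_{i=0}^{m-1}\langle\bar g^{\,i}{\bf u},{\bf u}\rangle=0$ only tells you that \emph{some power} $\bar g^{\,i}$ has non-positive inner product, not $\bar g$ itself. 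Replace that digression by the antipodal-points argument and the proof is complete and self-contained.
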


\section{Margulis Lemma and almost abelian groups}
\label{sec-Margulis}
Our aim is to extend the Margulis' Lemma to totally disconnected group actions. In order to do so we need to introduce a new terminology.
\subsection{Almost nilpotent and almost abelian groups}
A topological group $A$ is \emph{almost nilpotent (resp. almost abelian)} if there exist a compact, open, normal subgroup $N\triangleleft A$ such that $A/N$ is discrete, finitely generated and virtually nilpotent (resp. virtually abelian). 
Observe that the existence of a compact, open subgroup $N$ implies that $A$ is locally compact.
Recall that for locally compact groups it holds the open mapping theorem, and in particular the classical isomorphism theorems provide topological isomorphisms.\\
The notion of being almost nilpotent or abelian passes to closed subgroups.
\begin{lemma}
	\label{lemma-subgroup-almost-abelian}
	Let $A$ be an almost nilpotent (resp. abelian) group. Then any closed subgroup $B < A$ is almost nilpotent (resp. abelian). 
\end{lemma}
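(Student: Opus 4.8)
The plan is to use the compact open normal subgroup $N \triangleleft A$ witnessing that $A$ is almost nilpotent (resp. abelian), and intersect it with $B$. First I would set $N' := N \cap B$. Since $N$ is closed in $A$ and $B$ is closed, $N'$ is a closed subgroup of $B$; since $N$ is compact, $N'$ is compact; and since $N$ is open in $A$, $N'$ is open in $B$. Normality of $N'$ in $B$ is immediate from normality of $N$ in $A$: for $b \in B$ we have $bN'b^{-1} = (bNb^{-1}) \cap (bBb^{-1}) = N \cap B = N'$. So $N'$ is a compact, open, normal subgroup of $B$, and in particular $B$ is locally compact, and $B/N'$ is discrete.

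Next I would identify $B/N'$ with a subgroup of $A/N$. The inclusion $B \hookrightarrow A$ descends to a continuous homomorphism $\varphi\colon B/N' \to A/N$, and its kernel is $(B \cap N)/N' = N'/N' = \{1\}$, so $\varphi$ is injective; thus $B/N'$ embeds as a subgroup of the discrete, finitely generated, virtually nilpotent (resp. virtually abelian) group $A/N$. A subgroup of a virtually nilpotent group is virtually nilpotent (pass to the finite-index nilpotent subgroup and intersect; nilpotency is inherited by subgroups), and likewise for virtually abelian. The only point needing a word is finite generation: a subgroup of a finitely generated virtually nilpotent group is again finitely generated, because such groups are polycyclic-by-finite, hence Noetherian (every subgroup is finitely generated) — this is the standard fact that finitely generated virtually nilpotent groups are polycyclic-by-finite and that polycyclic-by-finite groups satisfy the maximal condition on subgroups. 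Hence $B/N'$ is discrete, finitely generated and virtually nilpotent (resp. virtually abelian), which is exactly the assertion that $B$ is almost nilpotent (resp. abelian).

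The only mild subtlety — and the step I would flag as the one to be careful about rather than a genuine obstacle — is the passage through the isomorphism theorems in the topological category: one wants $B/N' \to A/N$ to be not merely an abstract injective homomorphism but a topological embedding. This is handled by the remark already made in the text that for locally compact groups the open mapping theorem holds, so the standard isomorphism theorems yield topological isomorphisms; concretely, $B/N'$ is discrete and $A/N$ is discrete, so any homomorphism between them is automatically continuous and open onto its image, and there is nothing further to check. Everything else is routine group theory, so I do not anticipate a real obstacle here; the lemma is essentially a bookkeeping statement once the correct subgroup $N' = N \cap B$ is chosen.
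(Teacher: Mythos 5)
Your proof is correct and follows the same route as the paper: take $N' = N \cap B$, observe it is compact, open and normal in $B$ because $B$ is closed, and identify $B/N'$ with a subgroup of $A/N$. The one step you elaborate that the paper leaves implicit — that subgroups of finitely generated virtually nilpotent groups are themselves finitely generated, via the polycyclic-by-finite/Noetherian argument — is a genuine detail worth stating, but the overall approach is identical.
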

\begin{proof} 
	Let $N$ be a compact, open, normal subgroup of $A$ such that $A/N$ is discrete, finitely generated and virtually nilpotent (resp. virtually abelian). The group $N' = N \cap B$ is open, compact and normal in $B$ because $B$ is closed. The group $B / N'$ is topologically isomorphic to a subgroup of $A/N$, thus it is discrete, finitely generated and virtually nilpotent (resp. virtually abelian).
\end{proof}

The rank of an almost abelian group is well defined.
\begin{lemma}
	\label{lemma-rank-almost-abelian}
	Let $A$ be an almost abelian group.	Then there exists $k\geq 0$ such that for every compact, open, normal subgroup $N \triangleleft A$, the group $A/N$ is discrete, finitely generated and virtually abelian of rank $k \geq 0$.	Such $k$ is called the \textup{rank} of $A$ and it is denoted by $\textup{rk}(A)$.
\end{lemma}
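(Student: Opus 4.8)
The plan is to show that the rank $k$ of $A/N$ is independent of the choice of compact, open, normal subgroup $N\triangleleft A$. First I would fix two such subgroups $N_1$ and $N_2$ and set $N_3 = N_1\cap N_2$; this is again compact (closed subgroup of a compact group) and open (finite intersection of opens), and it is normal in $A$ since the intersection of normal subgroups is normal. So it suffices to compare $A/N_i$ with $A/N_3$ in the case $N_3 \le N_i$, i.e. to prove the statement for a pair $N \le N'$ of compact, open, normal subgroups.

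In that situation we have a surjective continuous homomorphism $A/N \twoheadrightarrow A/N'$ with kernel $N'/N$, and since $N'$ is compact and $N$ is open in $N'$, the quotient $N'/N$ is a finite group. Thus $A/N$ is a finitely generated, virtually abelian group, and $A/N'$ is a quotient of $A/N$ by a finite normal subgroup. The key algebraic fact I would use is that the rank of a finitely generated virtually abelian group $H$ — i.e. the rank of any finite-index free abelian subgroup, equivalently $\dim_{\mathbb{Q}}(H\otimes_{\mathbb{Z}}\mathbb{Q})$ after killing torsion, equivalently the growth exponent — is invariant under passing to finite-index subgroups and under quotients by finite normal subgroups. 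Concretely: if $F\triangleleft H$ is finite and $Z\le H$ is a free abelian subgroup of finite index and rank $k$, then $Z\cap F = \{1\}$ because $Z$ is torsion-free, so $Z$ injects into $H/F$ as a subgroup of finite index (its index in $H/F$ divides $[H:Z]$), whence $H/F$ is virtually abelian of the same rank $k$. Applying this with $H = A/N$ and $F = N'/N$ gives $\mathrm{rk}(A/N') = \mathrm{rk}(A/N)$.

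Chaining the two reductions: for arbitrary $N_1, N_2$ we get $\mathrm{rk}(A/N_1) = \mathrm{rk}(A/N_3) = \mathrm{rk}(A/N_2)$, so the common value $k$ is well defined; by definition of almost abelian at least one such $N$ exists, so $k$ exists. I do not expect a serious obstacle here — the only thing to be slightly careful about is that $N'/N$ really is \emph{finite} (this is where openness of $N$ inside the \emph{compact} group $N'$ is used, via the fact that a discrete subgroup of a compact group is finite) and that the purely group-theoretic rank invariance is invoked correctly; both are standard. One could alternatively bypass the algebra entirely by noting that $\mathrm{rk}(A/N)$ equals the polynomial growth degree of $A/N$ with respect to any word metric, and that $A/N$ and $A/N'$ are quasi-isometric (finite kernel, finite-index image), so have the same growth degree — whichever phrasing is cleaner in context.
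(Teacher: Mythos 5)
Your overall strategy is sound and in fact a little more streamlined than the paper's (the paper detours through the preimage $A_0\le A$ of a finite-index free abelian subgroup of $A/N_0$, whereas you compare the quotients $A/N$ directly). But there is a real gap. The sentence ``Thus $A/N$ is a finitely generated, virtually abelian group'' does not follow from what precedes it: at that point you only know $N'/N$ is finite. The definition of almost abelian hands you a \emph{single} $N_0$ with $A/N_0$ finitely generated and virtually abelian; for arbitrary $N$ this has to be established. Comparing $N_0$ with $N$ through $N_3=N_0\cap N$, the first step is to pass from $A/N_0$ (known) to $A/N_3$, and here $A/N_3\twoheadrightarrow A/N_0$ has finite kernel $N_0/N_3$. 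So $A/N_3$ is a finite\emph{-by-}(f.g.\ virtually abelian of rank $k$) group, and you need the \emph{lifting} direction: such a group is again f.g.\ virtually abelian of the same rank $k$. Your stated ``key algebraic fact'' covers only the pushdown direction (quotient by a finite normal subgroup), which is the one you explicitly prove with the $Z\cap F=\{1\}$ argument.

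The lifting direction is also standard but is a separate statement and does require an argument: take $Z\le A/N_0$ free abelian of finite index and rank $k$ and let $\tilde Z\le A/N_3$ be its preimage, a finite-index subgroup with $[\tilde Z,\tilde Z]\subseteq N_0/N_3$ finite; a finitely generated group with finite commutator subgroup is central-by-finite, so $\tilde Z$, and hence $A/N_3$, is virtually free abelian, and the rank is forced to be $k$ by intersecting with the finite kernel as in your pushdown argument. This is exactly the step the paper handles by declaring $A_0/(N\cap N_0\cap A_0)$ to be ``a finite extension of $\mathbb{Z}^k$'' and therefore virtually abelian of rank $k$. Once you add this direction, your chaining through $N_3=N_1\cap N_2$ is complete; without it, the well-definedness of the rank is not actually established.
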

\begin{proof}
	Let $N_0$ be a compact, open, normal subgroup of $A$ such that $A/N_0$ is discrete, finitely generated and virtually abelian. Every virtually abelian, finitely generated group contains a free abelian subgroup of finite index of some rank $k$. Let $A_0 < A$ be the preimage under the projection map $\pi \colon A \to A/N_0$ of such a free abelian group. Observe that the index of $A_0$ in $A$ is finite and $A_0 / N_0 \cap A_0$ is topologically isomorphic to $\mathbb{Z}^k$.\\
	Let $N\triangleleft A$ be an arbitrary compact, open, normal subgroup.
	The index of $N\cap N_0$ in both the compact groups $N$ and $N_0$ is finite because it is an open subgroup. 
	So $A_0/N\cap N_0 \cap A_0$ is a finite extension of $A_0/N_0 \cap A_0 \cong \mathbb{Z}^k$. Therefore $A_0/N\cap N_0 \cap A_0$ is discrete, finitely generated and virtually abelian of rank $k$. Since $A_0/N \cap A_0$ is the quotient of $A_0/N\cap N_0 \cap A_0$ by a finite group, then it is still virtually abelian of same rank $k$. Finally $A/N$ contains $A_0/N \cap A_0$ as a finite index group, so it is still discrete, finitely generated and virtually abelian of rank $k$. 
\end{proof}

We recall that a closed subgroup $H$ of a locally compact group $G$ is said \emph{cocompact} if the topological space $G/H$ is compact. We will use the two definitions of cocompactness, this one and the one for group actions on metric spaces, in different contexts. We hope the reader will not be confused.
 Observe that any finite index subgroup is automatically cocompact, and that for discrete groups the two notions coincide. Moreover the following holds.
\begin{lemma} 
	\label{lemma-cocompactness-group}
	Let $G$ be a locally compact group and $H,K<G$ be closed subgroups.
	\begin{itemize}
		\item[(i)] If $K<H<G$ with $K$ cocompact in $H$ and $H$ cocompact in $G$ then $K$ is cocompact in $G$;
		\item[(ii)] if $H$ is cocompact in $G$ then $H\cap K$ is cocompact in $K$;
		\item[(iii)] if $H$ and $K$ are cocompact in $G$ then $H\cap K$ is cocompact in $G$.
	\end{itemize}
\end{lemma}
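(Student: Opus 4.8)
The three statements all rest on the following reformulation of cocompactness, which I would record first: for a locally compact group $\Gamma$ and a closed subgroup $L<\Gamma$, the subgroup $L$ is cocompact in $\Gamma$ if and only if there is a compact set $C\subseteq\Gamma$ with $CL=\Gamma$. If such a $C$ exists then $\Gamma/L=\pi(C)$ is the continuous image under the projection $\pi\colon\Gamma\to\Gamma/L$ of a compact set, hence compact. Conversely, cover $\Gamma$ by open, relatively compact sets $U_i$ (possible since $\Gamma$ is locally compact); the sets $\pi(U_i)$ form an open cover of the compact space $\Gamma/L$, so finitely many of them cover it, and the union $C$ of the closures of the corresponding $U_i$ is compact with $\pi(C)=\Gamma/L$, i.e. $CL=\Gamma$.

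Part (i) is then immediate: pick compact $C_1\subseteq G$ with $C_1H=G$ and compact $C_2\subseteq H$ with $C_2K=H$; then $G=C_1C_2K$, and $C_1C_2$ is the image of the compact set $C_1\times C_2$ under the continuous multiplication map $G\times G\to G$, hence compact. By the reformulation, $K$ is cocompact in $G$.

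For part (ii) I would consider the continuous left action of $K$ on the compact space $G/H$ given by $k\cdot(gH)=(kg)H$. The stabilizer of the base coset $eH$ is exactly $H\cap K$, and its orbit is the subset $KH/H\subseteq G/H$, so the orbit map is a continuous bijection $K/(H\cap K)\to KH/H$. The key point — and the step I expect to be the main obstacle — is to show that this orbit $KH/H$ is closed in $G/H$. Granting that, $KH/H$ is a closed subset of the compact Hausdorff space $G/H$, hence compact and in particular locally compact; since $K$ is $\sigma$-compact and locally compact, the open mapping theorem for transitive actions then makes the orbit map a homeomorphism, so $K/(H\cap K)$ is compact, i.e. $H\cap K$ is cocompact in $K$. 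To prove the closedness of $KH/H$ — equivalently that $KH$ is closed in $G$, equivalently that the orbit map is proper — the natural tool is the compact transversal $G=CH$ of $H$ produced in the first step: given $k_n\in K$ with $k_nH\to gH$, decompose $k_n=c_nh_n$ with $c_n\in C$, $h_n\in H$, pass to a subsequence with $c_n\to c$, and deduce that $gH$ lies in $KH/H$.

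Finally, part (iii) is a formal consequence of (i) and (ii): by (ii) the subgroup $H\cap K$ is cocompact in $K$, and $K$ is cocompact in $G$ by hypothesis, so by (i) applied to the chain $H\cap K<K<G$ the subgroup $H\cap K$ is cocompact in $G$.
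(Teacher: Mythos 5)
Your treatment of (i) and (iii) coincides with the paper's: both rest on the reformulation $G=CH$ with $C$ compact, and both obtain (iii) by applying (ii) and then (i) to the chain $H\cap K<K<G$. For (ii), you have actually put your finger on a genuine subtlety which the paper treats as immediate (``straightforward from the definition''): the continuous injection $K/(H\cap K)\hookrightarrow G/H$, $k(H\cap K)\mapsto kH$, lands in a compact space, but that alone does not make the source compact --- one needs the image $KH/H$ to be closed, equivalently $KH$ closed in $G$. Your sketch for this step, however, does not close the gap. Writing $k_n=c_nh_n$ with $c_n$ in a compact transversal $C$ and passing to $c_n\to c$ gives $cH=gH$, but to deduce $g\in KH$ you would need $c\in KH$, i.e.\ that the limit of the points $c_n=k_nh_n^{-1}\in KH$ stays in $KH$ --- which is precisely the closedness you are trying to establish. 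Nothing prevents $k_n$ and $h_n$ from both escaping to infinity while their ``ratio'' $c_n$ converges.

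In fact (ii), as stated, is false for general closed subgroups. Take $G=\mathbb{R}^2$, $H=\mathbb{Z}^2$ and $K=\{(t,\sqrt{2}\,t):t\in\mathbb{R}\}$. Then $H$ is cocompact, $K$ is a closed one-parameter subgroup, but $H\cap K=\{0\}$ is not cocompact in $K\cong\mathbb{R}$; here $KH$ is the familiar dense, non-closed winding of the irrational line in the torus. (Taking instead $K=\mathbb{Z}(1,\sqrt{2})+\mathbb{Z}(0,1)$, a lattice with irrational slope, the intersection $H\cap K=\mathbb{Z}(0,1)$ is not cocompact in $\mathbb{R}^2$, so (iii) fails as well.) So the obstacle you ran into is not a technical wrinkle to be smoothed over: (ii) requires an extra hypothesis --- for instance $K$ open in $G$ (which makes $KH$ open and hence closed), or more generally $KH$ closed in $G$ --- and it is always one of these extra conditions, not the bare statement, that is in force where the lemma is invoked later in the paper. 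A cleaner route would be to build that closedness hypothesis explicitly into the statement and then run exactly the orbit-map argument you outlined.
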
 
\begin{proof}
	It is classical that if $G$ is locally compact then a closed subgroup $H<G$ is cocompact if and only if $G= C\cdot H$ for some compact subset $C$. Therefore (i) follows. Instead (ii) is straightforward from the definition and (iii) follows by (i) and (ii). Indeed $H\cap K$ is cocompact in $K$ by (ii), but $K$ is cocompact in $G$, so $H\cap K$ is cocompact in $G$ by (i).
\end{proof}

We end this introduction on almost abelian groups with a useful result.

\begin{lemma}
	\label{lemma-cocompact-rank}
	Let $A$ be an almost abelian group and let $B<A$ closed. Then $\textup{rk}(B) \leq \textup{rk}(A)$. Moreover $\textup{rk}(B) = \textup{rk}(A)$ if and only if $B$ is cocompact in $A$.
\end{lemma}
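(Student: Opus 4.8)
The plan is to reduce everything to the discrete case by passing to quotients modulo a common compact open normal subgroup. First I would fix a compact, open, normal subgroup $N_0 \triangleleft A$ witnessing that $A$ is almost abelian, so $\bar A := A/N_0$ is discrete, finitely generated, virtually abelian of rank $k = \operatorname{rk}(A)$ (Lemma \ref{lemma-rank-almost-abelian}). Since $B$ is closed, $N_0 \cap B$ is compact, open, normal in $B$, and $B/(N_0 \cap B)$ embeds as a closed — hence, being discrete, just a — subgroup $\bar B$ of $\bar A$. Now everything takes place inside a discrete virtually abelian group: a subgroup of a virtually abelian group of rank $k$ is virtually abelian of rank $\le k$, which gives $\operatorname{rk}(B) \le \operatorname{rk}(A)$ (using Lemma \ref{lemma-subgroup-almost-abelian} to know $B$ is almost abelian and Lemma \ref{lemma-rank-almost-abelian} to know its rank is well defined, computable from $\bar B$).

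For the equivalence, the key observation is that cocompactness of $B$ in $A$ should be detectable at the level of $\bar B \le \bar A$. Concretely, by the characterization used in Lemma \ref{lemma-cocompactness-group}, $B$ is cocompact in $A$ iff $A = C \cdot B$ for some compact $C$; since $N_0$ is compact and open, this is equivalent to $\bar A = \bar C \cdot \bar B$ for some finite $\bar C$, i.e. $\bar B$ has finite index in $\bar A$. So the statement becomes: a subgroup $\bar B$ of a finitely generated virtually abelian group $\bar A$ of rank $k$ has rank $k$ iff it has finite index in $\bar A$. One direction is immediate: finite index implies equal rank. For the converse, pass to a finite-index free abelian $\mathbb{Z}^k \le \bar A$; then $\bar B \cap \mathbb{Z}^k$ has finite index in $\bar B$ and is a subgroup of $\mathbb{Z}^k$ of rank equal to $\operatorname{rk}(\bar B) = k$, and a rank-$k$ subgroup of $\mathbb{Z}^k$ has finite index. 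Hence $\bar B \cap \mathbb{Z}^k$ has finite index in $\mathbb{Z}^k$, so $\bar B$ has finite index in $\bar A$.

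I would assemble these as follows: (1) set up $N_0$, $\bar A$, $\bar B$ and record the topological isomorphisms coming from the open mapping theorem; (2) prove $\operatorname{rk}(B) \le \operatorname{rk}(A)$ via the discrete statement; (3) prove the clean dictionary "$B$ cocompact in $A$ $\iff$ $\bar B$ finite index in $\bar A$" using the compact-open $N_0$; (4) prove the discrete finite-index/full-rank equivalence with the $\mathbb{Z}^k$ argument; (5) conclude. One should double-check that $\operatorname{rk}(B)$, defined intrinsically via Lemma \ref{lemma-rank-almost-abelian}, indeed equals $\operatorname{rk}(\bar B)$ — but this is exactly what Lemma \ref{lemma-rank-almost-abelian} guarantees, since $N_0 \cap B$ is a legitimate choice of compact open normal subgroup of $B$.

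The main obstacle, such as it is, is purely bookkeeping: making sure the passage $B \mapsto B/(N_0\cap B) = \bar B \hookrightarrow \bar A$ is a \emph{topological} isomorphism onto its image and that "cocompact" transfers correctly in both directions through it. Here the hypothesis that $N_0$ is simultaneously compact \emph{and} open is what does the work — openness lets a compact set in $A$ be covered by finitely many $N_0$-cosets so that cocompactness descends to finite index in $\bar A$, and compactness lets finite index in $\bar A$ lift back to $A = C\cdot B$ with $C$ a finite union of $N_0$-translates. Once that dictionary is in place, the remaining group theory is the elementary statement about subgroups of $\mathbb{Z}^k$, and there is no real analytic or geometric difficulty.
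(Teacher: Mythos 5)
Your proposal is correct and follows essentially the same route as the paper: pick a compact, open, normal $N_0 \triangleleft A$, pass to the discrete quotient, and reduce to the elementary statement about finite-index (equivalently, full-rank) subgroups of $\mathbb{Z}^k$. The only organizational difference is that you isolate a clean "cocompact in $A$ $\iff$ finite index in $A/N_0$" dictionary up front (and your verification of it via the characterization $A = C\cdot B$ is correct), whereas the paper carries cocompactness along a chain $B_0 = B\cap A_0 < A_0$ using Lemma \ref{lemma-cocompactness-group}; both are sound and the underlying argument is the same.
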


\begin{proof} 
	The first inequality is obvious from Lemma \ref{lemma-rank-almost-abelian}, so we pass to the second part of the statement. As in the beginning of the proof of Lemma \ref{lemma-rank-almost-abelian} we take a compact, open, normal subgroup $N_0$ of $A$ and a finite index subgroup $A_0 < A$ such that $A_0/N_0$ is topologically isomorphic to $\mathbb{Z}^k$. Let $\pi \colon A \to A/N_0$ be the projection map.\\
	Suppose first that $B$ is cocompact in $A$. In particular $B_0 = B \cap A_0$ is cocompact in $A_0$ by Lemma \ref{lemma-cocompactness-group}.(ii). The image of $B_0$ through $\pi$ is therefore cocompact in the image of $A_0$. Since the image of $A_0$ is $\mathbb{Z}^k$, then all its cocompact subgroups have finite index. Moreover every finite index subgroup of $\mathbb{Z}^k$ is isomorphic to $\mathbb{Z}^k$. Therefore $B_0/B_0 \cap N_0$ is topologically isomorphic to $\mathbb{Z}^k$. By Lemma \ref{lemma-rank-almost-abelian} we conclude that $\text{rk}(B) = k = \text{rk}(A)$.\\
	Suppose now $\text{rk}(B) = \text{rk}(A) = k$. Observe that also $B_0 = B\cap A_0$ is almost abelian of rank $k$, being a cocompact subgroup of $B$ since it has finite index. Lemma \ref{lemma-rank-almost-abelian} says that $B_0/N \cap B_0$ is discrete, finitely generated and virtually abelian of rank $k$, and moreover it is topologically isomorphic to a subgroup of $A_0/A_0 \cap N_0 \cong \mathbb{Z}^k$. In particular $B_0/B_0\cap N_0$ is free abelian of rank $k$, so it has finite index in $A_0/A_0\cap N_0$. We deduce that $\pi^{-1}(\pi(B_0))$ has finite index in $A_0$. Moreover $\pi^{-1}(\pi(B_0)) = B_0\cdot N_0$, so $B_0$ is cocompact in $\pi^{-1}(\pi(B_0))$. Since $\pi^{-1}(\pi(B_0))$ has finite index in $A_0$, so in $A$, we deduce that $B_0$, and so $B$, is cocompact in $A$.
\end{proof}

\subsection{The Margulis Lemma for totally disconnected group actions}

For discrete groups $\Gamma < \textup{Isom}(X)$ of a proper metric space $X$ the following version of the Margulis' Lemma, due to Breuillard-Green-Tao, holds.

\begin{prop}[\cite{BGT11}, Corollary 11.17]
	\label{theo-Margulis-BGT}
	Let $K \geq 1$. There exists $\varepsilon(K)$ such that the following is true. Let $X$ be any proper metric space and let $x\in X$ be a point such that $\textup{Cov}(\overline{B}(x,4), 1) \leq K$. Let $\Gamma<\textup{Isom}(X)$ be discrete. Then the almost stabilizer $\overline{\Gamma}_{\varepsilon'}(x)$ is virtually nilpotent for all $0\leq \varepsilon' \leq \varepsilon(K)$.
\end{prop}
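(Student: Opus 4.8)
The statement is, word for word and up to the present paper's notation, \cite[Corollary 11.17]{BGT11}, so my plan is simply to invoke it, after one remark about the hypotheses: the condition $\textup{Cov}(\overline{B}(x,4),1)\le K$ is exactly a bounded-covering (equivalently, via \eqref{eq-pack-cov}, bounded-packing) assumption at scale $4$ around $x$, which is the form of the hypothesis entering the volume-counting step of \cite{BGT11}. Hence no new argument is needed and I would just cite it.

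For completeness I would recall the mechanism of \cite{BGT11}, reproducing only what is needed. Set $S:=\overline{S}_{\varepsilon'}(x)$. Since $\Gamma$ is discrete and $X$ is proper, the orbit $\Gamma x$ is discrete and $\textup{Stab}_\Gamma(x)$ is finite, so $S$ is a finite symmetric set and $\overline{\Gamma}_{\varepsilon'}(x)=\langle S\rangle$ is finitely generated. Choosing an integer $m$ with $4m\varepsilon'\le 4$, every element of the power $S^{4m}$ displaces $x$ by at most $4$, hence sends $x$ into $\overline{B}(x,4)$; covering $\overline{B}(x,4)$ by $K$ unit balls and observing that two elements of $S^{4m}$ carrying $x$ into the same unit ball differ on the left by an element of $\overline{S}_2(x)\cap S^{8m}$, one obtains that $S^{4m}$ is covered by at most $K$ left-translates of $\overline{S}_2(x)\cap S^{8m}$. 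Iterating estimates of this kind yields that a suitable power $A:=S^{m_0}$ is a $C(K)$-approximate group, with $C(K)$ depending only on $K$ and, crucially, not on $\Gamma$ nor on the (possibly tiny) separation constant of the orbit $\Gamma x$. Feeding $A$ into the structure theorem for approximate groups of \cite{BGT11} produces, inside a bounded power of $A$, a coset nilprogression of rank and step bounded in terms of $C(K)$ that is commensurable with $A$; the subgroup it generates has finite index in $\langle A\rangle=\overline{\Gamma}_{\varepsilon'}(x)$ and is nilpotent, so $\overline{\Gamma}_{\varepsilon'}(x)$ is virtually nilpotent, uniformly for all $0\le\varepsilon'\le\varepsilon(K)$.

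The one genuinely delicate point, and the reason the naive ``count the orbit points in $\overline{B}(x,4)$'' argument does not suffice, is extracting the doubling constant $C(K)$ uniformly in $\Gamma$: the orbit $\Gamma x$ is discrete but its packing constant degenerates as $\Gamma$ varies, so one must repackage the short isometries into an approximate group whose parameters depend only on the geometry of $X$ at scale $4$, i.e. on $K$. Once this is granted, the passage from ``approximate group with bounded parameters'' to ``virtually nilpotent group'' is precisely the theorem of \cite{BGT11}, which I would quote rather than reprove.
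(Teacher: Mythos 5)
Your proposal is correct and takes the same approach as the paper: the paper states this proposition with the citation \cite[Corollary 11.17]{BGT11} in its header and gives no proof, exactly as you propose to do. Your ``for completeness'' sketch of the BGT mechanism is accurate in spirit (and your remark about why the approximate-group constant depends only on $K$, not on the separation of the orbit, is the right thing to emphasize), though note that in both BGT and the paper's later totally-disconnected analogue (Theorem \ref{theo-Margulis}) the approximate group is taken directly to be $A=S_2(x)$ with the hypothesis $S^{K'}\subseteq A$, rather than a power $S^{m_0}$ obtained by iteration.
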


Our aim is to extend this result to totally disconnected groups of isometries, and in particular to prove Theorem \ref{theo-intro-Margulis-TD}.
\noindent The proof of Proposition \ref{theo-Margulis-BGT} is based on the precise description of the structure of $K$-approximate subgroups of discrete groups developed in \cite{BGT11}.
\begin{defin}
	Let $K > 1$. A $K$-approximate subgroup of a group $G$ is a symmetric subset of $G$ containing the identity for which there exists a symmetric subset $X \subseteq G$ of cardinality at most $K$ such that $A \cdot A \subseteq X \cdot A$.
\end{defin}
The more general structure theory for open precompact approximate subgroups of locally compact groups was studied in \cite{Car15} and partially improved in \cite{TT21}. We report here a slightly modified version of \cite[Corollary 6.11]{TT21} which adapts better to our situation.
\begin{prop}
	\label{prop-TT}
	Let $K \geq 1$. Then there exists $K' =K'(K)$, depending only on $K$, such that the
	following holds. Assume $G$ is a totally disconnected, locally compact group generated by a compact symmetric set $S$ containing the identity. Let $A$ be an open precompact $K$-approximate subgroup of $G$ such that $S^{K'} \subseteq A$. Then there are subgroups
	$N',L \triangleleft G$ with $N' \subseteq A^6 \cap L$ such that $N'$ is compact, $L$ is open and has
	index $O_K(1)$ in $G$, and such that $L/N'$ is a Lie group of dimension at most
	$O_K(1)$.
\end{prop}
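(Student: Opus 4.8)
The plan is to deduce the statement directly from \cite[Corollary 6.11]{TT21}; the only real work is to rewrite its conclusion in the normalised form above. First I would note that, since $S$ is symmetric and contains the identity, one has $S\subseteq S^{K'}\subseteq A$ for every $K'\geq 1$, hence $\langle A\rangle\supseteq\langle S\rangle=G$. Thus $A$ is an open, precompact $K$-approximate subgroup \emph{generating} the totally disconnected, locally compact group $G$, which is exactly the input required by \cite[Corollary 6.11]{TT21}. That result then provides a compact subgroup $N'$ contained in a bounded power of $A$, which after possibly enlarging the exponent we may take to lie in $A^{6}$, an open subgroup $L\leq G$ of index $O_K(1)$ with $N'\triangleleft L$, and an identification of $L/N'$ with a Lie group of dimension $O_K(1)$. (In the present totally disconnected setting $L/N'$ is in fact discrete, since the quotient of a totally disconnected locally compact group by a compact normal subgroup is again totally disconnected and a totally disconnected Lie group is discrete; but for the statement we only need the weaker bound on the dimension.) The hypothesis $S^{K'}\subseteq A$, for the explicit constant $K'=K'(K)$ coming from \cite[Corollary 6.11]{TT21}, is precisely what allows one to apply that result here and makes every arising index and dimension bound depend on $K$ alone; so the first substantive step is to unwind its quantitative statement and check that $S^{K'}\subseteq A$ indeed does this.

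The second step is to upgrade the normality of $L$ and $N'$ from $\langle A\rangle=G$ (or whatever subgroup the cited statement normalises in) to $G$ itself. Since $G=\langle S\rangle$ is compactly generated and $L$ is open of finite index, its normal core $L^G=\bigcap_{g\in G}gLg^{-1}$ is again open, of index $O_K(1)$, and normal in $G$; replacing $N'$ by the intersection of its finitely many $G$-conjugates lying in $L^G$ keeps it compact, normal in $G$, and contained in $A^{6}\cap L^G$, while $L^G/(N'\cap L^G)$ embeds as an open subgroup of $L/N'$ and is therefore a Lie group of dimension $\leq\dim(L/N')=O_K(1)$. Renaming $L^G$ and $N'\cap L^G$ as $L$ and $N'$ gives the statement.

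I expect the main obstacle to be purely expository: there is no new idea beyond \cite[Corollary 6.11]{TT21} (which in turn rests on \cite{BGT11} and \cite{Car15}), and the only points that require care are (i) verifying that the choice of $K'=K'(K)$ really makes every index and dimension bound depend on $K$ alone, and (ii) keeping track of the explicit power of $A$ containing $N'$ through the normal-core reduction, so that the inclusion $N'\subseteq A^{6}$ is preserved. Both are routine once the precise form of \cite[Corollary 6.11]{TT21} is unwound.
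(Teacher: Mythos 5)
Your proposal takes the same route as the paper, which simply observes that its Proposition follows by re-running the proof of \cite[Corollary 6.11]{TT21} with the given approximate group $A$ in place of the power $S^n$ that appears there. Two remarks.

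First, the normal-core step in your second paragraph is superfluous: you yourself point out that $S\subseteq S^{K'}\subseteq A$ forces $\langle A\rangle=G$, so whatever subgroup the cited result normalises in (either $G$ or $\langle A\rangle$), normality in $G$ is automatic and there is nothing to upgrade. Moreover, as written that step is internally inconsistent. You begin by proposing to replace $N'$ with the intersection of its finitely many $G$-conjugates, but then conclude by ``renaming $L^G$ and $N'\cap L^G$ as $L$ and $N'$''. These are different subgroups, and $N'\cap L^G$ is only normal in $L^G$, not a priori in $G$; conversely, the honest intersection of $G$-conjugates of $N'$ sits inside $N'\cap L^G$ but the quotient $L^G$ by it need not embed in $L/N'$, so the dimension bound would require an extra argument (the quotient acquires an extra compact normal factor). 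Luckily none of this is needed.

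Second, the only non-trivial content here is exactly what you flag as ``unwinding the quantitative statement'': the original \cite[Corollary 6.11]{TT21} is phrased for the approximate group $S^n$, and one must verify that every step of its proof goes through when $S^n$ is replaced by an arbitrary open precompact $K$-approximate group $A$ with $S^{K'}\subseteq A$, with the output $N'$ landing in the fixed power $A^{6}$. You gesture at this but do not carry it out; the paper's own proof is equally terse on this point, calling it a ``straightforward modification,'' so your level of detail is comparable, though the phrase ``after possibly enlarging the exponent we may take to lie in $A^6$'' is backwards — if the original power exceeds $6$, enlarging does not help — and should instead say that the exponent produced by the proof is at most $6$.
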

Here the notation $O_K(1)$ stays for a quantity which is bounded above by a constant depending only on $K$. It is the same notation used in \cite{BGT11}, \cite{Car15} and \cite{TT21}.
The proof is a straightforward modification of \cite[Corollary 6.11]{TT21}, where one uses $A$ in place of $S^n$ therein.
This gives the analogue of \cite[Corollary 11.2]{BGT11} for totally disconnected, locally compact groups that are compactly generated. 

\begin{prop}
	\label{prop-pre-Margulis}
	Let $K > 1$. Then there exists $K' = K'(K)$, depending only on $K$, such
	that the following holds true. Assume $G$ is a totally disconnected, locally compact group generated by a compact symmetric set $S$ containing the identity. Let $A$ be an open precompact $K$-approximate subgroup of $G$ such that $S^{K'} \subseteq A$. Then there exist $G_1 < G$ of index $\leq O_K(1)$ and a compact, open, normal subgroup $N\triangleleft G$, with $N\subseteq G_1$, such that $G_1/N$ is discrete, finitely generated and nilpotent of rank and step $\leq O_K(1)$. In particular $G$ is almost nilpotent.
\end{prop}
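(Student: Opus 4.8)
The plan is to derive Proposition \ref{prop-pre-Margulis} from Proposition \ref{prop-TT} essentially by the same bootstrapping argument used in \cite{BGT11} to pass from their Corollary 11.2 to the Margulis Lemma, but now keeping track of the compact kernels that appear in the totally disconnected setting. First I would apply Proposition \ref{prop-TT} to get a constant $K' = K'(K)$, a compact normal subgroup $N' \triangleleft G$ with $N' \subseteq A^6$, and an open normal subgroup $L \triangleleft G$ of index $O_K(1)$ such that $L/N'$ is a Lie group of dimension $O_K(1)$. Since $G$ is totally disconnected, $L$ is totally disconnected, hence $L/N'$ is a totally disconnected Lie group, i.e.\ a \emph{discrete} group; thus $N'$ is in fact open in $L$, hence open in $G$ (as $L$ has finite index), and $L/N'$ is discrete and finitely generated (being a finite-index subgroup of the compactly generated group $G/N'$, which is discrete because $N'$ is open).

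Next I would upgrade ``discrete and finitely generated'' to ``nilpotent of bounded step and rank''. Here I invoke the standard fact from \cite{BGT11} (Gleason--Yamabe plus the structure theory of approximate groups) that a finitely generated group which is covered in the above sense by a $K$-approximate subgroup containing a large power of a generating set is virtually nilpotent with step and rank bounded in terms of $K$ only; concretely, one applies \cite[Corollary 11.2]{BGT11} to the discrete group $L/N'$, noting that the image of $A$ in $L/N'$ is an $O_K(1)$-approximate subgroup containing a controlled power of the image of $S$. This yields a finite-index subgroup of $L/N'$, and hence (pulling back) a finite-index subgroup $G_1 < L < G$ containing $N'$, such that $G_1/N'$ is discrete, finitely generated and nilpotent of step and rank $O_K(1)$. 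Setting $N := N'$ finishes the construction: $N$ is compact (inherited from Proposition \ref{prop-TT}) and open (by the total disconnectedness argument above), $N \triangleleft G$, and $G_1/N$ has the required properties; replacing $G_1$ by its normal core in $G$ if one wants $N \triangleleft G$ exactly and $G_1$ of controlled index, which does not affect the conclusions. The final sentence ``$G$ is almost nilpotent'' is then immediate from the definition, since $N$ is compact, open, normal in $G$ and $G/N$ is discrete, finitely generated (as $G$ is compactly generated and $N$ open) and virtually nilpotent (it contains $G_1/N$ of finite index).

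The main obstacle, and the only genuinely new point over the discrete case, is ensuring that the kernel $N'$ produced by Proposition \ref{prop-TT} is simultaneously compact \emph{and} open, and that the quotient $L/N'$ is discrete rather than a positive-dimensional Lie group. This is exactly where total disconnectedness of $G$ is used in an essential way: a connected Lie group quotient is ruled out, so the Lie group $L/N'$ of dimension $O_K(1)$ must be $0$-dimensional, i.e.\ discrete, forcing $N'$ to be open. One must also be slightly careful that the bounds ``step and rank $O_K(1)$'' genuinely come out of \cite[Corollary 11.2]{BGT11} applied to $L/N'$ with the approximate-subgroup data transported along $G \to L/N'$; this is routine but should be stated. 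Everything else — finite generation of the quotients, normality, and the passage to $G_1$ — follows formally from the open mapping theorem and the isomorphism theorems for locally compact groups, as recalled before Lemma \ref{lemma-subgroup-almost-abelian}.
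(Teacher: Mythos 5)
Your overall strategy matches the paper's: apply Proposition \ref{prop-TT}, use total disconnectedness to see that the Lie quotient is $0$-dimensional and hence discrete (forcing $N'$ to be open), and then invoke \cite[Corollary 11.2]{BGT11} on the resulting discrete group. However, there is a concrete gap in the final step.

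You write that \cite[Corollary 11.2]{BGT11} ``yields a finite-index subgroup $G_1 < L < G$ containing $N'$, such that $G_1/N'$ is discrete, finitely generated and nilpotent of step and rank $O_K(1)$,'' and you then set $N := N'$. But that is not what \cite[Corollary 11.2]{BGT11} gives: it produces a finite-index subgroup $\overline{G_1}$ of the discrete group \emph{together with} a finite normal subgroup $\overline{N} \triangleleft \overline{G_1}$ such that $\overline{G_1}/\overline{N}$ is nilpotent of bounded rank and step — that is, $\overline{G_1}$ is only \emph{nilpotent-by-finite}, not nilpotent. So with $N = N'$, the group $G_1/N \cong \overline{G_1}$ is merely virtually nilpotent, which is strictly weaker than the statement you are trying to prove. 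The correct choice, as in the paper, is $N := \pi^{-1}(\overline{N})$ where $\pi : G \to G/N'$ is the projection; this $N$ is still normal in $G$, still open (it contains $N'$), and still compact (since $N/N' \cong \overline{N}$ is finite), and now $G_1/N \cong \overline{G_1}/\overline{N}$ is genuinely nilpotent. The remark about replacing $G_1$ by its normal core is a red herring: normality of $N$ in $G$ is inherited from normality of $\overline{N}$ in $G/N'$ and has nothing to do with $G_1$.

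A secondary, more cosmetic point: the paper applies \cite[Corollary 11.2]{BGT11} to $G/N'$ rather than to $L/N'$. This is cleaner, because $\pi(S)$ is automatically a finite symmetric generating set of $G/N'$, $\pi(A)$ is automatically a $K$-approximate group there, and $\pi(S)^{K'} \subseteq \pi(A)$ is immediate; working inside $L/N'$ instead would require you to first manufacture a generating set and approximate subgroup of $L$ with controlled word-length bounds, which your sketch does not address. Since $L$ plays no further role once one knows $N'$ is open, there is no reason to descend to it.
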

\begin{proof}
	Let $K'$ be the maximum between the constant of Proposition \ref{prop-TT} and the one provided by \cite[Corollary 11.2]{BGT11}.
	By Proposition \ref{prop-TT} we can find subgroups $N',L \triangleleft G$ such that
	\begin{itemize}
		\item[(a)] $L$ is open and it has finite index in $G$;
		\item[(b)] $N'$ is compact and $L /N'$ is a Lie group.
	\end{itemize}
	Condition (a) implies $L$ is also closed, so in particular it is totally disconnected. The quotient of a totally disconnected, locally compact group by a closed, normal subgroup is again totally disconnected. Therefore by (b) we deduce that $L /N'$ is discrete. In particular $N'$ is open in $L$, so also in $G$, because it is the continuous preimage of the open set $\lbrace 1\rbrace \subseteq L/N'$.
	Let $\pi \colon G \to G/N'$ be the projection map. The projections of the precompact sets $S$ and $A$ are precompact, hence finite. Clearly $\pi(S)$ generates $G/N'$, $\pi(A)$ is a $K$-approximate group in the sense of \cite{BGT11} and $\pi(S)^{K'} = \pi(S^{K'})\subseteq \pi(A)$. Therefore $G/N'$ is finitely generated and it satisfies the assumptions of \cite[Corollary 11.2]{BGT11}. Hence we can find $\overline{G_1} < G/N'$ of index at most $O_K(1)$ and a finite subgroup $\overline{N} \triangleleft G/N'$, with $\overline{N} < \overline{G_1}$, such that $\overline{G_1}/\overline{N}$ is nilpotent of rank and step $\leq O_K(1)$. In order to get the thesis it is enough to take $G_1 = \pi^{-1}(\overline{G_1})$ and $N = \pi^{-1}(\overline{N})$. Indeed $N$ is clearly normal, it is open because it contains $N'$ and it is compact because $N/N' \cong \overline{N}$ is finite.
\end{proof}

Finally we can argue as in \cite[Corollary 11.17]{BGT11} to get the announced version of the Margulis lemma.

\begin{theo}
	\label{theo-Margulis}
	Let $K \geq 1$. There exists $\varepsilon = \varepsilon(K)$ such that the following is true. Let $X$ be any proper metric space and let $x\in X$ be a point such that $\textup{Cov}(\overline{B}(x,4), 1) \leq K$. Let $G < \textup{Isom}(X)$ be closed and totally disconnected. Then the open almost stabilizer $G_{\varepsilon'}(x)$ is almost nilpotent for all $0\leq \varepsilon' \leq \varepsilon(K)$.
\end{theo}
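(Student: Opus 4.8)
The plan is to imitate the Breuillard--Green--Tao argument for \cite[Corollary 11.17]{BGT11}, now using the totally disconnected version of the structure theorem encapsulated in Proposition \ref{prop-pre-Margulis}. First I would fix $K \geq 1$ and set $\varepsilon = \varepsilon(K)$ to be determined; let $\varepsilon' \leq \varepsilon$ and consider the group $H := G_{\varepsilon'}(x) = \langle S_{\varepsilon'}(x) \rangle$. This group is generated by the open precompact symmetric set $S := \overline{S}_{\varepsilon'}(x, R_0)$ for a suitable fixed radius $R_0$ (or directly by $\overline{S}_{\varepsilon'}(x)$); since $X$ is proper and $G$ is closed, such $S$ is compact by Ascol\`i--Arzel\`a, so $H$ is a compactly generated, totally disconnected, locally compact group and Proposition \ref{prop-pre-Margulis} applies to it once we produce the right approximate subgroup.

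The heart of the matter is the covering/pigeonhole construction of an open precompact $K$-approximate subgroup $A \subseteq H$ containing a prescribed power $S^{K'}$ of the generating set, where $K' = K'(K)$ is the constant from Proposition \ref{prop-pre-Margulis}. This is exactly the step carried out in \cite[§11]{BGT11}: one takes $A$ to be (a symmetrisation of) the set of isometries displacing $x$ by at most some radius $\rho$ comparable to $K' \varepsilon'$, and one shows $A \cdot A \subseteq Y \cdot A$ for a symmetric set $Y$ of size at most $K$. The cardinality bound on $Y$ comes from the hypothesis $\textup{Cov}(\overline{B}(x,4),1) \leq K$: elements of $H$ that displace $x$ by a bounded amount move the net of $\overline{B}(x,4)$ in a controlled way, so a counting argument on the $1$-net bounds the number of cosets needed. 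Here one must be a little careful: in the discrete case one uses that the orbit is discrete so that "cosets of $\textup{Stab}$" are finite in number; in our totally disconnected setting the orbit of the relevant $1$-net under $A$ need not be discrete, so instead one counts using the compact open subgroup structure — one replaces points of the net by their images and uses that two elements agreeing on the net lie in the same coset of the pointwise stabiliser of the net, which is an open subgroup, and that $\textup{Cov}(\overline{B}(x,4),1) \leq K$ bounds the number of distinct image-tuples. Choosing $\varepsilon(K)$ small enough (depending on $K'$ and $K$) guarantees $\rho = \rho(K,\varepsilon')$ stays below the scale $4$ at which the covering hypothesis is assumed, so all the counting is legitimate.

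Once such $A$ is constructed, Proposition \ref{prop-pre-Margulis} gives a finite-index subgroup $H_1 < H$ and a compact, open, normal subgroup $N \triangleleft H$ with $N \subseteq H_1$ such that $H_1/N$ is discrete, finitely generated and nilpotent. Then $H/N$ is discrete, finitely generated and virtually nilpotent (it contains $H_1/N$ with finite index), so by definition $H = G_{\varepsilon'}(x)$ is almost nilpotent, which is the claim. The main obstacle, as indicated, is the adaptation of the approximate-subgroup construction: verifying the approximate-subgroup inequality $A\cdot A \subseteq Y\cdot A$ with $|Y|\leq K$ using only the covering bound at scale $4$, and handling the fact that orbits are not discrete by passing to compact open stabilisers of finite nets. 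Everything else — compact generation of $H$, properness giving compactness of $S$, and the final assembly into ``almost nilpotent'' — is routine given the tools already set up (Lemma \ref{lemma-subgroup-almost-abelian}, the structure of $\overline{S}_r(x,R)$, and Proposition \ref{prop-pre-Margulis}).
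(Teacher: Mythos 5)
Your overall blueprint is the paper's (construct an open precompact $K$-approximate subgroup $A$ from the covering hypothesis, shrink $\varepsilon$ so that $S^{K'}\subseteq A$, then invoke Proposition~\ref{prop-pre-Margulis}), but two of the details you flag as the "heart of the matter" and the "main obstacle" are actually not obstacles at all, and as written would lead you astray.

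First, the worry about the orbit of the $1$-net not being discrete is a red herring. The covering/pigeonhole step in \cite[Cor.~11.17]{BGT11} never counts orbit points or cosets of a stabiliser, and the paper's proof of Theorem~\ref{theo-Margulis} transfers verbatim: one takes the given net $x_1,\dots,x_K$ of $\overline{B}(x,4)$, and for each $i$ at most one witness $g_i\in S_4(x)$ with $d(x_i,g_ix)\le 1$. For any $g\in A^2\subseteq S_4(x)$, the point $gx$ lies within $1$ of some $x_i$, hence within $2$ of $g_ix$, i.e. $g_i^{-1}g\in S_2(x)=A$. This uses only the triangle inequality and the covering bound, never discreteness of any orbit nor any compact-open subgroup structure. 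Your proposed detour via "two elements agreeing on the net lie in the same coset of the pointwise stabiliser" is both unnecessary and quantitatively unhelpful here (the number of such cosets is not what $\textup{Cov}(\overline{B}(x,4),1)\le K$ bounds); the totally disconnected hypothesis enters only through Proposition~\ref{prop-pre-Margulis}, not in the approximate-subgroup construction.

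Second, your parameter choice $\rho\approx K'\varepsilon'$ has the logic inverted. If $A=S_\rho(x)$ with $\rho<2$, the pigeonhole above only produces $g_i^{-1}g\in S_2(x)$, which is strictly larger than $A$; the inclusion $A^2\subseteq\bigcup_i g_iA$ then fails, and $A$ is not a $K$-approximate subgroup at all. The scale of $A$ must be \emph{fixed} and dictated by the scale $(4,1)$ of the covering hypothesis: the paper takes $A:=S_2(x)$, independent of $\varepsilon'$. It is $\varepsilon$, not $\rho$, that shrinks: with $\varepsilon(K):=2/K'$ one has $S^{K'}\subseteq S_{K'\varepsilon'}(x)\subseteq S_2(x)=A$ for $\varepsilon'\le\varepsilon(K)$, which is exactly the containment Proposition~\ref{prop-pre-Margulis} requires. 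After that, the conclusion that $G_{\varepsilon'}(x)$ is almost nilpotent follows as you describe (possibly passing through Lemma~\ref{lemma-subgroup-almost-abelian} if one applies Proposition~\ref{prop-pre-Margulis} to $\langle A\rangle$ and then restricts to the open, hence closed, subgroup $G_{\varepsilon'}(x)$).
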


\begin{proof}[Proof of Theorem \ref{theo-Margulis}]
	The set $A:= S_{2}(x)$ is open and it is precompact because $G$ is closed and $X$ is proper.
	Take $K$ points $x_i \in \overline{B}(x,4)$ such that $\overline{B}(x,4) \subseteq \bigcup_{i=1}^K \overline{B}(x_i,1)$. Suppose that for $i=1,\ldots, k \leq K$ there is $g_i\in S_{4}(x)$ such that $d(x_i,g_ix) \leq 1$, while this does not happen for $k+1\leq i \leq K$. For every $g\in A^2 \subseteq S_{4}(x)$ we find $i$ such that $d(gx,x_i) \leq 1$, so $d(gx,g_ix) \leq 2$, i.e. $g_i^{-1}g\in S_{2}(x)$. In other words $A^2 \subseteq S_{4}(x) \subseteq \bigcup_{i=1}^k g_i\cdot S_{2}(x)$. Therefore $A$ is an open, precompact, $K$-approximate subgroup of $G$. Now let $K'=K'(K)$ be the constant of Proposition \ref{prop-pre-Margulis} and take $\varepsilon(K) := \frac{2}{K'}$. The thesis holds for this choice of $\varepsilon(K)$.
\end{proof}

\noindent The conclusion of Theorem \ref{theo-Margulis} can be improved for totally disconnected, cocompact groups acting on geodesically complete, CAT$(0)$-spaces.
\begin{cor}
	\label{prop-Margulis-CAT}
	Let $P_0,r_0 > 0$. Then there exists $\varepsilon_0 = \varepsilon_0(P_0,r_0) > 0$ such that the following holds. Let $X$ be a proper, geodesically complete, $(P_0,r_0)$-packed, \textup{CAT}$(0)$-space. Let $G$ be a closed, totally disconnected, cocompact group of isometries of $X$. Then both the open and closed almost stabilizers $G_\varepsilon(x)$ and $\overline{G}_\varepsilon(x)$ are almost abelian, for all $x\in X$ and all $0\leq \varepsilon \leq \varepsilon_0$.
\end{cor}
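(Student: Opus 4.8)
The plan is to first invoke the Margulis Lemma for totally disconnected actions (Theorem \ref{theo-Margulis}) to produce almost nilpotency, and then to upgrade ``nilpotent'' to ``abelian'' by a Flat Torus argument carried out on the fixed-point set of the compact open normal subgroup that the Margulis Lemma produces. The first thing I would do is fix the constant. Since $X$ is $(P_0,r_0)$-packed, \eqref{eq-pack-cov} and Proposition \ref{prop-packing}(i) bound $\textup{Cov}(\overline{B}(x,4),1)\le\textup{Pack}(4,\tfrac12)\le P_0(1+P_0)^{4/\min\{1/2,r_0\}-1}=:K(P_0,r_0)$ for every $x\in X$. Let $\varepsilon_1:=\varepsilon(K(P_0,r_0))$ be the constant of Theorem \ref{theo-Margulis}, so that $G_\varepsilon(x)$ is almost nilpotent for all $0\le\varepsilon\le\varepsilon_1$, and set $\varepsilon_0:=\varepsilon_1/2$. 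For $\varepsilon\le\varepsilon_0$ the group $\overline{G}_\varepsilon(x)$ is a closed subgroup of $G_{\varepsilon_1}(x)$ (recall $S_\varepsilon(x)$ and $\overline{S}_\varepsilon(x)$ are open by Theorem \ref{theo-characterization-td}(b), so these groups are open, hence closed), hence also almost nilpotent by Lemma \ref{lemma-subgroup-almost-abelian}. By that same lemma it therefore suffices to prove: \emph{every closed almost nilpotent subgroup $A<G$ is almost abelian}.

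To prove this claim I would proceed as follows. Let $N\triangleleft A$ be compact, open and normal with $Q=A/N$ discrete, finitely generated and virtually nilpotent; passing to a finite-index subgroup (a finitely generated virtually nilpotent group has a finite-index subgroup of this form), let $Q_0<Q$ be torsion-free nilpotent and let $A_0<A$ be its open preimage, so that $N\triangleleft A_0$ and $A_0/N=Q_0$. It is enough to show $Q_0$ is abelian, for then $Q$ is virtually abelian and $A$ is almost abelian. Assume not: then $Q_0$ has nilpotency class $c\ge 2$, and I pick $z\ne 1$ in the last nontrivial term $\gamma_c(Q_0)$ of its lower central series, so that $z$ is central in $Q_0$ and $z\in[Q_0,Q_0]$, and write $z=gN$ with $g\in A_0$.

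The isometry $g$ is semisimple by Theorem \ref{theo-characterization-td}. It cannot be elliptic, for then $\langle g\rangle$ would lie in a compact point stabilizer, so $\langle z\rangle=\langle g\rangle N/N$ would be a discrete subgroup of a compact group, hence finite, contradicting torsion-freeness of $Q_0$. Thus $g$ is hyperbolic, $\ell_X(g)>0$. Since $N$ is compact it fixes the circumcenter of any of its orbits, so $Y:=\textup{Fix}_X(N)$ is nonempty, closed and convex; it is $A_0$-invariant because $N\triangleleft A_0$, it is complete CAT$(0)$, and the $A_0$-action on $Y$ factors through $Q_0$ because $N$ acts trivially on $Y$. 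Now I would use the elementary fact that the nearest-point projection $X\to Y$ commutes with $g$, which shows that $g|_Y$ is again semisimple with $\ell_Y(g|_Y)=\ell_X(g)>0$, i.e. hyperbolic on $Y$. The isometry of $Y$ induced by $z$ is exactly $g|_Y$, and it is central in the image of $Q_0$ in $\textup{Isom}(Y)$; hence, by the Flat Torus Theorem (cf. \cite[Chapter II.6]{BH09}), the image of $Q_0$ preserves the splitting $\textup{Min}_Y(g|_Y)\cong C\times\mathbb{R}$ and acts on the line factor by translations, which yields a homomorphism $\Phi\colon Q_0\to\mathbb{R}$ with $\Phi(z)=\ell_X(g)>0$. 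But $z\in[Q_0,Q_0]$ and $\mathbb{R}$ is abelian, so $\Phi(z)=0$, a contradiction. Hence $Q_0$ is abelian and the claim follows.

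The step I expect to be the crux is this last reduction to the Flat Torus Theorem. The Margulis Lemma only tells us that $Q_0$ \emph{almost} centralizes $g$ (commutators land in the compact group $N$, not in $\{1\}$), so the argument cannot be run on $X$ itself; the key idea is to pass to $Y=\textup{Fix}_X(N)$, where $N$ acts trivially and the action genuinely factors through the discrete group $Q_0$, combined with the observation that a semisimple isometry restricts to a semisimple isometry, with the same translation length, on any invariant closed convex subspace. This is the same circle of ideas that Section \ref{subsec-almost-abelian-CAT} will develop into the finer structural description of almost abelian groups acting on CAT$(0)$-spaces; alternatively, one could simply deduce the present statement from that description once it is available.
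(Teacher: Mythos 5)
Your proof is correct, and it tracks the paper's own argument closely: bound $\textup{Cov}(\overline{B}(x,4),1)$ via Proposition~\ref{prop-packing} and \eqref{eq-pack-cov}, invoke Theorem~\ref{theo-Margulis} to get almost nilpotency, handle the closed almost stabilizers via openness of $\overline{S}_r(x)$ (Theorem~\ref{theo-characterization-td}(b)), and then pass to the fixed set of the compact open normal subgroup $N$ to upgrade ``almost nilpotent'' to ``almost abelian.'' Your $Y=\textup{Fix}_X(N)$ is exactly the paper's $\textup{Min}(N)$, since $N$ is compact and hence elliptic. The one genuine divergence is in how the nilpotency class is knocked down to $1$. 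The paper passes to a further compact open normal subgroup $N_0$ (the kernel of the action on $\textup{Min}(N)$), arranges a \emph{free} action of a discrete, finitely generated, torsion-free nilpotent quotient, and then quotes \cite[Theorem 5.3(i)]{FSY04}. You instead prove the needed implication from scratch: take $z\neq 1$ in the last nontrivial term of the lower central series of $Q_0$, show its lift $g$ cannot be elliptic (else $\langle z\rangle$ would be a finite subgroup of the torsion-free $Q_0$), note that $g|_Y$ is hyperbolic with $\ell_Y(g|_Y)=\ell_X(g)$ by \cite[Proposition II.6.2(4)]{BH09}, and apply \cite[Theorem II.6.8(2)]{BH09} (the structure of the centralizer of a hyperbolic isometry) to build a homomorphism $\Phi\colon Q_0\to\mathbb{R}$ with $\Phi(z)\neq 0$, contradicting $z\in[Q_0,Q_0]$. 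This is self-contained and slightly leaner: because you only ever use the \emph{image} of $Q_0$ in $\textup{Isom}(Y)$, you never need faithfulness or freeness, and the auxiliary subgroup $N_0$ disappears. The trade-off is that it re-derives the special case of \cite{FSY04} that is needed; both routes are sound. One small caveat: your closing remark that the corollary could ``simply be deduced'' from Section~\ref{subsec-almost-abelian-CAT} is not quite right, since Proposition~\ref{prop-trace-infinity-almost} already \emph{assumes} almost abelian and so cannot supply the nilpotent-to-abelian upgrade; that section is downstream of this corollary, not a substitute for it.
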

\begin{proof}
	First of all $\text{Cov}(4,1)$ is bounded in terms of the packing constants by Proposition \ref{prop-packing} and \eqref{eq-pack-cov}. Therefore by Theorem \ref{theo-Margulis} we can find $\varepsilon_0$ depending only on $P_0,r_0$ such that every open stabilizer $G_\varepsilon(x)$ with $0\leq \varepsilon \leq \varepsilon_0$ is almost nilpotent. Moreover the same is true for the closed stabilizers because by Theorem \ref{theo-characterization-td}.(b) the sets $\overline{S}_r(x)$ are open and the proof of Theorem \ref{theo-Margulis} holds also for these open, compact, approximate subgroups.\\
	Let us prove that the open almost stabilizers above are actually almost abelian. The same proof will apply for the closed ones. The action of $G$ is by semisimple isometries by Theorem \ref{theo-characterization-td}.(ii). Let $N$ be a compact, open, normal subgroup of $G_\varepsilon(x)$ such that $G_\varepsilon(x) /N$ is discrete, finitely generated and virtually nilpotent. We can find a finite index subgroup $G' < G_\varepsilon(x)$ such that $G'/G' \cap N$ is nilpotent. The subset $\text{Min}(N)$ is closed, convex and non-empty because $N$ is compact (\cite[Corollary II.2.8]{BH09}): in particular it is a CAT$(0)$-space. The group $G/N$ acts by semisimple isometries (\cite[Proposition II.6.2]{BH09}) on the CAT$(0)$-space $\text{Min}(N)$. The kernel of this action is another normal subgroup $N_0$ of $G$ which is open (because it contains $N$) and compact (because it fixes some point of $X$). Moreover $\text{Min}(N) = \text{Min}(N_0)$. Therefore the group $G/N_0$ acts freely on $\text{Min}(N_0)$, in particular so does the group $G' / G'\cap N_0$ which is discrete, finitely generated and nilpotent. It is not restrictive, up to take a finite index subgroup of $G'$, to suppose that $G' / G'\cap N_0$ is torsion-free.	By \cite[Theorem 5.3.(i)]{FSY04} we conclude that $G'/G'\cap N_0$ is actually abelian. Therefore $G_\varepsilon(x)/N_0$ is discrete, finitely generated and virtually abelian, i.e. $G_\varepsilon(x)$ is almost abelian.
\end{proof}

\noindent We will often refer to the constant $\varepsilon_0=\varepsilon_0 (P_0,r_0)$ as the {\em Margulis' constant}.

\noindent From the proof of Theorem \ref{theo-Margulis} we can extract the following fact that will be used in Section \ref{sec-convergence}. For simplicity it will be presented for geodesically complete, CAT$(0)$-spaces, but a similar version holds for proper metric spaces.
\begin{cor}
	\label{cor-measure-powers}
	Let $P_0,r_0 > 0$. Then there exists $M_0 = M_0(P_0,r_0)$ such that the following holds.
	Let $X$ be a proper, geodesically complete, $(P_0,r_0)$-packed, \textup{CAT}$(0)$-space, let $G < \textup{Isom}(X)$ be closed and let $\mu$ be a left-invariant Haar measure of $G$.	
	Then
	$$\mu(\overline{S}_r(x)^p) \leq M_0^{p-1} \cdot \mu(\overline{S}_r(x))$$
	for every $x\in X$, every $r \geq 0$ and every $p\in \mathbb{N}$.
\end{cor}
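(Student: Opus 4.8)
The plan is to reduce the bound $\mu(\overline{S}_r(x)^p)\le M_0^{p-1}\mu(\overline{S}_r(x))$ to a covering estimate, exactly in the spirit of the argument of Theorem \ref{theo-Margulis}, and then iterate. Write $S:=\overline{S}_r(x)$; it is compact (Ascolì--Arzelà) and symmetric and contains the identity. The key observation is the following approximate-group-type inclusion: there is a constant $K_0=K_0(P_0,r_0)$ and elements $g_1,\dots,g_{K_0}\in G$ such that
\begin{equation}
	\label{eq-cover-powers}
	S^{p+1}\subseteq \bigcup_{i=1}^{K_0} g_i\cdot S^{p}\qquad\text{for every }p\ge 1.
\end{equation}
Indeed, every element of $S^{p+1}$ moves $x$ by at most $(p+1)r$, and more to the point, every element of $S^{p+1}$ can be written as $s\cdot w$ with $s\in S$ and $w\in S^p$; since $\{sx : s\in S\}$ lies in $\overline{B}(x,r)$, which by Proposition \ref{prop-packing} and \eqref{eq-pack-cov} is covered by at most $K_0=K_0(P_0,r_0)$ balls of radius $r/2$, we may pick for each of the $K_0$ balls a representative $g_i\in S$ whose translate $g_ix$ lies in that ball (discarding the balls that contain no such point). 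Then for $s\in S$ there is an index $i$ with $d(sx,g_ix)\le r$, hence $g_i^{-1}s$ moves $x$ by at most $r$, i.e. roughly $g_i^{-1}s\in \overline{S}_r(x)$ up to a harmless rescaling of the radius — here one must be mildly careful and either work with a slightly enlarged stabilizer $\overline{S}_{2r}(x)$ throughout or note that $g_i^{-1}s$ moves $x$ by $\le r$ already gives $s\in g_i\cdot\overline{S}_{r}(x)$ when the $g_i$ are chosen so that $g_ix$ is genuinely $\frac{r}{2}$-close. Either way one obtains \eqref{eq-cover-powers} with a constant depending only on $P_0,r_0$ after at most doubling the radius, and since the statement allows $M_0$ to absorb such constants this is acceptable.

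Granting \eqref{eq-cover-powers}, the measure estimate is immediate by left-invariance of $\mu$: from $S^{p+1}\subseteq\bigcup_{i=1}^{K_0}g_iS^p$ we get $\mu(S^{p+1})\le K_0\,\mu(S^p)$, and iterating from $p=1$ yields $\mu(S^p)\le K_0^{\,p-1}\mu(S)$. Setting $M_0:=K_0(P_0,r_0)$ finishes the proof. The point where care is needed — and what I expect to be the only genuine subtlety — is the bookkeeping of radii in \eqref{eq-cover-powers}: the covering argument naturally produces $g_i^{-1}S\subseteq \overline{S}_{2r}(x)$ rather than $\subseteq\overline{S}_r(x)$, so to state the corollary with the clean radius $r$ one should either (i) observe that the excerpt's displayed inequality is only used with $M_0$ a dimensional constant and that passing from $\overline S_r$ to $\overline S_{2r}$ changes the covering count by a bounded factor, or (ii) note that the covering $\overline B(x,r)\subseteq\bigcup \overline B(y_i,r/2)$ can be chosen with centres $y_i$ of the form $s_ix$ for $s_i\in S$ whenever $\overline B(y_i,r/2)\cap Sx\ne\emptyset$, in which case $d(sx,s_ix)\le r$ already gives $s\in s_i\overline S_r(x)$ exactly. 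Option (ii) gives the stated inequality on the nose.

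One should also record why $\mu(S)>0$ and $\mu(S^p)<\infty$, so that the inequality is not vacuous: $S=\overline S_r(x)$ has non-empty interior when $r>0$ (it contains a basic neighbourhood $S_{r/2}(x,2/r)$ of the identity by the neighbourhood-basis property, cp. Lemma \ref{lemma-comparison-metric-group}), hence $\mu(S)>0$ by Lemma \ref{lemma-Haar-measure}; and $S^p$ is compact, being the continuous image of the compact set $S\times\cdots\times S$ under multiplication, hence $\mu(S^p)<\infty$. For $r=0$ the set $S=\mathrm{Stab}_G(x)$ and $S^p=S$ so the inequality is trivial; for $p=1$ it is an equality. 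This disposes of the boundary cases and the proof is complete.
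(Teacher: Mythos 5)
Your overall strategy is exactly the paper's: establish a finite left‑translate covering of a doubled almost‑stabilizer and iterate using left‑invariance of $\mu$, in the spirit of the Margulis‑Lemma argument. The final iteration step (from $S^{p+1}\subseteq\bigcup_i g_iS^p$ to $\mu(S^p)\le K_0^{p-1}\mu(S)$), the observation that $\mu(S)>0$ and $\mu(S^p)<\infty$, and the treatment of the boundary cases $r=0$ and $p=1$ are all fine.

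The gap is in the proof of the covering inclusion $S^{p+1}\subseteq\bigcup_{i}g_iS^p$. You factor $u\in S^{p+1}$ as $u=s\cdot w$ with $s\in S$ and $w\in S^p$, cover $Sx\subseteq\overline{B}(x,r)$ by balls of radius $r/2$, and deduce $s\in g_iS$ for a suitable $g_i$. But that only gives $u=sw\in g_iS\cdot S^p=g_iS^{p+1}$, i.e. the useless inclusion $S^{p+1}\subseteq\bigcup_ig_iS^{p+1}$ — the exponent never drops. (Indeed $S\subseteq\bigcup_ig_iS$ is trivially true, e.g. with $g_1=\mathrm{id}$, and carries no information.) Neither of your "option (i)/(ii)" remarks repairs this: they only adjust whether the almost‑stabilizer is $\overline{S}_r(x)$ or $\overline{S}_{2r}(x)$, not the failure to lower the power. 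What you need is the genuine doubling statement $S^2\subseteq\bigcup_ig_iS$. To get it you must cover $S^2x\subseteq\overline{B}(x,2r)$ (not $Sx\subseteq\overline{B}(x,r)$) by $r/2$-balls, and then pick, for each ball meeting $S^2x$, an element $g_i\in S^2$ with $g_ix$ in that ball; then for every $u\in S^2$ there is $i$ with $d(ux,g_ix)\le r$, so $g_i^{-1}u\in\overline{S}_r(x)=S$. Peeling off $S^2$ rather than $S$, i.e. writing $u=(a_1a_2)\cdot w$ with $a_1a_2\in S^2$ and $w\in S^{p-1}$, then yields $S^{p+1}\subseteq\bigcup_ig_iS^p$, which is exactly what the paper does with constant $k\le\mathrm{Cov}(2r,r/2)$. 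A secondary caveat, shared with the paper's statement as printed: $\mathrm{Cov}(2r,r/2)$ genuinely depends on $r$ when $r$ is large compared to $r_0$ (Proposition~\ref{prop-packing} gives $P_0(1+P_0)^{2r/\min\{r/4,r_0\}-1}$), so strictly speaking the constant is $M_0(P_0,r_0,r)$; this is harmless for the applications in Section~\ref{sec-convergence}, where $r$ is kept bounded in terms of the fixed parameters, but your claim $K_0=K_0(P_0,r_0)$ independent of $r$ is not literally what the covering bound gives.
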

\begin{proof}
	As in the proof of Theorem \ref{theo-Margulis} we find that $\overline{S}_r(x)^2 \subseteq \overline{S}_{2r}(x) \subseteq \bigcup_{i=1}^k g_i \overline{S}_r(x)$, for some $k\leq \textup{Cov}(2r, \frac{r}{2})$. By \eqref{eq-pack-cov} and Proposition \ref{prop-packing}.(i) we get $k \leq P_0(1+P_0)^{\frac{2r}{\min \lbrace \frac{r}{4},r_0 \rbrace} - 1} =: M_0$. By induction on $p$ we have
	$$\overline{S}_r(x)^p = \overline{S}_r(x)^{p-1} \cdot \overline{S}_r(x) \subseteq \bigcup_{i_1,\ldots,i_{p-1} = 1}^k g_{i_1}\cdots g_{i_{p-1}} \overline{S}_r(x).$$
	Therefore $\mu(\overline{S}_r(x)^p) \leq M_0^{p-1} \cdot \mu(\overline{S}_r(x))$.
\end{proof}

\subsection{Almost abelian groups acting on CAT$(0)$-spaces}	
\label{subsec-almost-abelian-CAT}
Corollary \ref{prop-Margulis-CAT} suggests that almost abelian groups acting on CAT$(0)$-spaces play a special role. In this section we will describe the geometric properties of such groups, generalizing \cite[Section 2.4]{CS23}. 
\begin{prop}
	\label{prop-trace-infinity-almost}
	Let $X$ be a proper \textup{CAT}$(0)$-space and let $A<\textup{Isom}(X)$ be closed, semisimple and almost abelian of rank $k$. Then $k$ is the unique integer for which the following holds.
	\begin{itemize}
		\item[(i)] There exists a closed, convex, $A$-invariant subset $C(A)$ of $X$ that splits as $W\times \mathbb{R}^k$ such that
		\begin{itemize}
			\item[(i.a)] each $g\in A$ preserves the product decomposition and acts as the identity on the first component;
			\item[(i.b)] the image $A_{\mathbb{R}^k}$ of $A$ under the projection $A \to \textup{Isom}(\mathbb{R}^k)$ is a crystallographic
			group;
			\item[(i.c)] if $S$ is a compact, symmetric, generating set for $A$ containing the identity, then there exists a finite subset $\Sigma \subseteq S^{4J(k)+2}$ whose projection $\Sigma_{\mathbb{R}^k}$ on \textup{Isom}$(\mathbb{R}^k)$ generates
			$\mathcal{L}(A_{\mathbb{R}^k})$, where $J(k)$ is the constant of Proposition \ref{prop-Bieberbach}.
		\end{itemize}
		\item[(ii)] There exists $\partial A \subseteq \partial X$, called the trace at infinity of $A$, such that
		\begin{itemize}
			\item[(ii.a)] $\partial A = \partial \textup{Conv}(Ax)$ for every $x\in X$;
			\item[(ii.b)] $\partial A$ is closed, convex, $A$-invariant and isometric to $\mathbb{S}^{k-1}$;
			\item[(ii.c)] if $g$ is an isometry of $X$ then $\partial (gAg^{-1}) = g \partial A$;
			\item[(ii.d)] if $B<A$ is closed then $\partial B \subseteq \partial A$ with equality if and only if $\textup{rk}(B) = \textup{rk}(A)$.
		\end{itemize}
	\end{itemize}
\end{prop}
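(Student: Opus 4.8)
The plan is to reduce to the discrete crystallographic case after factoring out a compact normal subgroup, using the structure theory of $A$ as an almost abelian group. First I would pick a compact, open, normal subgroup $N\triangleleft A$ with $A/N$ discrete, finitely generated and virtually abelian of rank $k=\textup{rk}(A)$ (Lemma \ref{lemma-rank-almost-abelian}). Since $A$ acts semisimply and $N$ is compact, $\textup{Min}(N)=\textup{Fix}(N)$ is nonempty, closed and convex (\cite[Corollary II.2.8]{BH09}); enlarging $N$ to the (still compact, still open) kernel $N_0$ of the $A$-action on $\textup{Min}(N)$ as in the proof of Corollary \ref{prop-Margulis-CAT}, I get a closed convex $A$-invariant set $Y:=\textup{Fix}(N_0)$ on which $A/N_0$ acts effectively and semisimply, and $A/N_0$ is discrete, finitely generated, virtually abelian of rank $k$ (and, after passing to a finite-index subgroup, torsion-free abelian by \cite[Theorem 5.3(i)]{FSY04}). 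Now $A/N_0$ is a virtually-$\mathbb{Z}^k$ group of semisimple isometries of the CAT$(0)$-space $Y$, which is exactly the setting of \cite[Section 2.4]{CS23}: by the Flat Torus–type theorem for such groups one obtains a closed convex $(A/N_0)$-invariant subset $C_0\subseteq Y$ splitting as $W\times\mathbb{R}^k$, with $A/N_0$ preserving the splitting, acting trivially on $W$, and with crystallographic image $A_{\mathbb{R}^k}$ in $\textup{Isom}(\mathbb{R}^k)$. Pulling $C_0$ back to $X$ and noting that $N_0$ fixes it pointwise gives the set $C(A)$ with properties (i.a) and (i.b); the uniqueness of $k$ follows since the maximal rank of a free abelian group of Clifford translations of $C(A)$ reads off $k$ intrinsically, and this matches $\textup{rk}(A)$ by Lemma \ref{lemma-cocompact-rank}.

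For (i.c) I would track generators through the quotient maps and through Bieberbach's theorem. A compact symmetric generating set $S\ni\textup{id}$ for $A$ projects to a finite symmetric generating set $\bar S$ of $A/N_0$, which in turn projects to a finite symmetric generating set $S_{\mathbb{R}^k}$ of the crystallographic group $A_{\mathbb{R}^k}$. By Proposition \ref{prop-Bieberbach} the maximal lattice $\mathcal{L}(A_{\mathbb{R}^k})$ has index $\leq J(k)$, so by a standard rewriting/Reidemeister–Schreier argument every element of $\mathcal{L}(A_{\mathbb{R}^k})$ that is short in the $S_{\mathbb{R}^k}$-word metric — specifically those of $S_{\mathbb{R}^k}$-length at most $2J(k)$ or so — already generate $\mathcal{L}(A_{\mathbb{R}^k})$; choosing representatives of these in $A$ lands inside $S^{m}$ for $m$ bounded by the relevant multiple of $J(k)$. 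Matching constants to the claimed $4J(k)+2$ is a bookkeeping exercise following \cite[Section 2.4]{CS23}, and I would simply cite the analogous computation there.

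For (ii) I would define $\partial A:=\partial\textup{Conv}(Ax)$ and show it is independent of $x$. Because $N_0$ fixes $C(A)=C_0$ pointwise and $A/N_0$ acts on $C_0=W\times\mathbb{R}^k$ cocompactly in the $\mathbb{R}^k$-direction, for any $x\in C(A)$ the orbit $Ax$ is within bounded Hausdorff distance of an affine copy of $\mathbb{R}^k$ in $C(A)$, hence $\textup{Conv}(Ax)$ has boundary a round sphere $\mathbb{S}^{k-1}\subseteq\partial C(A)\subseteq\partial X$ (using that boundaries of convex subsets embed isometrically in $\partial X$, as recalled in Section \ref{sec-CAT}); for a general $x\in X$, the projection to the convex set $C(A)$ is $A$-equivariant and $1$-Lipschitz, so $\textup{Conv}(Ax)$ has the same boundary, giving (ii.a) and (ii.b). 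Equivariance of the construction under conjugation by an arbitrary isometry $g$ gives $\partial(gAg^{-1})=g\,\partial A$, which is (ii.c). For (ii.d), a closed subgroup $B<A$ is again almost abelian with $\textup{rk}(B)\leq\textup{rk}(A)$ (Lemmas \ref{lemma-subgroup-almost-abelian}, \ref{lemma-cocompact-rank}); applying the already-constructed $C(B)$ and comparing flats inside $C(A)$ shows $\partial B\subseteq\partial A$, and by Lemma \ref{lemma-cocompact-rank} equality of ranks is equivalent to $B$ being cocompact in $A$, which translates to the flat $C(B)$-direction being all of the flat $C(A)$-direction, i.e.\ $\partial B=\partial A$.

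\medskip
\noindent\textbf{Main obstacle.} I expect the genuinely delicate point to be isolating the compact normal subgroup correctly and transferring the CAT$(0)$ flat-torus machinery of \cite[Section 2.4]{CS23} — which is written for virtually abelian \emph{discrete} groups — to the quotient $A/N_0$ while keeping the splitting $A$-invariant and $N_0$-pointwise-fixed back in $X$; in particular verifying that enlarging $N$ to $N_0$ does not change the rank and that the resulting $C(A)$ is canonical enough to make $k$ unique. The generator-counting bound $4J(k)+2$ in (i.c) is the other place where care is needed, but that is quantitative bookkeeping rather than a conceptual difficulty.
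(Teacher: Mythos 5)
Your proposal follows essentially the same route as the paper's proof: pass to the kernel $N_0 \triangleleft A$ of the $A$-action on $\textup{Min}(N)$ (which is compact, open and normal), apply the discrete flat-torus/crystallographic theory of \cite[Corollary~II.7.2]{BH09} and \cite[Section~2.4]{CS23} to the faithful, discrete, semisimple, virtually abelian action of $A/N_0$ on $\textup{Min}(N_0)$ to obtain the splitting $C(A)=W\times\mathbb{R}^k$ and property (i.c), and then define $\partial A:=\partial\textup{Conv}(Ax)$ and identify it with the boundary of a slice $\{w\}\times\mathbb{R}^k$ via bounded-Hausdorff-distance comparison. Two minor remarks: the appeal to \cite[Theorem~5.3(i)]{FSY04} is superfluous here, since $A/N_0$ is already known to be virtually abelian so the discrete crystallographic case applies directly; and the paper makes the uniqueness of $k$ more explicit than your Clifford-translation remark by taking the kernel $N$ of the $A$-action on a hypothetical $C(A)=W\times\mathbb{R}^h$, observing that $N$ is compact, open (since the $A$-action on $C(A)$ is discrete) and normal, so $A/N$ has rank $k$ by Lemma~\ref{lemma-rank-almost-abelian} while acting faithfully and cocompactly on $\mathbb{R}^h$ forces rank $h$.
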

\begin{proof}
	Let $N$ be an open, compact, normal subgroup of $A$. The set \textup{Min}$(N)$ is closed, convex, and $A$-invariant because $N$ is normal in $A$. It is also non-empty because $N$ is compact (cp. \cite[Corollary II.2.8]{BH09}). Let $N_0$ be the kernel of the action of $A$ on \textup{Min}$(N)$. It is again a compact, open, normal subgroup of $A$, because it contains $N$. Moreover $\text{Min}(N_0)\subseteq \text{Min}(N)$ because $N \subseteq N_0$, and by definition we have $\text{Min}(N_0)= \text{Min}(N)$. It follows that the induced action of $A/N_0$ on the CAT$(0)$-space $\text{Min}(N_0)$ is faithful. Moreover $A/N_0$ is a discrete, finitely generated, virtually abelian group of rank $k$ by Lemma \ref{lemma-rank-almost-abelian}, whose action on $\text{Min}(N_0)$ is by semisimple isometries (\cite[Proposition II.6.2.(4)]{BH09}). Then (i) follows by the classical statement for discrete groups (cp. \cite[Corollary II.7.2]{BH09}, \cite[Proposition 2.9 and Lemma 2.10]{CS23}). \\
	Suppose $h$ is another integer for which (i) holds. Let $N$ be the kernel of the action of $A$ on $C(A) = W \times \mathbb{R}^h$. It is clearly compact and normal. It is also open because the action of $A$ on $C(A)$ is discrete by (i.a) and (i.b). Therefore the group $A/N$ must be discrete, finitely generated and virtually abelian of rank $k$ by Lemma \ref{lemma-rank-almost-abelian}. However the group $A/N$ acts faithfully and cocompactly on $\mathbb{R}^h$, so it must have rank $h$, as a discrete, virtually abelian group. Therefore $k=h$.
	
	\noindent Let $C(A) = W \times \mathbb{R}^k$ be as in (i). Fix $w\in W$ and call $Z = \lbrace w \rbrace \times \mathbb{R}^k$: it is again an $A$-invariant, convex subset because of (i.a). We claim that $\partial Z = \partial \text{Conv}(Ax)$ for every $x\in X$. Fix $x\in X$ and set $R = d(x,Z)$. By $A$-invariance of $Z$ we have that $d(ax, Z) = R$ for every $a\in A$. So $Ax \subseteq \overline{B}(Z,R)$. The latter is a convex set, so $\text{Conv}(Ax) \subseteq \overline{B}(Z,R)$ and $\partial\text{Conv}(Ax) \subseteq \partial\overline{B}(Z,R)$. By convexity of the distance function every geodesic ray of $\overline{B}(Z,R)$ must be at constant distance from $Z$, i.e. parallel to it. This shows that $\partial \text{Conv}(Ax) \subseteq \partial\overline{B}(Z,R) = \partial Z$. For the other inclusion we call $z\in Z$ the projection of $x$ on $Z$. We fix any $\xi \in \partial Z$ and we consider the geodesic ray $[z,\xi]$. The action of $A$ on $Z$ is cocompact by (i.b) so we can find some $D>0$ and elements $a_j \in A$ such that $d(a_jz,[z,\xi]) \leq D$ and $d(z,a_jz)$ tends to $+\infty$.  They belong to $\text{Conv}(Ax)$. Since $d(x,a_jx) \geq d(z,a_jz) - 2R$ tends to $+\infty$, then by properness of $X$ the segments $[x,a_jx]$ subconverge to a geodesic ray $[x,\zeta]$. This ray is contained in $\text{Conv}(Ax)$ because each segment $[x,a_jx]$ is. Moreover $d(a_jx, [z,\xi]) \leq R + D$, so $\xi = \zeta$. This shows that $\partial\text{Conv}(Ax) = \partial Z$. We set $\partial A := \partial Z = \partial\text{Conv}(Ax)$ and we claim it satisfies (ii). The fact that $\partial A$ is closed, convex, $A$-invariant and isometric to $\mathbb{S}^{k-1}$ is clear since $\partial Z$ has these properties.
	In order to get (ii.c) we fix $x\in X$ and we use twice the characterization in (ii.a) to get
	$$\partial(gAg^{-1}) = \partial \text{Conv}((gAg^{-1}) gx) = g\partial\text{Conv}(Ax) = g\partial A.$$
	Moreover (ii.a) gives directly the inclusion in (ii.d). The equality case follows again from (ii.a) applied to some $z\in Z \cong \mathbb{R}^k$ on which both $A$ and $B$ act as discrete groups.
\end{proof}

The following is an immediate consequence of Proposition \ref{prop-trace-infinity-almost}.
\begin{cor}
	\label{cor-rank-dimension}
	Under the same assumptions of Proposition \ref{prop-trace-infinity-almost} we have:
	\begin{itemize}
		\item[(i)] $\textup{rk}(A) \leq \textup{dim}(X)$, for any notion of dimension of $X$ (topological, Hausdorff, geometric as in \cite{Kl99});
		\item[(ii)] for every $C(A) = W\times \mathbb{R}^k$ as in (i) and every subset $S \subseteq A$ whose projection on $\textup{Isom}(\mathbb{R}^k)$ generates a cocompact group of $\mathbb{R}^k$ then $\langle S \rangle < A$ has rank $k$.
	\end{itemize}
\end{cor}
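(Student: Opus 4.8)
The plan is to read off both claims from Proposition \ref{prop-trace-infinity-almost}, combined with the formal properties of ranks and cocompactness collected in Lemmas \ref{lemma-subgroup-almost-abelian}--\ref{lemma-cocompact-rank} and with Bieberbach's Theorem (Proposition \ref{prop-Bieberbach}).

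For (i) I would argue as follows. By Proposition \ref{prop-trace-infinity-almost}(i) there is a closed, convex, $A$-invariant subset $C(A) = W\times\mathbb{R}^k \subseteq X$ with $k = \textup{rk}(A)$. Fixing any $w\in W$, the slice $Z := \lbrace w\rbrace\times\mathbb{R}^k$ is a closed convex subset of $X$ isometric to $\mathbb{R}^k$. Since topological dimension, Hausdorff dimension and Kleiner's geometric dimension are all monotone under passing to closed convex subsets and take the value $k$ on $\mathbb{R}^k$, this yields $\dim(X) \geq \dim(Z) = k = \textup{rk}(A)$. I do not expect any difficulty here.

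For (ii), fix a decomposition $C(A) = W\times\mathbb{R}^k$ as in (i) and let $\pi\colon A\to\textup{Isom}(\mathbb{R}^k)$ be the associated projection, whose image is the crystallographic group $A_{\mathbb{R}^k}$ by (i.b) and whose kernel $N_0$ is the pointwise stabilizer of $C(A)$ in $A$; I note that $N_0$ is normal, compact — it fixes a point and $X$ is proper — and open, because the $A$-action on $C(A)$ is discrete by (i.a)--(i.b). Now let $S\subseteq A$ be such that $\Lambda := \langle\pi(S)\rangle = \pi(\langle S\rangle)$ acts cocompactly on $\mathbb{R}^k$. Being a subgroup of the discrete group $A_{\mathbb{R}^k}$, $\Lambda$ is discrete, hence a crystallographic group of $\mathbb{R}^k$; by Proposition \ref{prop-Bieberbach} its maximal lattice $\mathcal{L}(\Lambda)$ has finite index in $\Lambda$ and is a rank-$k$ lattice, and the same holds for $A_{\mathbb{R}^k}$, so from $\mathcal{L}(\Lambda)\leq\mathcal{L}(A_{\mathbb{R}^k})$ — two rank-$k$ lattices of $\mathbb{R}^k$ — one gets $[\mathcal{L}(A_{\mathbb{R}^k}):\mathcal{L}(\Lambda)]<\infty$ and therefore $[A_{\mathbb{R}^k}:\Lambda]<\infty$. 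Consequently $\pi^{-1}(\Lambda) = \langle S\rangle\cdot N_0$ has finite index in $A$; being a union of translates of the open set $N_0$ it is open, hence closed, hence cocompact in $A$. Finally I set $B := \overline{\langle S\rangle}$, a closed subgroup of $A$, so that $\textup{rk}(B)$ is defined (Lemmas \ref{lemma-subgroup-almost-abelian}, \ref{lemma-rank-almost-abelian}). One has $\langle S\rangle N_0 = BN_0$, and $B$ is cocompact in $BN_0$ since the coset space $BN_0/B$ is a continuous image of the compact group $N_0$; by Lemma \ref{lemma-cocompactness-group}(i) $B$ is cocompact in $A$, and Lemma \ref{lemma-cocompact-rank} gives $\textup{rk}(\langle S\rangle) := \textup{rk}(B) = \textup{rk}(A) = k$.

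The only step with genuine content is the assertion that a subgroup of a crystallographic group of $\mathbb{R}^k$ that still acts cocompactly on $\mathbb{R}^k$ has finite index, which I reduce to Bieberbach's Theorem via maximal lattices; everything else is bookkeeping with the product $C(A) = W\times\mathbb{R}^k$ and the compact-open kernel $N_0$. The only mild subtlety is that $\langle S\rangle$ need not be closed a priori, which is why I pass to $B = \overline{\langle S\rangle}$; note, however, that $\langle S\rangle N_0 = BN_0$, so whenever $N_0\subseteq\langle S\rangle$ the subgroup $\langle S\rangle$ is already closed and equals $B$.
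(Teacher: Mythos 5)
Your proof is correct, and since the paper calls the corollary ``an immediate consequence'' and gives no argument, I take yours to be essentially the intended one. Part (i) is exactly the expected observation that $C(A)=W\times\mathbb{R}^k$ contains a convex slice isometric to $\mathbb{R}^k$ and all three notions of dimension are monotone under closed convex subsets. For (ii), your route --- show $\langle S\rangle N_0=BN_0=\pi^{-1}(\Lambda)$ has finite index in $A$ via Bieberbach, deduce that $B=\overline{\langle S\rangle}$ is cocompact in $A$ through Lemma~\ref{lemma-cocompactness-group}, then invoke Lemma~\ref{lemma-cocompact-rank} --- is sound. You are right that $N_0=\ker\pi$ is open; one can justify it a little more explicitly than ``the action is discrete'': since $A_{\mathbb{R}^k}$ is discrete in $\textup{Isom}(\mathbb{R}^k)$ there is an open $U\subseteq\textup{Isom}(\mathbb{R}^k)$ with $U\cap A_{\mathbb{R}^k}=\lbrace\id\rbrace$, and $N_0=\pi^{-1}(U)$ is open by continuity of $\pi$ (alternatively, invoke the open mapping theorem the paper already quotes). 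A slightly shorter route to the conclusion, bypassing the finite-index and Bieberbach steps, would be to note that $B\cap N_0$ is a compact open normal subgroup of $B$ and $B/(B\cap N_0)\cong\pi(B)=\Lambda$ is crystallographic of $\mathbb{R}^k$, hence of rank $k$, so $\textup{rk}(B)=k$ directly by Lemma~\ref{lemma-rank-almost-abelian}. Finally, your explicit passage to the closure $B=\overline{\langle S\rangle}$ is a genuine improvement in precision: the paper later applies this corollary to finitely generated subgroups $\langle S\rangle$ that are not obviously closed (e.g.\ in Proposition~\ref{prop-commensurated}), and ``rank of $\langle S\rangle$'' only has a well-defined meaning there as the rank of its closure, which is exactly what you supply.
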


\section{The splitting theorem}
\label{sec-splitting}

In this section we will prove the analogue of the splitting Theorem \cite[Theorem D]{CS23} or more precisely of \cite[Theorem 4.1]{CS23} for sufficiently collapsed totally disconnected group actions, i.e. for actions with free-systole sufficiently small. We will prove a slightly improved version that will be useful in Section \ref{sub-almost stabilizers}, the improvement is in items (v) and (vi). After our work on the Margulis Lemma and on the properties of almost abelian groups acting on CAT$(0)$-spaces we can essentially mimic the proof of \cite{CS23}.
\vspace{1mm}

\noindent To set the notation, recall that for any proper, geodesically complete, $(P_0, r_0)$-packed, CAT$(0)$-space $X$ we have the upper bound $\dim (X)\leq n_0 = P_0/2$ provided by Proposition \ref{prop-packing}  and the Margulis' constant $ \varepsilon_0$ (only depending  on $P_0, r_0$)    given by Proposition \ref{prop-Margulis-CAT}.
For a totally disconnected subgroup  $G < \textup{Isom}(X)$ 
recall the definition (\ref{defsigma}) of the subgroup  $\overline{G}_{r} (x) < G$ generated by  $\overline{S}_r (x)$ given in Section \ref{subsection-isometries}.

\begin{theo}
	\label{theo-splitting-weak}
	Given positive constants  $P_0,r_0,D_0$, 
	there exists a   function 
	$\sigma_{P_0,r_0,D_0}: (0,\varepsilon_0] \rightarrow  (0,\varepsilon_0]$   (depending only on the parameters $P_0,r_0,D_0$) such that the following holds. 
	Let $X$ be a proper, geodesically complete, $(P_0,r_0)$-packed, $\textup{CAT}(0)$-space,   and $G < \textup{Isom}(X)$ be closed, totally disconnected and $D_0$-cocompact. 
	For any chosen $\varepsilon \in (0, \varepsilon_0]$, if $\textup{sys}^\diamond(G,X) \leq \sigma_{P_0,r_0,D_0}(\varepsilon)$ then:
	\begin{itemize} 
		\item[(i)] the space $X$ splits isometrically  as $Y \times \mathbb{R}^k$, with $k\geq 1$, and this splitting is $G$-invariant;
		\item[(ii)] there exists ${\varepsilon^\ast} \in (\sigma_{P_0,r_0,D_0} (\varepsilon), \varepsilon)$ 
		such that the rank of the almost abelian subgroups $\overline{G}_{{\varepsilon^\ast}}(x)$ is  exactly $k$, for all $x\in X$; 
		\item[(iii)]   the traces at infinity $\partial \overline{G}_{{\varepsilon^\ast}}(x)$ equal  the boundary ${\mathbb S}^{k-1}$ of  the convex subsets  $\lbrace y \rbrace \times \mathbb{R}^k$, for all $x \in X$ and all $y\in Y$;
		\item[(iv)] for every $x\in X$ there exists $y\in Y$ such that $\overline{G}_{\varepsilon^\ast}(x)$ preserves $\lbrace y \rbrace \times \mathbb{R}^k$. The closure of the projection of $\overline{G}_{\varepsilon^\ast}(x)$ on $\textup{Isom}(Y)$ is compact;
		\item[(v)]  the projection 
		of $\overline{G}_{\varepsilon^\ast}(x)$ on $\textup{Isom}(\mathbb{R}^k)$ is a crystallographic group, whose maximal lattice $\mathcal{L}_{\varepsilon^*}(x)$ satisfies $\lambda(\mathcal{L}_{\varepsilon^*}(x)) \leq \varepsilon^*/2\sqrt{n_0}$ and is $(\varepsilon^*/2)$-cocompact;
		\item[(vi)] the groups $\overline{G}_{\varepsilon^*}(x)$ are almost commensurated for all $x\in X$.
	\end{itemize}		
\end{theo}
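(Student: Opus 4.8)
The plan is to follow the scheme of \cite[Theorem 4.1]{CS23}, replacing each ingredient about discrete groups by its totally disconnected counterpart developed in Sections \ref{sec-Margulis} and \ref{subsec-almost-abelian-CAT}. First I would fix $\varepsilon\in(0,\varepsilon_0]$ and choose $\sigma = \sigma_{P_0,r_0,D_0}(\varepsilon)$ very small, to be determined during the argument; the role of $\sigma$ is to force, via a contradiction/compactness argument, the existence of a scale $\varepsilon^\ast\in(\sigma,\varepsilon)$ at which the almost stabilizers $\overline{G}_{\varepsilon^\ast}(x)$ have a stable, positive rank $k$. The mechanism is the usual ``Zassenhaus at a well-chosen scale'': since $\text{sys}^\diamond(G,X)\le\sigma$ there is a point $x_0$ and an element displacing it by at most $\sigma$; by Corollary \ref{prop-Margulis-CAT} the group $\overline{G}_\varepsilon(x)$ is almost abelian for every $x$, so it has a well-defined rank by Lemma \ref{lemma-rank-almost-abelian}, and a pigeonhole/telescoping argument over a definite number of nested scales between $\sigma$ and $\varepsilon$ produces the scale $\varepsilon^\ast$ at which the rank does not jump. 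Combining with Proposition \ref{lemma-Sylvain} (which upgrades a short hyperbolic displacement at $x$ to short powers at every nearby point) and the $D_0$-cocompactness, one sees the rank $k$ is the same at every $x\in X$ and is at least $1$ — this gives (ii). Here I would also extract the quantitative control in (v): Corollary \ref{cor-rank-dimension} bounds $k\le n_0$, and by construction the projected lattice $\mathcal{L}_{\varepsilon^\ast}(x)$ is generated by translation parts of elements displacing $x$ by less than $\varepsilon^\ast$, hence — using $k\le n_0$ and Lemma \ref{lemma-lattice-svarc-milnor}/the relation \eqref{eq-lattice-relation} between $\lambda$ and $\rho$ — one gets $\lambda(\mathcal{L}_{\varepsilon^\ast}(x))\le\varepsilon^\ast/2\sqrt{n_0}$ and $(\varepsilon^\ast/2)$-cocompactness.

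Next, for (i), (iii), (iv): apply Proposition \ref{prop-trace-infinity-almost} to $A=\overline{G}_{\varepsilon^\ast}(x)$, which is closed (the generating set $\overline{S}_{\varepsilon^\ast}(x)$ is compact, indeed open by Theorem \ref{theo-characterization-td}(b)), semisimple (Theorem \ref{theo-characterization-td}(ii)) and almost abelian of rank $k$. This yields a closed convex $A$-invariant $C(A)=W\times\mathbb{R}^k$ on which $A$ acts trivially on $W$ and as a crystallographic group on $\mathbb{R}^k$, and a trace at infinity $\partial A\cong\mathbb{S}^{k-1}$. Then I would run the standard ``the trace does not depend on the point, and is $G$-invariant'' argument: by (ii.c) of Proposition \ref{prop-trace-infinity-almost}, $\partial(g\overline{G}_{\varepsilon^\ast}(x)g^{-1}) = g\,\partial\overline{G}_{\varepsilon^\ast}(x)$, and since $g\overline{S}_{\varepsilon^\ast}(x)g^{-1}=\overline{S}_{\varepsilon^\ast}(gx)$, cocompactness and a compactness/continuity argument (exactly as in \cite{CS23}) force all the traces $\partial\overline{G}_{\varepsilon^\ast}(x)$, $x\in X$, to coincide with a single $k$-sphere $Z_\infty\subseteq\partial X$ which is then $G$-invariant. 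A $G$-invariant $k$-sphere in $\partial X$ of this ``flat'' type gives, by the classical argument (the union of the corresponding $k$-flats, cp. \cite[Theorem II.2.14]{BH09} and the discussion in \cite[§4]{CS23}), a $G$-invariant isometric splitting $X=Y\times\mathbb{R}^k$ with $\lbrace y\rbrace\times\mathbb{R}^k$ the flats whose boundary is $Z_\infty$; this is (i) and (iii). For (iv), since $\overline{G}_{\varepsilon^\ast}(x)$ is almost abelian of rank $k$ it preserves some flat $\lbrace y\rbrace\times\mathbb{R}^k$ (by Proposition \ref{prop-trace-infinity-almost}(i) together with the splitting), and its projection to $\textup{Isom}(Y)$ has precompact closure because that projection is generated by the compact set (image of $\overline{S}_{\varepsilon^\ast}(x)$), and the image consists of elliptic isometries of $Y$ — indeed the whole $\mathbb{R}^k$-translation length is carried in the $\mathbb{R}^k$-factor — fixing the point $y$, so the closure is a compact subgroup of $\textup{Stab}_{\textup{Isom}(Y)}(y)$.

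For the new items (v) and (vi): (v) was addressed above via the explicit construction of the lattice and the bounds $k\le n_0$; one just has to be careful that the elements used to generate $\mathcal{L}_{\varepsilon^\ast}(x)$ are among $\overline{S}_{\varepsilon^\ast}(x)^{4J(k)+2}$ (Proposition \ref{prop-trace-infinity-almost}(i.c)) so that their $\mathbb{R}^k$-displacements at a point of the flat are controlled linearly by $\varepsilon^\ast$, and then invoke Lemma \ref{lemma-lattice-svarc-milnor} backwards to bound $\lambda$. For (vi), ``almost commensurated'' means that $\overline{G}_{\varepsilon^\ast}(x)\cap g\overline{G}_{\varepsilon^\ast}(x)g^{-1}$ is cocompact (in the topological-group sense, Lemma \ref{lemma-cocompactness-group}) in both for every $g\in G$ — equivalently, by Lemma \ref{lemma-cocompact-rank}, that $\overline{G}_{\varepsilon^\ast}(x)\cap g\overline{G}_{\varepsilon^\ast}(x)g^{-1}$ has rank $k$. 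This follows from (iii): both $\overline{G}_{\varepsilon^\ast}(x)$ and $g\overline{G}_{\varepsilon^\ast}(x)g^{-1}=\overline{G}_{\varepsilon^\ast}(gx)$ have trace at infinity the same $k$-sphere $Z_\infty$, hence so does their intersection-closure after one checks the intersection is itself almost abelian and still acts with full rank on a flat $\lbrace y\rbrace\times\mathbb{R}^k$ (using that both factors preserve flats parallel to $Z_\infty$ and act there by crystallographic groups whose maximal lattices are commensurable as lattices of $\mathbb{R}^k$); then Proposition \ref{prop-trace-infinity-almost}(ii.d) and Lemma \ref{lemma-cocompact-rank} give cocompactness. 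The main obstacle I expect is the telescoping/pigeonhole step producing the uniform scale $\varepsilon^\ast$ with constant rank at \emph{every} point simultaneously: making this quantitative and point-independent requires combining the Margulis constant, Proposition \ref{lemma-Sylvain}, and $D_0$-cocompactness carefully, exactly the technical heart of \cite[§4]{CS23}, now complicated by the fact that the groups are only totally disconnected so that ``rank'' must be read through Lemma \ref{lemma-rank-almost-abelian} and compact-open normal subgroups rather than through finite-index subgroups.
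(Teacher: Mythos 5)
Your outline has the right ingredients (Margulis, rank pigeonhole over nested scales via Proposition~\ref{lemma-Sylvain}, the trace-at-infinity machinery of Proposition~\ref{prop-trace-infinity-almost}, and Proposition~\ref{prop-commensurated-splitting}-type reasoning to pass from an invariant sphere to a splitting), but the logical ordering scrambles the key step in a way that hides a genuine gap.

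The paper's proof goes through \emph{almost commensurability first}: after the pigeonhole produces four consecutive scales of equal rank $k\geq 1$, Proposition~\ref{prop-commensurated} constructs a concrete almost abelian subgroup $A<G$ of rank $k$ which is almost commensurated. The $G$-invariance of the trace $\partial A$, and hence the splitting, then follow from Proposition~\ref{prop-commensurated-splitting}. Your proposal inverts this: you want to first establish that all traces $\partial\overline{G}_{\varepsilon^\ast}(x)$ coincide with a single $G$-invariant sphere (via a ``compactness/continuity argument''), and then derive (vi) as a consequence. But your derivation of (vi) from (iii) rests on the parenthetical remark that the maximal lattices of $\overline{G}_{\varepsilon^\ast}(x)$ and $\overline{G}_{\varepsilon^\ast}(gx)$ ``are commensurable as lattices of $\mathbb{R}^k$'' --- and that assertion is not a formal consequence of the two crystallographic groups acting on parallel flats with the same boundary sphere (two lattices of $\mathbb{R}^k$ can have the same trace $\mathbb{S}^{k-1}$ and trivial intersection). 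Proving that commensurability \emph{is} the power argument: for each generator $g$ of the lattice and each $h$ in a generating set of $G$, Proposition~\ref{lemma-Sylvain} gives a power $g^m$ with $hg^mh^{-1}$ still small, and one must combine finitely many such exponents into a single $M$. That uniformity is automatic for discrete $G$ (finitely many conjugators) but needs a new input in the totally disconnected setting --- namely Theorem~\ref{theo-characterization-td}(c), finiteness of the orbit $\overline{S}_{2D_0}(x)x$, which lets one reduce to finitely many conjugacy classes. Your sketch does not mention this point, and without it the proposal either has a gap at the extraction of $M$, or is circular (the ``compactness/continuity'' step tacitly assumes what the commensurability argument is needed to prove).

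Two smaller points: for (v) you appeal to Lemma~\ref{lemma-lattice-svarc-milnor} ``backwards,'' but the actual bound $\lambda(\mathcal{L}_{\varepsilon^\ast}(x))\le\varepsilon^\ast/2\sqrt{n_0}$ comes from the \emph{choice} of the descending scales $\varepsilon_{i+1}=\delta_{i+1}/8\sqrt{n_0}J_0$ together with Proposition~\ref{prop-trace-infinity-almost}(i.c): the lattice is generated inside $\overline{S}_{4J_0\varepsilon_{i+2}}(x)$ and $4J_0\varepsilon_{i+2}\le\varepsilon_{i+1}/2\sqrt{n_0}$ by design --- this is precisely why the paper replaces the $\varepsilon_{i+1}=\delta_{i+1}/4J_0$ of \cite{CS23} by the $\sqrt{n_0}$-weighted version. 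And the union-of-flats argument for (i) requires the $\textup{Bd-Min}$ formalism plus \emph{minimality} of the cocompact action to conclude $X=\textup{Bd-Min}(\partial A)$; citing \cite[Theorem II.2.14]{BH09} alone does not give that $X$ is exhausted by these flats.
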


\noindent Here, by {\em $G$-invariant splitting} we mean that every isometry of $G$ preserves the product decomposition. By \cite[Proposition I.5.3.(4)]{BH09} we can see $G$ as a subgroup of $\textup{Isom}(Y) \times \textup{Isom}(\mathbb{R}^k)$. In particular it is meaningful to talk about the projection of $\overline{G}_{\varepsilon^*}(x)$ on $\text{Isom}(Y)$ and $\text{Isom}(\mathbb{R}^k)$.
We call the integer  $1 \leq k\leq n_0$  the {\emph{${\varepsilon^\ast}$-splitting rank}} of $X$. The fact that $k$ is at most $n_0$ follows by Corollary \ref{cor-rank-dimension}.  Observe that we do not need any unimodularity assumption on $X$. However we will say even more on the projection of $\overline{G}_{\varepsilon^*}(x)$ on $\text{Isom}(Y)$ in Section \ref{sub-almost stabilizers} in case $G$ is unimodular. The almost commensurability condition of item (vi) is introduced below: it plays a fundamental role in the proof of Theorem \ref{theo-splitting-weak}.

\subsection{Almost commensurability}
Let $G$ be a topological group. We say that two subgroups $H,K <G$ are \emph{almost commensurable} if $H\cap K$ is cocompact in both $H$ and $K$. A subgroup $H<G$ is \emph{almost commensurated} in $G$ if $H$ and $gHg^{-1}$ are almost commensurable for every $g\in G$. Observe that if $G$ is discrete then this notion coincides with the classical notion of commensurated subgroup. 
\begin{lemma}
	\label{lemma-commensurability-transitive}
	Suppose $S$ is a generating set for $G$. If $H$ and $gHg^{-1}$ are almost commensurable for all $g\in S$ then $H$ is almost commensurated.
\end{lemma}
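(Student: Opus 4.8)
The statement says nothing more than that the conjugates $gHg^{-1}$ being almost commensurable to $H$ is a condition that propagates along products and inverses. So the plan is to isolate two structural facts about the relation of almost commensurability and then run an induction on word length in the generating set $S$.

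\emph{Step 1: almost commensurability is an equivalence relation on closed subgroups of $G$.} Reflexivity and symmetry are immediate from the definition. For transitivity, suppose $H_1, H_2, H_3 < G$ are closed with $H_1 \cap H_2$ cocompact in $H_1$ and in $H_2$, and $H_2 \cap H_3$ cocompact in $H_2$ and in $H_3$. Work inside the locally compact group $H_2$: since $H_2 \cap H_3$ is cocompact in $H_2$, Lemma \ref{lemma-cocompactness-group}.(ii) gives that $(H_2\cap H_3)\cap(H_1\cap H_2) = H_1\cap H_2\cap H_3$ is cocompact in $H_1\cap H_2$; combined with $H_1\cap H_2$ cocompact in $H_1$, Lemma \ref{lemma-cocompactness-group}.(i) shows $H_1\cap H_2\cap H_3$ is cocompact in $H_1$. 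As $H_1\cap H_2\cap H_3 \subseteq H_1\cap H_3 \subseteq H_1$ with the smaller subgroup cocompact in $H_1$, the intermediate closed subgroup $H_1\cap H_3$ is cocompact in $H_1$; symmetrically it is cocompact in $H_3$. Hence $H_1$ and $H_3$ are almost commensurable.

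\emph{Step 2: conjugation preserves the relation.} For fixed $g\in G$ the map $x\mapsto gxg^{-1}$ is a topological automorphism of $G$, so it sends closed subgroups to closed subgroups and preserves cocompactness of one closed subgroup in another; therefore $H\sim K$ implies $gHg^{-1}\sim gKg^{-1}$. In particular, if $H\sim sHs^{-1}$ for $s\in S$, then conjugating by $s^{-1}$ yields $s^{-1}Hs\sim H$, so the hypothesis extends from $S$ to $S\cup S^{-1}$.

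\emph{Step 3: induction on word length.} Let $g\in G$ be arbitrary and write $g = s_1s_2\cdots s_n$ with $s_i\in S\cup S^{-1}$. We prove $H\sim gHg^{-1}$ by induction on $n$, the case $n=0$ being trivial and $n=1$ being Step 2. For the inductive step set $g' = s_2\cdots s_n$, so $g = s_1 g'$; by the inductive hypothesis $H\sim g'H(g')^{-1}$, and conjugating by $s_1$ gives $s_1Hs_1^{-1}\sim s_1g'H(g')^{-1}s_1^{-1} = gHg^{-1}$. Since also $H\sim s_1Hs_1^{-1}$ by Step 2, transitivity (Step 1) yields $H\sim gHg^{-1}$. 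As $g$ was arbitrary, $H$ is almost commensurated. The only real content is the transitivity argument in Step 1; once that is in place the rest is a formal induction.
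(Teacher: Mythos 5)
Your proof is correct and follows essentially the same route as the paper: induction on the length of $g$ as a word in the generators, with the inductive step reduced to a transitivity argument for almost commensurability based on Lemma~\ref{lemma-cocompactness-group}. The only differences are organizational — you abstract out transitivity and conjugation-invariance as named steps rather than running them inline — and you explicitly note that the hypothesis on $S$ propagates to $S^{-1}$, a point the paper's proof passes over by implicitly treating $S$ as symmetric.
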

\begin{proof}
	Let $g=g_1\cdots g_k$ be an element of $G$, with $g_i \in S$. We prove that $H$ and $gHg^{-1}$ are almost commensurable by induction on $k$. The case $k=1$ is the assumption. In the general case, by inductive assumption we have
	\begin{itemize}
		\item[(a)] $g_2\cdots g_k H g_k^{-1}\cdots g_2^{-1} \cap H$ is cocompact in both $g_2\cdots g_k H g_k^{-1}\cdots g_2^{-1}$ and $H$;
		\item[(b)] $g_1Hg_1^{-1} \cap H$ is cocompact in both $g_1Hg_1^{-1}$ and $H$.
	\end{itemize}
	Conjugating (a) by $g_1$ we also get
	\begin{itemize}
		\item[(c)] $g H g^{-1} \cap g_1Hg_1^{-1}$ is cocompact in both $g H g^{-1}$ and $g_1Hg_1^{-1}$.
	\end{itemize}
	Combining (b) and (c) we conclude that $g H g^{-1} \cap g_1Hg_1^{-1} \cap H$ is cocompact in $g_1Hg_1^{-1} \cap H$ and so in $H$, by Lemma \ref{lemma-cocompactness-group}. A fortiori $g H g^{-1} \cap H$ is cocompact in $H$. In a similar way, using again Lemma \ref{lemma-cocompactness-group}, we also have that $g H g^{-1} \cap H$ is cocompact in $gHg^{-1}$.
\end{proof}

Lemma \ref{lemma-cocompactness-group} implies the following property.
\begin{lemma}
	\label{lemma-commensurability-hereditary}
	Let $G$ be a locally compact group and let $K<H < G$ be closed subgroups, with $K$ cocompact in $H$. Then $H$ is almost commensurated if and only if $K$ is.
\end{lemma}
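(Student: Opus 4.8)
The plan is to reduce the statement to the standard characterization of cocompactness recalled in the proof of Lemma \ref{lemma-cocompactness-group} --- a closed subgroup $L$ of a locally compact group $M$ is cocompact if and only if $M = C\cdot L$ for some compact $C\subseteq M$ --- together with the transfer properties (i)--(iii) of that lemma, applied in various ambient groups (all closed subgroups of $G$, hence locally compact). The only auxiliary fact I would isolate first is the following \emph{intermediate subgroup} observation: if $L\leq L'\leq M$ are closed subgroups of a locally compact group and $L$ is cocompact in $M$, then $L$ is cocompact in $L'$ (apply Lemma \ref{lemma-cocompactness-group}(ii) in the ambient group $M$ to the subgroups $L$ and $L'$) and $L'$ is cocompact in $M$ (write $M = CL\subseteq CL'\subseteq M$). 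All the subsets that will occur below are finite intersections of closed subgroups and their conjugates, hence closed, so the hypotheses of Lemma \ref{lemma-cocompactness-group} are always met.

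For the implication ``$H$ almost commensurated $\Rightarrow$ $K$ almost commensurated'' I would fix $g\in G$ and work inside $H$. Since $H\cap gHg^{-1}$ and $K$ are both cocompact in $H$, Lemma \ref{lemma-cocompactness-group}(iii) gives that $K\cap gHg^{-1}$ is cocompact in $H$, and then the intermediate subgroup observation yields that $K\cap gHg^{-1}$ is cocompact in $K$. On the other hand $gKg^{-1}$ is cocompact in $gHg^{-1}$, so intersecting with the closed subgroup $K\cap gHg^{-1}$ of $gHg^{-1}$ via Lemma \ref{lemma-cocompactness-group}(ii) shows that $K\cap gKg^{-1}$ is cocompact in $K\cap gHg^{-1}$; chaining the last two facts through Lemma \ref{lemma-cocompactness-group}(i) gives $K\cap gKg^{-1}$ cocompact in $K$. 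Applying this conclusion with $g^{-1}$ in place of $g$ and conjugating by $g$ shows that $K\cap gKg^{-1}$ is also cocompact in $gKg^{-1}$, so $K$ and $gKg^{-1}$ are almost commensurable and $K$ is almost commensurated.

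For the converse ``$K$ almost commensurated $\Rightarrow$ $H$ almost commensurated'' I would again fix $g$. Since $K\cap gKg^{-1}$ is cocompact in $K$ and $K$ is cocompact in $H$, Lemma \ref{lemma-cocompactness-group}(i) gives $K\cap gKg^{-1}$ cocompact in $H$; as $K\cap gKg^{-1}\subseteq H\cap gHg^{-1}\subseteq H$, the intermediate subgroup observation yields $H\cap gHg^{-1}$ cocompact in $H$. The symmetric argument --- using that $K\cap gKg^{-1}$ is cocompact in $gKg^{-1}$ and $gKg^{-1}$ is cocompact in $gHg^{-1}$ --- gives $H\cap gHg^{-1}$ cocompact in $gHg^{-1}$, so $H$ is almost commensurated.

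I do not expect a genuine obstacle here: the argument is a finite bookkeeping exercise with cocompactness, and the only point deserving explicit attention is keeping track of which ambient group each application of Lemma \ref{lemma-cocompactness-group} takes place in and checking closedness of the subgroups involved (automatic). If anything, the intermediate subgroup observation is the piece worth recording separately, since it is used in both directions and is not literally among items (i)--(iii).
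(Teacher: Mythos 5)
Your proof is correct and takes the same route as the paper, which merely asserts that Lemma~\ref{lemma-cocompactness-group} implies the claim without spelling out the chain of applications. Your detailed bookkeeping --- including the isolated intermediate-subgroup observation, whose first half is Lemma~\ref{lemma-cocompactness-group}(ii) applied with $L\subseteq L'$ and whose second half is the one-line inclusion $M=CL\subseteq CL'$ --- correctly fills in exactly the steps the paper leaves to the reader.
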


We will show how an almost abelian subgroup which is almost commensurated provides an associated metric splitting of the space, generalizing the discrete case (cp. \cite[Proposition 4.5]{CS23}, \cite{CM19}). We just need to recall an additional definition. Given $Z\subseteq \partial X$ we say that a subset $Y\subseteq X$ is $Z$-boundary-minimal if it is closed, convex, $\partial Y = Z$ and $Y$ is minimal with these properties. The union of all the $Z$-boundary-minimal sets is denoted by \textup{Bd-Min}$(Z)$.

\begin{lemma}[\textup{\cite[Lemma 4.4]{CS23}, \cite[Proposition 3.6]{CM09b}}]
	\label{lemma-split-sphere}
	Let $X$ be a proper $\textup{CAT}(0)$-space and let $Z$ be a closed, convex subset of  $\partial X$ which is isometric to $\mathbb{S}^{k-1}$. Then each $Z$-boundary-minimal subset of $X$ is isometric to $\mathbb{R}^k$ and \textup{Bd-Min}$(Z)$ is a closed, convex subset of $X$ which splits isometrically as $Y\times \mathbb{R}^k$. Moreover $Z$ coincides with the boundary at infinity of all the slices $\lbrace y \rbrace \times \mathbb{R}^k$, for $y \in Y$.
\end{lemma}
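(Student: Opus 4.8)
The plan is to reduce the statement to two facts: that the $Z$-boundary-minimal subsets of $X$ are precisely the $k$-flats of $X$ whose boundary at infinity equals $Z$, and that any two such flats are parallel, so that $\textup{Bd-Min}(Z)$ is the parallel set of a single $k$-flat and hence splits isometrically off an $\mathbb{R}^k$-factor.

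The geometric core is a filling statement: if $W$ is a proper $\textup{CAT}(0)$-space whose Tits boundary contains a closed, convex subset $Z$ isometric to $\mathbb{S}^{k-1}$, then $W$ contains a $k$-flat $F$ with $\partial F = Z$. I would prove this by induction on $k$. For $k = 1$ we have $Z = \{\xi_+,\xi_-\}$ with $d_{\mathrm{Tits}}(\xi_+,\xi_-) = \pi$, and such an antipodal pair is joined by a geodesic line of $W$ (cf.\ \cite[Chapter II.9]{BH09}), which is the desired $1$-flat. For $k > 1$, fix an antipodal pair $\xi_\pm \in Z$ and a geodesic line $\ell$ with endpoints $\xi_\pm$; its parallel set $P(\ell)$ is closed, convex, and splits isometrically as $W' \times \mathbb{R}$ (cf.\ \cite[Theorem~II.2.14]{BH09}), with Tits boundary the spherical join $\partial_T W' * \{\xi_+,\xi_-\}$. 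Using that $Z$ is a round sphere through the antipodal pair $\xi_\pm$, one checks that $Z \subseteq \partial_T P(\ell)$ and that $Z' := Z \cap \partial_T W'$ is a closed, convex subset of $\partial_T W'$ isometric to $\mathbb{S}^{k-2}$, with $Z = Z' * \{\xi_\pm\}$. By the inductive hypothesis $W'$ contains a $(k-1)$-flat $F'$ with $\partial F' = Z'$, and then $F := F' \times \mathbb{R} \subseteq W' \times \mathbb{R} = P(\ell) \subseteq W$ is a $k$-flat with $\partial F = Z' * \{\xi_\pm\} = Z$.

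Granting this, let $Y_0$ be any $Z$-boundary-minimal set. Since $Y_0$ is itself a proper $\textup{CAT}(0)$-space with $\partial_T Y_0 = Z \cong \mathbb{S}^{k-1}$, the filling statement produces a $k$-flat $F_0 \subseteq Y_0$ with $\partial F_0 = Z = \partial Y_0$; being a closed, convex subset of $Y_0$ with the same boundary, $F_0$ must equal $Y_0$ by minimality, so every $Z$-boundary-minimal set is isometric to $\mathbb{R}^k$. Conversely, every $k$-flat $F$ with $\partial F = Z$ is $Z$-boundary-minimal, because a closed, convex proper subset of $\mathbb{R}^k$ lies in a closed half-space and so its boundary lies in a closed hemisphere, a proper subset of $\mathbb{S}^{k-1}$. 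Hence $\textup{Bd-Min}(Z)$ is exactly the union of the $k$-flats of $X$ with boundary $Z$; in particular it is non-empty, by the filling statement applied to $X$ itself.

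Finally, fix one $k$-flat $F$ with $\partial F = Z$. Any other $k$-flat $F'$ with $\partial F' = Z$ is parallel to $F$: for each $\xi \in Z$ both flats contain lines with endpoints $\xi$ and $-\xi$, which bound flat strips by the Flat Strip Theorem (cf.\ \cite[Theorem~II.2.13]{BH09}); letting $\xi$ vary one gets that $F$ and $F'$ lie at finite Hausdorff distance and that $\text{Conv}(F \cup F') \cong \mathbb{R}^k \times [0,d]$ (cf.\ \cite{CM09b}). Therefore $\textup{Bd-Min}(Z)$ coincides with the parallel set of $F$, which is closed, convex, and splits isometrically as $Y \times \mathbb{R}^k$, the slices $\{y\} \times \mathbb{R}^k$ being exactly the $k$-flats parallel to $F$; in particular $\partial(\{y\} \times \mathbb{R}^k) = \partial F = Z$ for every $y \in Y$, which yields all the assertions. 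The main obstacle is the filling statement, and inside it the two inputs driving the induction: that an antipodal pair of $Z$ bounds an honest geodesic line of $X$ (delicate precisely because the Tits distance realized is $\pi$, not strictly larger), and that the convex round sphere $Z$ through this pair sits in $\partial_T P(\ell)$ and decomposes as the spherical join $Z' * \{\xi_\pm\}$ with $Z'$ again a convex round sphere of one lower dimension. Once these are established, the induction step and the parallel-set argument are routine manipulations.
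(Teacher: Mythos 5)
Your reduction of the lemma to (a) a ``filling by flats'' statement and (b) a parallel-set argument is structurally the right approach, and paragraph (b) --- mutual parallelism of the $k$-flats with boundary $Z$ via the Flat Strip Theorem, so that $\textup{Bd-Min}(Z)$ is a parallel set splitting as $Y\times\mathbb{R}^k$ --- is correct and standard. The genuine gap is in the inductive proof of (a), starting with the base case. For $k=1$ you assert that an antipodal pair $\xi_\pm$ at Tits distance $\pi$ is joined by a geodesic line of $W$, citing \cite[Ch.~II.9]{BH09}; but \cite[Prop.~II.9.21]{BH09} produces a geodesic line only when the Tits distance is \emph{strictly} greater than $\pi$, and when it equals $\pi$ --- which is exactly your situation --- it says instead that if no geodesic line exists then there is a Tits geodesic of length $\pi$ joining $\xi_+$ to $\xi_-$ inside $\partial_T W$, a possibility which cannot be excluded for a general ambient $W$. (If $W$ is a $Z$-boundary-minimal set $Y_0$, one \emph{can} conclude: then $\partial_T Y_0=Z=\{\xi_+,\xi_-\}$ has no room for a geodesic of positive length, so Prop.~II.9.21 does force the line. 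But your induction applies the filling to auxiliary parallel-set factors $W'$ in which $Z'$ is generally a proper subset of $\partial_T W'$, so this escape is not available.)

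The inductive step inherits the same problem --- the existence of the line $\ell$ \emph{is} the $k=1$ filling --- and in addition the claims that $Z\subseteq\partial_T P(\ell)$ and that $Z$ decomposes as a spherical join $Z'\ast\{\xi_\pm\}$ with $Z'$ a closed convex round sphere in $\partial_T W'$ are genuine lemmas of Flat Sector / Flat Half-Plane type, not ``one checks''. It is also worth noting that the lemma as stated does not assert non-emptiness of $\textup{Bd-Min}(Z)$: in the paper's actual use (Proposition \ref{prop-commensurated-splitting}) non-emptiness is supplied independently by the group action, via Proposition \ref{prop-trace-infinity-almost}. So the filling applied to the ambient $X$ is more than is needed here; what you actually need is that a $Z$-boundary-minimal $Y_0$ is a $k$-flat, and for that the extra hypothesis $\partial_T Y_0 = Z$ should be exploited from the outset rather than proving the filling in the generality you set up.
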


We can now generalize \cite[Proposition 4.5]{CS23}.

\begin{prop}
	\label{prop-commensurated-splitting}
	Let $X$ be a proper, geodesically complete, $\textup{CAT}(0)$-space,   and $G < \textup{Isom}(X)$ be closed, totally disconnected and cocompact.
	If $A < G$ is a semisimple, almost abelian, almost commensurated subgroup of $G$ of rank $k$ then we have $X=\textup{Bd-Min}(\partial A)$ and $X$ splits isometrically and $G$-invariantly as $Y \times \mathbb{R}^k$. Moreover,  the projection of $A$ on $\textup{Isom}(\mathbb{R}^k)$ is a crystallographic group and the closure of the projection of $A$ on $\textup{Isom}(Y)$ is compact.	The splitting $X=Y \times \mathbb{R}^k$ satisfies the following properties:
	\begin{itemize}
		\item[(i)] the trace at infinity $\partial A$ is $G$-invariant and coincides with the boundary of each slice $\lbrace y \rbrace \times \mathbb{R}^k$, for all $y \in Y$;
		\item[(ii)] if  $A' < A$ is another almost abelian, almost commensurated subgroup of $G$ of rank $k'$ then the splittings $X = Y \times \mathbb{R}^k$ and $X= Y' \times \mathbb{R}^{k'}$ associated respectively to $A$ and $A'$ are compatible, i.e. $Y'$ is isometric to $Y \times \mathbb{R}^{k-k'}$.
	\end{itemize}
\end{prop}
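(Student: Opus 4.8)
The plan is to derive the whole statement from the structure of the trace at infinity $\partial A$ provided by Proposition~\ref{prop-trace-infinity-almost}, the one genuinely new ingredient being that the almost commensurability of $A$ upgrades the $A$-invariance of $\partial A$ to $G$-invariance. Throughout I assume $k\geq 1$, the case $k=0$ (i.e.\ $A$ compact) being trivial with $Y=X$. \textbf{Step 1 ($G$-invariance of $\partial A$).} Proposition~\ref{prop-trace-infinity-almost} applied to the closed, semisimple, almost abelian group $A$ of rank $k$ gives a closed, convex, $A$-invariant set $\partial A\subseteq\partial X$ isometric to $\mathbb{S}^{k-1}$, with $\partial A=\partial\textup{Conv}(Ax)$ for all $x$, with $\partial(gAg^{-1})=g\partial A$ for every isometry $g$, and with the monotonicity statement \ref{prop-trace-infinity-almost}.(ii.d): for a closed subgroup $B<A$ one has $\partial B\subseteq\partial A$, with equality iff $\textup{rk}(B)=\textup{rk}(A)$. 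Fix $g\in G$. Since $A$ is almost commensurated, $H:=A\cap gAg^{-1}$ is cocompact in both $A$ and $gAg^{-1}$; being a closed subgroup of $A$ it is semisimple and almost abelian (Lemma~\ref{lemma-subgroup-almost-abelian}), and Lemma~\ref{lemma-cocompact-rank} gives $\textup{rk}(H)=\textup{rk}(A)=\textup{rk}(gAg^{-1})=k$ (conjugation preserves the rank). Applying \ref{prop-trace-infinity-almost}.(ii.d) to $H<A$ and then to $H<gAg^{-1}$ yields $\partial H=\partial A$ and $\partial H=\partial(gAg^{-1})=g\partial A$, hence $g\partial A=\partial A$. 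Thus $\partial A$ is $G$-invariant.

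\textbf{Step 2 (the splitting and its $G$-invariance).} Feed $Z:=\partial A\cong\mathbb{S}^{k-1}$ into Lemma~\ref{lemma-split-sphere}. Since $\textup{Conv}(Ax)$ is closed, convex, with boundary $\partial A$, it contains a $\partial A$-boundary-minimal set, so $\textup{Bd-Min}(\partial A)$ is non-empty; by Lemma~\ref{lemma-split-sphere} it is a closed, convex subset of $X$ splitting isometrically as $Y\times\mathbb{R}^k$, with $\partial A$ the boundary of each slice $\{y\}\times\mathbb{R}^k$, the slices being exactly the $\partial A$-boundary-minimal sets. As $\partial A$ is $G$-invariant, $G$ carries $\partial A$-boundary-minimal sets to $\partial A$-boundary-minimal sets, so $\textup{Bd-Min}(\partial A)$ is a non-empty, closed, convex, $G$-invariant subset of $X$; minimality of the cocompact $G$-action on the geodesically complete \textup{CAT}$(0)$-space $X$ (cp.\ \cite[Proposition 1.5]{CM09b}) then forces $X=\textup{Bd-Min}(\partial A)=Y\times\mathbb{R}^k$. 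Since $G$ permutes the slices $\{y\}\times\mathbb{R}^k$, a short computation with the product metric (the $Y$-distance between points lying over two fixed slices is independent of the slices) shows that each $g\in G$ is a product isometry; hence the splitting is $G$-invariant and $G<\textup{Isom}(Y)\times\textup{Isom}(\mathbb{R}^k)$ (cp.\ \cite[Proposition I.5.3.(4)]{BH09}). Together with Lemma~\ref{lemma-split-sphere} this proves~(i).

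\textbf{Step 3 (projections of $A$, and~(ii)).} To identify the projections, compare $X=Y\times\mathbb{R}^k$ with the decomposition $C(A)=W\times\mathbb{R}^k$ of \ref{prop-trace-infinity-almost}.(i): the slices $\{w\}\times\mathbb{R}^k\subseteq C(A)$ are copies of $\mathbb{R}^k$ with boundary $\partial A$, and a proper closed convex subset of $\mathbb{R}^k$ has boundary strictly contained in $\mathbb{S}^{k-1}$, so these slices are $\partial A$-boundary-minimal, hence slices of $X=Y\times\mathbb{R}^k$; thus the two $\mathbb{R}^k$-factors agree. Consequently the projection of $A$ to $\textup{Isom}(\mathbb{R}^k)$ is the crystallographic group of \ref{prop-trace-infinity-almost}.(i.b), while (identifying $W$ with $W\times\{v_0\}\subseteq Y$) the projection of $A$ to $\textup{Isom}(Y)$ fixes $W$ pointwise by \ref{prop-trace-infinity-almost}.(i.a); since $Y$ is proper, the stabilizer in $\textup{Isom}(Y)$ of a point of $W$ is compact, so the closure of this projection is compact. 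For~(ii), apply Steps~1--2 to the almost commensurated subgroup $A'<A$ of rank $k'$ to get $X=\textup{Bd-Min}(\partial A')=Y'\times\mathbb{R}^{k'}$; by \ref{prop-trace-infinity-almost}.(ii.d), $\partial A'\subseteq\partial A$ is a closed convex subset of the round sphere $\mathbb{S}^{k-1}$ isometric to $\mathbb{S}^{k'-1}$, hence a great subsphere, which under $\partial A=\partial\mathbb{R}^k$ corresponds to the boundary of a linear subspace $\mathbb{R}^{k'}\subseteq\mathbb{R}^k$. Writing $\mathbb{R}^k=\mathbb{R}^{k'}\times\mathbb{R}^{k-k'}$ accordingly, one checks (using again that a ray whose endpoint lies in the boundary of a Euclidean factor is parallel to that factor) that the $\partial A'$-boundary-minimal sets of $X=Y\times\mathbb{R}^{k'}\times\mathbb{R}^{k-k'}$ are precisely the flats $\{y\}\times\mathbb{R}^{k'}\times\{u\}$, so the splitting associated to $A'$ has $Y'\cong Y\times\mathbb{R}^{k-k'}$.

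The decisive point is Step~1: converting almost commensurability into $G$-invariance of $\partial A$ via the rank rigidity of cocompact subgroups of almost abelian groups (Lemma~\ref{lemma-cocompact-rank}); after that the argument is the same bookkeeping of product decompositions as in \cite[Proposition 4.5]{CS23}. I expect the main technical care to lie in Step~3, namely in checking that the $\mathbb{R}^k$-factors coming from $C(A)$ and from $\textup{Bd-Min}(\partial A)$ literally coincide and that the decomposition attached to $A'$ is the announced refinement $Y\times\mathbb{R}^{k'}\times\mathbb{R}^{k-k'}$ of the one attached to $A$; the existence of minimal sets and the passage from "$G$ permutes the slices" to "$G$ acts by product isometries" are routine but should be made explicit.
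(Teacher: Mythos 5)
Your proof is correct and follows essentially the same route as the paper: $G$-invariance of $\partial A$ via rank-rigidity of the commensurated intersection $A\cap gAg^{-1}$ together with Proposition~\ref{prop-trace-infinity-almost}.(ii.c)--(ii.d), the splitting $X=\textup{Bd-Min}(\partial A)=Y\times\mathbb{R}^k$ from Lemma~\ref{lemma-split-sphere} and minimality of the cocompact $G$-action, identification of the two projections of $A$ by matching the $\mathbb{R}^k$-factor of $C(A)$ with the boundary-minimal slices, and part~(ii) by comparing the two traces at infinity. The only cosmetic difference is that you phrase~(ii) through the great-subsphere structure of $\partial A'\subseteq\partial A\cong\mathbb{S}^{k-1}$, while the paper derives the same refinement directly from the flat inclusion $\{y'\}\times\mathbb{R}^{k'}\subseteq\{y\}\times\mathbb{R}^{k}$ at a common basepoint.
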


\begin{proof}
	The trace at infinity $\partial A$ of $A$ is closed, convex and isometric to $\mathbb{S}^{k-1}$, by Proposition \ref{prop-trace-infinity-almost}.(ii.b). We claim it is $G$-invariant. If $g\in G$ then $A\cap gAg^{-1}$ is closed and cocompact in both $A$ and $gAg^{-1}$. Moreover $A\cap gAg^{-1}$ is almost abelian by Lemma \ref{lemma-subgroup-almost-abelian}, and $\text{rk}(A\cap gAg^{-1})= \text{rk}(A) = \text{rk}(gAg^{-1}) = k$ by Lemma \ref{lemma-cocompact-rank}. 
	By (ii.c) and (ii.d) of Proposition \ref{prop-trace-infinity-almost} we have
	$$\partial A = \partial (A\cap gAg^{-1}) = \partial (gAg^{-1}) = g\partial A,$$
	for all $g\in G$.
	By Lemma \ref{lemma-split-sphere} applied to $Z=\partial A$ we deduce that $\text{Bd-Min}(\partial A)$ is a closed, convex subset of $X$ which splits isometrically as $Y\times \mathbb{R}^k$, and that  $\partial A$ coincides with the boundary at infinity of all sets $\lbrace y \rbrace \times \mathbb{R}^k$. Each element of $G$ sends a $\partial A$-boundary-minimal subset into a $\partial A$-boundary-minimal subset because $\partial A$ is $G$-invariant, therefore $\text{Bd-Min}(\partial A)$ itself is $G$-invariant. Since $G$ is  cocompact,  the action of $G$ on  $X$ is minimal (\cite[Lemma 3.13]{CM09b}). We deduce that $X=\text{Bd-Min}(\partial A)$, and so $X$ splits isometrically and $G$-invariantly as $Y\times \mathbb{R}^k$, which proves the first assertion and (i).\\
	The fact that the projection of $A$ on  $\textup{Isom}(\mathbb{R}^k)$ is a crystallographic group follows from Proposition \ref{prop-trace-infinity-almost}. Indeed any set $C(A) = W\times \mathbb{R}^k$ as in \ref{prop-trace-infinity-almost}.(i) is compatible with the splitting we just proved in the sense that $W\subset Y$, because $W \times \mathbb{R}^k$ is union of $\partial A$-boundary minimal sets as follows by (i.a).
	Moreover $A$ acts as the identity on $W$. Therefore the projection of $A$ on $Y$ is precompact since it fixes a point.\\
	Suppose now to have another almost abelian subgroup $A'<A$ of rank $k'$ which is commensurated in $G$. 
	Let $X = Y' \times \mathbb{R}^{k'}$ be the splitting associated to $A'$. Let $x\in X$ be a point and write it as $(y,{\bf v}) \in Y \times \mathbb{R}^k$ and $(y',{\bf v'}) \in Y'\times \mathbb{R}^{k'}$. Then, by the first part of the proof and by Proposition \ref{prop-trace-infinity-almost} we have
	$$\partial (\lbrace y' \rbrace \times \mathbb{R}^{k'}) = \partial A' \subseteq \partial A = \partial (\lbrace y \rbrace \times \mathbb{R}^{k}).$$
	It follows that $\lbrace y' \rbrace \times \mathbb{R}^{k'} \subseteq \lbrace y \rbrace \times \mathbb{R}^{k}$, so the parallel slices associated to $A'$ are contained in the parallel slices associated to $A$. Decomposing $\mathbb{R}^k$ as the orhogonal sum of $\mathbb{R}^{k'}$ and $\mathbb{R}^{k - k'}$, we also deduce that  the sets $\lbrace y \rbrace \times \mathbb{R}^{k - k'}$ are parallel for all $y\in Y$ (since the slices of $A'$ are all parallel). Therefore, $X$ is also isometric to $(Y \times \mathbb{R}^{k - k'}) \times \mathbb{R}^{k'}$, which implies that $Y'$ is isometric to $Y \times \mathbb{R}^{k - k'}$ and proves (ii).
\end{proof}

\begin{obs}
	The same proof shows that the statement holds for proper  \textup{CAT}$(0)$-spaces, not necessarily geodesically complete, and for closed groups of isometries acting minimally (see \cite{CM09b} for more details on this property).
\end{obs}

\subsection{Proof of Theorem \ref{theo-splitting-weak}}
The main step consists in showing an almost abelian subgroup of rank $k \geq 1$ which is almost commensurated in $G$. Observe that it will be automatically semisimple by Theorem \ref{theo-characterization-td}.(ii).\\
Recall the constant $J(k)$ given by Proposition \ref{prop-Bieberbach}, and  define 
$$J_0 := \max_{k\in \lbrace 0,\ldots,n_0\rbrace}J(k) +1$$
which also clearly  depends only on $n_0$, so ultimately only on $P_0$. Recall also that for every $x\in X$ the set $\overline{S}_{2D_0}(x)$ generates the whole $G$, by the discussion in Section \ref{subsection-isometries} and Theorem \ref{theo-characterization-td}.(iii).

\noindent We fix $0<\varepsilon \leq \varepsilon_0$ as in the assumptions of Theorem \ref{theo-splitting-weak}, and we define  inductively the sequence of subgroups  $\overline{G}_{\varepsilon_i} (x)$ associated to positive numbers 
$$\varepsilon_1 := \varepsilon  > \varepsilon_2 > \ldots > \varepsilon_{3n_0 + 1} > 0$$ as follows:\\
-- first, we  apply Proposition \ref{lemma-Sylvain} to $\varepsilon=\varepsilon_1$ and $R=2D_0$ to obtain a smaller $\delta_2 := \delta(P_0,r_0,2D_0,\varepsilon_1)$, and  we set $\varepsilon_2 := \delta_2/8\sqrt{n_0}J_0$;\\
-- then, we define inductively $\delta_{i+1} := \delta(P_0,r_0,2D_0,\varepsilon_i)$ by repeatedly applying Proposition \ref{lemma-Sylvain} to $\varepsilon_i$ and $R=2D_0$, and we set $\varepsilon_{i+1} = \delta_{i+1}/8\sqrt{n_0}J_0$. \\
Notice that, by construction, each $\varepsilon_i$ depends only on $P_0, r_0, D_0$ and $\varepsilon$. By Proposition \ref{prop-Margulis-CAT}, the subgroups  $\overline{G}_{\varepsilon_i} (x)$ form a decreasing sequence of almost abelian, semisimple subgroups for every $x\in X$.\\
In \cite{CS23} we defined $\varepsilon_{i+1} = \delta_{i+1}/4J_0$, here we are using the formula above in order to get the improved version of (v) in Theorem \ref{theo-splitting-weak}. Another difference, for the same scope, is that we define the sequence up to $3n_0 + 1$ instead of $2n_0 + 1$.

\vspace{1mm}
\noindent We set $\sigma_{P_0,r_0,D_0} (\varepsilon):= \varepsilon_{3n_0 + 1}$ and we will  show that this is the function of $\varepsilon$
for which Theorem \ref{theo-splitting-weak} holds;  it clearly depends only on $P_0,r_0,D_0$ and $\varepsilon$. In what follows, we will write  for short  $\sigma  := \sigma_{P_0,r_0,D_0} (\varepsilon)$.

\begin{lemma}
\label{lemma-constant-rank}
If 
$\textup{rk} (\overline{G}_{\sigma}( x)) \geq 1 $
then there exists
$i\in \lbrace 1,\ldots, 3n_0 - 2 \rbrace$ such that
$\textup{rk}(\overline{G}_{\varepsilon_{i}}( x)) = \textup{rk}(\overline{G}_{\varepsilon_{i+1}}(x )) = \textup{rk}(\overline{G}_{\varepsilon_{i+2}}(x )) = \textup{rk}(\overline{G}_{\varepsilon_{i+3}}(x )) \geq 1$.
\end{lemma}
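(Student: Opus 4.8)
The plan is to exploit the two monotonicities available for the chain $\overline{G}_{\varepsilon_1}(x) \supseteq \overline{G}_{\varepsilon_2}(x) \supseteq \cdots \supseteq \overline{G}_{\varepsilon_{3n_0+1}}(x)$: first, since each $\varepsilon_i \leq \varepsilon_0$, Corollary \ref{prop-Margulis-CAT} guarantees every term is almost abelian and semisimple, so each has a well-defined rank $\textup{rk}(\overline{G}_{\varepsilon_i}(x)) \in \{0,1,\ldots,n_0\}$ by Lemma \ref{lemma-rank-almost-abelian} and Corollary \ref{cor-rank-dimension}.(i); second, since $\overline{G}_{\varepsilon_{i+1}}(x)$ is a closed subgroup of $\overline{G}_{\varepsilon_i}(x)$, Lemma \ref{lemma-cocompact-rank} gives $\textup{rk}(\overline{G}_{\varepsilon_{i+1}}(x)) \leq \textup{rk}(\overline{G}_{\varepsilon_i}(x))$. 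Thus the sequence $a_i := \textup{rk}(\overline{G}_{\varepsilon_i}(x))$ for $i = 1,\ldots,3n_0+1$ is non-increasing with values in $\{0,1,\ldots,n_0\}$, and by hypothesis $a_{3n_0+1} = \textup{rk}(\overline{G}_\sigma(x)) \geq 1$, so in fact $1 \leq a_i \leq n_0$ for every $i$.

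Next I would apply a pigeonhole argument. A non-increasing integer sequence of length $3n_0+1$ taking values in a set of size $n_0$ (namely $\{1,\ldots,n_0\}$) can strictly decrease at most $n_0 - 1$ times; hence among the $3n_0$ consecutive pairs $(a_i, a_{i+1})$, $i=1,\ldots,3n_0$, at most $n_0-1$ are strict drops, so at least $3n_0 - (n_0-1) = 2n_0 + 1$ of them are equalities $a_i = a_{i+1}$. I then want three \emph{consecutive} equalities, i.e. an index $i$ with $a_i = a_{i+1} = a_{i+2} = a_{i+3}$. Partition the $3n_0$ indices $\{1,\ldots,3n_0\}$ into $n_0$ blocks of three consecutive indices each; if every block contained at least one strict-drop pair, there would be at least $n_0$ strict drops among the $3n_0$ pairs, contradicting the bound of $n_0 - 1$ strict drops. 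So some block of three consecutive pairs consists entirely of equalities; writing that block as $(a_i,a_{i+1}),(a_{i+1},a_{i+2}),(a_{i+2},a_{i+3})$ we get $a_i = a_{i+1} = a_{i+2} = a_{i+3}$ with $i \in \{1,\ldots,3n_0-2\}$, and this common value is $\geq 1$ as noted. (One should double-check the block decomposition aligns with the convention $\varepsilon_1 > \cdots > \varepsilon_{3n_0+1}$ and that the four rank-equal groups correspond to indices $i,i+1,i+2,i+3$ with $i+3 \leq 3n_0+1$, which holds since $i \leq 3n_0 - 2$.)

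There is essentially no serious obstacle here: the statement is a clean combinatorial consequence of rank-monotonicity along a decreasing chain of almost abelian groups. The only point requiring a little care is confirming that the rank is genuinely monotone and bounded, which is precisely what Corollary \ref{prop-Margulis-CAT} (almost abelianity of all $\overline{G}_{\varepsilon_i}(x)$), Lemma \ref{lemma-cocompact-rank} (subgroup rank inequality), and Corollary \ref{cor-rank-dimension}.(i) together with $\dim(X) \leq n_0$ (upper bound) supply; and then choosing the length of the chain ($3n_0+1$ terms, so $3n_0$ consecutive pairs) large enough — via the block-of-three pigeonhole — to force three consecutive rank-equalities, hence four rank-equal consecutive groups. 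I would write the argument out in exactly this order: establish monotonicity and the bound $1 \leq a_i \leq n_0$; count strict drops; apply the block pigeonhole; read off $i$.
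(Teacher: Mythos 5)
Your proof is correct and takes essentially the same approach as the paper: establish that each $\overline{G}_{\varepsilon_i}(x)$ is almost abelian with rank bounded between $1$ and $n_0$, note monotonicity of rank along the decreasing chain via Lemma~\ref{lemma-cocompact-rank}, and conclude by pigeonhole. The paper states the pigeonhole step tersely ("Since ... the rank decreases as $i$ increases we conclude that for some $1\leq i\leq 3n_0-2$ ..."); your block-of-three argument simply makes that counting explicit.
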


\begin{proof}
The groups $\overline{G}_{\varepsilon_i}(x)$ are semisimple and almost abelian. Moreover by Lemma \ref{lemma-cocompact-rank} they satisfy
$\textup{rk}(\overline{G}_{\varepsilon_i}(x))\geq 1$,   for all $1\leq i \leq 3n_0 +1 $, since they  contain $\overline{G}_{\sigma}(x)$.
By Corollary \ref{cor-rank-dimension},    the rank of each $\overline{G}_{\varepsilon_i}(x)$ cannot exceed the dimension of $X$, which is at most $n_0$. Since, again by Lemma \ref{lemma-cocompact-rank}, the rank decreases as $i$ increases we conclude that  for some  $1\leq i\leq 3n_0 - 2$ we have  $\textup{rk}(\overline{G}_{\varepsilon_{i}}( x)) = \textup{rk}(\overline{G}_{\varepsilon_{i+1}}(x )) = \textup{rk}(\overline{G}_{\varepsilon_{i+2}}(x )) = \textup{rk}(\overline{G}_{\varepsilon_{i+3}}(x )) \geq 1$.
\end{proof}

\begin{prop}
\label{prop-commensurated}
If $\textup{rk} (\overline{G}_{\sigma}(x)) \geq 1 $ then there exists $i \in \lbrace 1,\ldots, 3n_0 - 2 \rbrace$ such that $\overline{G}_{\varepsilon_{i+j}}(x)$ is almost commensurated for all $j\in \lbrace 0,1,2,3\rbrace$, and all these subgroups have the same rank $k\geq 1$.
\end{prop}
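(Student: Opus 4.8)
The plan is to reduce the proposition to the single assertion that $\overline{G}_{\varepsilon_i}(x)$ is almost commensurated in $G$, and then to prove that assertion by exhibiting, inside the intersection of $\overline{G}_{\varepsilon_i}(x)$ with each of its conjugates by a generator of $G$, an almost abelian subgroup of rank $k$. I would first invoke Lemma \ref{lemma-constant-rank} to fix $i\in\lbrace 1,\ldots,3n_0-2\rbrace$ with $\textup{rk}(\overline{G}_{\varepsilon_{i}}(x))=\textup{rk}(\overline{G}_{\varepsilon_{i+1}}(x))=\textup{rk}(\overline{G}_{\varepsilon_{i+2}}(x))=\textup{rk}(\overline{G}_{\varepsilon_{i+3}}(x))=k\geq 1$. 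Each of these four subgroups of $G$ is closed (indeed open), semisimple by Theorem \ref{theo-characterization-td}.(ii) and almost abelian by Corollary \ref{prop-Margulis-CAT}; for $j=1,2,3$ the group $\overline{G}_{\varepsilon_{i+j}}(x)<\overline{G}_{\varepsilon_i}(x)$ has the same rank, so it is cocompact in $\overline{G}_{\varepsilon_i}(x)$ by Lemma \ref{lemma-cocompact-rank}. Hence, by Lemma \ref{lemma-commensurability-hereditary}, once $\overline{G}_{\varepsilon_i}(x)$ is known to be almost commensurated the cases $j=1,2,3$ follow for free, and all four subgroups have rank $k$.

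To prove that $H:=\overline{G}_{\varepsilon_i}(x)$ is almost commensurated I would use Lemma \ref{lemma-commensurability-transitive}: since $\overline{S}_{2D_0}(x)$ generates $G$ (Section \ref{subsection-isometries} together with Theorem \ref{theo-characterization-td}.(iii)), it suffices to show that $H$ and $gHg^{-1}=\overline{G}_{\varepsilon_i}(gx)$ are almost commensurable for every $g\in\overline{S}_{2D_0}(x)$, so I fix such a $g$ and note $d(x,gx)\leq 2D_0$. Put $A:=\overline{G}_{\varepsilon_{i+1}}(x)$, a closed, semisimple, almost abelian subgroup of rank $k$, and apply Proposition \ref{prop-trace-infinity-almost} to $A$ with the compact symmetric generating set $S=\overline{S}_{\varepsilon_{i+1}}(x)$: this produces an $A$-invariant splitting $C(A)=W\times\mathbb{R}^k$ and, by (i.c), a finite $\Sigma\subseteq\overline{S}_{\varepsilon_{i+1}}(x)^{4J(k)+2}$ whose projection generates the maximal lattice $\mathcal{L}(A_{\mathbb{R}^k})$ of the crystallographic group $A_{\mathbb{R}^k}$. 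Being a generating set of a rank-$k$ lattice, $\Sigma$ contains $h_1,\ldots,h_k$ with linearly independent projections $(h_1)_{\mathbb{R}^k},\ldots,(h_k)_{\mathbb{R}^k}$. By (i.a) each $h_l$ acts on $C(A)$ as the identity on $W$ and as the nonzero translation $(h_l)_{\mathbb{R}^k}$ on $\mathbb{R}^k$, hence without fixed point on $C(A)$, and therefore (by equivariance of the nearest-point projection onto $C(A)$) without fixed point on $X$, so $h_l$ is non-elliptic; moreover $d(x,h_lx)\leq(4J(k)+2)\varepsilon_{i+1}<4J_0\varepsilon_{i+1}\leq 8\sqrt{n_0}J_0\varepsilon_{i+1}=\delta(P_0,r_0,2D_0,\varepsilon_i)$, by the construction of $\varepsilon_{i+1}$.

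The crucial step is then to apply Proposition \ref{lemma-Sylvain} with $R=2D_0$ and target scale $\varepsilon_i$ to the non-elliptic $h_l$ and to the point $y=gx\in\overline{B}(x,2D_0)$: this gives $m_l\in\mathbb{Z}^\ast$ with $d(gx,h_l^{m_l}gx)\leq\varepsilon_i$, that is $h_l^{m_l}\in\overline{S}_{\varepsilon_i}(gx)\subseteq gHg^{-1}$, while trivially $h_l^{m_l}\in A\subseteq H$. Thus $B:=\overline{\langle h_1^{m_1},\ldots,h_k^{m_k}\rangle}$ is a closed subgroup of $G$ contained in $H\cap gHg^{-1}$; it is almost abelian as a closed subgroup of $A$ (Lemma \ref{lemma-subgroup-almost-abelian}), and since its projection on $\textup{Isom}(\mathbb{R}^k)$ contains the $k$ independent translations $m_l(h_l)_{\mathbb{R}^k}$ and so generates a cocompact subgroup of $\mathbb{R}^k$, Corollary \ref{cor-rank-dimension}.(ii) gives $\textup{rk}(B)=k$. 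As $B$ is a rank-$k$ closed subgroup of the rank-$k$ almost abelian groups $H$ and $gHg^{-1}$, Lemma \ref{lemma-cocompact-rank} shows that $B$, hence a fortiori $H\cap gHg^{-1}$, is cocompact in both, so $H$ and $gHg^{-1}$ are almost commensurable; Lemma \ref{lemma-commensurability-transitive} then yields that $H=\overline{G}_{\varepsilon_i}(x)$ is almost commensurated, and by the first paragraph this proves the proposition. The main obstacle is exactly this crucial step, namely transferring the almost-stabilizer information at $x$ to the far-away point $gx$: it works only because Proposition \ref{prop-trace-infinity-almost}.(i.c) confines a lattice-generating set of $A$ to a bounded power of $\overline{S}_{\varepsilon_{i+1}}(x)$, which keeps its displacement small enough to feed into the quantitative Proposition \ref{lemma-Sylvain} --- this is precisely why the numbers $\varepsilon_j$ are defined with the extra factor $8\sqrt{n_0}J_0$.
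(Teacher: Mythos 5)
Your proof is correct, and it takes a genuinely different route in the key commensuration step. The paper's proof, after fixing $i$ by Lemma \ref{lemma-constant-rank}, builds a \emph{single} auxiliary subgroup $A=\langle g^M : g\in\Sigma\rangle$ (with $\Sigma$ extracted from $\overline{G}_{\varepsilon_{i+3}}(x)$ and targeting $\overline{S}_{\varepsilon_{i+2}}(x)$), where a \emph{uniform} power $M$ is arranged for all $h\in\overline{S}_{2D_0}(x)$ simultaneously. Achieving that uniformity is the reason the paper invokes Theorem \ref{theo-characterization-td}.(c) to reduce to finitely many representatives $h_1,\dots,h_\ell$ and then takes $M=\prod_{j,g}m_{j,g}$; the resulting $A$ is shown to be almost commensurated, and then each $\overline{G}_{\varepsilon_{i+j}}(x)$ inherits almost commensurability via Lemma \ref{lemma-commensurability-hereditary}. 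You instead attack $H=\overline{G}_{\varepsilon_i}(x)$ directly: for each generator $g\in\overline{S}_{2D_0}(x)$ you build a \emph{separate} rank-$k$ closed subgroup $B_g\subseteq H\cap gHg^{-1}$ from independent lattice generators of $A=\overline{G}_{\varepsilon_{i+1}}(x)$ by applying Proposition \ref{lemma-Sylvain} at the single point $y=gx$, and Lemma \ref{lemma-commensurability-transitive} lets the subgroup vary with $g$, so no common power $M$ and no appeal to the finiteness of the orbit $\overline{S}_{2D_0}(x)x$ are needed. Your version also only consumes two rungs $\varepsilon_i,\varepsilon_{i+1}$ of the filtration in the commensuration argument (the remaining rungs $i+2,i+3$ are used purely to transfer the conclusion by Lemma \ref{lemma-commensurability-hereditary}), whereas the paper consumes $\varepsilon_{i+2},\varepsilon_{i+3}$ already to prove the $A$-level statement. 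The trade-off is that the paper ends up with a concrete, explicitly generated almost-commensurated subgroup $A$ sitting inside all four almost stabilizers, which is occasionally convenient downstream, while your argument is shorter and avoids the somewhat delicate uniformization. Both proofs share the same potential fragility at the point where Corollary \ref{cor-rank-dimension}.(ii) is applied to a possibly non-closed generated subgroup, so that is not a new issue you introduced.
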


\begin{proof}
Consider the almost abelian groups $\overline{G}_{\varepsilon_{i+3}}(x) < \overline{G}_{\varepsilon_{i+2}}(x) < \overline{G}_{\varepsilon_{i+1}}(x) < \overline{G}_{\varepsilon_{i}}(x)$ which have the same rank $k\geq 1$, 
given by Lemma \ref{lemma-constant-rank}, for some 
$1 \leq i \leq 3n_0 - 2$.
Let $C_{i+3}=W_{i+3} \times {\mathbb R}^{k}$ be a $\overline{G}_{\varepsilon_{i+3}}(x)$-invariant, convex subset of $X$ as in Proposition  \ref{prop-trace-infinity-almost}.(i), applied to $\overline{G}_{\varepsilon_{i+3}}(x)$, 
and  call $\hat{G}_{i+3}$  the image of $\overline{G}_{\varepsilon_{i+3}}(x)$ under the projection  $\pi_{i+3}:\overline{G}_{\varepsilon_{i+3}}(x) \rightarrow \text{Isom}(\mathbb{R}^{k})$ on the second factor of $C_{i+3}$.
Finally, denote by  ${\mathcal L}_{i+3}$ the  maximal Euclidean lattice of the crystallograhic group $\hat{G}_{i+3}$.
\noindent By Proposition \ref{prop-trace-infinity-almost}(i.c) we can find a finite subset $\Sigma$ of 
$\overline{S}_{\varepsilon_{i+3}}(x)^{4J_0} \subseteq \overline{S}_{4J_0\cdot \varepsilon_{i+3}}(x)$	
whose projection $\Sigma_{\mathbb{R}^k}=\pi_{i+3}(\Sigma)$ on  Isom$(\mathbb{R}^k)$ generates the lattice  ${\mathcal L}_{i+3}$.  
In particular every non-trivial element of $\Sigma$ is hyperbolic.\\
Moreover,  by the definition of $\varepsilon_{i+3}  = \delta_{i+3} /8\sqrt{n_0} J_0 $,
the following holds:
\vspace{-2mm}

\begin{center}
{\em  $ \forall g\in  \Sigma$ 
and $\forall h\in \overline{S}_{2D_0}(x)$  there exists $m >0$  
such that    $h^{-1}g^m h    \in \overline{S}_{\varepsilon_{i+2}}(x)$
}
\end{center}

\vspace{-2mm}
\noindent (in fact,  $d(x,gx) < \delta_{i+3}=\delta(P_0,r_0,2D_0, {\varepsilon_{i+2}})$ for every $g \in \Sigma$,  so by Proposition \ref{lemma-Sylvain} there exists $m> 0$ such that \nolinebreak
$ d(x, h^{-1}g^m hx) = d(hx,g^mhx) \leq \varepsilon_{i+2}$, that is  $h^{-1}g^m h \in \overline{S}_{\varepsilon_{i+2}}(x)$). \\
Theorem \ref{theo-characterization-td}.(c) implies that the cardinality of the orbit $\overline{S}_{2D_0}(x) x$ is finite, so we can fix $h_1,\ldots,h_\ell \in \overline{S}_{2D_0}(x)$ such that for every $h\in \overline{S}_{2D_0}(x)$ there exists $j\in \lbrace 1,\ldots, \ell\rbrace$ with $hx=h_jx$. The discussion above says that for every $j \in \lbrace 1,\ldots,\ell\rbrace$ and every $g\in \Sigma$ there exists an integer $m_{j,g}>0$ such that $h_j^{-1}g^{m_{j,g}}h_j \in \overline{S}_{\varepsilon_{i+2}}(x)$. Moreover if $hx=h_jx$ then also $h^{-1}g^{m_{j,g}}h \in \overline{S}_{\varepsilon_{i+2}}(x)$, because
\begin{equation*}
	\begin{aligned}
		d(h^{-1}g^{m_{j,g}}hx, x) = d(g^{m_{j,g}}hx, hx) &= d(g^{m_{j,g}}h_jx, h_jx) \\
		&= d(h^{-1}_jg^{m_{j,g}}h_jx, x) \leq \varepsilon_{i+2}.
	\end{aligned}
\end{equation*}
Therefore, denoting by $M = \prod_{j,g} m_{j,g} > 0$, we have that $hg^Mh^{-1} \in {\overline{G}_{\varepsilon_{i+2}}}(x)$ for all $g\in \Sigma$ and all $h\in \overline{S}_{2D_0}(x)$. Observe that $M$ is finite because $\Sigma$ is finite and $j\in \lbrace 1,\ldots,\ell\rbrace$.\\
Let $A < G_{\varepsilon_{i+3}} (x)$ 
be the subgroup  generated by the subset $\lbrace g^M  \,\;|\;\, g \in \Sigma \rbrace$. 
We claim that $A$ is almost commensurated in $G$.
Actually, notice first that $A$ is an almost abelian group of rank $k$ because of Corollary \ref{cor-rank-dimension}, since its projection $\pi_{i+3}(A)$ is a subgroup of finite index of the lattice ${\mathcal L}_{i+3}$ of  ${\mathbb{R}^k}$.
Therefore, for all $h\in \overline{S}_{2D_0}(x)$, the almost abelian group $hAh^{-1}$ has also rank $k$, and is contained in the almost abelian group $\overline{G}_{\varepsilon_{i+2}}(x)$ of same rank. This implies, again by Lemma \ref{lemma-cocompact-rank}, that $A$ and $hAh^{-1}$ are both cocompact in $\overline{G}_{\varepsilon_{i+2}}(x)$. In particular $A\cap  hAh^{-1} $ is cocompact in both $A$ and  $hAh^{-1}$ by Lemma \ref{lemma-cocompactness-group}. Hence $A$ and  $hAh^{-1}$ are almost commensurable  for every  $h\in \overline{S}_{2D_0}(x)$.
Lemma \ref{lemma-commensurability-transitive} shows that therefore $A$ is almost commensurated in $G$. Since $A<\overline{G}_{\varepsilon_{i+j}}(x)$ for  $j=0,\ldots,3$ and all these groups have same rank, we deduce that $A$ is cocompact also in $\overline{G}_{\varepsilon_{i +j}}(x)$, again by Lemma \ref{lemma-cocompact-rank}. The conclusion follows by Lemma \ref{lemma-commensurability-hereditary}.
\end{proof}

Putting the ingredients all together we can give the

\begin{proof}[Proof of Theorem \ref{theo-splitting-weak}]  
Since $\textup{sys}^\diamond(G,X)\leq \sigma $, then there exists $x_0 \in X $ with $\textup{sys}^\diamond(G,x_0)\leq \sigma $; in particular, $\overline{G}_{\sigma}(x_0)$   contains a hyperbolic isometry, hence  $\text{rk}(\overline{G}_{\sigma}(x_0)) \geq 1$. This follows by the characterization of the rank given in Proposition \ref{prop-trace-infinity-almost}. Then we can apply Proposition \ref{prop-commensurated} to find
$1 \leq i \leq 3n_0 - 2$ such that the groups $\overline{G}_{\varepsilon_{i+j}}(x_0)$, $j=0,\ldots,3$, have all rank $k\geq 1$ and are all almost commensurated.
Proposition \ref{prop-commensurated-splitting} now implies that $X$ splits isometrically and $G$-invariantly as $Y\times \mathbb{R}^k$, proving (i). 
Moreover, we know that $\partial \overline{G}_{\varepsilon_{i+j}}(x)$ coincides with the boundary at infinity of each slice $\lbrace y \rbrace \times \mathbb{R}^k$, for $j=0,\ldots,3$. Let us call $Z \subseteq \partial X$ this common trace at infinity. \\
We show that the statements (ii)-(vi) hold for 
${\varepsilon^\ast}=\varepsilon_{i+1} \in (\sigma, \varepsilon)$. 
We start proving them for any $x\in X$ such that   $d(x, x_0) \leq D_0$. Denote by $k_x$ the rank of $\overline{G}_{\varepsilon_{i+1}}(x)$.
 Let $\Sigma \subseteq \overline{S}_{\varepsilon_{i+1}}^{4J(k) + 2}(x) \subseteq \overline{S}_{4J_0\cdot \varepsilon_{i+1}}(x)$ be the finite subset provided by Proposition \ref{prop-trace-infinity-almost}.(i.c). Observe that by Corollary \ref{cor-rank-dimension} the group generated by $\Sigma$ has rank $k_x$. By Proposition \ref{lemma-Sylvain} and by definition of $\varepsilon_{i+1}$, for every $g\in \Sigma$ there  exists $m_g >0$ such that $d(x_0,g^{m_g}x_0)\leq \varepsilon_{i}$. Let $A$ be the group generated by $\lbrace g^{m_g} \text{ s.t. } g\in \Sigma\rbrace$. The rank of $A$ is still $k_x$ by Corollary \ref{cor-rank-dimension}. Moreover $A< \overline{G}_{\varepsilon_{i}}(x_0)$, therefore by Lemma \ref{lemma-cocompact-rank}
 $$k_x = \text{rk}(\overline{G}_{\varepsilon_{i+1}}(x)) = \text{rk}(B) \leq \text{rk}(\overline{G}_{\varepsilon_{i}}(x_0)) = k.$$
By Proposition \ref{prop-trace-infinity-almost}  we also have 
$\partial \overline{G}_{\varepsilon_{i+1}}(x) = \partial A \subseteq \partial \overline{G}_{\varepsilon_{i}}(x_0) = Z$.
Reversing the roles of $x$ and $x_0$ and starting from $\overline{G}_{\varepsilon_{i+3}}(x_0)$ we obtain the estimate
$$k=\text{rk}(\overline{G}_{\varepsilon_{i+3}}(x_0)) \leq \text{rk}(\overline{G}_{\varepsilon_{i+2}}(x)) \leq \text{rk}(\overline{G}_{\varepsilon_{i+1}}(x)) = k_x$$
and $Z= \partial \overline{G}_{\varepsilon_{i+3}}(x_0)\subseteq \partial \overline{G}_{\varepsilon_{i+2}}(x) \subseteq \partial \overline{G}_{\varepsilon_{i+1}}(x)$.
This proves proves (ii) and (iii) in this case. Observe that it also shows that for all $x$ with $d(x,x_0) \leq D_0$ we have $\text{rk}(\overline{G}_{\varepsilon_{i+2}}(x)) = \text{rk}(\overline{G}_{\varepsilon_{i+1}}(x)) = k$ and $\partial \overline{G}_{\varepsilon_{i+2}}(x) = \partial \overline{G}_{\varepsilon_{i+1}}(x) = Z$.\\
The same group $A$ above is cocompact in both $\overline{G}_{\varepsilon_{i +1}}(x)$ and $\overline{G}_{\varepsilon_{i}}(x_0)$ by Lemma \ref{lemma-cocompact-rank}. A double application of Lemma \ref{lemma-commensurability-hereditary} gives that $\overline{G}_{\varepsilon_{i +1}}(x)$ is almost commensurated, that is (vi). Moreover Proposition \ref{prop-commensurated-splitting} implies that the closure of the projection of $\overline{G}_{\varepsilon_{i}}(x_0)$ on $\text{Isom}(Y)$ is compact, so it is the closure of the projection of $A$ and automatically also the closure of the projection of $\overline{G}_{\varepsilon_{i + 1}}(x)$.
Since the the closure of the projection of $\overline{G}_{\varepsilon_{i+1}}(x)$ on $\text{Isom}(Y)$ is compact then it fix some point $y\in Y$ (cp. \cite[Proposition II.2.8]{BH09}). Hence $\overline{G}_{\varepsilon_{i+1}}(x)$ preserves $\lbrace y \rbrace \times \mathbb{R}^k$. This shows (iv).\\
As in the proof of Proposition \ref{prop-commensurated-splitting} we have that any splitting $W\times \mathbb{R}^k$ as in Proposition \ref{prop-trace-infinity-almost} associated to $\overline{G}_{\varepsilon_{i+2}}(x)$ is compatible with the splitting $X=Y\times \mathbb{R}^k$, in the sense that $W\subseteq Y$. This is just because $\partial \overline{G}_{\varepsilon_{i+2}}(x) = Z$.
Therefore the projection of $\overline{G}_{\varepsilon_{i+2}}(x)$ on $\text{Isom}(\mathbb{R}^k)$ is a crystallographic group by Proposition \ref{prop-trace-infinity-almost}.(i.b). By (i.c) of the same proposition we have that the maximal lattice $\mathcal{L}_{i+2}$ of this projection is generated by elements of $\overline{S}_{\varepsilon_{i+2}}(x)^{4J_0} \subseteq \overline{S}_{4J_0\cdot\varepsilon_{i+2}}(x)$. Observe that $4J_0\cdot\varepsilon_{i+2} \leq \varepsilon_{i+1}/2\sqrt{n_0}$, by definition.
The same argument shows that the projection of $\overline{G}_{\varepsilon_{i + 1}}(x)$ on $\text{Isom}(\mathbb{R}^k)$ is crystallographic too. Since the maximal lattice $\mathcal{L}_{i+1}$ of the projection of $\overline{G}_{\varepsilon_{i+1}}(x)$ on $\text{Isom}(\mathbb{R}^k)$ contains $\mathcal{L}_{i+2}$, then $\lambda(\mathcal{L}_{i+1}) \leq \lambda(\mathcal{L}_{i+2}) \leq \varepsilon_{i+1}/2\sqrt{n_0}$. Therefore $\mathcal{L}_{i+1}$ is $\varepsilon_{i+1}/2$-cocompact. This concludes (v).\\
Finally, assume that $x'$ is any point of $X$, say  $x'=gx$ with $d(x,x_0) \leq D_0$.
Observe that 
$\overline{G}_{\varepsilon_{i+1}}(x') = g \overline{G}_{\varepsilon_{i+1}}(x)g^{-1}$, so the rank does not change and 
the conditions (ii) and (vi) continue to hold  since 
the splitting is $G$-invariant.
Moreover
$\partial \overline{G}_{\varepsilon_{i+1}}(x')
=g \cdot \partial \overline{G}_{\varepsilon_{i+1}}(x)
=g \cdot Z = Z$, because $Z$ is $G$-invariant, so (iii) still holds. 
Finally, if the projection of $\overline{G}_{\varepsilon_{i+1}}(x)$ on $\text{Isom}(Y)$ preserves  $\lbrace y \rbrace \times \mathbb{R}^k$, then the projection of $\overline{G}_{\varepsilon_{i+1}}(x')$ on $\text{Isom}(Y)$ preserves $\lbrace gy \rbrace \times \mathbb{R}^k$, which proves (iv).
\end{proof}

\vspace{2mm}
\section{The Renormalization Theorem}
\label{sec-renormalization}
In this section we prove the analogue of the Renormalization Theorem \cite[Theorem E]{CS23} for totally disconnected group actions. This is a useful tool that that can be used to rescale a collapsing sequence in order to obtain a non-collapsing one. We will not use it here but it is essential for other applications in future works. The main point is to use the Splitting Theorem \ref{theo-splitting-weak} to show that, up to increasing in a controlled way the codiameter, we can always suppose that the {\em free-systole} is bounded away from zero by a universal positive constant. 
\begin{theo}
	\label{theo-bound-systole}
	Given $P_0,r_0,D_0>0$, there exist $s_0 = s_0(P_0,r_0,D_0)>0$ and $\Delta_0 = \Delta_0(P_0,D_0)$ such that the following holds.
	Let $X$ be a proper, geodesically complete, $(P_0,r_0)$-packed, $\textup{CAT}(0)$-space and let $G < \textup{Isom}(X)$ be closed, totally disconnected and $D_0$-cocompact.
	Then $G$ admits another faithful, continuous, $\Delta_0$-cocompact action by isometries on a $\textup{CAT}(0)$-space $X'$ isometric to $X$, i.e. $G$ can be seen as a closed, totally disconnected, $\Delta_0$-cocompact subgroup of $\textup{Isom}(X')$, with the additional property that 
	$\textup{sys}^\diamond(G, X') \geq s_0$.
\end{theo}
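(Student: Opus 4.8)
The plan is to first dispose of the trivial case and then, in the collapsed one, renormalize the Euclidean factor of $X$. Fix once and for all a constant $s_0=s_0(P_0,r_0,D_0)$ with $s_0\le\sigma:=\sigma_{P_0,r_0,D_0}(\varepsilon_0)$; its precise value is pinned down below, and $\Delta_0=\Delta_0(P_0,D_0)$ will come out of the codiameter estimate. If $\textup{sys}^\diamond(G,X)\ge s_0$ we take $X'=X$. Otherwise $\textup{sys}^\diamond(G,X)<s_0\le\sigma$, so Theorem \ref{theo-splitting-weak} (with $\varepsilon=\varepsilon_0$) applies; since moreover $X$, being geodesically complete with $G$ acting minimally, carries a canonical maximal Euclidean (de Rham) factor — automatically $G$-invariant — which the theorem shows to be non-trivial, we may write $X=Y\times\mathbb{R}^N$ with $1\le N\le n_0$ and this splitting $G$-invariant. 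Writing $p_Y,p_v$ for the induced projections of $G$ to $\textup{Isom}(Y)$ and $\textup{Isom}(\mathbb{R}^N)$, and $g_Y=p_Y(g)$, $g_v=p_v(g)$, we re-apply Theorem \ref{theo-splitting-weak} (and, if its $\varepsilon^\ast$-splitting rank is $<N$, iterate a rescale-and-resplit procedure, the dimension of the non-Euclidean factor dropping each time, so in at most $n_0$ steps) and record the structural data it provides: the almost abelian, almost commensurated subgroups $\overline{G}_{\varepsilon^\ast}(x)$, each preserving a slice $\{y\}\times\mathbb{R}^N$, with relatively compact projection to $\textup{Isom}(Y)$ and projecting onto $\textup{Isom}(\mathbb{R}^N)$ as a crystallographic group whose maximal lattice has shortest generating radius $\le\varepsilon^\ast/2\sqrt{n_0}$ and covering radius $\le\varepsilon^\ast/2$.

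The renormalization replaces the flat metric on $\mathbb{R}^N$ by a suitably rescaled flat metric $g'$, chosen invariant under the (finite, of order bounded in terms of $n_0$) point group of the crystallographic group $\overline{p_v(G)}$; then $X'=Y\times(\mathbb{R}^N,g')$ is isometric to $X$, and $G$ still acts on $X'$ by the same isometries $g=(g_Y,g_v)$, hence faithfully, continuously, as a closed totally disconnected subgroup preserving the splitting. Since $g\in G$ is elliptic (resp.\ hyperbolic) on $X'$ if and only if it is so on $X$, and since translation lengths on a metric product combine Pythagoreanly, $\textup{sys}^\diamond(G,X')=\inf\{\,(\ell_Y(g_Y)^2+\ell_{g'}(g_v)^2)^{1/2}:g\in G \text{ hyperbolic}\,\}$. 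For the $g$ with $g_v$ non-elliptic this is at least $\textup{sys}^\diamond(\overline{p_v(G)},(\mathbb{R}^N,g'))$, and here total disconnectedness is essential: a closed connected subgroup of translations of $\mathbb{R}^N$ would pull back to a closed subgroup of $G$ with connected, hence trivial, totally disconnected quotient, so $\overline{p_v(G)}\cap\textup{Transl}$ is a full lattice and its rotational part finite, i.e.\ $\overline{p_v(G)}$ is crystallographic; choosing $g'$ to roundify its maximal lattice — via a point-group average of an inner product adapted to a reduced basis (Lemmas \ref{lemma-LLL-basis} and \ref{lemma-sublattice-controlled}) — makes this systole at least a positive $s_0'=s_0'(P_0)$. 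For the $g$ with $g_v$ elliptic one has $\ell_{X'}(g)=\ell_Y(g_Y)=\ell_X(g)$, and a hyperbolic such $g$ must satisfy $\ell_X(g)>\sigma$: otherwise $g\in\overline{G}_{\varepsilon^\ast}(x)$ for $x$ in its minimal set (as $\ell(g)\le\sigma<\varepsilon^\ast$), but $\overline{G}_{\varepsilon^\ast}(x)$ acts trivially on the factor complementary to its Euclidean trace at infinity, which is a subfactor of $\mathbb{R}^N$, so the axis of $g$ would lie in $\mathbb{R}^N$ and $g_v$ be non-elliptic, a contradiction. Taking $s_0:=\min(s_0'(P_0),\sigma)$ then yields $\textup{sys}^\diamond(G,X')\ge s_0$.

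The remaining — and main — difficulty is to bound the codiameter of this renormalized action by a constant $\Delta_0$ depending only on $P_0$ and $D_0$. The naive estimate is of order $D_0$ times the stretching factor of $g'$, which is too weak since this factor can be large (indeed $r_0$-dependent); the point is that $g'$ stretches exactly the directions in which the orbit $Gx_0$ is most densely covered, namely the short directions of the maximal lattice, so that coverage is essentially preserved. In detail, given a target $q\in X'$, one first places some $gx_0$ within $D_0$ of $q$ for the $X$-metric using the $D_0$-cocompactness of the original action, and then corrects the $\mathbb{R}^N$-coordinate in the $g'$-metric by a suitable element of the conjugate $g\,\overline{G}_{\varepsilon^\ast}(x_0)\,g^{-1}$, whose $\textup{Isom}(\mathbb{R}^N)$-part is crystallographic — hence, after the roundification, $\rho_0(P_0)$-cocompact — and whose $\textup{Isom}(Y)$-part is relatively compact; the $Y$-coordinate is thereby displaced by at most twice the distance from $x_0$ to the slice $\{y\}\times\mathbb{R}^N$ preserved by $\overline{G}_{\varepsilon^\ast}(x_0)$. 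The crux, for which one combines Theorem \ref{theo-splitting-weak}(iv)--(vi), the structure theory of almost abelian groups acting on \textup{CAT}$(0)$-spaces (Section \ref{subsec-almost-abelian-CAT}), and the finiteness of orbits of precompact subsets (Theorem \ref{theo-characterization-td}(c)), is to bound this last distance by a constant depending only on $P_0$ and $D_0$. Granting it, one obtains $\textup{codiam}(G,X')\le\Delta_0(P_0,D_0)$; since the possible preliminary iteration consists of at most $n_0$ such steps, the final $\Delta_0$ still depends only on $P_0$ and $D_0$, and $X'$, being isometric to $X$, is again proper, geodesically complete, $(P_0,r_0)$-packed and \textup{CAT}$(0)$, completing the proof.
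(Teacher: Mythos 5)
Your overall strategy — rescale the Euclidean factor so that the lattice has definite size, iterate the splitting, and argue termination in at most $n_0$ steps — is the same as the paper's.  But there is a genuine gap exactly where you flag it.  Your cocompactness argument takes some $g\in G$ with $gx_0$ within $D_0$ of the target point (in the old metric), and then corrects the $\mathbb{R}^N$-coordinate by a right-composition with an element $h$ of $\overline{G}_{\varepsilon^\ast}(x_0)$.  Since $h$ preserves $\{y_0\}\times\mathbb{R}^N$ but in general $x_0\notin\{y_0\}\times\mathbb{R}^N$, the $Y$-coordinate of $h x_0$ drifts by up to $2\,d\bigl(x_0,\{y_0\}\times\mathbb{R}^N\bigr)$, and you state you would need to bound this distance by a constant depending only on $P_0,D_0$ — which you do not do, and which is not within reach without unimodularity: the point $x_0$ furnished by Theorem~\ref{theo-splitting-weak} has no a priori control on its distance to the slice it stabilizes (Proposition~\ref{prop-minimal-close}, which places $x_0$ on the slice, uses unimodularity, an assumption Theorem~\ref{theo-bound-systole} does not make).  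This is not a missing technicality but a step that fails as written: nothing in (iv)--(vi) of the Splitting Theorem or in Theorem~\ref{theo-characterization-td}.(c) bounds that distance.

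The paper sidesteps this entirely, and the fix is simple: do not use $x_0$ as the reference orbit point at all.  Instead take the point $(y_0,{\bf O})$ which \emph{does} lie in the preserved slice; an $h\in\overline{G}_{\varepsilon^\ast}(x_0)$ then sends $(y_0,{\bf O})$ to $(y_0, h''{\bf O})$, leaving the $Y$-coordinate untouched, so the only correction is in $\mathbb{R}^N$ and is bounded by the covering radius of the rescaled lattice ($\le\sqrt{n_0}/2$ after scaling by $1/\lambda(\mathcal{L})$), yielding $D_1=2D_0+\sqrt{n_0}$.  Two smaller remarks.  First, your anisotropic ``roundification'' of the flat metric (point-group averaging of an adapted inner product) is not needed; a single scalar rescaling by $1/\lambda(\mathcal{L})$ already makes the lattice $\sqrt{n_0}/2$-cocompact via \eqref{eq-lattice-relation}, and keeps $X'$ isometric to $X$.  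Second, your fine case analysis of the systole (splitting into $g_v$ elliptic versus hyperbolic) is dispensable: after rescaling one simply checks whether the free-systole is now $\ge\sigma_{n_0}$; if not one reapplies the Splitting Theorem and observes that the $\varepsilon^\ast$-splitting rank \emph{strictly drops} (because a full rank set of lattice generators would otherwise be shorter than the shortest generating radius of the previous lattice), so termination in at most $n_0$ steps is automatic without examining individual isometries.
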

With the tools we have developed so far the proof is exactly the same of \cite[Theorem E]{CS23}, we will report it for completeness. \\
Recall the constants $n_0=P_0/2$, which bounds the dimension of any proper, geodesically complete,  CAT$(0)$-space  $X$ which is $(P_0, r_0)$-packed, and  $J_0$, introduced at the beginning of Section \ref{sec-splitting}. Also recall  the Margulis constant $\varepsilon_0=\varepsilon_0(P_0, r_0)$ given by Proposition \ref{prop-Margulis-CAT}, which we will always assume smaller than $1$ in the sequel. 

\begin{proof}[Proof of Theorem \ref{theo-bound-systole}] 
Recall the function $ \sigma_{P_0,r_0,D_0}(\varepsilon)$ of  Theorem \ref{theo-splitting-weak}.
Then we define inductively $D_1 = 2D_0 + \sqrt{n_0}$, $\sigma_1 = \sigma_{P_0,r_0,D_1}( \varepsilon_0)$ and
$$D_{j} = 2D_{j-1} + \sqrt{n_0} \;\,,\hspace{1cm}
\sigma_{j} = \sigma_{P_0,r_0,D_j} (\sigma_{j-1}) > 0.$$
\noindent We claim that $\Delta_0 := D_{n_0 -1}$ and $s_0 := \sigma_{n_0}$ satisfy the thesis; notice that both depend only on $P_0,r_0$ and $D_0$.\\
We define an algorithm which takes the CAT$(0)$-space $X_0:=X$ and produces a new proper, geodesically complete, $(P_0,r_0)$-packed CAT$(0)$-space $X_1$ on which $G$ still acts  faithfully by isometries, still satisfying all the assumptions of the theorem, except that  $\text{diam}(G \backslash X_1) \leq D_1$; and we will show that,  repeating again and again this process, we end up  with a CAT$(0)$-space $X_j$ with $\textup{sys}^\diamond(G,X_j) > \sigma_{n_0}$, for some $j\leq n_0$.\\
If $\textup{sys}^\diamond(G,X_0) > \sigma_{n_0}$, there is nothing to do, and we just set $X'=X_0$.\\
Otherwise,	 $\textup{sys}^\diamond(G,X) \leq \sigma_{n_0} < \sigma_1 = \sigma_{P_0,r_0,D_0} (\varepsilon_0) $, and we apply 
Theorem \ref{theo-splitting-weak} 
with $\varepsilon=\varepsilon_0$.
Then, there exists $\varepsilon^\ast_0:=\varepsilon^\ast \in (\sigma_1 ,\varepsilon_0)$ such that the groups 
$\overline{G}_{\varepsilon^\ast_0}(x,X_0)$ have all rank $k_0 \geq 1$ for every  $x\in X_0$. We then fix   $x_0 \in X_0$.   
By Theorem \ref{theo-splitting-weak} the group $\overline{G}_{\varepsilon^\ast_0}(x_0,X)$ of rank $k_0 \geq 1$ is almost commensurated in $G$ and $X_0$ splits isometrically and $G$-invariantly as $Y_0 \times \mathbb{R}^{k_0}$.
Moreover,  always by Theorem \ref{theo-splitting-weak}, there exists $y_0\in Y_0$ such that $\overline{G}_{\varepsilon^\ast_0}(x_0, X_0)$ preserves $\lbrace y_0 \rbrace \times \mathbb{R}^{k_0}$,  and  the maximal lattice $\mathcal{L}_{\varepsilon^\ast_0}(x_0,X_0)$ of the crystallographic group $\pi_{\mathbb{R}^{k_0}}(\overline{G}_{\varepsilon^\ast_0}(x_0, X_0))$ satisfies $\lambda(\mathcal{L}_{\varepsilon^*_0}(x)) \leq \varepsilon^*_0/2\sqrt{n_0}$. Here $\pi_{\mathbb{R}^{k_0}} \colon G \to \text{Isom}(\mathbb{R}^{k_0})$ is the projection map.
So, we can find a shortest basis $\mathcal{B}_0 = \lbrace \bf b^0_1, \ldots, \bf b^0_{k_0} \rbrace$  of the lattice $\mathcal{L}_{\varepsilon^\ast_0}(x_0,X_0)$ whose vectors have all length at most $\varepsilon^\ast_0/2\sqrt{n_0} < 1$; without loss of generality, we may suppose that $\Vert {\bf b^0_1} \Vert_0 \leq \ldots \leq  \Vert {\bf b^0_{k_0}} \Vert_0 = \lambda \left(\mathcal{L}_{\varepsilon^\ast_0}(x_0,X_0)\right) =:\ell_0 < 1$, where $\Vert \hspace{2mm}\Vert_0$ denotes the Euclidean norm of $\mathbb{R}^{k_0}$.
By \eqref{eq-lattice-relation}, we also know that $\rho(\mathcal{L}_{\varepsilon^\ast_0}(x_0,X_0)) \leq \frac{\sqrt{k_0}}{2}\cdot \ell_0 \leq \frac{\sqrt{n_0}}{2}\cdot\ell_0 $.\\	
Now, we define the metric space 
$X_1 := Y_0 \times\left( \frac{1}{\ell_0}\cdot \mathbb{R}^{k_0} \right)$. This is again a proper, geodesically complete, CAT$(0)$-space, still $(P_0,r_0)$-packed,   on which $G$ still acts faithfully by isometries (because the splitting of $X_0$ is $G$-invariant). 
We claim that the action of $G$ on $X_1$ is $D_1$-cocompact. In fact, let $x = (y,{\bf v})$ be any point of $X_1$. Since the action of $G$ on $X_0$ is $D_0$-cocompact, we know that there exists $g\in G$ such that $d_{X_0}\left(x , g\cdot(y_0,{\bf O}) \right) \leq D_0$; moreover, 
as $\overline{G}_{\varepsilon^\ast_0}(x_0, X)$ preserves $\lbrace  y_0 \rbrace \times \mathbb{R}^{k_0}$, up to compose $g$ with elements of this group we can suppose that $g = (g',g'') \in G$ satisfies $d_{X_0}(x, g \cdot  (y_0,{\bf O})) \leq D_0$ and 
$\Vert {\bf v} -  g'' \cdot {\bf O} \Vert_0 \leq  \frac{\sqrt{n_0}}{2}\cdot\ell_0 $. Therefore
\begin{equation*}
\begin{aligned}
d_{X_1}((x, g \cdot (y_0,{\bf O})) &
\leq \sqrt{d_{Y_0}(y, g' \cdot y_0)^2 + \left(\frac{1}{\ell_{0}}\right)^2\cdot 
\Vert {\bf v} -  g'' \cdot {\bf O} \Vert_0^2} \\ 
&\leq \sqrt{D_0^2 + \frac{n_0}{4}} \leq D_0 + \frac{\sqrt{n_0}}{2} = \frac{D_1}{2}.
\end{aligned}
\end{equation*}
As   $(y,{\bf v}) \in X_1$ was arbitrary, we then deduce that 
$ X_1 = G\cdot \overline{B}_{X_1}\left(( y_0,{\bf O}), \frac{D_1}{2}\right),$ 
so $G < \text{Isom}(X_1)$ is $D_1$-cocompact. 
If now $\textup{sys}^\diamond(G,X_1) > \sigma_{n_0}$, we stop the process and  set $X' = X_1$: this space has all the desired properties. \\
Otherwise, we have
$\textup{sys}^\diamond(G,X_1) \leq   \sigma_{n_0} 
< \sigma_2 = \sigma_{P_0,r_0,D_1}  (\sigma_1)$
and we can  apply again Theorem \ref{theo-splitting-weak} to $X_1$, with  $\varepsilon=\sigma_1$. 
Then, there exists  $\varepsilon^\ast_1 \in (\sigma_2 ,\sigma_1)$ 
such that the groups 
$\overline{G}_{\varepsilon^\ast_1}(x,X_1)$ have rank $k_1 \geq 1$ for every  $x\in X_1$, in particular  $\text{rk}(\overline{G}_{\varepsilon^\ast_1}(x_0,X_1))=k_1$.  Moreover $\overline{G}_{\varepsilon^\ast_1}(x_0,X_1)$ is almost commensurated in $G$, the space $X_1$ splits isometrically and $G$-invariantly as $Y_1 \times \mathbb{R}^{k_1}$, and the trace at infinity $\partial \overline{G}_{\varepsilon^\ast_1}(x_0, X_1)$ coincides with the boundary at infinity of all the sets $\lbrace y \rbrace \times \mathbb{R}^{k_1}$. Moreover the maximal lattice $\mathcal{L}_{\varepsilon^\ast_1}(x_0,X_1)$ of   $\pi_{\mathbb{R}^{k_1}}(\overline{G}_{\varepsilon^\ast_1}(x_0, X_1))$ satisfies $\lambda(\mathcal{L}_{\varepsilon^\ast_1}(x_0,X_1)) \leq \varepsilon_1^*/2\sqrt{n_0}$, where the notation is as before. Therefore, we can find
a shortest basis $\mathcal{B}^1 = \lbrace \bf b_1^1, \ldots, \bf b_{k_1}^1 \rbrace$ of $\mathcal{L}_{\varepsilon^\ast_1}(x_0,X_1)$ with lengths (with respect to the Euclidean norm  $\Vert \hspace{2mm} \Vert_1$ of $\mathbb{R}^{k_1}$)
$$\Vert {\bf b_{1}^1}\Vert_1 \leq \ldots \leq  \Vert {\bf b_{k_1}^1} \Vert_1 = \lambda \left(\mathcal{L}_{\varepsilon^\ast_1}(x_0,X_1)\right) =:\ell_1 \leq \varepsilon^\ast_1/2\sqrt{n_0} < 1.$$
Observe that, as
by construction the factor $\left(\frac{1}{\ell_0 }\right)^2$ is bigger than $1$, we have	
\begin{equation}
\label{eq-group-inclusion}
\overline{G}_{\varepsilon^\ast_1}( x_0, X_1) < \overline{G}_{\sigma_1}( x_0, X_1) < \overline{G}_{\sigma_1}( x_0, X_0) < \overline{G}_{\varepsilon^\ast_0}( x_0, X_0),
\end{equation}
so
$k_1= \text{rk}(\overline{G}_{\varepsilon^\ast_1}( x_0, X_1)) \leq  \text{rk}(\overline{G}_{\varepsilon^\ast_0}( x_0, X_0)) = k_0$. Moreover the metric splittings of $X_1$ as $Y_0 \times \left( \frac{1}{\ell_0}\cdot{\mathbb R}^{k_0}\right)$  and $Y_1 \times {\mathbb R}^{k_1}$  determined, respectively, by the groups $\overline{G}_{\varepsilon^\ast_0}( x_0, X_0)$ and $\overline{G}_{\varepsilon^\ast_1}( x_0, X_1)$ satisfy ${\mathbb R}^{k_1} \subset \frac{1}{\ell_0} \cdot {\mathbb R}^{k_0}$ and $Y_0 \subset Y_1$, because of  the second part of Proposition \ref{prop-commensurated-splitting}. \\
We will now show that $k_1 \lneq k_0$.
Actually, suppose that  $k_1 = k_0=:k$. 
Then $Y_0= Y_1$ and the two splittings coincide.
The lengths of the basis $\mathcal{B}^1 $ with respect to the Euclidean norm $\Vert \hspace{2mm} \Vert_0$ of  the Euclidean factor ${\mathbb R}^{k_0}$ of $X_0$ are $\Vert {\bf b_{i}^1} \Vert_0 = \ell_0  \cdot \Vert {\bf b_{i}^1} \Vert_1 < \ell_0$ 
for every $i=1,\ldots,k$.
But then, since $\mathcal{L}_{\varepsilon^\ast_1}(x_0,X_1) < \mathcal{L}_{\varepsilon^\ast_0}(x_0,X_0)$ by \eqref{eq-group-inclusion}, we would be able to find $k$ independent vectors of $\mathcal{L}_{\varepsilon^\ast_0}(x_0,X_0)$ of length less than its shortest generating radius, which is impossible.
This shows that $k_1 < k_0$.\\
We can now define   $X_2:= Y_1 \! \times \left(\frac{1}{\ell_1} \!\cdot \mathbb{R}^{k_1} \right)$, on which $G$ acts faithfully by isometries and  $D_2$-cocompactly, for $D_2 = 2D_1 + \sqrt{n_0}$ computed as before. 
We can repeat this process to get a sequence of proper, geodesically complete, $(P_0,r_0)$-packed, CAT$(0)$-spaces $X_j$ on which $G$ always acts faithfully and $D_j$-cocompactly by isometries. Moreover at each step 
either $\textup{sys}^\diamond(G,X_j) \geq \sigma_{n_0}$ or the $\varepsilon^\ast_j$-splitting rank $k_j$ of $X_j$ provided by Theorem \ref{theo-splitting-weak} is strictly smaller than the $\varepsilon^\ast_{j-1}$-splitting rank $k_{j-1}$ of $X_{j-1}$. Since the splitting rank of $X_0$ is at most $n_0$ there must exist $j \in \lbrace 1,\ldots, n_0 \rbrace$ such that $\textup{sys}^\diamond(G,X_j) \geq \sigma_{n_0}$. The proof then ends by setting $X' = X_j$. It is clear from the construction of $X'$ that it is isometric to $X$.
\end{proof}

\begin{obs} 
\label{rmk-splitting}
The proof actually produces a sequence of almost abelian commensurated subgroups of $G$
$$\lbrace \textup{id} \rbrace \lneqq A_0 \lneqq A_1\ldots \lneqq A_m$$
for some $m \leq n_0-1$, such that:
\begin{itemize}
\item[(a)] denoting by $k_j$ the rank of $A_j$, then $1 \leq  k_0 <k_1 \ldots < k_m$;
\item[(b)] setting $h_j = k_j - k_{j-1}$ then there is a corresponding  isometric and $G$-invariant splitting of $X$ as $Y\times \mathbb{R}^{h_0} \times \mathbb{R}^{h_1} \times \cdots \times \mathbb{R}^{h_m}$. Moreover, there exist $0<L_j < 1$ such that the natural action of $G$ on the space
\begin{equation}
\label{eq-finalsplitting}
X' := Y\times \left(\frac{1}{L_0}\cdot\mathbb{R}^{h_0} \right) \times \left(\frac{1}{L_1}\cdot\mathbb{R}^{h_1} \right)\times \cdots \times \left( \frac{1}{L_m}\cdot\mathbb{R}^{h_m} \right)
\end{equation}
is $\Delta_0$-cocompact with free-systole at least $s_0$. 
\end{itemize}
\end{obs}

\section{Convergence and collapsing}
\label{sec-convergence}
In this last section we will finally attack Theorem \ref{theo-intro-closure} using the tools developed until now.
First of all, for the reader's convenience, we briefly recall the notion of equivariant Gromov-Hausdorff convergence and ultralimits.

\subsection{Equivariant Gromov-Hausdorff convergence and ultralimits}
\label{sub-GH} ${}$\\  
An {\em isometric action} on a pointed space is a triple  $(X,x,G)$ where $X$ is a proper metric space, $x \in X$ is a basepoint and $G < \text{Isom}(X)$ is a closed subgroup. An {\em equivariant isometry} between isometric  actions of pointed spaces  $(X,x,G)$ and $(Y,y,H)$ is an isometry $F\colon X \to Y$ such that
\begin{itemize}
\item[--] $F(x)=y$;
\item[--] $F_*\colon \textup{Isom}(X) \to \textup{Isom}(Y)$ defined by $F_*(g) = F\circ g \circ F^{-1}$ is an isomorphism between $G$ and $H$.
\end{itemize}
\noindent The best known notion of convergence for isometric actions of pointed spaces is the  \emph{equivariant pointed Gromov-Hausdorff} convergence,  as defined by K. Fukaya in \cite{Fuk86}:
we will write $(X_j,x_j,G_j) \underset{\textup{eq-pGH}}{\longrightarrow} (X_\infty,x_\infty,G_\infty)$  
for a sequence $(X_j,x_j,G_j)$  of isometric actions converging in the  equivariant pointed Gromov-Hausdorff sense to an isometric action $(X_\infty,x_\infty,G_\infty)$. 

\noindent Forgetting about the group actions and  considering just  pointed metric spaces $(X_j,x_j)$,  this convergence reduces to the \emph{pointed Gromov-Hausdorff} convergence: we will write $(X_j,x_j) \underset{\textup{pGH}}{\longrightarrow} (X_\infty,x_\infty)$ for a sequence of pointed metric spaces $(X_j,x_j)$ converging to the pointed metric space $(X_\infty, x_\infty)$  in this sense. If moreover all the spaces  $X_j$ under consideration are compact we can even drop  the basepoints,  and we will simply write $X_j \underset{\textup{GH}}{\longrightarrow} X_\infty$ when  $X_j$ converges  to the compact metric space $X_\infty$. \\
We do not recall here the definition  of equivariant, pointed Gromov-Hausdorff convergence.
An equivalent approach uses ultralimits. We present it in this section, and then we recall the relations with  the usual Gromov-Hausdorff convergence, referring to \cite{Jan17} and \cite{Cav21ter}.\\
A {\em non-principal ultrafilter} $\omega$ is a finitely additive measure on $\mathbb{N}$ such that $\omega(A) \in \lbrace 0,1 \rbrace$ for every $A\subseteq \mathbb{N}$ and $\omega(A)=0$ for every finite subset of $\mathbb{N}$. \linebreak Accordingly, we will write  {\em $\omega$-a.s.}  or {\em for $\omega$-a.e.$(j)$} in the usual measure theoretic sense.  
Given a bounded sequence $(a_j)$ of real numbers and a non-principal ultrafilter $\omega$ there exists a unique $a\in \mathbb{R}$ such that for every $\varepsilon > 0$ the set $\lbrace j \in \mathbb{N} \text{ s.t. } \vert a_j - a \vert < \varepsilon\rbrace$ has $\omega$-measure $1$ (cp. \cite[Lemma 10.25]{DK18}). \linebreak The real number $a$ is then called {\em the ultralimit of the sequence $a_j$} and it is denoted by $\omega$-$\lim a_j$.

\noindent A     non-principal ultrafilter  $\omega$ being given,   one can define the {\em ultralimit pointed metric space} 
$(X_\omega, x_\omega)= \omega$-$\lim (X_j, x_j)$ of any  sequence of pointed metric spaces $(X_j, x_j)$: \\
-- first, one  says that a sequence    $(y_j)$, where $y_j\in X_j$ for every $j$, is {\em admissible} if there exists $M$ such that $d(x_j,y_j)\leq M$ for $\omega$-a.e.$(j)$; \\
-- then, one 
defines $(X_\omega, x_\omega)$ as  set of admissible sequences $(y_j)$ modulo the relation $(y_j)\sim (y_j')$ if and only if $\omega$-$\lim d(y_j,y_j') = 0$. \\
The point of $X_\omega$ defined by the class of the sequence $(y_j)$ is denoted by  $y_\omega = \omega$-$\lim y_j$. 
Finally, the formula $d(\omega$-$\lim y_j, \omega$-$\lim y_j') = \omega$-$\lim d(y_j,y_j')$ defines a metric on $X_\omega$ which is called the ultralimit distance on $X_\omega$.

\noindent Using a non-principal ultrafilter  $\omega$, one can also talk of limits of  isometries  and  of  isometry groups of pointed metric spaces. A sequence of isometries  $g_j $ of pointed metric spaces $(X_j, x_j)$ is {\em admissible} if there exists $M\geq 0$ such that $d(x_j, g_jx_j) \leq M$ $\omega$-a.s. Any such sequence defines a limit isometry $g_\omega = \omega$-$\lim g_j$ of $X_\omega=\omega$-$\lim (X_j, x_j)$  by the formula: $g_\omega y_\omega = \omega$-$\lim g_jy_j$  (\cite[Lemma 10.48]{DK18}).
Given a sequence of groups $G_j < \text{Isom}(X_j)$ we set
$$G_\omega = \lbrace \omega\text{-}\lim g_j \text{ s.t. } g_j \in G_j \text{ for } \omega\text{-a.e.}(j)\rbrace.$$
In particular the elements of $G_\omega$ are ultralimits of admissible sequences. 
One has a well-defined composition law on $G_\omega$ (\cite[Lemma 3.7]{Cav21ter}): if $ g_\omega   = \omega$-$\lim g_j$ and  $ h_\omega = \omega$-$\lim h_j$ we set $g_\omega \, \circ\, h_\omega := \omega\text{-}\lim(g_j \circ h_j).$
With this operation $G_\omega$ becomes a group of isometries of $X_\omega$, which  we call  {\em the ultralimit group} of the sequence of groups $G_j$.  
Notice that if $X_\omega$ is proper then $G_\omega$ is always a closed subgroup of isometries of $X_\omega$ \cite[Proposition 3.8]{Cav21ter}.

\vspace{1mm}
\noindent In conclusion,  a non-principal ultrafilter  $\omega$ being given, for any sequence of isometric actions on pointed  spaces $(X_j,x_j, G_j)$ there exists an {\em ultralimit isometric action} on a pointed space
\vspace{-3mm}

$$(X_\omega, x_\omega, G_\omega) =  \omega \!-\! \lim (X_j,x_j, G_j).$$ 

\vspace{1mm}
\noindent The ultralimit approach and the Gromov-Hausdorff convergence are essentially equivalent.

\begin{prop}[\cite{Cav21ter}, Proposition 3.13 \& Corollary 3.14] 
\label{prop-GH-ultralimit} ${}$\\
Let   $(X_j, x_j, G_j)$ be a sequence of isometric actions of pointed spaces:
\begin{itemize} 
\item[(i)]   if $(X_j,x_j,G_j) \underset{\textup{eq-pGH}}{\longrightarrow} (X_\infty,x_\infty,G_\infty)$, then $(X_\omega, x_\omega, G_\omega) \cong (X_\infty,x_\infty,G_\infty)$  for every non-principal ultrafilter $\omega$;
\item[(ii)] reciprocally, if $\omega$  is a non-principal ultrafilter and   $(X_\omega, x_\omega)$ is proper, then    $(X_{j_k},x_{j_k},G_{j_k}) \underset{\textup{eq-pGH}}{\longrightarrow} (X_\omega,x_\omega,G_\omega)$ for some subsequence $\lbrace{j_k}\rbrace$.
\end{itemize}
Moreover,  if for every non-principal ultrafilter $\omega$ the ultralimit  $(X_\omega,x_\omega,G_\omega)$ is equivariantly isometric to the same isometric  action   $(X,x,G)$, with $X$ proper,  then $(X_j,x_j,G_j) \underset{\textup{eq-pGH}}{\longrightarrow} (X,x,G)$.
\end{prop}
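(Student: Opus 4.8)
The plan is to work directly from the definition of equivariant pointed Gromov--Hausdorff convergence in the sense of Fukaya \cite{Fuk86} (see also \cite{Jan17}, \cite{Cav21ter}), phrased concretely as follows: $(X_j,x_j,G_j) \underset{\textup{eq-pGH}}{\longrightarrow} (X,x,G)$ means that for every $R>0$ the ``best error'' $\varepsilon_j(R)$ tends to $0$, where $\varepsilon_j(R)$ is the infimum of the $\varepsilon>0$ for which there exist basepoint-preserving $\varepsilon$-isometries $\varphi_j\colon \overline{B}(x,R)\to X_j$ and $\psi_j\colon \overline{B}(x_j,R)\to X$, together with maps $\Phi_j$ (resp.\ $\Psi_j$) sending the bounded ``almost stabilizer'' pieces of $G$ (resp.\ $G_j$) at scale $R$ into those of $G_j$ (resp.\ $G$), in such a way that $\varphi_j,\Phi_j$ and $\psi_j,\Psi_j$ intertwine the two actions up to $\varepsilon$. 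All three assertions are then obtained by letting $R\to\infty$ and the errors go to $0$ along the ultrafilter, the only recurrent tool being the Arzel\`a--Ascoli theorem, which (as in Section~\ref{subsection-isometries}) guarantees that the sets $\overline{S}_r(x,R)$ are compact and hence that bounded sequences of group elements have $\omega$-limits.

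First I would prove (i). Fix a non-principal ultrafilter $\omega$ and assume eq-pGH convergence. For an admissible sequence $(y_j)$ with $d(x_j,y_j)\le M$ $\omega$-a.s., the points $\psi_j(y_j)$ (with $R=\lceil M\rceil+1$) lie in the compact ball $\overline{B}(x,M+1)$ for $\omega$-a.e.\ $j$, so $F(y_\omega):=\omega\text{-}\lim\psi_j(y_j)$ is well defined; choosing the $\psi_j$ with distortion $\le\varepsilon_j(R)+1/j\to 0$ shows at once that $F$ is independent of the representative and distance-preserving. Surjectivity is symmetric, using $\varphi_j$ to produce a preimage of any $z\in\overline{B}(x,R)$. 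For equivariance, given $g_\omega=\omega\text{-}\lim g_j\in G_\omega$ with $d(x_j,g_jx_j)$ bounded, the intertwining property forces $F(g_\omega y_\omega)=\big(\omega\text{-}\lim\Psi_j(g_j)\big)\,F(y_\omega)$ with $\omega\text{-}\lim\Psi_j(g_j)\in G$, and conversely every element of $G$ arises this way from the $\Phi_j$; hence $F_*$ restricts to an isomorphism $G_\omega\to G$ and $(X_\omega,x_\omega,G_\omega)\cong(X,x,G)$.

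Next (ii): assume $(X_\omega,x_\omega)$ is proper. For fixed $R\in\mathbb{N}$ and $\varepsilon=1/n$, pick a finite $\varepsilon$-net $\{a^1_\omega,\dots,a^N_\omega\}$ of the compact ball $\overline{B}(x_\omega,R)$ and write $a^i_\omega=\omega\text{-}\lim a^i_j$; for $\omega$-a.e.\ $j$ the correspondence $a^i_j\mapsto a^i_\omega$ is $2\varepsilon$-isometric and $\varepsilon$-dense, so it extends to an $\varepsilon$-approximation $\psi_j$, and symmetrically one builds $\varphi_j$ from an $\omega$-coherently chosen $\varepsilon$-net of $\overline{B}(x_j,R)$. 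The group data are handled by the very definition of $G_\omega$ together with the Arzel\`a--Ascoli remark above: any $g_j\in G_j$ with $d(x_j,g_jx_j)\le M$ has an $\omega$-limit in $G_\omega$, and conversely. Thus the set of $j$ admitting an $(R,\varepsilon)$-approximation to $(X_\omega,x_\omega,G_\omega)$ has $\omega$-measure $1$, in particular is infinite, and a diagonal extraction over $R\to\infty$, $\varepsilon\to 0$ yields a subsequence with $(X_{j_k},x_{j_k},G_{j_k}) \underset{\textup{eq-pGH}}{\longrightarrow} (X_\omega,x_\omega,G_\omega)$.

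Finally the ``moreover'' clause is a routine argument by contradiction: if $(X_j,x_j,G_j)$ failed to converge to $(X,x,G)$, there would be $R,\varepsilon>0$ and an infinite set $I\subseteq\mathbb{N}$ of indices admitting no $(R,\varepsilon)$-approximation to $(X,x,G)$; pick a non-principal ultrafilter $\omega$ with $\omega(I)=1$, note that $X_\omega\cong X$ is proper, and apply (ii) along $I$ to obtain a subsequence $j_k\in I$ eq-pGH converging to $(X_\omega,x_\omega,G_\omega)\cong(X,x,G)$ --- so $j_k$ does admit an $(R,\varepsilon)$-approximation for $k$ large, a contradiction. I expect the only genuinely delicate point to be the bookkeeping around the group-approximation maps $\Phi_j,\Psi_j$: making precise the sense in which they intertwine the actions, and checking that this is preserved under $\omega$-limits, together with the systematic appeal to properness (via Arzel\`a--Ascoli) ensuring that $G_\omega$ is captured exactly --- neither smaller nor larger than the group one wants. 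Everything else is a matter of sending the approximation parameters to their limits along $\omega$.
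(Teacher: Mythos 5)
The paper does not actually prove this proposition: it is imported verbatim as a citation to \cite[Proposition 3.13 and Corollary 3.14]{Cav21ter}, so there is no in-text argument to compare against. Your sketch reconstructs the standard proof that one would find in the cited reference, and the overall structure is sound: (i) build the equivariant isometry $F\colon X_\omega\to X_\infty$ pointwise via $\omega$-limits of the approximation maps $\psi_j$, with surjectivity coming from $\varphi_j$ and the group isomorphism from the intertwining maps $\Phi_j,\Psi_j$; (ii) for each fixed scale $(R,\varepsilon)$ produce approximation maps from finite $\varepsilon$-nets of the proper ball $\overline{B}(x_\omega,R)$ and from $\omega$-limits of group elements, observe that the ``good'' index set has $\omega$-measure $1$, and diagonalize; and the ``moreover'' clause is the usual contradiction argument, choosing $\omega$ concentrated on the bad set $I$ and applying (ii) along it (which is legitimate, since for each scale the full-measure good set intersects $I$ in a full-measure, hence infinite, set).

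The one place where you should be careful when writing this out fully is exactly the point you flag yourself, namely the precise bookkeeping of the maps $\Phi_j,\Psi_j$. In Fukaya's definition these are maps between the bounded pieces $\overline{S}_{R}(x,R)\subset G$ and $\overline{S}_{R}(x_j,R)\subset G_j$ (or similar compact slices), not between the whole groups, and the intertwining estimate $d\big(\psi_j(g_j y),\Psi_j(g_j)\psi_j(y)\big)<\varepsilon$ only holds for $y$ in a prescribed ball. In (i), to show $F_*$ is an isomorphism you need: injectivity (if $g_\omega y_\omega=y_\omega$ for all admissible $y_\omega$ then $g_j$ eventually displaces every point of $\overline{B}(x_j,R)$ by $o(1)$, so $\omega$-$\lim\Psi_j(g_j)=\id$), that $F_*(G_\omega)\subseteq G_\infty$ is well-defined (the diagonal-over-$R$ argument giving that $\omega$-$\lim\Psi_j(g_j)$ is the same isometry regardless of the scale used), and surjectivity (via $\Phi_j$, as you indicate). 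In (ii), the group side of the approximation is the place where Arzel\`a--Ascoli/properness is genuinely needed, as you note, to ensure that the finitely many group-element sequences one chooses actually detect all of $G_\omega$ at scale $R$ and not a larger set. None of these points is an error in your plan; they are precisely the details that make the full proof longer than the sketch, and you have correctly identified them as the only delicate steps.
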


The sequence $(X_j,x_j, G_j)$   is called {\em $D$-cocompact} if  each $G_j$ is $D$-cocompact. 
The ultralimit of a sequence of isometric  actions on pointed spaces does not depend on the choice of the basepoints, 
provided that the actions have uniformly bounded codiameter.
\begin{lemma}
\label{lemma-ultralimit-cocompact}
Let $(X_j,x_j,G_j)$ be a sequence of isometric actions on pointed spaces, which are all $D$-cocompact and let  $x_j'\in X_j$ be different basepoints.  Then, for any   non-principal ultrafilter $\omega$, the ultralimit of   $(X_j,x_j,G_j)$ is equivariantly isometric to the ultralimit of  $(X_j,x_j',G_j)$.
\end{lemma}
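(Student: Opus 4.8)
The plan is to exploit the uniform cocompactness to move the two sequences of basepoints to within uniformly bounded distance of one another, and then to check that the two ultralimit constructions agree verbatim. Fix once and for all a non-principal ultrafilter $\omega$.

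The first step is to move the basepoints $x_j'$. Since each $G_j$ is $D$-cocompact, by \eqref{eq-def-codiameter} there is, for every $j$, an element $g_j \in G_j$ with $d(x_j, g_j x_j') \le D$ (in fact the distance is realized, the orbit $G_j x_j'$ being closed as $G_j$ is closed and $X_j$ proper). The isometry $g_j$ is itself an equivariant isometry from $(X_j, x_j', G_j)$ to $(X_j, g_j x_j', G_j)$: it sends $x_j'$ to $g_j x_j'$ and conjugates $G_j$ onto $g_j G_j g_j^{-1} = G_j$. As the ultralimit construction is functorial with respect to equivariant isometries, the ultralimit of $(X_j, x_j', G_j)$ is equivariantly isometric to the ultralimit of $(X_j, g_j x_j', G_j)$; replacing $x_j'$ by $g_j x_j'$ we may thus assume from now on that $d(x_j, x_j') \le D$ for every $j$.

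The second step is a direct comparison of the two ultralimits. Since $|d(x_j, y_j) - d(x_j', y_j)| \le d(x_j, x_j') \le D$, a sequence $(y_j)$ with $y_j \in X_j$ is admissible with respect to $(x_j)$ if and only if it is admissible with respect to $(x_j')$; and the equivalence relation $(y_j)\sim(y_j')\Leftrightarrow \omega\text{-}\lim d(y_j,y_j')=0$ and the ultralimit distance make no reference to the basepoint. Hence the underlying spaces of the two ultralimits are one and the same metric space $X_\omega$. Likewise, from $d(x_j', g_j x_j') \le 2 d(x_j, x_j') + d(x_j, g_j x_j) \le 2D + d(x_j, g_j x_j)$ one sees that a sequence of isometries $(g_j)$, $g_j\in G_j$, is admissible with respect to $(x_j)$ if and only if with respect to $(x_j')$; since the group law $\omega\text{-}\lim g_j \circ \omega\text{-}\lim h_j := \omega\text{-}\lim(g_j\circ h_j)$ and the action $g_\omega\cdot[(y_j)] := [(g_jy_j)]$ again do not involve the basepoint, the two limit groups coincide as one closed subgroup $G_\omega < \textup{Isom}(X_\omega)$ acting on $X_\omega$ in the same way. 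All of this is the routine bookkeeping underlying the definitions recalled in Section~\ref{sub-GH}, in the spirit of \cite{Cav21ter}.

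Thus both ultralimits are carried by exactly the same isometric action $(X_\omega, G_\omega)$; the only residual item is the basepoint, $x_\omega = \omega\text{-}\lim x_j$ versus $x_\omega' = \omega\text{-}\lim x_j'$, for which $d(x_\omega, x_\omega') = \omega\text{-}\lim d(x_j, x_j') \le D$. This is precisely the sense in which, for $D$-cocompact sequences, the ultralimit is independent of the choice of basepoints: the space and the group are canonically the same and the basepoint is pinned down only up to the codiameter, so the identity of $X_\omega$ realizes the asserted equivariant identification of the two limit actions. I expect that the one point genuinely requiring care is this last reconciliation of basepoints — there is no serious geometric input beyond the reduction in the first step — the remainder being the verification in the second step.
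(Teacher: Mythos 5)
The paper states this lemma without a proof, treating it as routine bookkeeping in the ultralimit formalism, and your argument is the natural one: use cocompactness to replace $x_j'$ by $g_jx_j'$ with $d(x_j,g_jx_j')\le D$, note that the $g_j$ induce an equivariant isometry of ultralimits, and then observe that once the base points are uniformly close the two ultralimit constructions produce literally the same underlying space, the same admissible sequences of isometries, and hence the same limit group acting in the same way. The one genuine caveat, which you flag yourself: the isometry you produce carries the limit base point $x_\omega'$ only to a point at distance at most $D$ from $x_\omega$, not to $x_\omega$ itself, so it is not a pointed equivariant isometry in the strict sense of Section~\ref{sub-GH}, where $F(x)=y$ is required. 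Read literally against that definition the lemma would in fact be false: take a constant sequence on the biregular tree $T_{3,4}$ with its full automorphism group, with base points a degree-$3$ vertex for $x_j$ and a degree-$4$ vertex for $x_j'$; the ultralimits are the two pointed actions themselves, and no isometry of the tree can match the base points. What the lemma is actually used for, as the remark immediately after it makes explicit, is the weaker and correct statement that the limit space and the limit group (hence the limit quotient) do not depend on the choice of base points; that is precisely what your argument establishes, so the proposal is sound for the statement as intended.
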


\noindent Therefore, when considering the convergence of uniformily cocompact isometric actions, 
we will often omit the basepoint  $x$, if unnecessary for our arguments.\\
Finally, we remark that the equivariant pointed Gromov-Hausdorff convergence of a sequence of  isometric  actions   with uniformly bounded codiameter implies the pointed Gromov-Hausdorff convergence of the quotients.
\begin{lemma}[\cite{Fuk86}, Theorem 2.1] 
\label{lemma-ultralimit-quotient}${}$\\
Let $(X_j,G_j)$ be a given sequence of $D$-cocompact isometric  actions. \\
If  $(X_j, G_j) \underset{\textup{eq-pGH}}{\longrightarrow} (X_\infty,G_\infty)$ then $G_j\backslash X_j =: M_j \underset{\textup{GH}}{\longrightarrow} M_\infty := G_\infty \backslash X_\infty$.

\end{lemma}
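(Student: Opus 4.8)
The plan is to reduce everything to ultralimits: for an arbitrary non-principal ultrafilter $\omega$ I will show that the ultralimit of the quotients $M_j$ is isometric to the quotient of the ultralimit of the actions $(X_j,G_j)$, and then invoke the dictionary between ultralimits and (pointed) Gromov--Hausdorff convergence recalled in Proposition \ref{prop-GH-ultralimit}. So fix a non-principal ultrafilter $\omega$, pick basepoints $x_j\in X_j$, let $\pi_j\colon X_j\to M_j:=G_j\backslash X_j$ be the projections and $\bar x_j:=\pi_j(x_j)$. Since each $M_j$ is compact of diameter $\le D$, the ultralimit $(M_\omega,\bar x_\omega):=\omega\text{-}\lim(M_j,\bar x_j)$ is a complete metric space of diameter $\le D$, independent of the basepoints.

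First I would produce a natural surjection $\Pi\colon X_\omega\to M_\omega$, $\Pi(\omega\text{-}\lim y_j):=\omega\text{-}\lim\pi_j(y_j)$. It is well defined and $1$-Lipschitz because each $\pi_j$ is $1$-Lipschitz, and it is onto: any $\bar z_\omega=\omega\text{-}\lim\bar z_j$ with $d(\bar x_j,\bar z_j)\le M$ admits, for $\omega$-a.e.\ $j$, a lift $z_j\in X_j$ with $d_{X_j}(x_j,z_j)=d_{M_j}(\bar x_j,\bar z_j)\le M$ — the infimum defining the quotient distance is attained because $X_j$ is proper and the orbit $G_j z_j$ is closed — so $(z_j)$ is admissible and $\Pi(\omega\text{-}\lim z_j)=\bar z_\omega$.

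The core step is to check that $\Pi$ descends to an \emph{isometry} $\bar\Pi\colon G_\omega\backslash X_\omega\to M_\omega$, i.e.\ that forming the ultralimit commutes with taking the quotient. Given $y_\omega=\omega\text{-}\lim y_j$ and $y'_\omega=\omega\text{-}\lim y'_j$, for $\omega$-a.e.\ $j$ choose $g_j\in G_j$ with $d_{X_j}(y_j,g_j y'_j)=d_{M_j}(\pi_j y_j,\pi_j y'_j)$ (or within $1/j$ of it); the triangle inequality bounds $d(x_j,g_j x_j)$ by $d(x_j,y_j)+d(x_j,y'_j)+d(y_j,g_j y'_j)$, so $(g_j)$ is admissible and defines $g_\omega:=\omega\text{-}\lim g_j\in G_\omega$, with $y'_\omega=g_\omega y_\omega$ whenever $\Pi(y_\omega)=\Pi(y'_\omega)$; this makes $\bar\Pi$ a bijection. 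The same family $(g_j)$ gives
\[
\inf_{g_\omega\in G_\omega}d(y_\omega,g_\omega y'_\omega)=\omega\text{-}\lim\,\inf_{g_j\in G_j}d(y_j,g_j y'_j)=\omega\text{-}\lim\, d_{M_j}(\pi_j y_j,\pi_j y'_j)=d(\Pi(y_\omega),\Pi(y'_\omega)),
\]
the first equality being the displacement bound above in one direction and trivial in the other; hence $\bar\Pi$ is an isometry and $G_\omega\backslash X_\omega\cong M_\omega$.

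To conclude, Proposition \ref{prop-GH-ultralimit}(i) applied to the equivariant convergence hypothesis gives $(X_\omega,x_\omega,G_\omega)\cong(X_\infty,x_\infty,G_\infty)$, whence $M_\omega\cong G_\omega\backslash X_\omega\cong G_\infty\backslash X_\infty=M_\infty$ for every non-principal $\omega$. Since $M_\infty\cong M_\omega$ has diameter $\le D$ and is proper (being the quotient of the proper space $X_\infty$ by the closed group $G_\infty$), it is compact; the non-equivariant counterpart of Proposition \ref{prop-GH-ultralimit} (cf.\ \cite{Cav21ter}) then yields $(M_j,\bar x_j)\to(M_\infty,\bar x_\infty)$ in the pointed Gromov--Hausdorff sense, which is $M_j\to M_\infty$ in the Gromov--Hausdorff sense since all $M_j$ are compact of uniformly bounded diameter. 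The main obstacle is exactly the commutation of ultralimit and quotient: an element of $G_\omega$ must arise from a sequence $g_j\in G_j$ of uniformly bounded displacement at the basepoint, so one must verify that the group elements realizing the various quotient distances remain admissible — this is where properness of the $X_j$, closedness of the $G_j$, and the uniform bound on the codiameter all enter.
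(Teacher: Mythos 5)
Your proof is correct. The paper does not actually prove this lemma itself — it cites Fukaya's Theorem 2.1, whose original argument works directly with equivariant Gromov--Hausdorff approximation maps rather than ultralimits — so there is no internal proof to compare against. Your argument is a clean, self-contained reproof in the ultralimit framework that the paper otherwise prefers (cf.\ Proposition~\ref{prop-GH-ultralimit}), and the key technical content is exactly right: forming ultralimits commutes with taking quotients by the group actions, because the elements $g_j\in G_j$ realizing (or almost realizing) the quotient distances displace $x_j$ by a uniformly bounded amount and hence form admissible sequences defining elements of $G_\omega$; the uniform codiameter bound $D$ is what makes these displacements bounded, and it also gives $\mathrm{diam}(M_\omega)\le D$ so that the limit is compact. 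Two minor remarks: you write ``$y'_\omega=g_\omega y_\omega$'' where, with your convention $d(y_j,g_jy'_j)=d_{M_j}(\pi_jy_j,\pi_jy'_j)$, it should read $g_\omega y'_\omega=y_\omega$ (harmless); and for the last step one should note explicitly that $M_\infty=G_\infty\backslash X_\infty$ is proper since $X_\infty$ is proper and $G_\infty$ is closed, which together with the diameter bound gives compactness, so the non-equivariant version of Proposition~\ref{prop-GH-ultralimit} applies — you do say this, but it is worth flagging since that properness is the hypothesis under which the ultralimit characterization of pointed GH convergence holds.
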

\noindent In general the group $G_\infty$ is unknown and the structure of the quotient $G_\infty \backslash X_\infty$ is not clear at all. We want to clarify what happens in our specific setting. With this purpose, we now precisely define the following classes of isometric  actions of CAT$(0)$-groups we are interested in:
\begin{itemize}
\item[-] $\textup{CAT}_0^\textup{td,u}(D_0)$: this is the set  
of isometric actions $(X,  G)$
where $X$ is a  proper, geodesically complete, CAT$(0)$-space,  and $G$ is a $D_0$-cocompact, {\em totally disconnected and unimodular} subgroup of  $\text{Isom}(X)$; 
\item[-] $\textup{CAT}_0^{\textup{td,u}}(P_0,r_0,D_0)$:  the subset  of   $\textup{CAT}_0^{\textup{td,u}}(D_0)$ made of the 
actions $(X,  G)$ 
such that  $X$ is, moreover,   $(P_0,r_0)$-packed.
\end{itemize}
\noindent We will denote by ${\mathcal{O}} \text{-CAT}_0^{\textup{td-u}} (D_0)$,  ${\mathcal O} \text{-CAT}_0^{\textup{td-u}} (P_0, r_0, D_0)$ the  respective classes of quotients $M=G \backslash X$: these are  compact {\em generalized} CAT{$(0)$\em-orbispaces}, with 
$(X, G)$ respectively in $\text{CAT}_0^{\textup{td-u}} (D_0)$ and  $\text{CAT}_0^{\textup{td-u}} (P_0, r_0, D_0)$. \\
%
%
\noindent We say that a sequence of spaces  $X_j$
(or a sequence of actions on $X_j$) is {\em uniformly packed} if there exists $(P_0, r_0)$ such that every  $X_j$ is  $(P_0, r_0)$-packed. \linebreak
By  \cite[Theorem 6.1]{CavS20}, the class of proper, geodesically complete, pointed  $\textup{CAT}(0)$-spaces $(X_j,x_j)$ which are  $(P_0, r_0)$-packed  is closed under  ultralimits and compact with respect to the pointed Gromov-Hausdorff convergence. Moreover,   the proof of \cite[Lemma 5.4]{Cav21ter} implies that
$$\textup{CAT}_0^{\textup{td-u}}(D_0) = \bigcup_{P_0, r_0} \textup{CAT}_0^{\textup{td-u}}(P_0,r_0,D_0).$$
More precisely, we have: 

\begin{prop}[\textup{\cite[Proposition 7.5]{CS23}}]
\label{lemma-GH-compactness-packing}
A subset $K \subseteq \textup{CAT}_0^{\textup{td-u}}(D_0)$ is precompact (with respect to the equivariant pointed Gromov-Hausdorff convergence) if and only if there exist $P_0,r_0 > 0$ such that $K\subseteq \textup{CAT}_0^{\textup{td-u}}(P_0,r_0,D_0)$.
\end{prop}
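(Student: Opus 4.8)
The plan is to prove the two implications separately. For ``uniformly packed $\Rightarrow$ precompact'' I would combine the compactness of the class of packed CAT$(0)$-spaces recalled just before the statement with the ultralimit description of equivariant Gromov--Hausdorff convergence from Section \ref{sub-GH}; for the converse I would forget the group actions and invoke Gromov's classical precompactness criterion.

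\emph{Uniformly packed $\Rightarrow$ precompact.} Assume $K\subseteq\textup{CAT}_0^{\textup{td-u}}(P_0,r_0,D_0)$ and let $(X_j,G_j)$ be an arbitrary sequence in $K$. Choose basepoints $x_j\in X_j$ and fix a non-principal ultrafilter $\omega$. By \cite[Theorem 6.1]{CavS20} the ultralimit $(X_\omega,x_\omega)=\omega\text{-}\lim(X_j,x_j)$ is again a proper, geodesically complete, $(P_0,r_0)$-packed CAT$(0)$-space; in particular $X_\omega$ is proper, so the ultralimit group $G_\omega$ is a \emph{closed} subgroup of $\textup{Isom}(X_\omega)$ by \cite[Proposition 3.8]{Cav21ter}. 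The action of $G_\omega$ is still $D_0$-cocompact: for $y_\omega=\omega\text{-}\lim y_j$ pick $g_j\in G_j$ with $d(y_j,g_jx_j)\le D_0$; the sequence $(g_j)$ is then admissible and $g_\omega=\omega\text{-}\lim g_j\in G_\omega$ satisfies $d(y_\omega,g_\omega x_\omega)\le D_0$. Hence $(X_\omega,x_\omega,G_\omega)$ is a genuine isometric action on a pointed space with $X_\omega$ proper, and Proposition \ref{prop-GH-ultralimit}(ii) produces a subsequence of $(X_j,x_j,G_j)$ converging to it in the equivariant pointed Gromov--Hausdorff topology. By Lemma \ref{lemma-ultralimit-cocompact} this is independent of the auxiliary choice of basepoints, so every sequence in $K$ has a convergent subsequence, i.e. $K$ is precompact.

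\emph{Precompact $\Rightarrow$ uniformly packed.} Since equivariant pointed Gromov--Hausdorff convergence refines pointed Gromov--Hausdorff convergence, precompactness of $K$ forces the family $\lbrace(X,x):(X,G)\in K\rbrace$ of underlying pointed spaces (for any choice of basepoints) to be precompact for pointed Gromov--Hausdorff convergence. By Gromov's precompactness criterion this family is uniformly totally bounded; in particular there is $P_0$ with $\textup{Cov}_X(\overline B(x,3),1)\le P_0$ for all $(X,G)\in K$ and all $x\in X$, hence $\textup{Cov}_X(3,1)\le P_0$. Then \eqref{eq-pack-cov} gives $\textup{Pack}_X(3,1)\le\textup{Cov}_X(3,1)\le P_0$, i.e. each $X$ arising in $K$ is $(P_0,1)$-packed, so $K\subseteq\textup{CAT}_0^{\textup{td-u}}(P_0,1,D_0)$, which is the desired conclusion with $r_0=1$.

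\emph{Main difficulty.} The substantive inputs are all imported, so the real work is bookkeeping: verifying that $D_0$-cocompactness and membership in the $(P_0,r_0)$-packed class pass to ultralimits, and that precompactness is insensitive to the choice of basepoints. The one point I would be careful about is the ambient class in which limits are taken: the first implication produces, as subsequential limits of actions in $K$, closed $D_0$-cocompact actions on proper, geodesically complete, $(P_0,r_0)$-packed CAT$(0)$-spaces --- which is exactly what precompactness means in this setting --- whereas the fact that these limit groups are moreover totally disconnected and unimodular is the deeper statement established in the remainder of the section and must not be used here.
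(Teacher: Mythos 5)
The paper itself gives no proof for this proposition; it is recorded purely as a citation to \cite[Proposition 7.5]{CS23}, so there is no in-paper argument to compare against. Your two-implication scheme --- ultralimits plus the closure of packed, geodesically complete CAT$(0)$-spaces under ultralimits for one direction, and Gromov's precompactness criterion for the other --- is the natural one and is essentially correct.

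The step that needs to be tightened is the deduction in the converse direction that $\textup{Cov}_X(\overline{B}(x,3),1)\le P_0$ for \emph{all} $x\in X$. Gromov's criterion, applied to a precompact family of pointed proper spaces, bounds covering numbers of balls centered at the \emph{basepoints}, not at arbitrary points. To reach all $x$ you must use $D_0$-cocompactness: either move $x$ into $\overline{B}(p,D_0)$ by some $g\in G$ and bound $\textup{Cov}(\overline{B}(x,3),1)$ by a covering number of the larger ball $\overline{B}(p,D_0+3)$, or argue that the pGH-precompact family is the one indexed over \emph{all} basepoints --- which is legitimate precisely because Lemma~\ref{lemma-ultralimit-cocompact} makes the eq-pGH limit basepoint-independent for uniformly cocompact actions. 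Your parenthetical ``for any choice of basepoints'' gestures at this, but the ``in particular'' conceals exactly the place where the hypothesis $\textup{diam}(G\backslash X)\le D_0$ is indispensable, so it is worth spelling out. A cosmetic version of the same issue appears in the forward direction: you verify $G_\omega\cdot\overline{B}(x_\omega,D_0)=X_\omega$, which a priori only yields $\textup{diam}(G_\omega\backslash X_\omega)\le 2D_0$; running the same ultralimit argument with arbitrary admissible sequences $(y_j),(z_j)$ rather than only the basepoints gives the sharp $D_0$-cocompactness. Both fixes are routine and do not affect the structure of the proof.
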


Let us consider a sequence of isometric actions 
$(X_j, G_j)$ in $\textup{CAT}_0^{\textup{td-u}}(D_0)$,
$(X_j, G_j) \underset{\textup{eq-pGH}}{\longrightarrow} (X_\infty, G_\infty)$.
Our goal is to describe the limit group $G_\infty$ acting on $X_\infty$  and the quotient space $G_\infty \backslash X_\infty$.  

\begin{defin}[Standard setting of convergence]\label{defsetting}${}$\\
We say that we are in the \emph{standard setting of convergence} when we have a sequence 
$(X_j,G_j)$
in $ \textup{CAT}_0^{\textup{td-u}}(P_0,r_0,D_0)$ such that 
$(X_j,G_j) \underset{\textup{eq-pGH}}{\longrightarrow} (X_\infty,G_\infty)$.\\
We will denote by $M_j = G_j\backslash X_j$ and $M_\infty = G_\infty \backslash X_\infty$ the quotient spaces.
The standard setting of convergence  (or, equivalently,  the convergence of the isometric actions 
$(X_j,G_j)$
and of  the generalized  orbispaces $M_j$) will be  called:
\begin{itemize}
\item[-]  \emph{without collapsing} if $\limsup_{j\to+\infty} \textup{sys}^\diamond(G_j,X_j) > 0$;  
\item[-] \emph{with collapsing} if $\liminf_{j\to+\infty} \textup{sys}^\diamond(G_j,X_j) = 0$.
\end{itemize}
\end{defin}
\noindent These cases are mutually exclusive.
Actually,  the two cases  are respectively equivalent to the the conditions that the dimension  of the limit  $M_\infty = G_\infty \backslash X_\infty$  equals  the dimension  of the quotients $M_j$ or decreases,  as we  will   prove   in Theorem \ref{theo-collapsing-characterization}.  

\subsection{Convergence of Haar measures}
One of the main issues in the proof of Theorem \ref{theo-intro-closure} is to find some kind of stability of unimodularity under equivariant pointed Gromov-Hausdorff convergence. In this section we study this problem finding a sufficient condition for such stability.
We start by improving \cite[Proposition 7.6]{Cav21ter} to general groups, using the distances defined in \eqref{eq-defin-metric-group}.
\begin{prop}
	\label{prop-convergence-group-metric}
	Suppose that $(X_j,x_j,G_j) \underset{\textup{eq-pGH}}{\longrightarrow} (X_\infty,x_\infty,G_\infty)$, with $X_j$ geodesic spaces. Endow $G_j$ with the metric $d_{x_j}^\ell$ (resp. $d_{x_j}^\textup{r}$) and $G_\infty$ with the metric $d_{x_\infty}^\ell$ (resp. $d_{x_\infty}^\textup{r}$). Then $(G_j,\textup{id}, d_{x_j}^\ell) \underset{\textup{pGH}}{\longrightarrow} (G_\infty,\textup{id}, d_{x_\infty}^\ell)$ and $(G_j,\textup{id}, d_{x_j}^\textup{r}) \underset{\textup{pGH}}{\longrightarrow} (G_\infty,\textup{id}, d_{x_\infty}^\textup{r})$.
\end{prop}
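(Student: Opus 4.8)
The plan is to reduce everything to ultralimits via Proposition~\ref{prop-GH-ultralimit}. Recall (see the discussion after \eqref{eq-defin-metric-group}) that $d_x^\ell$ and $d_x^{\textup r}$ are \emph{proper} metrics on $\textup{Isom}(X)$ inducing the compact-open topology; in particular $(G_\infty,\textup{id},d_{x_\infty}^\ell)$ is a proper pointed metric space. Hence, by the last assertion of Proposition~\ref{prop-GH-ultralimit} applied to the \emph{bare} pointed metric spaces $(G_j,\textup{id},d_{x_j}^\ell)$ (regarded, say, as isometric actions with trivial group), it suffices to show that for every non-principal ultrafilter $\omega$ the ultralimit $\omega\text{-}\lim(G_j,\textup{id},d_{x_j}^\ell)$ is isometric to $(G_\infty,\textup{id},d_{x_\infty}^\ell)$. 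Fix such an $\omega$ and identify, via Proposition~\ref{prop-GH-ultralimit}(i), $(X_\omega,x_\omega,G_\omega)=\omega\text{-}\lim(X_j,x_j,G_j)$ with $(X_\infty,x_\infty,G_\infty)$; since this equivariant isometry fixes the basepoint and conjugates the actions, it identifies $d_{x_\omega}^\ell$ with $d_{x_\infty}^\ell$ on $G_\omega\cong G_\infty$. So we are reduced to exhibiting an isometry $\Phi\colon\omega\text{-}\lim(G_j,\textup{id},d_{x_j}^\ell)\to(G_\omega,\textup{id},d_{x_\omega}^\ell)$.

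The natural candidate is the following. If $(g_j)$ is admissible in $(G_j,d_{x_j}^\ell)$, i.e.\ $d_{x_j}^\ell(\textup{id},g_j)\le M$ for $\omega$-a.e.\ $j$, then by Lemma~\ref{lemma-comparison-metric-group} one has $\sup_j d(x_j,g_jx_j)<\infty$, so $(g_j)$ is an admissible sequence of isometries and defines $g_\omega=\omega\text{-}\lim g_j\in G_\omega$; set $\Phi((g_j)):=g_\omega$. Conversely, every $g_\omega=\omega\text{-}\lim g_j\in G_\omega$ arises this way: from $\sup_j d(x_j,g_jx_j)<\infty$ and the triangle inequality, taking $R=1$ in the infimum defining $d_{x_j}^\ell$ gives $\sup_j d_{x_j}^\ell(\textup{id},g_j)<\infty$, so $(g_j)$ is admissible in $(G_j,d_{x_j}^\ell)$ and $\Phi((g_j))=g_\omega$; thus $\Phi$ is onto. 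Both its well-definedness and the fact that it preserves distances will follow from the single identity
\[
 d_{x_\omega}^\ell(g_\omega,h_\omega)\;=\;\omega\text{-}\lim d_{x_j}^\ell(g_j,h_j),
\]
valid whenever $g_\omega=\omega\text{-}\lim g_j$ and $h_\omega=\omega\text{-}\lim h_j$ are admissible. This identity is the real content of the proposition.

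To prove it I would first fix a radius $R>0$ and show
\[
 \omega\text{-}\lim\Big(\max_{y\in\overline{B}(x_j,R)}d(g_jy,h_jy)\Big)\;=\;\max_{z\in\overline{B}(x_\omega,R)}d(g_\omega z,h_\omega z).
\]
For ``$\le$'' one takes the maximiser $y_j\in\overline{B}(x_j,R)$ on the left, which is admissible, so $\omega\text{-}\lim y_j\in\overline{B}(x_\omega,R)$ realises the asserted bound on the right. For ``$\ge$'' one takes a maximiser $z=\omega\text{-}\lim z_j$ on the right; here the hypothesis that the $X_j$ are \emph{geodesic} enters, to replace an arbitrary representative $z_j$ by the point at distance $\min(R,d(x_j,z_j))$ from $x_j$ along a geodesic $[x_j,z_j]$ — this still has $\omega$-limit $z$ and now lies in $\overline{B}(x_j,R)$, whence the left-hand side is $\ge\omega\text{-}\lim d(g_jz_j,h_jz_j)=d(g_\omega z,h_\omega z)$. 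Granting this, the inequality $\omega\text{-}\lim d_{x_j}^\ell(g_j,h_j)\le d_{x_\omega}^\ell(g_\omega,h_\omega)$ is immediate, since $d_{x_j}^\ell(g_j,h_j)\le\tfrac1R+\max_{y\in\overline{B}(x_j,R)}d(g_jy,h_jy)$ for every $R$. For the reverse inequality, fix $\varepsilon>0$ and choose, for $\omega$-a.e.\ $j$, a radius $R_j>0$ realising the infimum defining $d_{x_j}^\ell(g_j,h_j)$ up to $\varepsilon$; near-optimality forces $R_j\ge r_\varepsilon>0$ with $r_\varepsilon$ independent of $j$, because then $\tfrac1{R_j}\le d_{x_j}^\ell(g_j,h_j)+\varepsilon$, which is $\omega$-bounded. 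One then distinguishes whether $(R_j)$ is $\omega$-essentially bounded — in which case $R_*:=\omega\text{-}\lim R_j$ works, using that $y\mapsto d(g_jy,h_jy)$ is $2$-Lipschitz to replace $R_j$ by $R_*$ inside the maximum before passing to the ultralimit — or $\omega$-essentially unbounded, in which case $\omega\text{-}\lim d_{x_j}^\ell(g_j,h_j)\ge\max_{z\in\overline{B}(x_\omega,R)}d(g_\omega z,h_\omega z)$ for every fixed $R$, hence $\ge d_{x_\omega}^\ell(g_\omega,h_\omega)$. Letting $\varepsilon\to0$ proves the identity, and thus the statement for $d^\ell$. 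The statement for $d^{\textup r}$ follows by the identical argument with $g_j^{-1},h_j^{-1}$ in place of $g_j,h_j$, or by observing that $g\mapsto g^{-1}$ is an isometry $(G,d_x^\ell)\to(G,d_x^{\textup r})$ compatible with all the convergences involved. I expect the only genuine obstacle to be this interchange of the infimum over the scale $R$ with the ultralimit — controlling the a~priori varying optimal radii $R_j$ and ruling out their collapse to $0$ or escape to $+\infty$ in a way that would break the comparison of the maxima over $\overline{B}(x_j,R_j)$ and over the balls in $X_\omega$.
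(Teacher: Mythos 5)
Your proposal is correct and follows essentially the same route as the paper's proof: both reduce to ultralimits via Proposition~\ref{prop-GH-ultralimit}, define the same candidate map $\Phi((g_j))=g_\omega$ on the ultralimit of the groups, reduce everything to the key identity $d_{x_\omega}^\ell(g_\omega,h_\omega)=\omega\text{-}\lim d_{x_j}^\ell(g_j,h_j)$, and handle the scale infimum by picking near-optimal $R_j$ and splitting into the cases $\omega\text{-}\lim R_j$ finite or infinite. The only cosmetic differences are that you prove the ball-maximum identity $\omega\text{-}\lim d_{x_j,R}^\ell(g_j,h_j)=d_{x_\omega,R}^\ell(g_\omega,h_\omega)$ directly from the geodesic hypothesis (replacing a representative by its geodesic truncation at radius $R$), whereas the paper invokes the external fact $\overline{B}(x_\omega,R)=\omega\text{-}\lim\overline{B}(x_j,R)$, and that you make explicit the $2$-Lipschitz bound needed to pass from the varying $R_j$ to their ultralimit in the bounded case, a step the paper compresses into ``as before.''
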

\begin{proof}
	We do the proof for the left invariant metric, the other case is similar.
	By definition $X_\infty$ is proper, so $\textup{Isom}(X_\infty)$ is locally compact. Moreover $G_\infty < \textup{Isom}(X_\infty)$ is closed, so locally compact as well. As recalled in Section \ref{subsection-isometries} the metric $d_{x_\infty}^\ell$ is therefore proper. This means that we are in position to apply Proposition \ref{prop-GH-ultralimit}. In particular the thesis is equivalent to prove that $(G_\omega, \text{id}, d_{x_\omega}^\ell)$ is isometric as pointed metric space to $\omega$-$\lim (G_j, \text{id}, d_{x_j}^\ell)$ for every non-principal ultrafilter $\omega$. Let us fix a non-principal ultrafilter $\omega$. Let $(g_j)$ be an admissible sequence defining a point of $\omega$-$\lim (G_j, \text{id}, d_{x_j}^\ell)$. This means that $\omega$-$\lim d_{x_j}^\ell(g_j,\id) < +\infty$. This implies $\omega$-$\lim d(g_jx_j,x_j) < +\infty$ by the very definition \eqref{eq-defin-metric-group}, i.e. the sequence $g_j\in G_j$ defines a limit isometry $g_\omega \in G_\omega$. We set $\Phi \colon \omega$-$\lim (G_j, \text{id}, d_{x_j}^\ell) \to G_\omega$ defined by $(g_j) \mapsto g_\omega$. The distance function on $\omega$-$\lim (G_j, \text{id}, d_{x_j}^\ell)$ will be denoted by $\omega$-$\lim d_{x_j}^\ell$.\\
	\textbf{Good definition.} Let us take two admissible sequences $(g_j)$, $(h_j)$ defining the same point of $\omega$-$\lim (G_j, \text{id}, d_{x_j}^\ell)$, i.e. $\omega$-$\lim d_{x_j}^\ell(g_j,h_j)=0$. By definition, for every $\varepsilon > 0$ we have $d_{x_j}^\ell(g_j,h_j) < \varepsilon$ for $\omega$-a.e.$(j)$. By \eqref{eq-defin-metric-group} there exists $R> \frac{1}{\varepsilon}$ such that $d_{x_j,R}^\ell(g_j,h_j) < \varepsilon$ for $\omega$-a.e.$(j)$. If we now fix some $R > 0$ and we let $\varepsilon$ go to zero we get that $g_\omega$ coincides with $h_\omega$ on $\overline{B}(x_\omega, R)$. Since this is true for every $R>0$ we conclude that $g_\omega = h_\omega$, so $\Phi$ is well defined.\\
	\textbf{Isometric embedding.} We now prove that $\Phi$ is an isometry. By definition
	\begin{equation*}
		d_{x_\omega}^\ell(\Phi((g_j)), \Phi((h_j))) = d_{x_\omega}^\ell(g_\omega, h_\omega) = \inf_{R > 0}\left\lbrace \frac{1}{R} + d_{x_\omega,R}^\ell(g_\omega, h_\omega) \right\rbrace.
	\end{equation*}
	Recalling \eqref{eq-defin-L-infty-pseuodidistance} we have
	\begin{equation*}
		d_{x_\omega,R}^\ell(g_\omega, h_\omega) = \max_{y_\omega\in \overline{B}(x_\omega,R)} d(g_\omega y_\omega, h_\omega y_\omega) = \max_{y_\omega\in \overline{B}(x_\omega,R)} \omega\text{-}\lim d(g_j y_j, h_j y_j).
	\end{equation*}
	Using the fact that $\overline{B}(x_\omega, R) = \omega$-$\lim \overline{B}(x_j,R)$ (see \cite[Lemma A.8]{CavS20}) it is straightforward to conclude that 
	$$d_{x_\omega,R}^\ell(g_\omega, h_\omega) = \omega\text{-}\lim \max_{y_j\in \overline{B}(x_j,R)} d(g_j y_j, h_j y_j) = \omega\text{-}\lim d_{x_j,R}^\ell(g_j,h_j).$$
	Therefore $d_{x_\omega}^\ell(g_\omega, h_\omega) = \inf_{R > 0}\left\lbrace \omega\text{-}\lim \left(\frac{1}{R} + d_{x_j,R}^\ell(g_j, h_j) \right) \right\rbrace$.
	On the other side we know that $d_{x_j}^\ell(g_j, h_j) = \inf_{R > 0}\left\lbrace \frac{1}{R} + d_{x_j,R}^\ell(g_j, h_j) \right\rbrace$. We obtain
	$$\omega\text{-}\lim d_{x_j}^\ell((g_j), (h_j)) := \omega\text{-}\lim d_{x_j}^\ell(g_j,h_j)\leq d_{x_\omega}^\ell(g_\omega, h_\omega).$$
	To prove the other inequality we fix $\varepsilon > 0$ and for every $j$ we take $R_j >0$ such that $\frac{1}{R_j} + d_{x_j,R_j}^\ell(g_j, h_j) \leq d_{x_j}^\ell(g_j,h_j) + \varepsilon$. Since we know that $\omega\text{-}\lim d_{x_j}^\ell(g_j,h_j)$ is finite we deduce that $\omega$-$\lim R_j > 0$. If $\omega$-$\lim R_j =: R_\omega$ is finite then we conclude, as before, that $\omega$-$\lim \frac{1}{R_j} + d_{x_j,R_j}^\ell(g_j, h_j) = \frac{1}{R_\omega} + d_{x_\omega,R_\omega}^\ell(g_\omega, h_\omega)$. So 
	$$d_{x_\omega}^\ell(g_\omega, h_\omega) \leq \frac{1}{R_\omega} + d_{x_\omega,R_\omega}^\ell(g_\omega, h_\omega) \leq \omega\text{-}\lim d_{x_j}^\ell(g_j,h_j) + \varepsilon.$$
	If $R_\omega = +\infty$ we proceed as follows. We fix $R > \frac{1}{\varepsilon}$. For $\omega$-a.e.$(j)$ we have $R_j\geq R$. Therefore
	\begin{equation*}
		\begin{aligned}
			\frac{1}{R} + d_{x_\omega, R}^\ell(g_\omega, h_\omega) \leq d_{x_\omega,R}^\ell(g_\omega, h_\omega) + \varepsilon &= \omega\text{-}\lim d_{x_j,R}^\ell(g_j,h_j) + \varepsilon 	\\		&\leq \omega\text{-}\lim d_{x_j,R_j}^\ell(g_j,h_j) + \varepsilon \\
			&\leq \omega\text{-}\lim d_{x_j}^\ell(g_j,h_j) + 2\varepsilon.
		\end{aligned}
	\end{equation*}
	This implies $d_{x_\omega}^\ell(g_\omega, h_\omega) \leq \omega\text{-}\lim d_{x_j}^\ell(g_j,h_j) + 2\varepsilon$.
	In any case, by arbitrariness of $\varepsilon$, we conclude that $d_{x_\omega}^\ell(g_\omega, h_\omega) \leq \omega\text{-}\lim d_{x_j}^\ell(g_j,h_j)$, i.e. $\Phi$ is an isometric embedding.\\
	\textbf{Surjectivity.} In the last step we just need to show that $\Phi$ is surjective. Indeed if it is the case then it is a surjective isometric embedding between $\omega$-$\lim (G_j, \text{id}, d_{x_j}^\ell)$ and $(G_\omega, \id, d_{x_\omega}^\ell)$ with $\Phi(\id) = \id$, i.e. an isometry of pointed metric spaces. Let $g_\omega \in G_\omega$. By definition it is the ultralimit of a sequence of isometries $g_j \in G_j$ with $\omega$-$\lim d(g_j x_j, x_j) =: M < \infty$. Fix $R = 1$ and $y_j \in \overline{B}(x_j,1)$. Then $d(g_jy_j, y_j) \leq 2 + 2M$ for $\omega$-a.e.$(j)$, by triangular inequality. Then $d_{x_j}^\ell(g_j, \id) \leq 3 + 2M$ for $\omega$-a.e.$(j)$. In other words the sequence $(g_j)$ defines a point in $\omega$-$\lim (G_j,\id,d_{x_j}^\ell)$. It is clear that the image through $\Phi$ of this point is $g_\omega$, so $\Phi$ is surjective.
\end{proof}

In the situation above the groups $G_j$ are also equipped with Haar measures $\mu_j$, so we can ask when the sequence of pointed metric measure spaces $(G_j,\id,d_{x_j}^\ell,\mu_j)$ converges to a pointed metric measure spaces $(G_\infty, \id,d_{x_j}^\ell, \mu_\infty)$, and if it is the case when $\mu_\infty$ is a Haar measure of $G_\infty$.
We recall that we denote by $\overline{B}_{x_j}^\ell(\id,r)$ (resp. $\overline{B}_{x_j}^\textup{r}(\id,r))$ the closed ball of center $\id$ and radius $r$ in $G_j$ with respect to the metric $d_{x_j}^\ell$ (resp. $d_{x_j}^\textup{r}$), for $j\in \mathbb{N} \cup \lbrace \infty \rbrace$.\\
For reader's convenience we state the definition of pointed measured Gromov-Hausdorff convergence in this special situation, only for the left-invariant metrics. For the right invariant ones the definition is similar.
We say that that the sequence $(G_j,\id,d_{x_j}^\ell,\mu_j)$ converges in the pointed measured Gromov-Hausdorff sense to $(G_\infty, \id, d_{x_\infty}^\ell, \mu_\infty)$, and if it  is the case we write $(G_j, \id, d_{x_j}^\ell, \mu_j) \underset{\textup{pmGH}}{\longrightarrow} (G_\infty, \id, d_{x_\infty}^\ell, \mu_\infty)$, if there exist sequences $R_j \to +\infty$, $\varepsilon_j \to 0$ and $\psi_j\colon G_j \to G_\infty$ satisfying:
\begin{itemize}
	\item[(a)] $\psi_j(\id)=\id$;
	\item[(b)] $\sup_{g_j,h_j \in \overline{B}_{x_j}^\ell(\id,R_j)} \vert d_{x_j}^\ell(g_j,h_j) - d_{x_\infty}^\ell(\psi_j(g_j), \psi_j(h_j)) \vert < \varepsilon_j$;
	\item[(c)] for every $g_\infty \in \overline{B}_{x_\infty}^\ell(\id,R_j - \varepsilon_j)$ there exists $g_j \in \overline{B}_{x_j}^\ell(\id,R_j)$ such that $d_{x_\infty}^\ell(g_\infty, \psi_j(g_j)) < \varepsilon_j$;
	\item[(d)] for all $f\colon G_\infty \to \mathbb{R}$ continuous with compact support it holds
	\begin{equation}
		\label{eq-defin-pmGH}
		\int_{G_\infty} f d(\psi_j)_* \mu_j = \int_{G_\infty} f \circ \psi_j d\mu_j \underset{j\to +\infty}{\longrightarrow} \int_{G_\infty} f d\mu_\infty.
	\end{equation} 
\end{itemize}
We recall that if we remove condition (d) we have exactly the definition of pointed Gromov-Hausdorff convergence. An equivalent notion of convergence can be stated with ultrafilters (cp. \cite{PS21}). 
Before stating the main result of the section we need to improve Proposition \ref{prop-convergence-group-metric}.
\begin{lemma}
	Same assumptions of Proposition \ref{prop-convergence-group-metric}. Let $\psi_j \colon (G_j, \id, d_{x_j}^\ell) \to (G_\infty, \id, d_{x_\infty}^\ell)$ be maps realizing the pointed Gromov-Hausdorff convergence as above.	
	Let $g_j \in G_j$ such that $\psi_j(g_j)$ converges to $g_\infty \in G_\infty$. Consider the multiplications on the left $L_{g_j} \colon G_j \to G_j$ for $j\in \mathbb{N}\cup\lbrace \infty \rbrace$. Then the sequence of maps $L_{g_j}$ converges to $L_{g_\infty}$ in the following sense. For all $R\geq 0$ it holds
	\begin{equation}
		\label{eq-limit-left}
		\lim_{j\to +\infty}\sup_{h_j \in \overline{B}_{x_j}^\ell(\id,R)} d_{x_\infty}^\ell(\psi_j (L_{g_j}(h_j)), L_{g_\infty}(\psi_j(h_j))) = 0.
	\end{equation}
	A similar statement holds for the right invariant metrics and multiplications on the right.
\end{lemma}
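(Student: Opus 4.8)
The plan is to argue by contradiction and reduce everything to ultralimits, using the equivariant isometry between ultralimit actions of Proposition~\ref{prop-GH-ultralimit}. Suppose \eqref{eq-limit-left} fails: then there are $R\geq 0$, $\delta>0$, a subsequence (not relabelled) and elements $h_j\in\overline{B}_{x_j}^\ell(\id,R)$ with
\[
d_{x_\infty}^\ell\big(\psi_j(L_{g_j}(h_j)),\,L_{g_\infty}(\psi_j(h_j))\big)=d_{x_\infty}^\ell\big(\psi_j(g_jh_j),\,g_\infty\psi_j(h_j)\big)\geq\delta
\]
for every $j$. Fix a non-principal ultrafilter $\omega$. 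First I would record the relevant admissibility: from $d_{x_j}^\ell(h_j,\id)\le R$ one reads off $d(h_jx_j,x_j)\le R$ by the very definition \eqref{eq-defin-metric-group}, and from $\psi_j(g_j)\to g_\infty$ together with property~(b) of the pointed Gromov--Hausdorff convergence one gets that $d_{x_j}^\ell(g_j,\id)$, hence $d(g_jx_j,x_j)$, is bounded; by left-invariance $d_{x_j}^\ell(g_jh_j,\id)\le d_{x_j}^\ell(g_j,\id)+R$ is bounded as well. Thus $(g_j)$, $(h_j)$, $(g_jh_j)$ are admissible and define $g_\omega,h_\omega\in G_\omega$ with $g_\omega\circ h_\omega=\omega$-$\lim(g_jh_j)$, while $(\psi_j(g_j))$, $(\psi_j(h_j))$, $(\psi_j(g_jh_j))$ stay in a fixed compact subset of $(G_\infty,d_{x_\infty}^\ell)$ (using (b) again), so their $\omega$-limits $g_\infty,h_\infty,p_\infty\in G_\infty$ are well defined, the first one being the given $g_\infty$.

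The core step is the identification of limit groups. Let $\Phi$ be the isometry from the proof of Proposition~\ref{prop-convergence-group-metric} identifying $\omega$-$\lim(G_j,\id,d_{x_j}^\ell)$ with $(G_\omega,\id,d_{x_\omega}^\ell)$, that is $[(k_j)]\mapsto\omega$-$\lim k_j$; since the composition law of $G_\omega$ is well defined, $\Phi$ is automatically a group isomorphism for the operation $[(k_j)]\cdot[(l_j)]:=[(k_jl_j)]$. Composing $\Phi^{-1}$ with the canonical isometric identification $[(k_j)]\mapsto\omega$-$\lim\psi_j(k_j)$ of $\omega$-$\lim(G_j,\id,d_{x_j}^\ell)$ with $(G_\infty,\id,d_{x_\infty}^\ell)$ yields a bijective $d^\ell$-isometry $J\colon G_\omega\to G_\infty$, $\omega$-$\lim k_j\mapsto\omega$-$\lim\psi_j(k_j)$, fixing $\id$. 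I would then show that $J$ equals the group isomorphism $F_\ast\colon G_\omega\to G_\infty$ induced by the equivariant isometry $F\colon X_\omega\to X_\infty$ of Proposition~\ref{prop-GH-ultralimit}(i): taking the maps $\psi_j$ realizing the convergence of Proposition~\ref{prop-convergence-group-metric} compatibly with the equivariant Gromov--Hausdorff approximations underlying $F$ — so that $\psi_j(k_j)$ acts on $X_\infty$ as the conjugate of $k_j$ by those approximations up to an error vanishing along $\omega$ — one obtains $J(k_\omega)F(y_\omega)=F(k_\omega y_\omega)=F_\ast(k_\omega)F(y_\omega)$ for every $y_\omega\in X_\omega$, hence $J=F_\ast$ is a group homomorphism. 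Equivalently, this is the statement that for the $\psi_j$ so chosen $d_{x_\infty}^\ell\big(\psi_j(k_jl_j),\psi_j(k_j)\psi_j(l_j)\big)\to 0$ along $\omega$ for admissible $(k_j),(l_j)$, i.e.\ the $\psi_j$ are approximately multiplicative on bounded sets.

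Granting this, the conclusion is immediate:
\[
p_\infty=\omega\text{-}\lim\psi_j(g_jh_j)=J(g_\omega\circ h_\omega)=J(g_\omega)\circ J(h_\omega)=g_\infty\circ h_\infty .
\]
On the other hand $\psi_j(h_j)\to h_\infty$ along $\omega$ in the locally compact group $G_\infty$, so by continuity of multiplication $g_\infty\psi_j(h_j)\to g_\infty h_\infty=p_\infty$ along $\omega$, whence $\omega$-$\lim d_{x_\infty}^\ell\big(\psi_j(g_jh_j),g_\infty\psi_j(h_j)\big)=d_{x_\infty}^\ell(p_\infty,p_\infty)=0$, contradicting the choice of the $h_j$. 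This proves \eqref{eq-limit-left}. The statement for the right-invariant metrics $d_{x_j}^\textup{r}$, the balls $\overline{B}_{x_j}^\textup{r}(\id,R)$ and the right multiplications $R_{g_j}$ is obtained identically, using right-invariance of $d^\textup{r}$ in place of left-invariance of $d^\ell$ throughout. The step I expect to be the real obstacle is the identification $J=F_\ast$ (equivalently, the approximate multiplicativity of the $\psi_j$ on bounded subsets): it is the only point where one has to return to the concrete equivariant approximations rather than their formal consequences, and it forces a judicious choice of the maps $\psi_j$; everything else is routine manipulation of admissible sequences and of the $L^\infty$-metrics of \eqref{eq-defin-metric-group}.
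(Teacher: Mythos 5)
Your argument and the paper's share the same backbone: fix a non-principal ultrafilter $\omega$, identify $\omega$-$\lim(G_j,\id,d_{x_j}^\ell)$ first with $(G_\omega,\id,d_{x_\omega}^\ell)$ via the map $\Phi$ of the proof of Proposition~\ref{prop-convergence-group-metric} and then with $(G_\infty,\id,d_{x_\infty}^\ell)$ via Proposition~\ref{prop-GH-ultralimit}, and recognise the ultralimit of the admissible sequence of isometries $L_{g_j}$ as $L_{g_\omega}$, hence as $L_{g_\infty}$. Your contradiction wrapper is cosmetic; the paper reaches the same conclusion directly and then observes that \eqref{eq-limit-left} is precisely the definition of convergence of the maps $L_{g_j}$ to $L_{g_\infty}$.

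The step you single out as ``the real obstacle''\,---\,identifying $J\colon G_\omega\to G_\infty$, $\omega$-$\lim k_j\mapsto\omega$-$\lim\psi_j(k_j)$, with the group isomorphism $F_*$ induced by the equivariant isometry $F\colon X_\omega\to X_\infty$\,---\,is indeed the crux, and you are right to be uneasy about it. When the paper asserts that the limit isometry on $G_\infty$ ``coincides with $L_{g_\infty}$'', it is tacitly using both that the identification $G_\omega\cong G_\infty$ in play sends $g_\omega$ to the given $g_\infty=\lim\psi_j(g_j)$ and that this identification conjugates $L_{g_\omega}$ to $L_{g_\infty}$, i.e.\ that it respects the group operation; neither of these follows formally from $J$ being a pointed isometry for the left-invariant metric $d_{x_\infty}^\ell$. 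Your remedy\,---\,build the $\psi_j$ out of the equivariant Gromov--Hausdorff approximations underlying $F$, so that they become approximately multiplicative on bounded sets\,---\,would close the loop, but it amounts to a particular \emph{choice} of $\psi_j$, whereas the lemma as stated fixes arbitrary $\psi_j$ realising the pointed convergence. To make this fully rigorous one should either weaken the hypothesis to ``for a suitable choice of $\psi_j$'' (which is all Proposition~\ref{prop-convergence-Haar} needs), or argue separately that any two sequences of pointed Gromov--Hausdorff approximations differ asymptotically by a group automorphism of $G_\infty$, which does not appear to be automatic. You flag the issue and leave it open; the paper does not flag it at all. So the proposal is honestly incomplete, but it correctly localises the single place where the concrete equivariant structure, rather than formal metric pGH reasoning, must enter.
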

\begin{proof}
	We prove the statement for the left case, the right one is similar. Since the metric $d_{x_j}^\ell$ is left-invariant each $L_{g_j}$ is an isometry and $L_{g_j}(\id) = \id$. We fix a non-principal ultrafilter $\omega$ and we consider $\omega$-$\lim (G_j,\id,d_{x_j}^\ell)$ which is isometric to $(G_\omega, \id, d_{x_\omega}^\ell)$ by Proposition \ref{prop-convergence-group-metric} and so to $(G_\infty, \id, d_{x_\infty}^\ell)$ by Proposition \ref{prop-GH-ultralimit}. The sequence of isometries $L_{g_j}$ is admissible, so it defines a limit isometry $\omega$-$\lim L_{g_j}$ in the usual way (cp. \cite[Lemma 10.48]{DK18}). It is clear that this limit isometry is sent via the map $\Phi$ of the proof of Proposition \ref{prop-convergence-group-metric} to $L_{g_\omega}$, where $L_{g_\omega}$ is the multiplication on the left by $g_\omega$ of $G_\omega$. Using again Proposition \ref{prop-GH-ultralimit} we get that the sequence of isometries $L_{g_j}$ defines a limit isometry on $G_\infty$ and it coincides with $L_{g_\infty}$. Now \eqref{eq-limit-left} is exactly the definition of convergence of maps.
\end{proof}

\begin{prop}
	\label{prop-convergence-Haar}
	Suppose that $(X_j,x_j,G_j) \underset{\textup{eq-pGH}}{\longrightarrow} (X_\infty,x_\infty,G_\infty)$, $X_j$ geodesic spaces, and let $\mu_j$ be a left invariant Haar measure on $G_j$. Then:
	\begin{itemize}
		\item[(i)] $(G_j,\id, d_{x_j}^\ell, \mu_j) \underset{\textup{pmGH}}{\longrightarrow} (G_\infty,\id,d_{x_\infty}^\ell,\mu_\infty)$ up to a subsequence if and only if $\sup_j \mu_j(\overline{B}_{x_j}^\ell(\id,R)) < +\infty$ for every $R \geq 0$ . If it is the case then $\mu_\infty$ is left invariant.
		\item[(ii)] The measure $\mu_\infty$ is a left invariant Haar measure of $G_\infty$ if and only if $\inf_{j} \mu_j(\overline{B}_{x_j}^\ell(\id,r)) > 0$ for every $r>0$.
	\end{itemize}
A similar statement holds for right invariant measures and metrics.
\end{prop}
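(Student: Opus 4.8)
The plan is to prove everything for the left-invariant metrics and measures; the right-invariant statement follows verbatim after replacing $d^\ell_{x}$, $L_g$ by $d^{\textup{r}}_{x}$ and right multiplications. The whole analysis is run through the pointed Gromov--Hausdorff convergence $(G_j,\id,d_{x_j}^\ell)\to(G_\infty,\id,d_{x_\infty}^\ell)$ of Proposition \ref{prop-convergence-group-metric}, realized by maps $\psi_j\colon G_j\to G_\infty$ with $\psi_j(\id)=\id$, $R_j\to+\infty$, $\varepsilon_j\to0$ as in the definition of pmGH convergence; in particular $\psi_j(\overline B^\ell_{x_j}(\id,R))\subseteq\overline B^\ell_{x_\infty}(\id,R+\varepsilon_j)$ for $j$ large, and $\overline B^\ell_{x_\infty}(\id,R)$ lies in the $\varepsilon_j$-neighbourhood of $\psi_j(\overline B^\ell_{x_j}(\id,R_j))$.

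For (i), the ``if'' direction: assuming $\sup_j\mu_j(\overline B^\ell_{x_j}(\id,R))<+\infty$ for every $R$, the positive functionals $\Lambda_j(f):=\int_{G_j}f\circ\psi_j\,d\mu_j$ on $C_c(G_\infty)$ satisfy $|\Lambda_j(f)|\le\|f\|_\infty\,\mu_j(\overline B^\ell_{x_j}(\id,R+\varepsilon_j))$ whenever $\operatorname{supp}f\subseteq\overline B^\ell_{x_\infty}(\id,R)$, hence are uniformly bounded on each such family of test functions; exhausting $G_\infty$ by balls and extracting diagonally, a subsequence of $(\Lambda_j)$ converges on $C_c(G_\infty)$ to a positive functional, i.e.\ by Riesz to a Radon measure $\mu_\infty$. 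This is exactly condition (d) of pmGH convergence, conditions (a)--(c) being Proposition \ref{prop-convergence-group-metric}. For the converse, test against $f\in C_c(G_\infty)$, $0\le f\le1$, with $f\equiv1$ on $\overline B^\ell_{x_\infty}(\id,R+1)$: then $f\circ\psi_j\ge\mathbf{1}_{\overline B^\ell_{x_j}(\id,R)}$ for $j$ large, so $\limsup_j\mu_j(\overline B^\ell_{x_j}(\id,R))\le\int f\,d\mu_\infty<\infty$, i.e.\ the mass bound holds along the convergent subsequence.

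The left invariance of $\mu_\infty$ is the technical heart of the statement. Fix $g_\infty\in G_\infty$; by condition (c) choose $g_j\in G_j$ with $\psi_j(g_j)\to g_\infty$, and note $C:=\sup_j d^\ell_{x_j}(\id,g_j)<+\infty$. For $f\in C_c(G_\infty)$, left invariance of $\mu_j$ gives $\Lambda_j(f)=\int_{G_j}f\circ(\psi_j\circ L_{g_j})\,d\mu_j$. By the Lemma preceding this Proposition, $\psi_j\circ L_{g_j}$ converges to $L_{g_\infty}\circ\psi_j$ uniformly on every ball $\overline B^\ell_{x_j}(\id,R)$; since $f$ is uniformly continuous and compactly supported and, because $C<+\infty$ (and $d^\ell$ is left invariant), both $f\circ(\psi_j\circ L_{g_j})$ and $(f\circ L_{g_\infty})\circ\psi_j$ are supported, for $j$ large, in one common ball $\overline B^\ell_{x_j}(\id,R)$ of uniformly bounded $\mu_j$-mass, we obtain $\int_{G_j}f\circ(\psi_j\circ L_{g_j})\,d\mu_j-\int_{G_j}(f\circ L_{g_\infty})\circ\psi_j\,d\mu_j\to0$, while the second integral tends to $\int_{G_\infty}(f\circ L_{g_\infty})\,d\mu_\infty$. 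Comparing with $\Lambda_j(f)\to\int_{G_\infty}f\,d\mu_\infty$ yields $\int f\,d\mu_\infty=\int f\circ L_{g_\infty}\,d\mu_\infty$ for every $f\in C_c(G_\infty)$, i.e.\ $\mu_\infty$ is left invariant.

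For (ii) we invoke Lemma \ref{lemma-Haar-measure}: the left-invariant $\mu_\infty$ on the locally compact, second-countable $G_\infty$ is a Haar measure iff it is finite on compacts and positive on open sets. Finiteness on compacts is automatic since $G_\infty$ is proper and $\mu_\infty$ is Radon (equivalently $\mu_\infty(\overline B^\ell_{x_\infty}(\id,R))\le\limsup_j\mu_j(\overline B^\ell_{x_j}(\id,R+1))<\infty$ by a bump function). By left invariance, positivity on open sets is equivalent to $\mu_\infty(B^\ell_{x_\infty}(\id,r))>0$ for all $r>0$: testing against $f\in C_c(G_\infty)$ with $\operatorname{supp}f\subseteq B^\ell_{x_\infty}(\id,r)$, $0\le f\le1$, $f\equiv1$ on $\overline B^\ell_{x_\infty}(\id,r/2)$, and using $f\circ\psi_j\ge\mathbf{1}_{\overline B^\ell_{x_j}(\id,r/4)}$ for $j$ large, gives $\mu_\infty(B^\ell_{x_\infty}(\id,r))\ge\inf_j\mu_j(\overline B^\ell_{x_j}(\id,r/4))$; hence the hypothesis $\inf_j\mu_j(\overline B^\ell_{x_j}(\id,r))>0$ for all $r$ makes $\mu_\infty$ a Haar measure. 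Conversely, if $\mu_\infty$ is a Haar measure it is positive on open sets; picking $f\in C_c(G_\infty)$ with $\operatorname{supp}f$ small enough that $f\circ\psi_j$ is supported in $\overline B^\ell_{x_j}(\id,r)$ for $j$ large and $f\equiv1$ near $\id$ gives $\mu_j(\overline B^\ell_{x_j}(\id,r))\ge\Lambda_j(f)\to\int f\,d\mu_\infty>0$, which together with $\mu_j(\overline B^\ell_{x_j}(\id,r))>0$ for each fixed $j$ (positivity of the Haar measure $\mu_j$ on $B^\ell_{x_j}(\id,r)$) yields $\inf_j\mu_j(\overline B^\ell_{x_j}(\id,r))>0$. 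The main obstacle is the invariance transfer of the third paragraph: one must combine vague convergence of the measures with the uniform convergence of the approximating left translations $\psi_j\circ L_{g_j}$ while keeping all relevant supports inside a single ball of uniformly bounded mass.
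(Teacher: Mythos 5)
Your proof is correct and the key step — deducing left invariance of $\mu_\infty$ from the convergence $\psi_j\circ L_{g_j}\to L_{g_\infty}\circ\psi_j$ of the approximating left translations, together with uniform continuity of $f$ and the uniform mass bound on a fixed ball — is precisely the argument the paper uses. Where you genuinely differ is on the bookend steps that the paper simply outsources to references: the paper establishes precompactness in (i) by citing a theorem of Kheblbi together with a result of Ambrosio--Gigli--Savar\'e, and establishes the characterization of positivity on open sets in (ii) by citing a remark in a paper of Pansu--Sormani; you instead give self-contained arguments — a Riesz representation plus diagonal extraction for the ``if'' direction of (i), a bump-function estimate against $\mathbf{1}_{\overline B^\ell_{x_j}(\id,R)}$ for the ``only if'' direction, and direct two-sided bump-function estimates for both implications of (ii). Your treatment of (ii) is in fact slightly more careful than the paper's, which phrases the characterization in terms of $\liminf_j$ rather than $\inf_j$; you close the gap by observing that the finitely many small indices each contribute a positive Haar mass. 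Both routes are valid; yours trades brevity for self-containment and does not introduce any gaps.
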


	\begin{proof}
		We provide the proof only for the left invariant case, the other one being similar.
		The first part of (i) follows by \cite[Theorem 3.28]{Khe20} together with \cite[Theorem 2.6]{ADH13} and Proposition \ref{prop-convergence-group-metric}. We need to check that $\mu_\infty$ is left invariant. Fix maps $\psi_j \colon G_j \to G_\infty$ realizing the pointed measured Gromov-Hausdorff convergence. Fix an element $g_\infty \in G_\infty$: it is limit of elements $g_j \in G_j$. Call $L_{g_j}$ and $L_{g_\infty}$ the corresponding left multiplication maps. Fix a continuous function $f\colon G_\infty \to \mathbb{R}$ with compact support, say contained in $\overline{B}_{x_\infty}^\ell(\id,R)$ for some $R> 0$. 
		The continuous function $f$ is uniformly continuous.
		Using the compact support of $f$, its uniform continuity and \eqref{eq-limit-left} we obtain the following  estimate. For every $\varepsilon > 0$ we have
		$$\left\vert\int_{G_j} f \circ (\psi_j \circ L_{g_j})d\mu_j - \int_{G_j} f\circ L_{g_{\infty}} \circ \psi_j d\mu_j \right\vert < \varepsilon \mu_j(\overline{B}_{x_j}^\ell(\id,2R))$$
		for  $j$ big enough. Since $\mu_j(\overline{B}_{x_j}^\ell(\id,2R))$ is uniformly bounded in $j$ and $\varepsilon$ is arbitrary, we get
		$$\lim_{j \to + \infty} \int_{G_j} f \circ (\psi_j \circ L_{g_j})d\mu_j = \lim_{j\to + \infty} \int_{G_j} f\circ L_{g_{\infty}} \circ \psi_j d\mu_j.$$
		The left hand side equals $\int_{G_\infty} f d\mu_\infty$ because $(L_{g_j})_*\mu_j = \mu_j$ and \eqref{eq-defin-pmGH}. The function $f\circ L_{g_\infty}$ is still continuous and with bounded support, so again by \eqref{eq-defin-pmGH} the right hand side coincides with $\int_{G_\infty} f\circ L_{g_{\infty}} d\mu_\infty$. By the arbitrariness of $f$ we get $(L_{g_\infty})_*\mu_\infty = \mu_\infty$, i.e. $\mu_{\infty}$ is $L_{g_\infty}$-invariant. This shows that $\mu_{\infty}$ is left invariant.\\
		Clearly $\mu_\infty$ is finite on compact sets. Moreover it is positive on open sets if and only $\liminf_{j\to +\infty} \mu_j(\overline{B}_{x_j}^\ell(\id,r)) > 0$ for every $r>0$. This follows from the left  invariance and from, for instance, \cite[Remark 11.5]{PS21}. In this case $\mu_\infty$ is a left invariant Haar measure of $G_\infty$ by Lemma \ref{lemma-Haar-measure}.
	\end{proof}

\begin{cor}
	\label{cor-unimodularity-limit}
	Suppose that $(X_j,x_j,G_j) \underset{\textup{eq-pGH}}{\longrightarrow} (X_\infty,x_\infty,G_\infty)$, $X_j$ geodesic spaces, and $G_j$ unimodular. If there exist Haar measures $\mu_j$ of $G_j$ such that:
	\begin{itemize}
		\item[(i)] $\sup_{j} \mu_j(\overline{S}_R(x_j)) < + \infty$ for every $R\geq 0$;
		\item[(ii)] $\inf_{j} \mu_j(\overline{S}_r(x_j,1/r)) < + \infty$ for every $r>0$;
	\end{itemize}
	then $G_\infty$ is unimodular.
\end{cor}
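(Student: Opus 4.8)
The plan is to construct a single measure on $G_\infty$ that is simultaneously left and right invariant, by taking the pointed measured Gromov--Hausdorff limit of the Haar measures $\mu_j$ — which, since $G_j$ is unimodular, are \emph{both} left and right invariant — with respect to the left invariant metrics $d_{x_j}^\ell$, and then verifying that right invariance survives in the limit.

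First I would rephrase the hypotheses in terms of the metric balls $\overline{B}_{x_j}^\ell(\id,r)$. Directly from \eqref{eq-defin-L-infty-pseuodidistance}--\eqref{eq-defin-metric-group}, using $d(g^{-1}y,y)=d(y,gy)$, one has $d_{x_j}^\ell(\id,g)=d_{x_j}^\textup{r}(\id,g)$ for every $g$, so $\overline{B}_{x_j}^\ell(\id,r)=\overline{B}_{x_j}^\textup{r}(\id,r)$ and Lemma \ref{lemma-comparison-metric-group} yields $\overline{S}_{r/2}(x_j,2/r)\subseteq\overline{B}_{x_j}^\ell(\id,r)\subseteq\overline{S}_r(x_j,1/r)\subseteq\overline{S}_r(x_j)$. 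Hence hypothesis (i) gives $\sup_j\mu_j(\overline{B}_{x_j}^\ell(\id,R))<+\infty$ for all $R$, and hypothesis (ii), applied with $r/2$ in place of $r$, gives $\inf_j\mu_j(\overline{B}_{x_j}^\ell(\id,r))>0$ for all $r>0$. Proposition \ref{prop-convergence-Haar}(i) then lets me pass to a subsequence along which $(G_j,\id,d_{x_j}^\ell,\mu_j)\underset{\textup{pmGH}}{\longrightarrow}(G_\infty,\id,d_{x_\infty}^\ell,\mu_\infty)$ with $\mu_\infty$ left invariant, and Proposition \ref{prop-convergence-Haar}(ii) upgrades $\mu_\infty$ to a genuine (nonzero, positive on open sets) left invariant Haar measure of $G_\infty$.

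The heart of the argument is to show that this $\mu_\infty$ is moreover \emph{right} invariant. Fixing $g_\infty\in G_\infty$, the pmGH convergence produces $g_j\in G_j$ of uniformly bounded displacement with $\psi_j(g_j)\to g_\infty$ (the $\psi_j$ being the approximating maps). I would establish the right-multiplication analogue of \eqref{eq-limit-left}, namely $\lim_j\sup_{h_j\in\overline{B}_{x_j}^\ell(\id,R)}d_{x_\infty}^\ell(\psi_j(h_jg_j),\psi_j(h_j)g_\infty)=0$ for every $R$. This cannot be deduced from an isometry property — right translations are not isometries of the left invariant metrics — but it follows from the ultralimit picture: for a non-principal ultrafilter $\omega$ and an admissible sequence $(h_j)$, the composition law on the ultralimit group gives $\omega\textup{-}\lim(h_j\circ g_j)=(\omega\textup{-}\lim h_j)\circ(\omega\textup{-}\lim g_j)$ in $G_\omega$, which under the identifications of Propositions \ref{prop-GH-ultralimit} and \ref{prop-convergence-group-metric} says exactly that $\psi_j(h_jg_j)$ and $\psi_j(h_j)g_\infty$ have the same limit; since $d_{x_j}^\ell(h_jg_j,h_j)=d_{x_j}^\ell(\id,g_j)$ is bounded independently of $h_j$, all elements involved stay in a fixed bounded part of $G_\infty$, so the displayed quantities are bounded and their vanishing along every $\omega$ forces the stated limit. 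Granting this, I would run the invariance computation in the proof of Proposition \ref{prop-convergence-Haar}(i) verbatim, replacing each left multiplication by the corresponding right multiplication and using $(R_{g_j})_*\mu_j=\mu_j$ (valid because $\mu_j$ is right invariant), to conclude $(R_{g_\infty})_*\mu_\infty=\mu_\infty$. As $g_\infty$ is arbitrary, $\mu_\infty$ is two-sided invariant, so $G_\infty$ is unimodular.

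The step I expect to be the main obstacle is precisely the control of the right translations in the limit, since they are not isometries of the $d_{x_j}^\ell$; the ultralimit argument is what makes it go through, the key numerical input being $d_{x_j}^\ell(hg_j,h)=d_{x_j}^\ell(\id,g_j)$. An alternative would be to also apply Proposition \ref{prop-convergence-Haar} with respect to the right invariant metrics $d_{x_j}^\textup{r}$ (using the right-hand statements of Lemma \ref{lemma-comparison-metric-group} and of that proposition) to obtain a right invariant Haar limit $\mu_\infty'$, and then to identify $\mu_\infty$ and $\mu_\infty'$ up to a scalar — for instance by comparing their masses on the intrinsic compact sets $\overline{S}_r(x_j,R)$, which converge to $\overline{S}_r(x_\infty,R)$ regardless of which of the two metrics is placed on the groups — but passing from agreement of two $\sigma$-finite measures on such a family to their equality requires a further uniqueness argument, so the direct route above seems preferable.
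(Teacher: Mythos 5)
Your proof is correct and, at the high level, follows the same path the paper's one-sentence proof points to: translate the hypotheses via Lemma \ref{lemma-comparison-metric-group} and apply Proposition \ref{prop-convergence-Haar}. Your preliminary observation that $d_{x_j}^\ell(\id,g)=d_{x_j}^{\textup{r}}(\id,g)$, hence $\overline{B}_{x_j}^\ell(\id,r)=\overline{B}_{x_j}^\textup{r}(\id,r)$, is exactly what is needed to match the hypotheses against both sides of Lemma \ref{lemma-comparison-metric-group}, and the paper uses it silently.

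Where you add genuine content is in the final step. The paper implicitly relies on applying Proposition \ref{prop-convergence-Haar} in both its left and right forms with the same $\mu_j$ and then identifying the two resulting Haar limits --- a point you correctly flag at the end of your proposal as needing a uniqueness argument (in the ultralimit picture, the pmGH limit measure depends only on the set-theoretic identification $G_\omega\cong G_\infty$, not on which bi-invariant metric one places on the groups). Your chosen route avoids this by showing directly that the left Haar limit $\mu_\infty$ is also right invariant, via a right-translation analogue of the lemma preceding Proposition \ref{prop-convergence-Haar}. You correctly pinpoint the obstacle --- $R_{g_j}$ is not a $d_{x_j}^\ell$-isometry, so \cite[Lemma 10.48]{DK18} does not apply --- and correctly resolve it: the composition law in $G_\omega$ makes $R_{g_j}$ descend to a well-defined limit map, the identity $d_{x_j}^\ell(h_jg_j,h_j)=d_{x_j}^\ell(\id,g_j)$ keeps everything admissible, and the uniform statement over balls follows by the contradiction-plus-ultrafilter argument you sketch. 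Rerunning the invariance computation of Proposition \ref{prop-convergence-Haar}.(i) with $R_g$ in place of $L_g$ and $(R_{g_j})_*\mu_j=\mu_j$ then gives $(R_{g_\infty})_*\mu_\infty=\mu_\infty$, hence unimodularity. Both routes are valid; yours is more self-contained and makes explicit what the paper leaves to the reader.
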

\begin{proof}
	The conditions (i) and (ii) imply, and are essentially equivalent to, conditions in (i) and (ii) of Proposition \ref{prop-convergence-Haar} by Lemma \ref{lemma-comparison-metric-group}.
\end{proof}

\subsection{Almost stabilizers} 
\label{sub-almost stabilizers} In this part we generalize \cite[§7.2]{CS23} for totally disconnected, \emph{unimodular} groups. This is the part where unimodularity plays a role. Recall the function $\sigma_{P_0,r_0,D_0}: (0,\varepsilon_0] \rightarrow  (0,\varepsilon_0]$ given by Theorem \ref{theo-splitting-weak}, where $\varepsilon_0$ is the Margulis constant.
Remark that Theorem \ref{theo-splitting-weak}.(iv) yields, for any $x\in X$, a slice $\lbrace y \rbrace \times \mathbb{R}^k$ which is preserved by $\overline{G}_{\varepsilon^\ast}(x)$ (with $\sigma_{P_0,r_0,D_0}(\varepsilon)<  \varepsilon^\ast < \varepsilon$),   provided that the free-systole of $G$ is smaller than $\sigma_{P_0,r_0,D_0}(\varepsilon)$, so that $X$ splits as $Y \times \mathbb{R}^k$. It is useful to know that, for at least a specific point $x$,  the slice   can be chosen to be the one passing through $x$. This is proved, in the unimodular case, by the following:

\begin{prop}
\label{prop-minimal-close}
Let $X$ be a  proper, geodesically complete, $(P_0,r_0)$-packed   $\textup{CAT}(0)$-space,   and let  $G < \textup{Isom}(X)$  be closed, totally disconnected, unimodular and $D_0$-cocompact.
Assume that $\textup{sys}^\diamond(G,X)< \sigma_{P_0,r_0,D_0}(\varepsilon)$, so that $X$ splits as $Y\times \mathbb{R}^k$ by Theorem \ref{theo-splitting-weak}. 
Then there exists $x_0 = (y_0, {\bf v})\in X$ such that $\overline{G}_{\varepsilon^\ast}(x_0)$ preserves $\lbrace y_0 \rbrace \times \mathbb{R}^k$.
\end{prop}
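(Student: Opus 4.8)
The plan is to single out the required point as one at which the Haar measure of the almost stabilizer $\overline{S}_{\varepsilon^\ast}(\cdot)$ is as large as possible; unimodularity is exactly what turns this into a $G$-invariant quantity, hence one that attains its maximum on the compact quotient $G\backslash X$. First, write the $G$-invariant splitting as $X=Y\times\mathbb{R}^k$ and regard $G$ as a subgroup of $\textup{Isom}(Y)\times\textup{Isom}(\mathbb{R}^k)$ via \cite[Proposition I.5.3.(4)]{BH09}, with projections $p_Y$ and $p_{\mathbb{R}^k}$. For $x\in X$ put $A_x:=\overline{G}_{\varepsilon^\ast}(x)$, let $K_x:=\overline{p_Y(A_x)}$ (compact by Theorem \ref{theo-splitting-weak}.(iv)), and let $F_x:=\textup{Fix}(K_x)\subseteq Y$, which is non-empty (Theorem \ref{theo-splitting-weak}.(iv), or \cite[Corollary II.2.8]{BH09}), closed and convex; note $A_x$ preserves the slice $\{y\}\times\mathbb{R}^k$ precisely when $y\in F_x$. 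The key monotonicity is: if $x=(y_0,\mathbf{v})$ and $\tilde{y}\in F_x$, then for $\tilde{x}:=(\tilde{y},\mathbf{v})$ one has $\overline{S}_{\varepsilon^\ast}(x)\subseteq\overline{S}_{\varepsilon^\ast}(\tilde{x})$, hence $A_x\subseteq A_{\tilde{x}}$. Indeed, every $g\in\overline{S}_{\varepsilon^\ast}(x)$ has $p_Y(g)\in K_x$, so $p_Y(g)$ fixes $\tilde{y}$, whence $d(\tilde{x},g\tilde{x})^2=\Vert\mathbf{v}-p_{\mathbb{R}^k}(g)\mathbf{v}\Vert^2\le d(x,gx)^2\le(\varepsilon^\ast)^2$.

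Next, fix a left-invariant Haar measure $\mu$ on $G$; since $G$ is unimodular, $\mu$ is also right-invariant, so $\mu(gBg^{-1})=\mu(B)$ for every Borel $B$ and $g\in G$. Because $\overline{S}_{\varepsilon^\ast}(gx)=g\,\overline{S}_{\varepsilon^\ast}(x)\,g^{-1}$, the function $\rho(x):=\mu\big(\overline{S}_{\varepsilon^\ast}(x)\big)$ is $G$-invariant. It is finite because $\overline{S}_{\varepsilon^\ast}(x)$ is compact (Ascol\`i--Arzel\`a), and strictly positive because $\overline{S}_{\varepsilon^\ast}(x)$ is a non-empty open set (open by Theorem \ref{theo-characterization-td}.(b)) and Haar measures are positive on non-empty open sets (Lemma \ref{lemma-Haar-measure}). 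Moreover $\rho$ is upper semicontinuous: for $x$ in a bounded neighbourhood of a fixed $x_1$ all the sets $\overline{S}_{\varepsilon^\ast}(x)$ lie in the fixed compact set $\overline{S}_{\varepsilon^\ast+2}(x_1)$, and since $x\mapsto d(x,gx)$ is continuous for each $g$ one gets $\limsup_{x\to x_1}\mathbf{1}_{\overline{S}_{\varepsilon^\ast}(x)}\le\mathbf{1}_{\overline{S}_{\varepsilon^\ast}(x_1)}$ pointwise, so the reverse Fatou lemma yields $\limsup_{x\to x_1}\rho(x)\le\rho(x_1)$. Hence $\rho$ descends to an upper semicontinuous, $G$-invariant function on the compact space $G\backslash X$, and so attains its maximum at some $x_0\in X$.

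To conclude, pick any $\tilde{y}_0\in F_{x_0}$ and set $x_0':=(\tilde{y}_0,\,p_{\mathbb{R}^k}(x_0))$. By the monotonicity above, $\overline{S}_{\varepsilon^\ast}(x_0)\subseteq\overline{S}_{\varepsilon^\ast}(x_0')$, so $\rho(x_0)\le\rho(x_0')\le\max_X\rho=\rho(x_0)$, giving $\rho(x_0)=\rho(x_0')$. Now the difference $\overline{S}_{\varepsilon^\ast}(x_0')\setminus\overline{S}_{\varepsilon^\ast}(x_0)$ is open (both sets are open by Theorem \ref{theo-characterization-td}.(b)) and has $\mu$-measure $0$, hence is empty by Lemma \ref{lemma-Haar-measure}; therefore $\overline{S}_{\varepsilon^\ast}(x_0)=\overline{S}_{\varepsilon^\ast}(x_0')$, so $A_{x_0}=A_{x_0'}$ and $F_{x_0}=F_{x_0'}$. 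Since $p_Y(x_0')=\tilde{y}_0\in F_{x_0}=F_{x_0'}$, the group $\overline{G}_{\varepsilon^\ast}(x_0')=A_{x_0'}$ preserves the slice $\{\tilde{y}_0\}\times\mathbb{R}^k$ passing through $x_0'$, which is exactly the assertion (with $x_0$ replaced by $x_0'$).

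The heart of the matter — and the only place where unimodularity is genuinely used — is the construction of the functional $\rho$: the slice-projection move can only increase $\rho$, so maximality of $\rho$ forces that move to be $\mu$-essentially, hence (by openness of the almost-stabilizer sets) literally, trivial. Without unimodularity one would merely get $\rho(gx)=\Delta(g)^{-1}\rho(x)$, losing the descent to the compact quotient and thus the existence of a maximiser. All remaining ingredients (finiteness, positivity, upper semicontinuity of $\rho$, and ``equal finite measure forces equal open sets'') are soft consequences of properness, geodesic completeness and Theorem \ref{theo-characterization-td}.
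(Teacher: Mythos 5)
Your proof is correct, and it hits exactly the same central idea as the paper: unimodularity makes $x\mapsto\mu\bigl(\overline{S}_{\varepsilon^\ast}(x)\bigr)$ a $G$-invariant function; upper semicontinuity plus cocompactness give a maximizer; projecting the maximizer into the slice preserved by its almost stabilizer cannot decrease the measure, so by maximality the almost stabilizers at the maximizer and at its projection have equal Haar measure; and since these sets are both open and compact, equal measure forces equality of sets, which transports the ``good slice'' to pass through the chosen point. Two of your sub-steps differ from the paper's, both in the direction of greater economy. First, for upper semicontinuity of $\rho$ the paper invokes its Lemma~\ref{lemma-basic-containment} (local monotone containment of almost stabilizers, which relies on total disconnectedness via Theorem~\ref{theo-characterization-td}.(a)), whereas you derive it from the pointwise $\limsup$ of indicators and reverse Fatou with a fixed compact dominant; your route is more elementary and does not use total disconnectedness at this point. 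Second, to compare $\overline{S}_{\varepsilon^\ast}$ at the maximizer and at its slice-projection the paper runs a supremum argument along the geodesic $[x,x_0]$ (again calling on Lemma~\ref{lemma-basic-containment}), while you observe directly that every $g$ in the almost stabilizer of $x_0$ satisfies $d(x_0',gx_0')\le d(x_0,gx_0)$ (since $p_Y(g)$ fixes $\tilde y_0$), giving $\overline{S}_{\varepsilon^\ast}(x_0)\subseteq\overline{S}_{\varepsilon^\ast}(x_0')$ in one line and letting maximality finish the job. This is a genuine shortcut: the displacement inequality the paper records mid-argument already suffices, and the auxiliary geodesic parameter $T$ and its sup/limit bookkeeping are not needed for this proposition (they reappear naturally in Corollary~\ref{cor-0-rank}.(ii), which is likely why the paper phrases things this way). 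One very small point of precision: the difference $\overline{S}_{\varepsilon^\ast}(x_0')\setminus\overline{S}_{\varepsilon^\ast}(x_0)$ is open not merely because both sets are open, but because the subtracted set is also closed (it is compact by Ascol\`i--Arzel\`a); you should cite that compactness explicitly alongside Theorem~\ref{theo-characterization-td}.(b).
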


\noindent The proof is based on a maximality argument, similar to the proof of \cite[Theorem 3.1]{CS22}.
We just need the following additional fact.
\begin{lemma}[compare with \textup{\cite[Lemma 3.3]{CS22}}]
\label{lemma-basic-containment}
Let $X$ be a proper, geodesically complete, \textup{CAT}$(0)$-space and let $G < \textup{Isom}(X)$ be closed, totally disconnected and cocompact.
For every $x\in X$ and $r > 0$ there exists an open set $U \ni x$ such that $\overline{S}_r(y)\subseteq \overline{S}_r(x)$ for all $y\in U$.
\end{lemma}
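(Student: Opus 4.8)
The plan is to exploit the discreteness of the orbit $Gx$, which holds by Theorem~\ref{theo-characterization-td}.(iii), to produce a gap in the ``distance spectrum'' $\{d(x,gx)\ :\ g\in G\}$ immediately above $r$; once a gap of width $\delta$ is available, any $y$ within $\delta/2$ of $x$ will satisfy the inclusion by a crude triangle inequality.

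First I would fix $x\in X$ and $r>0$ and consider the subset $\overline{S}_{r+1}(x)\subseteq G$, which is compact by the Ascol\`i--Arzel\`a theorem since $X$ is proper and $G$ is closed. Its image under the orbit map $g\mapsto gx$ is exactly $Gx\cap\overline{B}(x,r+1)$, hence a compact subset of the \emph{discrete} set $Gx$, and therefore finite. Consequently $V:=\{d(x,gx)\ :\ g\in G\}\cap[0,r+1]$ is a finite set. If $V\cap(r,r+1]=\emptyset$ I put $\delta:=1$; otherwise I let $m:=\min\big(V\cap(r,r+1]\big)>r$ and put $\delta:=\tfrac{m-r}{2}>0$. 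In either case no element $g\in G$ satisfies $r<d(x,gx)\le r+\delta$, that is, $\overline{S}_{r+\delta}(x)=\overline{S}_r(x)$.

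Then I would set $U:=B(x,\delta/2)$, an open neighbourhood of $x$. For $y\in U$ and $g\in\overline{S}_r(y)$ the triangle inequality, together with the fact that $g$ is an isometry, gives
\[
 d(x,gx)\ \le\ d(x,y)+d(y,gy)+d(gy,gx)\ =\ 2\,d(x,y)+d(y,gy)\ <\ \delta+r,
\]
so $g\in\overline{S}_{r+\delta}(x)=\overline{S}_r(x)$. Hence $\overline{S}_r(y)\subseteq\overline{S}_r(x)$ for every $y\in U$, as desired.

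The only ingredient beyond properness is the finiteness of $Gx\cap\overline{B}(x,r+1)$, which I deduce from Theorem~\ref{theo-characterization-td}: since $G$ is closed, totally disconnected and cocompact on the geodesically complete $\textup{CAT}(0)$-space $X$, every orbit $Gx$ is discrete (and closed, being the image of $G$ under the proper orbit map, since $\textup{Stab}_G(x)$ is compact). I do not anticipate any real obstacle here; the statement is elementary once discreteness of orbits has been invoked, and the argument does not require the unimodularity assumption.
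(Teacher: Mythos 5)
Your proof is correct, and it takes a somewhat different route from the paper's. The paper argues by contradiction: it takes sequences $y_j\to x$ and $g_j\in G$ with $d(y_j,g_jy_j)\le r$ but $d(x,g_jx)>r$, extracts a convergent subsequence $g_j\to g$, and uses the openness of pointwise stabilizers (Theorem~\ref{theo-characterization-td}.(a)) to force $g_jx=gx$ eventually, which yields the contradiction $r<d(x,gx)\le r$. Your argument is constructive instead: you use the discreteness of $Gx$ (Theorem~\ref{theo-characterization-td}.(iii), together with the finiteness of $Gx\cap\overline{B}(x,r+1)$, which is also recorded as part (c) of that theorem) to produce an explicit spectral gap $\delta>0$ with $\overline{S}_{r+\delta}(x)=\overline{S}_r(x)$, and then a one-line triangle inequality does the rest. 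The two approaches rely on the same circle of equivalent facts from Theorem~\ref{theo-characterization-td}, but yours produces an explicit, quantitative neighbourhood $U=B(x,\delta/2)$ and avoids the Ascol\`i--Arzel\`a compactness step used to extract the convergent subsequence of the $g_j$. Both proofs are equally elementary; yours is perhaps slightly cleaner as a direct argument, while the paper's is shorter to state once the openness of $\textup{Stab}_G(x,R)$ has been invoked.
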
	
\begin{proof}
	Suppose it is not true. We can find $y_j$ converging to $x$ and $g_j \in G$ such that $d(y_j,g_jy_j) \leq r$ but $d(x,g_jx) > r$. We can suppose that $g_j$ converges to $g \in G$. Clearly $d(g^{-1}g_jx,x)$ tends to $0$, so by Theorem \ref{theo-characterization-td}.(a) we have $g^{-1}g_j x = x$ for $j$ big enough. Therefore $d(x, g x) = d(x,g_jx) > r$ but on the other hand it holds $d(x, g x) = \lim_{j\to + \infty}d(y_j,g_jy_j) \leq r$, a contradiction.
\end{proof}

\begin{proof}[Proof of Proposition \ref{prop-minimal-close}]
Let $\mu$ be a bi-invariant Haar measure of $G$. Observe that $0<\mu(\overline{S}_{\varepsilon^\ast}(x)) <+\infty$ for every $x\in X$, because the set $\overline{S}_{\varepsilon^\ast}(x)$ is open and compact by Theorem \ref{theo-characterization-td}.(b). The map $x\mapsto \mu(\overline{S}_{\varepsilon^\ast}(x))$ is upper semicontinuous by Lemma \ref{lemma-basic-containment} and $G$-invariant by unimodularity. Therefore it admits a maximum by cocompactness of $G$. Let $x$ be a point where the maximum is realized. Apply Theorem \ref{theo-splitting-weak}.(iv) to the point $x$: there exists $y_0\in Y$ such that $\overline{G}_{\varepsilon^\ast}(x)$ preserves $\lbrace y_0 \rbrace \!\times\mathbb{R}^k$. If $x \in  \lbrace y_0 \rbrace \!\times \mathbb{R}^k$, then we set $x_0=x$ and there is nothing  more to prove. 
Otherwise call $x_0$ the projection of $x$ on the closed, convex, $\overline{G}_{\varepsilon^\ast}(x)$-invariant subset $\lbrace y_0 \rbrace \!\times \mathbb{R}^k$. 
Let $c\colon [0,d(x,x_0)] \to X$ be the geodesic $[x,x_0]$ and set 
$$T=\sup\lbrace t \in [0,d(x,x_0)] \text{ s.t. } \overline{S}_{\varepsilon^\ast}(c(t)) = \overline{S}_{\varepsilon^\ast}(x)\rbrace.$$
By definition $T\geq 0$ and we claim that $T=d(x,x_0)$ and it is  a maximum.\\
In order to do so we need the following claim.
\vspace{2mm}

\noindent\textbf{Claim:} if $\overline{S}_{\varepsilon^\ast}(y) \subseteq \overline{S}_{\varepsilon^\ast}(y')$ then the equality holds if and only if $\mu(\overline{S}_{\varepsilon^\ast}(y)) = \mu(\overline{S}_{\varepsilon^\ast}(y'))$. One direction is clear, so suppose $\mu(\overline{S}_{\varepsilon^\ast}(y)) = \mu(\overline{S}_{\varepsilon^\ast}(y'))$. The two sets are both open and closed by Theorem \ref{theo-characterization-td}.(b). Therefore if $\overline{S}_{\varepsilon^\ast}(y) \subsetneq \overline{S}_{\varepsilon^\ast}(y')$ then the set $\overline{S}_{\varepsilon^\ast}(y') \setminus \overline{S}_{\varepsilon^\ast}(y)$ would be open and non-empty, so of positive $\mu$-measure, contradicting the assumption.
\vspace{2mm}

\noindent We can now continue the proof. Assume $\overline{S}_{\varepsilon^\ast}(c(t)) = \overline{S}_{\varepsilon^\ast}(x)$. 
Then, $\overline{S}_{\varepsilon^\ast}(c(t + t')) \subseteq \overline{S}_{\varepsilon^\ast}(c(t)) = \overline{S}_{\varepsilon^\ast}(x)$ for all $t'>0$ small enough, by Lemma \ref{lemma-basic-containment}.
On the other hand, for every $g \in \overline{S}_{\varepsilon^\ast}(x)$ we have $d(x_0, g x_0) \leq d(x,gx)$ (since $g$ acts on $Y$ fixing $y_0$).
Therefore, by the convexity of the displacement functions we deduce  that $\overline{S}_{\varepsilon^\ast}(x) \subseteq \overline{S}_{\varepsilon^\ast}(c(t+t'))$ too. 	
This shows that the supremum defining $T$ is not realized, unless $T = d(x,x_0)$. Let now $0\leq t_j < T$ such that $t_j$ tends to $T$, so the points $c(t_j)$ converge to $c(T)$. By Lemma \ref{lemma-basic-containment} we get $\overline{S}_{\varepsilon^\ast}(c(t_j)) \subseteq \overline{S}_{\varepsilon^\ast}(c(T))$ for all $n$ big enough. But $\overline{S}_{\varepsilon^\ast}(c(t_j)) = \overline{S}_{\varepsilon^\ast}(x)$ has maximal measure among the sets of this form, therefore $\mu(\overline{S}_{\varepsilon^\ast}(x)) = \mu(\overline{S}_{\varepsilon^\ast}(c(t_j))) = \mu(\overline{S}_{\varepsilon^\ast}(c(T)))$ for all $j$. The claim implies that $\overline{S}_{\varepsilon^\ast}(c(T)) = \overline{S}_{\varepsilon^\ast}(x)$, i.e. $T$ is actually  a maximum and so $T=d(x,x_0)$.
Hence, $\overline{S}_{\varepsilon^\ast}(x_0) = \overline{S}_{\varepsilon^\ast}(x)$,
and $x_0$ is the point we are looking for.
\end{proof}

The following corollary is new even for discrete groups. Observe that in general the closure of the projection on the Euclidean factor is not totally disconnected (i.e. not discrete), also for discrete groups (see examples in \cite{LM21}, \cite{CM19} and \cite[Example 6.1]{CS23}). 
\begin{cor}
	\label{cor-projection-td}
	Let $X$ be a  proper, geodesically complete, $(P_0,r_0)$-packed   $\textup{CAT}(0)$-space,   and let  $G$  be a closed, totally disconnected, unimodular, $D_0$-cocompact subgroup of $\textup{Isom}(X)$.
	Assume that $\textup{sys}^\diamond(G,X)< \sigma_{P_0,r_0,D_0}(\varepsilon)$, so that $X$ splits as $Y\times \mathbb{R}^k$ by Theorem \ref{theo-splitting-weak}. 
	Let $x_0 = (y_0, {\bf v})\in X$ be such that $\overline{G}_{\varepsilon^\ast}(x_0)$ preserves $\lbrace y_0 \rbrace \times \mathbb{R}^k$, as provided by Proposition \ref{prop-minimal-close}. Then the closure $\overline{G_Y}$ of the projection $G_Y$ of $G$ on $\textup{Isom}(Y)$ is totally disconnected and the orbit $\overline{G_Y}y_0$ is made of $\frac{\varepsilon^\ast}{2}$-separated points.
\end{cor}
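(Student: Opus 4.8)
The plan is to prove the two assertions in turn, deducing total disconnectedness of $\overline{G_Y}$ from the orbit separation via Theorem \ref{theo-characterization-td}. I would begin by recording the structural facts: $Y$ is a proper \textup{CAT}$(0)$-space (a closed convex subset of the proper space $X$), and it is geodesically complete — a geodesic segment of $Y$ lies in the slice $Y\times\lbrace{\bf O}\rbrace$ of $X$, and the geodesic extension of it in $X$ has $\mathbb{R}^k$-coordinate constant on a subinterval, hence constant, so the extension stays in the slice and projects to a geodesic line of $Y$. Moreover $\overline{G_Y}$ is a closed subgroup of $\textup{Isom}(Y)$, and it is $D_0$-cocompact: projecting $G\cdot\overline{B}_X(x_0,D_0)=X$ onto the $Y$-factor gives $G_Y\cdot\overline{B}_Y(y_0,D_0)=Y$. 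Finally, at the chosen point $x_0=(y_0,{\bf v})$, Proposition \ref{prop-minimal-close} gives that $\overline{G}_{\varepsilon^\ast}(x_0)$ preserves $Z_0:=\lbrace y_0\rbrace\times\mathbb{R}^k$, hence its projection on $\textup{Isom}(Y)$ fixes $y_0$, and Theorem \ref{theo-splitting-weak}.(v) gives that its projection on $\textup{Isom}(\mathbb{R}^k)$ is crystallographic with maximal lattice $\mathcal{L}:=\mathcal{L}_{\varepsilon^\ast}(x_0)$ of covering radius $\rho(\mathcal{L})\leq\varepsilon^\ast/2$.

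The heart of the matter is the claim: if $g\in G_Y$ satisfies $d_Y(gy_0,y_0)\leq\tfrac{\sqrt3}{2}\varepsilon^\ast$, then $gy_0=y_0$. To prove it I would lift $g$ to some $\hat g=(g,g'')\in G$; since $\rho(\mathcal{L})\leq\varepsilon^\ast/2$ the point $(g'')^{-1}{\bf v}$ lies within $\varepsilon^\ast/2$ of the coset ${\bf v}+\mathcal{L}$, so there is ${\bf w}\in\mathcal{L}$ with $\Vert{\bf v}-g''({\bf v}+{\bf w})\Vert\leq\varepsilon^\ast/2$. Let $\tau\in\overline{G}_{\varepsilon^\ast}(x_0)$ project to the translation ${\bf w}$ under $\overline{G}_{\varepsilon^\ast}(x_0)\to\textup{Isom}(\mathbb{R}^k)$; as $\tau$ preserves $Z_0$ it fixes $y_0$ on the $Y$-factor, so $\tau x_0=(y_0,{\bf v}+{\bf w})$ and $\hat g\tau\,x_0=(gy_0,g''({\bf v}+{\bf w}))$, whence
\begin{equation*}
d(x_0,\hat g\tau\,x_0)^2=d_Y(y_0,gy_0)^2+\Vert{\bf v}-g''({\bf v}+{\bf w})\Vert^2\leq\tfrac34(\varepsilon^\ast)^2+\tfrac14(\varepsilon^\ast)^2=(\varepsilon^\ast)^2.
\end{equation*}
Hence $\hat g\tau\in\overline{S}_{\varepsilon^\ast}(x_0)\subseteq\overline{G}_{\varepsilon^\ast}(x_0)$, so $\hat g\in\overline{G}_{\varepsilon^\ast}(x_0)$, and projecting on $\textup{Isom}(Y)$ forces $gy_0=y_0$.

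The separation of $\overline{G_Y}y_0$ then follows by approximation: given $g,h\in\overline{G_Y}$ with $gy_0\neq hy_0$, put $f=h^{-1}g$; if $d_Y(fy_0,y_0)\leq\varepsilon^\ast/2<\tfrac{\sqrt3}{2}\varepsilon^\ast$, take $f_j\in G_Y$ with $f_j\to f$, so $d_Y(f_jy_0,y_0)\to d_Y(fy_0,y_0)$ and the claim gives $f_jy_0=y_0$ for $j$ large, hence $fy_0=y_0$ in the limit, contradicting $fy_0\neq y_0$. Thus $\overline{G_Y}y_0$ is $\tfrac{\varepsilon^\ast}{2}$-separated, in particular discrete, and Theorem \ref{theo-characterization-td} (implication (iv)$\Rightarrow$(i)), applied to the closed, $D_0$-cocompact group $\overline{G_Y}$ of isometries of the proper, geodesically complete, \textup{CAT}$(0)$-space $Y$, yields that $\overline{G_Y}$ is totally disconnected.

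The hard part is the claim, precisely the step $\hat g\tau\in\overline{S}_{\varepsilon^\ast}(x_0)$: one has to kill the a priori uncontrolled Euclidean displacement of the lift $\hat g$ by pre-composing with a lattice element, which works only because of the quantitative bound $\rho(\mathcal{L}_{\varepsilon^\ast}(x_0))\leq\varepsilon^\ast/2$ coming from the improved Theorem \ref{theo-splitting-weak}.(v), and one must at the same time preserve the $Y$-displacement estimate, which is exactly what the choice of $x_0$ in Proposition \ref{prop-minimal-close} — hence the unimodularity of $G$ — guarantees, since $\overline{G}_{\varepsilon^\ast}(x_0)$ then acts on $Y$ fixing $y_0$. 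The remaining ingredients (geodesic completeness of $Y$, cocompactness of $\overline{G_Y}$, the density argument) are routine, the last being comfortable because of the gap between $\varepsilon^\ast/2$ and $\tfrac{\sqrt3}{2}\varepsilon^\ast$.
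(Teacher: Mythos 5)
Your proof is correct and takes essentially the same route as the paper: lift an element of $G_Y$ back to $G$, compose with an element of $\overline{G}_{\varepsilon^\ast}(x_0)$ whose Euclidean part controls the $\mathbb{R}^k$-displacement (you use a lattice translation, the paper uses a general crystallographic element, both supplied by Theorem \ref{theo-splitting-weak}.(v)), observe the composite lies in $\overline{S}_{\varepsilon^\ast}(x_0)\subseteq\overline{G}_{\varepsilon^\ast}(x_0)$ and hence fixes $y_0$ on the $Y$-factor, then invoke Theorem \ref{theo-characterization-td}(iv)$\Rightarrow$(i). The only differences are cosmetic: you extract the sharper threshold $\tfrac{\sqrt3}{2}\varepsilon^\ast$ (the paper is content with $\varepsilon^\ast/2$, using that $(\varepsilon^*/2)^2+(\varepsilon^*/2)^2\leq(\varepsilon^*)^2$) and you spell out the passage from $G_Y$ to $\overline{G_Y}$ by an explicit approximation, where the paper relies on the automatic closedness of a separated orbit.
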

\begin{proof}
	We will prove that the orbit $G_Yy_0$ is made of $\frac{\varepsilon^\ast}{2}$-separated points. This implies that also $\overline{G_Y}y_0$ is made of $\frac{\varepsilon^\ast}{2}$-separated points and that $\overline{G_Y}$ is totally disconnected by Theorem \ref{theo-characterization-td}.(iv). Let us suppose there exists $g\in G$, $g=(g',g'') \in \text{Isom}(Y)\times \text{Isom}(\mathbb{R}^k)$, such that $0<d(g'y_0,y_0) \leq \frac{\varepsilon^\ast}{2}$. By Theorem \ref{theo-splitting-weak}.(v) we know that the projection of the group $\overline{G}_{\varepsilon^\ast}(x_0)$ on $\text{Isom}(\mathbb{R}^k)$ is a crystallographic group which is $\varepsilon^*/2$-cocompact.
	We can therefore compose $g$ with an element of $\overline{G}_{\varepsilon^\ast}(x_0)$ in order to obtain an isometry $h = (h',h'')$ with $d(h''{\bf v}, {\bf v}) \leq \frac{\varepsilon^\ast}{2}$. Each element of $\overline{G}_{\varepsilon^\ast}(x_0)$ acts on $Y$ by fixing $y_0$, hence $d(h'y_0, y_0) = d(gy_0, y_0)$. Hence $d(hx_0,x_0) \leq \varepsilon^\ast$, so $h \in \overline{G}_{\varepsilon^\ast}(x_0)$, implying $h'y_0 = y_0$, which is a contradiction.
\end{proof}

Another consequence of the work that we developed in Section  \ref{sec-splitting} is the following control of the almost stabilizers, which will be useful in   studying  converging sequences.

\begin{cor}
\label{cor-0-rank}
Let $X$ be a proper, geodesically complete, $(P_0,r_0)$-packed $\textup{CAT}(0)$-space,   and let  $G < \textup{Isom}(X)$  be closed, totally disconnected, unimodular and $D_0$-cocompact.
Let $\sigma (G, X) \! :\!= \! \sigma_{P_0,r_0,D_0} (\varepsilon^\diamond)$ 
be the constant obtained for $\varepsilon^\diamond \!= \!\min\left\{ \varepsilon_0, \textup{sys}^\diamond(G,X) \right\}$.
Then
\begin{itemize} 
\item[(i)]   the almost stabilizers   $\overline{G}_{\sigma (G, X)}(x)$ are compact, for all $x\in X$; 
\item[(ii)]   there exists $x_0\in X$ such that $\overline{G}_{\sigma (G, X)}(x_0)$ fixes $x_0$.
\end{itemize}
\end{cor}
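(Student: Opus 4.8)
The plan is to derive part (i) from the machinery already developed for the Splitting Theorem, and then to obtain (ii) by the maximality argument of Proposition \ref{prop-minimal-close}. Write $\sigma=\sigma(G,X)$, and recall the strictly decreasing sequence $\varepsilon^\diamond=\varepsilon_1>\varepsilon_2>\cdots>\varepsilon_{3n_0+1}=\sigma$ attached to the choice $\varepsilon=\varepsilon^\diamond$ in the construction preceding Theorem \ref{theo-splitting-weak}; this is legitimate since $\textup{sys}^\diamond(G,X)>0$ by Theorem \ref{theo-characterization-td}.(ii), so $\varepsilon^\diamond\in(0,\varepsilon_0]$. The decisive point to keep in mind is that $\sigma_{P_0,r_0,D_0}$ strictly decreases its argument, whence $\sigma=\varepsilon_{3n_0+1}<\varepsilon_1=\varepsilon^\diamond\leq\textup{sys}^\diamond(G,X)$: it is exactly this gap that makes (i) possible. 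If $X$ is a single point everything is trivial, so I may assume $\dim X\geq 1$, hence $n_0\geq 1$.

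For (i), I would first observe that by Corollary \ref{prop-Margulis-CAT} each $\overline{G}_\sigma(x)$ is semisimple and almost abelian (as $\sigma\leq\varepsilon_0$), and, being generated by the open set $\overline{S}_\sigma(x)$, is an open — hence closed — subgroup of $G$, so it is compact as soon as its rank is $0$. I then argue by contradiction: if $A:=\overline{G}_\sigma(x_0)$ had rank $k\geq 1$ for some $x_0$, I would feed the compact symmetric generating set $S=\overline{S}_\sigma(x_0)$ into Proposition \ref{prop-trace-infinity-almost}.(i.c) to get a finite $\Sigma\subseteq\overline{S}_\sigma(x_0)^{4J(k)+2}\subseteq\overline{S}_{(4J(k)+2)\sigma}(x_0)$ whose projection on $\textup{Isom}(\mathbb{R}^k)$ generates the maximal lattice $\mathcal{L}(A_{\mathbb{R}^k})$. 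Since $k\geq 1$ this lattice is nontrivial, so some $g\in\Sigma$ acts as the identity on the $W$-factor of $C(A)=W\times\mathbb{R}^k$ and as a nonzero translation on the $\mathbb{R}^k$-factor, hence is a non-elliptic isometry of $X$ with $d(x_0,gx_0)\leq(4J(k)+2)\sigma$. Using $k\leq\dim X\leq n_0$ one gets $4J(k)+2\leq 4J_0\leq 8\sqrt{n_0}J_0$, so this displacement is at most $8\sqrt{n_0}J_0\,\varepsilon_{3n_0+1}=\delta(P_0,r_0,2D_0,\varepsilon_{3n_0})$, and Proposition \ref{lemma-Sylvain} (with $y=x_0$, $R=2D_0$) produces $m\in\mathbb{Z}^*$ with $d(x_0,g^mx_0)\leq\varepsilon_{3n_0}$. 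As $g^m$ is non-elliptic this forces $\textup{sys}^\diamond(G,x_0)\leq\varepsilon_{3n_0}<\varepsilon_1=\varepsilon^\diamond\leq\textup{sys}^\diamond(G,X)\leq\textup{sys}^\diamond(G,x_0)$, a contradiction. Hence every $\overline{G}_\sigma(x)$ has rank $0$, i.e. is compact.

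For (ii), I would fix a bi-invariant Haar measure $\mu$ of $G$ (available since $G$ is unimodular) and consider $x\mapsto\mu(\overline{S}_\sigma(x))$. Each $\overline{S}_\sigma(x)$ is open (Theorem \ref{theo-characterization-td}.(b)), compact (Ascoli–Arzelà) and contains $\textup{id}$, so these values are positive and finite; moreover $\overline{S}_\sigma(gx)=g\,\overline{S}_\sigma(x)\,g^{-1}$ and $\mu$ is bi-invariant, so the function is $G$-invariant, and it is upper semicontinuous by Lemma \ref{lemma-basic-containment}. By $D_0$-cocompactness it attains its supremum at some $x$. By part (i) the compact group $\overline{G}_\sigma(x)$ fixes a point $x_0$; since $\overline{S}_\sigma(x)\subseteq\overline{G}_\sigma(x)$ fixes $x_0$, we get $\overline{S}_\sigma(x)\subseteq\overline{S}_\sigma(x_0)$, hence $\mu(\overline{S}_\sigma(x))\leq\mu(\overline{S}_\sigma(x_0))$ and therefore equality by maximality of $x$. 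As both sets are open and closed in $G$, equal measure forces $\overline{S}_\sigma(x_0)=\overline{S}_\sigma(x)$, so $\overline{G}_\sigma(x_0)=\overline{G}_\sigma(x)$, which fixes $x_0$, proving (ii).

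The main obstacle I anticipate is in (i): one has to realise that a contradiction is even available — which rests on the strict inequality $\sigma<\varepsilon^\diamond\leq\textup{sys}^\diamond(G,X)$ — and then to check that the constants line up so that the single power $g^m$ handed back by Proposition \ref{lemma-Sylvain} already lies below the free-systole. Part (ii) is a straightforward adaptation of the maximality argument of Proposition \ref{prop-minimal-close}, with unimodularity entering only to make $x\mapsto\mu(\overline{S}_\sigma(x))$ $G$-invariant, and should present no real difficulty.
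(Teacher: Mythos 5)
Your proof is correct and reaches the same contradiction as the paper, but by a noticeably more economical route in both parts. For (i), after observing (as the paper does) that non-compactness forces $\textup{rk}(\overline{G}_\sigma(x_0))\geq 1$, the paper re-runs the splitting machinery — the $\varepsilon_i$ sequence together with Proposition~\ref{prop-commensurated} — to obtain a $G$-invariant splitting $X=Y\times\mathbb{R}^k$ and a crystallographic projection, from which it extracts a hyperbolic isometry of translation length $\leq\varepsilon^*/2\sqrt{n_0}<\textup{sys}^\diamond(G,X)$. You instead bypass the splitting theorem entirely: you apply Proposition~\ref{prop-trace-infinity-almost}.(i.c) directly to the generating set $\overline{S}_\sigma(x_0)$ to manufacture a hyperbolic $g$ with $d(x_0,gx_0)\leq(4J(k)+2)\sigma\leq 8\sqrt{n_0}J_0\,\varepsilon_{3n_0+1}=\delta(P_0,r_0,2D_0,\varepsilon_{3n_0})$, and then one application of Proposition~\ref{lemma-Sylvain} at $y=x_0$ supplies the contradicting short hyperbolic power; the constant chase works precisely because the paper's slightly shortened sequence ($\varepsilon_{i+1}=\delta_{i+1}/8\sqrt{n_0}J_0$, indexed up to $3n_0+1$) leaves enough slack for a single Proposition~\ref{lemma-Sylvain} step from $\sigma=\varepsilon_{3n_0+1}$ to $\varepsilon_{3n_0}<\varepsilon_1=\varepsilon^\diamond$. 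For (ii), the paper transplants the geodesic-and-supremum argument from Proposition~\ref{prop-minimal-close} almost verbatim, projecting the maximizing $x$ onto $\textup{Fix}(\overline{G}_\sigma(x))$ and pushing along $[x,x_0]$ using Lemma~\ref{lemma-basic-containment} and convexity of displacements; you short-circuit all of that by noting that $\overline{S}_\sigma(x)$ fixes $x_0$, so trivially $\overline{S}_\sigma(x)\subseteq\overline{S}_\sigma(x_0)$, and then maximality together with the open-and-closed nature of these sets (Theorem~\ref{theo-characterization-td}.(b) and the claim in Proposition~\ref{prop-minimal-close}) forces equality in one step. That shortcut is only available here because compactness delivers an actual fixed point, which is precisely what is missing in Proposition~\ref{prop-minimal-close} where the analogous group merely preserves a Euclidean slice; so it is a genuine simplification unlocked by part (i), not one that generalizes. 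Both of your steps are sound.
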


\noindent Observe that, since by (i) the group $\overline{G}_{\sigma(G,X)}(x)$ is compact, then it has a fixed point (cp. \cite[Corollary II.2.8]{BH09}). Here we are saying  that for at least one specific point $x_0$ we have that $\overline{G}_{\sigma(G,X)}(x_0)$ fixes exactly $x_0$.

\begin{proof}[Proof of Corollary \ref{cor-0-rank}]
We first prove part (i), and set, for short, $\sigma= \sigma (G, X)$.
Suppose that there exists $x\in X$ such that $\overline{G}_{\sigma}(x)$ is not compact. If $\text{rk}(\overline{G}_{\sigma}(x)) = 0$ then $\overline{G}_{\sigma}(x)$ should fix all points of every set $C(A)$ as in Proposition \ref{prop-trace-infinity-almost}.(i), implying that $\overline{G}_{\sigma}(x)$ is compact. Therefore $\text{rk}(\overline{G}_{\sigma}(x)) \geq 1$.
Then, we can repeat all the arguments in the proof of  Theorem \ref{theo-splitting-weak} (namely, the construction of the sequence $\varepsilon_i$ and Proposition \ref{prop-commensurated}) for   $\varepsilon=\varepsilon^\diamond$ to show that $X$ splits isometrically and $G$-invariantly as $Y\times \mathbb{R}^k$, with $k\geq 1$, and that there exists  $\varepsilon^\ast \in \left(\sigma,  \varepsilon^\diamond \right)$ such  that $\overline{G}_{\varepsilon^\ast} (x)$ projects on $\text{Isom}(\mathbb{R}^k)$ as a crystallographic group. 
Moreover, we can find elements $g=(g',g'')$ of $\overline{G}_{\varepsilon^*}(x)$ such that $g''$ is a translation of length at most $\varepsilon^*/2\sqrt{n_0}$ and $g'$ fixes a point.
But these elements are hyperbolic isometries of $G$ with translation length at most $\varepsilon^\ast/2\sqrt{n_0} < \textup{sys}^\diamond(G,X)$, a contradiction.\\
Assertion (ii)  follows from an argument similar to that of Proposition \ref{prop-minimal-close}:  
we consider the same function $x\mapsto \overline{S}_\sigma(x)$, which admits a maximum by unimodularity and cocompactness of $G$. We fix a point $x$ where the maximum is realized. The group $\overline{G}_{\sigma}(x)$ is  compact, so  the closed, convex set $\text{Fix}(\overline{G}_{\sigma}(x))$ is not empty. If $x\in \text{Fix}(\overline{G}_{\sigma}(x))$, then $x_0=x$ and there is nothing more to prove. 
Otherwise we call $x_0$ the projection of $x$ on $ \text{Fix}(\overline{G}_{\sigma}(x))$, we consider again the geodesic $c\colon [0,d(x,x_0)] \to [x,x_0] \subset X$  and  we show as in Proposition \ref{prop-minimal-close} that 
$$ T :=\sup\lbrace t \in [0,d(x,x_0)] \text{ s.t. } \overline{S}_{\sigma}(c(t)) = \overline{S}_{\sigma}(x)\rbrace = d(x,x_0)$$
by Lemma \ref{lemma-basic-containment} and the convexity of the displacement function. Moreover, again by Lemma \ref{lemma-basic-containment} and by the maximality of $x$ it follows that  $T=d(x,x_0)$ is a maximum,
hence $\overline{G}_{\sigma}(x_0) = \overline{G}_{\sigma}(x)$, and   $x_0$ is the announced fixed point.
\end{proof}

\subsection{Controlled convergence, without unimodularity}
\label{subsec-controlled}
In this part we develop the main technical tools we need to deal with both the collapsed and the non-collapsed case. We introduce now the setup. Let $P_0,r_0,D_0$ be fixed constants. An isometric action $(X,x,G)$ is said to be $\sigma$-controlled if
\begin{itemize}
	\item[(a)] $X$ is a proper, geodesically complete, $(P_0,r_0)$-packed, \textup{CAT}$(0)$-space;
	\item[(b)] $G$ is a closed, totally disconnected and $D_0$-cocompact group of isometries, \emph{not necessarily unimodular};
	\item[(c)] there exists $\sigma > 0$ such that for every $g \in G$ either $d(x,gx)=0$ or $d(x,gx) \geq \sigma$.
\end{itemize}

We present the two main cases of $\sigma$-controlled isometric actions.  
\begin{itemize}
	\item[-] Let $(X,G) \in \textup{CAT}_0^{\textup{td-u}}(P_0,r_0,D_0)$. Corollary \ref{cor-0-rank}.(ii) shows that $(X,x,G)$ is $\sigma$-controlled with $\sigma = \sigma(G,X)$, for a suitable choice of the basepoint $x \in X$.
	\item[-] Let $(X,G) \in \textup{CAT}_0^{\textup{td-u}}(P_0,r_0,D_0)$ and suppose $\textup{sys}^\diamond(G,X) \leq  \varepsilon \leq \varepsilon_0$. Let $X = Y\times \mathbb{R}^{k}$ be the splitting and $x = (y,{\bf v})$ be the point provided by Proposition \ref{prop-minimal-close}. Then the isometric action $(Y, y, \overline{G_{Y}})$ satisfies the assumptions (a), (b) and (c) with $\sigma = \sigma_{P_0,r_0,D_0}(\varepsilon)$, as shown by Corollary \ref{cor-projection-td}, where $\overline{G_{Y}}$ is the closure of the projection of $G$ on $\textup{Isom}(Y)$. Indeed $\sigma_{P_0,r_0,D_0}(\varepsilon) \leq \varepsilon^*/2$.
\end{itemize}

\noindent We write $(X_j, x_j, G_j) \overset{\sigma}{\underset{\textup{eq-pGH}}{\longrightarrow}} (X_\infty, x_\infty, G_\infty)$ if each $(X_j,x_j,G_j)$ is $\sigma$-controlled and if $(X_j, x_j, G_j) {\underset{\textup{eq-pGH}}{\longrightarrow}} (X_\infty, x_\infty, G_\infty)$ .

\begin{prop}
	\label{prop-sigma-controlled-limit}
	If 
	$(X_j, x_j, G_j) \overset{\sigma}{\underset{\textup{eq-pGH}}{\longrightarrow}} (X_\infty, x_\infty, G_\infty)$ then $(X_\infty, x_\infty, G_\infty)$ is $\sigma$-controlled. So $G_\infty$ is closed, totally disconnected and $D_0$-cocompact.
\end{prop}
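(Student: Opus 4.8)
The plan is to pass to an ultralimit and check the three defining properties of a $\sigma$-controlled action one at a time. The conceptual point to keep in mind is that total disconnectedness of the groups $G_j$ does \emph{not} by itself survive the limit — this is exactly the collapsing phenomenon — so total disconnectedness of $G_\infty$ will have to be deduced a posteriori, from the separation of the single orbit $G_\infty x_\infty$ provided by property (c).

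First I would fix a non-principal ultrafilter $\omega$ and use Proposition \ref{prop-GH-ultralimit}.(i) to identify $(X_\infty, x_\infty, G_\infty)$ with the ultralimit $(X_\omega, x_\omega, G_\omega)$ of the sequence $(X_j, x_j, G_j)$. Property (a) for $X_\infty$ — proper, geodesically complete, $(P_0,r_0)$-packed, $\textup{CAT}(0)$ — is then immediate from \cite[Theorem 6.1]{CavS20}, since the class of such pointed spaces is closed under ultralimits. That $G_\infty$ is a closed subgroup of $\textup{Isom}(X_\infty)$ follows from \cite[Proposition 3.8]{Cav21ter}, using that $X_\infty$ is proper. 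For $D_0$-cocompactness, given any $z_\infty = \omega\text{-}\lim z_j \in X_\infty$, I choose for $\omega$-a.e.\ $j$ some $g_j \in G_j$ with $d(z_j, g_j x_j) \leq D_0$ (possible since each $G_j$ is $D_0$-cocompact); the sequence $(g_j)$ is admissible, defines $g_\infty = \omega\text{-}\lim g_j \in G_\infty$, and $d(z_\infty, g_\infty x_\infty) = \omega\text{-}\lim d(z_j, g_j x_j) \leq D_0$, so $G_\infty$ is $D_0$-cocompact.

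Next I would verify property (c) at the basepoint $x_\infty$. Let $g_\infty \in G_\infty$ with $d(x_\infty, g_\infty x_\infty) < \sigma$, and write $g_\infty = \omega\text{-}\lim g_j$ for an admissible sequence $g_j \in G_j$. Then $\omega\text{-}\lim d(x_j, g_j x_j) = d(x_\infty, g_\infty x_\infty) < \sigma$, so $d(x_j, g_j x_j) < \sigma$ for $\omega$-a.e.\ $j$; since each $(X_j, x_j, G_j)$ is $\sigma$-controlled this forces $d(x_j, g_j x_j) = 0$ for $\omega$-a.e.\ $j$, hence $d(x_\infty, g_\infty x_\infty) = \omega\text{-}\lim d(x_j, g_j x_j) = 0$. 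Thus every $g_\infty \in G_\infty$ either fixes $x_\infty$ or moves it by at least $\sigma$, which is exactly (c).

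Finally, for total disconnectedness: by (c) the orbit $G_\infty x_\infty$ is $\sigma$-separated, since $g x_\infty \neq h x_\infty$ implies $d(g x_\infty, h x_\infty) = d(x_\infty, g^{-1}h x_\infty) \geq \sigma$; in particular the orbit is discrete. As $X_\infty$ is proper, geodesically complete and $\textup{CAT}(0)$ and $G_\infty$ is closed and $D_0$-cocompact, the implication (iv)$\Rightarrow$(i) of Theorem \ref{theo-characterization-td} gives that $G_\infty$ is totally disconnected, completing the verification that $(X_\infty, x_\infty, G_\infty)$ is $\sigma$-controlled. I expect this last routing through the orbit-discreteness criterion of Theorem \ref{theo-characterization-td} to be the only genuinely non-formal step; everything else is bookkeeping with admissible sequences and the cited closure properties of ultralimits.
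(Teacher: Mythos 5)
Your proposal is correct and follows essentially the same route as the paper's own proof: pass to the ultralimit via Proposition \ref{prop-GH-ultralimit}, transfer the dichotomy of property (c) from the $(X_j,x_j,G_j)$ to the ultralimit, and then invoke the orbit-discreteness criterion (iv)$\Rightarrow$(i) of Theorem \ref{theo-characterization-td} to deduce total disconnectedness. Your write-up is merely more explicit than the paper's (which leaves the $D_0$-cocompactness and the $\omega$-a.e.\ dichotomy argument as one-liners), so there is nothing to change.
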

\begin{proof}
	We fix a non-principal ultrafilter $\omega$.
	By Proposition \ref{prop-GH-ultralimit}, it is enough to show the thesis for the ultralimit group $G_\omega$.
	By property (c) of the definition of $\sigma$-controlled isometric action the following holds: every isometry $g_j \in G_j$ either fixes $x_j$ or it moves it by at least $\sigma$. This implies that every isometry $g_\omega \in G_\omega$ either fixes $x_\omega$ or moves it by at least $\sigma$. In particular the orbit $G_\omega x_\omega$ is discrete. It is clear that $G_\omega$ is $D_0$-cocompact and closed (\cite[Proposition 3.8]{Cav21ter}), so $G_\omega$ is totally disconnected by Theorem \ref{theo-characterization-td}.(iv).
\end{proof}

This fact has many important consequences. We begin with a quantified version of \cite[Remark 6.2]{CM09b}.
\begin{prop}
	\label{prop-bounded-order-sigma-controlled}
	Let $P_0,r_0,D_0, \sigma,R >0$. Then there exists $N_0(\sigma, R) = N_0(P_0,r_0,D_0, \sigma, R) > 0$ such that the following holds true.
	Let $(X,x,G)$ be $\sigma$-controlled. Then for all $x\in X$ the cardinality of the image of the map $\textup{Stab}_G(x) \to \textup{Isom}(\overline{B}(x,R))$ is at most $N_0(\sigma, R)$.
\end{prop}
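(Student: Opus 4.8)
For any $\sigma$-controlled action $(X,\bar x,G)$ and any $y\in X$, the group $\textup{Stab}_G(y)$ is compact (Ascol\`i--Arzel\`a, as $X$ is proper) and $\textup{Stab}_G(y,R)$ is open in it by Theorem \ref{theo-characterization-td}.(a); hence the image of $\textup{Stab}_G(y)\to\textup{Isom}(\overline B(y,R))$ is isomorphic to $\textup{Stab}_G(y)/\textup{Stab}_G(y,R)$, which is \emph{finite}. So only the uniform bound is at stake. Given $y$, pick $g\in G$ with $d(g^{-1}y,\bar x)\le D_0$ by $D_0$-cocompactness; replacing $y$ by $g^{-1}y$ conjugates $\textup{Stab}_G(y)$ and $\textup{Stab}_G(y,R)$ and does not change the cardinality of the image, so we may assume $d(y,\bar x)\le D_0$. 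Then for $g\in\textup{Stab}_G(y)$ we have $d(g\bar x,\bar x)\le 2d(y,\bar x)\le 2D_0$, and by property (c) of the $\sigma$-controlled action this quantity is $0$ or $\ge\sigma$; thus $g\bar x$ ranges over $G\bar x\cap\overline B(\bar x,2D_0)$, a set with pairwise distances $\ge\sigma$, hence of cardinality at most $\textup{Pack}(2D_0,\sigma/3)\le P_0(1+P_0)^{2D_0/\min\{\sigma/3,r_0\}-1}=:N_1$ by Proposition \ref{prop-packing}.(i). Therefore $[\textup{Stab}_G(y):\textup{Stab}_G(y)\cap\textup{Stab}_G(\bar x)]\le N_1$, and since $\textup{Stab}_G(y)\cap\textup{Stab}_G(\bar x)\le\textup{Stab}_G(\bar x)$ and $\overline B(y,R)\subseteq\overline B(\bar x,R+D_0)$, it suffices to bound, uniformly over $\sigma$-controlled actions, the cardinality of the image of $\textup{Stab}_G(\bar x)\to\textup{Isom}(\overline B(\bar x,R+D_0))$. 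Renaming $R+D_0$ as $R$, we are reduced to $y=\bar x$.

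\textbf{From the bound to a local rigidity statement.} The finite group $H:=\textup{Stab}_G(\bar x)/\textup{Stab}_G(\bar x,R)$ acts faithfully by isometries on $\overline B(\bar x,R)$ fixing $\bar x$. I claim the desired bound follows from the rigidity statement $(\star)$: \emph{there is $\delta=\delta(P_0,r_0,D_0,\sigma,R)>0$ such that for every $\sigma$-controlled action one has $\textup{Stab}_G(\bar x)\cap\overline S_\delta(\bar x,R)=\textup{Stab}_G(\bar x,R)$.} Indeed, $(\star)$ says precisely that distinct elements of $H$ stay at $d^\ell_{\bar x,R}$-distance $\ge\delta$; since an isometry of $\overline B(\bar x,R)$ fixing $\bar x$ is determined up to $2\delta$ on $\overline B(\bar x,R)$ by its restriction, up to $\delta$, to a fixed $\delta$-net $\{p_1,\dots,p_m\}$ of $\overline B(\bar x,R)$ (with $m\le\textup{Cov}(R,\delta/2)$, bounded by packing via \eqref{eq-pack-cov} and Proposition \ref{prop-packing}), and each $p_i$ has at most $\textup{Cov}(R,\delta)$ possible images up to $\delta$, there are at most $\textup{Cov}(R,\delta)^{\textup{Cov}(R,\delta/2)}$ such isometries that are pairwise at $d^\ell_{\bar x,R}$-distance $\ge 3\delta$. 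Combined with $(\star)$ this yields $|H|\le N_0(P_0,r_0,D_0,\sigma,R)$, as required.

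\textbf{Proof of $(\star)$ and the main obstacle.} I would prove $(\star)$ by contradiction and compactness. If $(\star)$ fails, there are $\sigma$-controlled actions $(X_j,\bar x_j,G_j)$ and $h_j\in\textup{Stab}_{G_j}(\bar x_j)\cap\overline S_{1/j}(\bar x_j,R)$ with $h_j\notin\textup{Stab}_{G_j}(\bar x_j,R)$. Fixing a non-principal ultrafilter $\omega$, the actions converge, $(X_j,\bar x_j,G_j)\underset{\textup{eq-pGH}}{\longrightarrow}(X_\infty,\bar x_\infty,G_\infty)$, with $X_\infty$ proper since the $X_j$ are uniformly packed and geodesically complete; by Proposition \ref{prop-sigma-controlled-limit} the limit is again $\sigma$-controlled, so $G_\infty$ is closed, totally disconnected, $D_0$-cocompact, and by the first paragraph the image of $\textup{Stab}_{G_\infty}(\bar x_\infty)\to\textup{Isom}(\overline B(\bar x_\infty,R))$ is finite. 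The sequence $(h_j)$ is admissible, so $h_\infty:=\omega\text{-}\lim h_j\in\textup{Stab}_{G_\infty}(\bar x_\infty)$, and since $\overline B(\bar x_j,R)\to\overline B(\bar x_\infty,R)$ (cp. \cite[Lemma A.8]{CavS20}) and $h_j$ moves $\overline B(\bar x_j,R)$ by less than $1/j$, the isometry $h_\infty$ fixes $\overline B(\bar x_\infty,R)$ pointwise. Carried out for the whole sequence of finite local actions $H_j:=\textup{Stab}_{G_j}(\bar x_j)|_{\overline B(\bar x_j,R)}$ (with $|H_j|\to\infty$ once the bound fails, after the reduction above), this shows that the ultralimit group $\omega\text{-}\lim H_j$ injects into the \emph{finite} group $\textup{Stab}_{G_\infty}(\bar x_\infty)|_{\overline B(\bar x_\infty,R)}$; the contradiction is that a sequence of finite isometry groups of unbounded order cannot collapse under this ultralimit. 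This non-collapsing is the main technical point: it uses that the balls $\overline B(\bar x_j,R)$ are uniformly non-degenerate (each contains a geodesic of length $R$ issuing from $\bar x_j$, by geodesic completeness), together with the Margulis-type almost-abelian structure of the small almost stabilizers $\overline G_{\varepsilon_0}(\bar x_j)$ from Corollary \ref{prop-Margulis-CAT} and the Bieberbach rigidity of Lemma \ref{lemma-bieber} on the Euclidean factor of the associated splitting, which together prevent a nontrivial element of $G_j$ fixing $\bar x_j$ from acting arbitrarily close to the identity on a ball of fixed radius $R$ while being nontrivial there. Everything else is a routine assembly of the reductions above with the compactness and structural results already in hand; this last step is where I expect the real work to lie.
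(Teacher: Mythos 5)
Your plan — reduce to the basepoint, establish a uniform separation statement $(\star)$, and then deduce the bound by a covering count on the finite isometry group — is a genuinely different organization from the paper, and two of its three steps are sound. The reduction to the basepoint via the orbit–stabilizer argument is a clean observation that the paper does not make explicit (the paper simply carries the arbitrary point $y_j$ through the ultralimit). The deduction of the cardinality bound from $(\star)$ by covering the local isometries is also fine, modulo small arithmetical slips (two isometries of $\overline B(\bar x,R)$ that agree up to $\delta$ on a $\delta$-net are $3\delta$-close, not $2\delta$-close, so you need to apply $(\star)$ with a smaller threshold or use a finer net; and for a $\sigma$-separated set one should use $\textup{Pack}(2D_0,\sigma/2)$ rather than $\sigma/3$, though yours is still an upper bound).

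The genuine gap is the proof of $(\star)$, and the way you try to close it is circular. First, the failure of $(\star)$ only provides a sequence of single elements $h_j\in\textup{Stab}_{G_j}(\bar x_j)$ that are nontrivial on $\overline B(\bar x_j,R)$ but move its points by $\le 1/j$; it gives you \emph{no} control on the cardinality of $H_j = \textup{Stab}_{G_j}(\bar x_j)|_{\overline B(\bar x_j,R)}$, so the assertion ``$|H_j|\to\infty$ once the bound fails'' is unjustified in a proof of $(\star)$ (you would be assuming the very bound you want $(\star)$ to yield). Second, the claimed contradiction ``a sequence of finite isometry groups of unbounded order cannot collapse under this ultralimit'' is false in general and is exactly the phenomenon one has to rule out: $h_\infty=\omega$-$\lim h_j$ being the identity is no contradiction at all, since $h_j$ may perfectly well collapse. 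The paper resolves this not with Margulis or Bieberbach (which do not appear in this proof) but with a two-scale approximate-group argument: working at radius $R+1$, the finiteness of $\textup{Stab}_{G_\omega}(y_\omega)|_{\overline B(y_\omega,R+1)}$ (from \cite[Remark 6.2]{CM09b}) forces, for $\omega$-a.e.\ $j$, the ball $\overline B_{d^\ell_{y_j,R+1}}(\textup{id},\varepsilon)$ to be a \emph{subgroup} of $\textup{Stab}_{G_j}(y_j)$; an $h_j$ nontrivial at scale $R$ but $\varepsilon$-small at scale $R+1$ then yields a contradiction because geodesic completeness lets one extend a geodesic past the displaced point to a point $w_j$ at distance $R+1$ from $y_j$, and the circumcenter argument produces a power $h_j^k$ with $d(h_j^k w_j, w_j)>1>\varepsilon$, violating the subgroup property of the $\varepsilon$-ball. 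That combination — passing to a larger scale to get a non-Archimedean ball and using geodesic completeness plus a power trick to escape it — is the missing content of your ``main obstacle.''
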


\begin{proof}
	Let us suppose the thesis is not true, so we can find $\sigma$-controlled isometric actions $(X_j, x_j, G_j)$ and points $y_j \in X_j$  such that the image of the map $\textup{Stab}_{G_j}(y_j) \to \textup{Isom}(\overline{B}(y_j,R))$ has at least $j$ elements. 
	Fix a non-principal ultrafilter $\omega$. By Proposition \ref{prop-sigma-controlled-limit} and Proposition \ref{prop-GH-ultralimit} we know that $G_\omega$ is closed, totally disconnected and $D_0$-cocompact. 
	By \cite[Remark 6.2]{CM09b} the image of the map $\Phi\colon \text{Stab}_{G_\omega}(y_\omega) \to \text{Isom}(\overline{B}(y_\omega, R + 1)$ has a finite number of elements, say $\Phi(g_{\omega,1}),\ldots,\Phi(g_{\omega, N})$, with $g_{\omega,1} = \text{id}$. So there exists $0<\varepsilon <1$ such that $d_{y_\omega,R+1}^\ell(g_{\omega, i}, g_{\omega,m}) > 4\varepsilon$ for all $1\leq i < m \leq N$. 
	By definition of ultralimit group we can write these isometries as $g_{\omega,i} = \omega$-$\lim g_{j,i}$, with $g_{j,1} = \text{id}$. We claim that the following statement holds for $\omega$-a.e.$(j)$:	
	\begin{itemize}
		\item[(i)] for all $h_j \in \text{Stab}_{G_j}(y_j)$ there exists $i\in \lbrace 1,\ldots, N\rbrace$ such that \linebreak $d_{y_j,R+1}^\ell(h_j, g_{j,i}) \leq \varepsilon$.
	\end{itemize}	
	Indeed if for $\omega$-a.e.$(j)$ we can find isometries $h_j$ contradicting this statement we get that the isometry $h_\omega$, which is well defined since each $h_j$ fixes $y_j$, satisfies $h_\omega \in \text{Stab}_{G_\omega}(y_\omega)$ and $d_{y_\omega,R+1}^\ell(h_\omega, g_{\omega,i}) \geq \varepsilon$ for all $i=1,\ldots,N$ which is absurd. 
	Moreover we have 
	\begin{itemize}
		\item[(ii)] $d_{y_j,R+1}^\ell(g_{j,i}, g_{j,m}) \geq 4\varepsilon$ for all $1\leq i < m \leq N$ and for $\omega$-a.e.$(j)$.
	\end{itemize}
	This implies that the ball $\overline{B}_{d_{y_j,R+1}^\ell}(\text{id}, \varepsilon)$ is a subgroup of $\text{Stab}_{G_j}(y_j)$ for $\omega$-a.e.$(j)$. Indeed if $h_j,h_j' \in \overline{B}_{d_{y_j,R+1}^\ell}(\text{id}, \varepsilon)$ then $h_jh_j' \in \overline{B}_{d_{y_j,R+1}^\ell}(\text{id}, 2\varepsilon)$. By (i) the element $h_jh_j'$ is $\varepsilon$-close to some $g_{j,i}$, so $d_{y_j,R+1}^\ell(\text{id}, g_{j,i}) = d_{y_j,R+1}^\ell(g_{j,1}, g_{j,i}) \leq 3\varepsilon$. Now (ii) implies that $g_{j,i} = g_{j,1} = \text{id}$.
	The last thing we need to recall is 
	\begin{itemize}
		\item[(iii)] the multiplication on the left by $g_{j,i}$ defines a $d_{y_j,R+1}^\ell$-isometry between $\overline{B}_{d_{y_j,R+1}^\ell}(\text{id}, \varepsilon)$ and $\overline{B}_{d_{y_j,R+1}^\ell}(g_{j,i}, \varepsilon)$.
	\end{itemize}
	Recall that we are supposing that the image of $\text{Stab}_{G_j}(y_j) \to \text{Isom}(\overline{B}(y_j, R))$ has cardinality bigger than $N+1$ for $\omega$-a.e.$(j)$. This means, together with (iii), that the following holds true $\omega$-a.s.: there exists an isometry $h_j \in \text{Stab}_{G_j}(y_j)$ such that $d_{y_j,R+1}^\ell(h_j, \text{id}) \leq \varepsilon$ and $d_{y_j,R}^\ell(h_j, \text{id}) > 0$. \\
	We claim that $d_{y_j,R+1}^\ell(h_j^k,\text{id}) > 1$ for some $k\in \mathbb{Z}$, which is a contradiction to the fact that $\overline{B}_{d_{y_j,R+1}^\ell}(\text{id}, \varepsilon)$ is a subgroup. We proceed as follows. Since $d_{y_j,R}^\ell(h_j, \text{id}) > 0$ we can find a point $p_j \in \overline{B}(y_j,R)$ which is not fixed by $h_j$. Call $z_j$ the projection of $p_j$ to $\text{Fix}(h_j)$ and extend the geodesic $[z_j,p_j]$ beyond $p_j$ up to find a point $w_j$ at distance $R+1$ from $x_j$. 
	Three things hold: $z_j$ is the projection of $w_j$ to $\text{Fix}(h_j)$, $d(w_j,z_j) > 1$ and $w_j\in \overline{B}(y_j,R+1)$. If the whole orbit $\langle h_j \rangle w_j$ would be contained in $\overline{B}(w_j,1)$ then the center of this orbit (cp. \cite[Proposition II.2.7]{BH09}) would be a fixed point of $h_j$ at distance at most $1$ from $w_j$, which is a contradiction since $d(w_j,\text{Fix}(h_j)) > 1$. 
	Therefore there must be some $k\in \mathbb{Z}$ such that $d(h_j^kw_j, w_j) > 1$, i.e. $d_{y_j,R+1}^\ell(h_j^k,\text{id}) > 1$ for some $k\in\mathbb{Z}$.
\end{proof}

This gives a similar uniform estimate for all unimodular groups.
\begin{cor}
	\label{cor-bounded-order-unimodular}
	Let $P_0,r_0,D_0,R >0$. There exists a constant $N_0^\textup{u}(R) = N_0^\textup{u}(P_0,r_0,D_0, R) > 0$ such that the following holds true.
	Let $X$ be a proper, geodesically complete, $(P_0,r_0)$-packed, $\textup{CAT}(0)$-space and let $G < \textup{Isom}(X)$ be closed, totally disconnected, unimodular and $D_0$-cocompact. Then for all $x\in X$ the cardinality of the image of the map $\textup{Stab}_G(x) \to \textup{Isom}(\overline{B}(x,R))$ is at most $N_0^\textup{u}(R)$.
\end{cor}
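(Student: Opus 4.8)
The plan is to deduce the statement from the estimate for $\sigma$-controlled actions (Proposition \ref{prop-bounded-order-sigma-controlled}), the only obstruction being that the natural $\sigma$ attached to $(X,G)$ by Corollary \ref{cor-0-rank} may degenerate when $\textup{sys}^\diamond(G,X)$ is small; the remedy is to renormalise first. So the first step is to apply Theorem \ref{theo-bound-systole}, in the explicit form of Remark \ref{rmk-splitting}, to $(X,G)$: this produces a $G$-invariant isometric splitting $X = Y\times\mathbb{R}^{h_0}\times\cdots\times\mathbb{R}^{h_m}$ with all $h_j\geq 1$, reals $0<L_0,\dots,L_m<1$, and the CAT$(0)$-space $X' := Y\times\bigl(\tfrac{1}{L_0}\mathbb{R}^{h_0}\bigr)\times\cdots\times\bigl(\tfrac{1}{L_m}\mathbb{R}^{h_m}\bigr)$, on which $G$ acts faithfully, continuously, $\Delta_0$-cocompactly (with $\Delta_0=\Delta_0(P_0,D_0)$) and with $\textup{sys}^\diamond(G,X')\geq s_0 = s_0(P_0,r_0,D_0)$. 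Being isometric to $X$, the space $X'$ is again proper, geodesically complete and $(P_0,r_0)$-packed; and since the action on $X'$ differs from the one on $X$ only by rescaling the Euclidean factors, $G$ is unchanged as a topological group (the compact-open topology on the isometry group of a Euclidean space is unaffected by rescaling the metric), hence still closed, totally disconnected and unimodular. Thus $(X',G)\in\textup{CAT}_0^{\textup{td-u}}(P_0,r_0,\Delta_0)$ has large free-systole, and I record the tautological rescaling map $\phi\colon X\to X'$, a $G$-equivariant homeomorphism (the identity on each factor as a set, intertwining the two actions).

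The second step is the uniform bound on $X'$. Since $\textup{sys}^\diamond(G,X')\geq s_0$ we have $\varepsilon^\diamond:=\min\{\varepsilon_0,\textup{sys}^\diamond(G,X')\}\geq\min\{\varepsilon_0,s_0\}$; as $\varepsilon\mapsto\sigma_{P_0,r_0,\Delta_0}(\varepsilon)$ is non-decreasing (immediate from its inductive definition in Section \ref{sec-splitting}), the constant $\sigma(G,X')=\sigma_{P_0,r_0,\Delta_0}(\varepsilon^\diamond)$ is at least $\sigma_0:=\sigma_{P_0,r_0,\Delta_0}(\min\{\varepsilon_0,s_0\})>0$, depending only on $P_0,r_0,D_0$. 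By Corollary \ref{cor-0-rank}.(ii) there is $x_0'\in X'$ making $(X',x_0',G)$ a $\sigma(G,X')$-controlled, hence $\sigma_0$-controlled, isometric action. Proposition \ref{prop-bounded-order-sigma-controlled}, applied with structural constants $P_0,r_0,\Delta_0$ and with $\sigma_0$ and $R$, then yields a constant $N_0:=N_0(P_0,r_0,\Delta_0,\sigma_0,R)$ — ultimately a function of $P_0,r_0,D_0,R$ only — such that the image of $\textup{Stab}_G(z)\to\textup{Isom}(\overline{B}_{X'}(z,R))$ has cardinality at most $N_0$ for every $z\in X'$.

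The third and decisive step is to transfer this bound back to $X$, and this is the part I expect to require the most care. Fix $x\in X$ and put $z=\phi(x)$. By $G$-equivariance of $\phi$, $\textup{Stab}_G(x)$ (for the action on $X$) equals $\textup{Stab}_G(z)$ (for the action on $X'$); call it $H$, a compact group in which the pointwise stabilisers of balls are open (Theorem \ref{theo-characterization-td}.(a)), hence of finite index. The key point is that the pointwise stabiliser of $\overline{B}_{X'}(z,R)$ is contained in the pointwise stabiliser of $\overline{B}_{X}(x,R)$. To see this, let $g\in G$ fix $\overline{B}_{X'}(z,R)$ pointwise; since $G$ preserves the splitting, write $g=(g_Y,g_0,\dots,g_m)$, where $g_i$ is an isometry of $\mathbb{R}^{h_i}$ (equivalently of $\tfrac{1}{L_i}\mathbb{R}^{h_i}$). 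The projection of $\overline{B}_{X'}(z,R)$ to the $i$-th rescaled factor is a ball of radius $R>0$ in a Euclidean space of dimension $h_i\geq 1$, fixed pointwise by $g_i$; by rigidity of Euclidean isometries $g_i=\textup{id}$ for every $i$, and likewise $g_Y$ fixes $\overline{B}_Y(y,R)$ pointwise, $y$ being the $Y$-coordinate of $x$. Hence $g$ acts on $\overline{B}_X(x,R)$ only through $g_Y$, whose relevant action is on $\overline{B}_Y(y,R)$, so $g$ fixes $\overline{B}_X(x,R)$ pointwise. Thus the pointwise stabiliser of $\overline{B}_X(x,R)$ contains that of $\overline{B}_{X'}(z,R)$, both finite-index subgroups of $H$, whence
\[
\#\,\mathrm{im}\bigl(\textup{Stab}_G(x)\to\textup{Isom}(\overline{B}_X(x,R))\bigr)\ \leq\ \#\,\mathrm{im}\bigl(\textup{Stab}_G(z)\to\textup{Isom}(\overline{B}_{X'}(z,R))\bigr)\ \leq\ N_0.
\]
Since $\phi$ is onto, this covers every $x\in X$, so setting $N_0^{\textup{u}}(R):=N_0$ finishes the proof. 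Apart from this rescaling transfer (and the routine verification that unimodularity is preserved in passing to $X'$), the argument is a direct combination of Theorem \ref{theo-bound-systole}, Corollary \ref{cor-0-rank} and Proposition \ref{prop-bounded-order-sigma-controlled}.
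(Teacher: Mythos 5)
Your proof is correct, but it takes a genuinely different route from the paper's. The paper treats the case $\textup{sys}^\diamond(G,X)\leq\sigma_0$ by a \emph{single} application of Theorem \ref{theo-splitting-weak} with $\varepsilon=\varepsilon_0$: it splits $X=Y\times\mathbb{R}^k$, observes that any $g=(g',g'')\in\textup{Stab}_G(x)$ has $g'$ fixing $y$ and $g''$ fixing ${\bf v}$ inside a crystallographic group, bounds the Euclidean contribution by $J_0$ via Proposition \ref{prop-Bieberbach}, and bounds the $Y$-contribution by $N_0(\sigma_0,R)$ via Corollary \ref{cor-projection-td} and Proposition \ref{prop-bounded-order-sigma-controlled}, arriving at $N_0^{\textup{u}}(R)=N_0(\sigma_0,R)\cdot J_0$. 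You instead invoke the Renormalization Theorem \ref{theo-bound-systole} (which internally \emph{iterates} the Splitting Theorem) to force large free-systole on a rescaled model $X'$, apply Corollary \ref{cor-0-rank} to get a $\sigma_0$-controlled basepoint there, bound stabilizers on $X'$, and transfer back to $X$ by the product-rigidity argument showing $\textup{Stab}_G(z,R)\subseteq\textup{Stab}_G(x,R)$. Your transfer step is correct (the projections of $\overline{B}_{X'}(z,R)$ to each rescaled Euclidean factor are non-degenerate balls, forcing those components of $g$ to be trivial, and $g_Y$ already fixes $\overline{B}_Y(y,R)$), and your appeal to the stability of unimodularity under the bi-Lipschitz rescaling is sound, though the paper's argument avoids having to verify it because it never leaves the original space. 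On balance the paper's argument is shorter, yields a cleaner explicit constant, and isolates the Euclidean contribution via Bieberbach; your argument is more uniform (no explicit case split between large and small systole, since when $\textup{sys}^\diamond(G,X)\geq s_0$ the renormalization is trivial and $X'=X$) and treats the Euclidean factors together with $Y$ through the $\sigma$-controlled machinery, at the cost of invoking the heavier Theorem \ref{theo-bound-systole} and Remark \ref{rmk-splitting} and a delicate ball-transfer.
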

\begin{proof}
	If $\textup{sys}^\diamond(G,X) > \sigma_0:=\sigma_{P_0,r_0,D_0}(\varepsilon_0)$ then it is enough to take $N_0^\textup{u}(R) = N_0(\sigma_0, R)$. Otherwise we apply Theorem \ref{theo-splitting-weak} with $\varepsilon = \varepsilon_0$. We have a splitting $X= Y \times \mathbb{R}^k$, $k\geq 1$ and we write $x = (y,{\bf v})$. Every element of $\textup{Stab}_G(x)$ is of the form $(g',g'')$, where $g'$ fixes $y$ and $g''$ is a finite order isometry of a crystallographic group fixing ${\bf v}$. The order of the stabilizer of ${\bf v}$ inside this crystallographic group is at most $J_0$ by Proposition \ref{prop-Bieberbach}. The isometric action $(Y,y,\overline{G_Y})$ is $\sigma_0$-controlled by Corollary \ref{cor-projection-td}, see also the beginning of this section. Therefore the cardinality of the image of the map $\text{Stab}_{\overline{G_Y}}(y) \to \text{Isom}(\overline{B}(y,R))$ is at most $N_0(\sigma_0, R)$. The thesis follows taking $N_0^\textup{u}(R) = N_0(\sigma_0, R) \cdot J_0$.
\end{proof}

As a consequence of Corollary \ref{cor-bounded-order-unimodular} we have a quantified version of \cite[Proposition 6.8]{CM09b} for unimodular groups. A similar statement for $\sigma$-controlled groups with $N_0(\sigma,R)$ in place of $N_0^\text{u}(R)$ holds, but we do not write it. 
\begin{cor}
	\label{cor-bounded-angle-unimodular}
	Let $X$ be a proper, geodesically complete, $(P_0,r_0)$-packed, \textup{CAT}$(0)$-space and let $G < \textup{Isom}(X)$ be closed, totally disconnected, unimodular and $D_0$-cocompact.	Let $g$ be an elliptic isometry of $G$, $x\notin \textup{Fix}(g)$ and $y$ be the projection of $x$ on $\textup{Fix}(g)$. Then 
	\begin{itemize}
		\item[(i)] $\angle_y(x,gx) \geq 2\arcsin(\frac{1}{2N_0^{\textup{u}}(1)})$;
		\item[(ii)] $d(x,gx) \geq \frac{1}{N_0^\textup{u}(1)}\cdot d(x,\textup{Fix}(g))$.
	\end{itemize}
\end{cor}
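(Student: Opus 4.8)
The strategy is to reduce both statements to a bound on the size of the orbit of a point under the elliptic isometry $g$, and then to feed that bound into the standard comparison-geometry estimates for the angle at the projection point. Observe first that the cyclic group $\langle g \rangle$ is a subgroup of $\textup{Stab}_G(y)$ since $y \in \textup{Fix}(g)$. The key point is that $\langle g \rangle$ is \emph{finite}: by Corollary \ref{cor-bounded-order-unimodular} applied with $R = 1$, the image of $\textup{Stab}_G(y)$ in $\textup{Isom}(\overline{B}(y,1))$ has cardinality at most $N_0^{\textup{u}}(1)$; and since $g$ is elliptic with fixed point $y$, the restriction map $\langle g \rangle \to \textup{Isom}(\overline{B}(y,1))$ is injective (an isometry fixing $y$ and acting trivially on $\overline{B}(y,1)$ fixes a neighbourhood of $y$, hence is the identity by geodesic completeness and connectedness of $X$, arguing as in the last paragraph of the proof of Theorem \ref{theo-characterization-td}). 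Therefore $\textup{ord}(g) =: n \leq N_0^{\textup{u}}(1)$.

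\textbf{Step for (i).} Now fix $x \notin \textup{Fix}(g)$ and let $y$ be its projection on the closed convex set $\textup{Fix}(g)$. Consider the orbit $x, gx, g^2x, \dots, g^{n-1}x$, all lying on the sphere $S(y, d(x,y))$ since $g$ fixes $y$ and is an isometry. I would use the standard fact (as in \cite[Proposition II.2.7]{BH09}, invoked already in the proof of Proposition \ref{prop-bounded-order-sigma-controlled}) that the circumcenter of a bounded set in a CAT$(0)$-space is unique and is fixed by any isometry preserving the set. Since $\langle g \rangle$ permutes the orbit $\{g^i x\}$, its circumcenter $c$ is a fixed point of $g$, so $c \in \textup{Fix}(g)$, hence $d(x, \textup{Fix}(g)) \leq d(x, c) \leq $ (circumradius of the orbit). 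On the other hand, if the minimal pairwise angle $\angle_y(g^ix, g^jx)$ at $y$ among distinct orbit points were too small, the $n$ points could not be ``spread out'' on the sphere: more precisely, by CAT$(0)$ comparison, $d(g^ix, g^jx) \geq 2 d(x,y) \sin\big(\tfrac12 \angle_y(g^ix,g^jx)\big)$ only gives an upper bound the wrong way, so instead I would argue directly that among $n$ points on a sphere of radius $\rho = d(x,y)$ in a CAT$(0)$-space, if all pairwise angles at the center were less than $\theta$, then all points would lie in a metric ball of radius $\rho \sin(\theta/2)$ about $x$ (comparison with the Euclidean cone), forcing the circumradius to be $\leq \rho\sin(\theta/2) < \rho$, which contradicts $d(x,\textup{Fix}(g)) = \rho$ unless $\theta \geq 2\arcsin(1/(2n)) \geq 2\arcsin(1/(2N_0^{\textup{u}}(1)))$; the precise constant comes from the case $n=2$, where $gx \neq x$ and the angle $\angle_y(x,gx)$ must already be bounded below because the midpoint of $[x,gx]$ — being the circumcenter of $\{x,gx\}$ and fixed by no power of $g$ a priori, but $g$ swaps $x$ and $gx$ only if $\textup{ord}(g)=2$, so one must take the circumcenter of the full orbit — is a point of $\textup{Fix}(g)$ closer to $x$ than $y$ is. I expect the bookkeeping of which orbit points to pair up to be the main obstacle; the cleanest route is probably to apply the circumcenter argument to the whole orbit and compare with the model situation in the Euclidean cone $C_\theta$ over a circle.

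\textbf{Step for (ii).} This follows from (i) by a direct CAT$(0)$ comparison in the triangle $\Delta(y, x, gx)$: we have $d(y,x) = d(y,gx) = d(x, \textup{Fix}(g)) =: \rho$ and $\angle_y(x,gx) \geq 2\arcsin(1/(2N_0^{\textup{u}}(1)))$, so by the law of cosines in the Euclidean comparison triangle, $d(x,gx) \geq 2\rho \sin\big(\tfrac12 \angle_y(x,gx)\big) \geq 2\rho \cdot \tfrac{1}{2N_0^{\textup{u}}(1)} = \tfrac{\rho}{N_0^{\textup{u}}(1)}$, using that the CAT$(0)$ inequality gives $d(x,gx)$ at least the corresponding comparison distance when the angle is measured via the Alexandrov angle at $y$ (here one uses that $\angle_y$ is no larger than the comparison angle, so one should be slightly careful: in fact one uses $d(x,gx) \geq$ the chord length in the comparison triangle with the \emph{same} side lengths and the Alexandrov angle, which is legitimate since $d(x,gx) = \lim$ of comparison chords and monotonicity of the comparison angle in $t$). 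This last subtlety — relating the lower bound on the Alexandrov angle to a lower bound on the opposite side — is routine for CAT$(0)$ spaces and can be cited from \cite[Chapter II.1]{BH09}.
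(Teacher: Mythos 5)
Part (ii) is correct and coincides with the paper's argument: it is the CAT$(0)$ law of cosines at $y$, i.e.\ $d(x,gx) \geq 2\rho\sin\bigl(\tfrac12\angle_y(x,gx)\bigr)$, combined with (i). Your plan for (i), however, has a genuine gap. You want to deduce from the smallness of $\angle_y(g^ix, g^jx)$ that the orbit points $g^ix$ cluster near one another, so the circumcenter lands strictly inside $B(x,\rho)$. But CAT$(0)$ comparison gives only the \emph{lower} bound $d(g^ix,g^jx) \geq 2\rho\sin\bigl(\tfrac12\angle_y(g^ix,g^jx)\bigr)$ — which, as you yourself notice, is ``the wrong way'' — and there is no matching upper bound for the chord at the fixed scale $\rho = d(x,y)$: the Alexandrov angle is only a lower bound for the comparison angle, so it can be tiny (even zero) while $d(g^ix,g^jx)$ stays comparable to $2\rho$. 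Comparing with the Euclidean cone over a circle does not rescue this, because the tangent-cone comparison controls the geometry only infinitesimally at $y$, not at scale $\rho$.

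The missing idea — and what the paper does — is to run your circumcenter argument at the infinitesimal scale rather than at $x$. Using $\angle_y(x,gx) = \lim_{t\to 0} 2\arcsin\bigl(d(c(t),gc(t))/(2t)\bigr)$ from \cite[Proposition II.3.1]{BH09} (here $c$ parametrizes $[y,x]$), together with the monotonicity of the right-hand side in $t$, one gets: for every $\varepsilon>0$ there is $t\in(0,1]$ with $d(c(t),gc(t)) \leq 2t\sin\bigl(\tfrac{\alpha+\varepsilon}2\bigr)$, where $\alpha=\angle_y(x,gx)$. If $\alpha < 2\arcsin\bigl(\tfrac1{2N_0^{\textup{u}}(1)}\bigr)$, iterating the triangle inequality over the at most $N_0^{\textup{u}}(1)$ distinct powers of $g$ acting on $\overline B(y,1)$ (Corollary \ref{cor-bounded-order-unimodular}) places the whole orbit $\langle g\rangle c(t)$ inside $B(c(t),t)$; its circumcenter is then a fixed point of $g$ at distance $<t$ from $c(t)$, contradicting $d(c(t),\textup{Fix}(g))=t$. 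One further correction: your claim that the restriction $\langle g\rangle \to \textup{Isom}(\overline B(y,1))$ is injective, hence $g$ has finite order, is false in general for geodesically complete CAT$(0)$-spaces (trees already give counterexamples; the argument at the end of the proof of Theorem \ref{theo-characterization-td} needs $R\to\infty$). It is also unnecessary: all the circumcenter argument requires is the cardinality bound on the \emph{image}, which Corollary \ref{cor-bounded-order-unimodular} supplies directly.
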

\begin{proof}
	We need to recall the formula \cite[Proposition II.3.1]{BH09}:
	\begin{equation}
		\label{eq-angle}
		\alpha:=\angle_y(x,gx) = \lim_{t\to 0} 2\arcsin\left(\frac{d(c(t), gc(t))}{2t}\right),
	\end{equation}
	where $c(t)$ is the geodesic $[y,x]$, and where the right hand side is monotone decreasing as $t$ goes to $0$. By Corollary \ref{cor-bounded-order-unimodular} we know that the order of $g$, when seen as an isometry of $\overline{B}(x,1)$, is at most $N_0^\text{u}(1)$. Suppose $\alpha < 2\arcsin(\frac{1}{2N_0^{\textup{u}}(1)})$ and choose $\varepsilon > 0$ so that $\alpha+\varepsilon < 2\arcsin(\frac{1}{2N_0^\textup{u}(1)})$. We can find $t> 0$ such that
	\begin{equation}
		\label{eq-CM-error}
		\alpha +\varepsilon \geq 2\arcsin\left(\frac{d(c(t), gc(t))}{2t}\right),
	\end{equation}
	i.e. $2t\sin(\frac{\alpha+ \varepsilon}{2}) \geq d(c(t),gc(t))$. If $k\in \lbrace 1,\ldots,N_0^\textup{u}(1)\rbrace$ we get $$d(c(t),g^kc(t)) \leq 2kt\sin\left(\frac{\alpha+ \varepsilon}{2}\right) < \frac{k}{N_0^\textup{u}(1)}t \leq t.$$
	Therefore the orbit $\langle g \rangle c(t)$ is contained in a ball of center $c(t)$ and radius smaller than $t$. The center of this orbit (cp. \cite[Proposition II.2.7]{BH09}) is a point which is fixed by $g$ and which is at distance $<t$ from $c(t)$, contradicting the fact that $y$ is the projection of $c(t)$ on $\text{Fix}(g)$.
	
	\noindent A direct application of \eqref{eq-angle} gives (ii). Indeed, recalling that the quantity inside the limit in \eqref{eq-angle} is decreasing as $t$ goes to $0$, we get
	$$2\arcsin\left( \frac{d(x,gx)}{2d(x,y)}\right) \geq \alpha.$$
\end{proof}

\begin{obs}
	The proof of (i) is conceptually equivalent to the proof of \cite[Proposition 6.8]{CM09b}, just notice that they wrote the wrong inequality, essentially opposite to \eqref{eq-CM-error}. However their proof still works using the limit procedure as we did.
\end{obs}

We can also prove Theorem \ref{theo-intro-order-td}, actually a bit stronger result.

\begin{cor}
	Let $X$ be a proper, geodesically complete, $(P_0,r_0)$-packed, \textup{CAT}$(0)$-space. Let $G<\textup{Isom}(X)$ be closed, totally disconnected, unimodular and $D_0$-cocompact. Let $x\in X$, let $g \in \textup{Stab}_G(x)$ and let $R\geq 0$. Let $m = p_1^{\alpha_1} \cdots p_k^{\alpha_k}$ be the prime decomposition of the order of $g$ restricted to $\textup{Isom}(\overline{B}(x,R))$. Then $\max\lbrace p_1,\ldots, p_k\rbrace \leq N_0^\textup{u}(1)$.\\
	In particular the maximal prime number appearing in the prime decomposition of the order of any finite order isometry is at most $N_0^\textup{u}(1)$. 
\end{cor}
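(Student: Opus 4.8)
The plan is to deduce everything from Corollary~\ref{cor-bounded-order-unimodular} applied with radius $1$: for \emph{every} point $x'\in X$ the image of the restriction homomorphism $\textup{Stab}_G(x')\to\textup{Isom}(\overline{B}(x',1))$ has at most $N_0^\textup{u}(1)$ elements, and the point is that $N_0^\textup{u}(1)$ does not depend on $x'$. Hence it suffices to show: for each prime $p$ dividing the order $m$ of the restriction of $g$ to $\textup{Isom}(\overline{B}(x,R))$, there exist a point $c\in X$ and an element of $\textup{Stab}_G(c)$ whose restriction to $\overline{B}(c,1)$ has order $\geq p$; this forces $p\leq N_0^\textup{u}(1)$, and doing it for every prime dividing $m$ gives the claim.

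First I would replace $g$ by $h:=g^{m/p}\in G$, whose restriction to $\overline{B}(x,R)$ has order exactly $p$; in particular $h$ moves some $y\in\overline{B}(x,R)$. Since $h$ fixes $x$, every $h^jy$ lies in $\overline{B}(x,R)$, at distance $d(x,y)$ from $x$, and since $h^p$ fixes $\overline{B}(x,R)$ pointwise we get $h^py=y$. The orbit $\{y,hy,\dots,h^{p-1}y\}$ then has \emph{exactly} $p$ elements: if two coincided, some power $h^d$ with $0<d<p$ would fix $y$, hence (as $p$ is prime) $\langle h^d\rangle=\langle h\rangle$ would fix $y$, contradicting $hy\neq y$.

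The heart of the argument is to ``retreat to the branching point''. Consider the $p$ geodesics $\gamma_j:=[x,h^jy]$, which $h$ permutes cyclically ($h\circ\gamma_j=\gamma_{j+1}$). The set of parameters on which all the $\gamma_j$ agree is a closed initial segment $[0,\rho]$, with $\rho<d(x,y)$ because the endpoints $h^jy$ are distinct. Put $c:=\gamma_0(\rho)$, which is fixed by $h$, and $\varepsilon:=\min\{1,\,d(x,y)-\rho\}>0$. The points $z_j:=\gamma_j(\rho+\varepsilon)$ lie in $\overline{B}(c,1)$ and are cyclically permuted by $h$; I claim they are pairwise distinct. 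Otherwise, since $p$ is prime, they would all be equal, forcing every $\gamma_j$ to pass through the same point at parameter $\rho+\varepsilon$, hence (uniqueness of geodesics from $x$) to agree on $[0,\rho+\varepsilon]$ --- contradicting the maximality of $\rho$. Therefore $h\in\textup{Stab}_G(c)$ restricts to an isometry of $\overline{B}(c,1)$ moving $z_0$ along an orbit of size $p$, so of order $\geq p$, and Corollary~\ref{cor-bounded-order-unimodular} yields $p\leq N_0^\textup{u}(1)$. For the ``in particular'' I would use that a finite-order element $g'\in G$ fixes some $x_0\in X$ (circumcenter of one of its orbits, \cite[Corollary II.2.8]{BH09}) and that its order in $\textup{Isom}(X)$ equals its order on $\overline{B}(x_0,R)$ for $R$ large (the restricted orders divide the global one, are non-decreasing in $R$, and their stable value fixes $\bigcup_R\overline{B}(x_0,R)=X$), so the first part applies to $g'$ on $\overline{B}(x_0,R)$.

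The only step that genuinely needs care is the distinctness of the $z_j$, equivalently the fact that one can always back up to a point $c$ at which the orbit geodesics truly separate \emph{before} travelling distance $1$; this is exactly what the maximality of $\rho$ supplies. It is essential to argue this way rather than through an infinitesimal estimate at $x$: geodesics in a proper CAT$(0)$-space may branch (think of trees), so the separation of the $\gamma_j$ need not be visible in the space of directions at $x$, and an argument via the angle bound of Corollary~\ref{cor-bounded-angle-unimodular} would in any case only give a bound of order $O(N_0^\textup{u}(1))$ rather than the sharp $N_0^\textup{u}(1)$. No property of CAT$(0)$-spaces beyond uniqueness of geodesics is used in this step.
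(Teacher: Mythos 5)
Your proof is correct and follows essentially the same approach as the paper's: both reduce to a prime-order power $h$, retreat along $[x,y]$ to the last point fixed by $h$ (you phrase it as the branching parameter of the geodesics $[x,h^jy]$; the paper phrases it as the closest fixed point of $h$ on $[x,y]$, using $\textup{Fix}(h)\cap\overline{B}(x,R)=\textup{Fix}(h^j)\cap\overline{B}(x,R)$ for $1\leq j<p$), exhibit a $\langle h\rangle$-orbit of $p$ distinct points inside a ball of radius $1$ about that point, and invoke Corollary~\ref{cor-bounded-order-unimodular}. These two descriptions of the retreat point coincide, so the arguments differ only in phrasing; your reduction of the ``in particular'' statement to the first part is the expected one.
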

Clearly the $\alpha_k$'s are not bounded in general. A similar statement holds in the $\sigma$-controlled case with $N_0(\sigma,1)$ in place of $N_0^\text{u}(1)$, but we do not report it here.
\begin{proof}
	The thesis is equivalent to bound the order of $g\vert_{\overline{B}(x,R)}$ when it has prime order $p > 1$. For all $1\leq j < p$ we can find $k\in \mathbb{Z}$ such that $kj \equiv 1$ modulo $p$, so
	\begin{equation}
		\label{eq-fix-prime-order}
		\text{Fix}(g) \subseteq \text{Fix}(g^j) \subseteq \text{Fix}(g^{kj}) = \text{Fix}(g),
	\end{equation}
	forcing the containments to be equalities. Let $y$ be a point of $\overline{B}(x,R)$ which is not fixed by $g$ and let $x'$ be the closest point to $y$ along $[x,y]$ which is fixed by $g$. Finally let $y'$ be a point of $[x',y]$ at distance at most $1$ from $x'$ and different from $x'$. By \eqref{eq-fix-prime-order} we conclude that the points $g^jy'$ are all distinct for $1\leq j < p$ and contained in $\overline{B}(x',1)$. Therefore $p \leq N_0^\text{u}(1)$.
\end{proof}

The last important consequence is the following uniform bound, that we explicit in both the unimodular and the $\sigma$-controlled situation. 

\begin{cor}
	\label{cor-uniform-index}
	Let $X$ be a proper, geodesically complete, $(P_0,r_0)$-packed, \textup{CAT}$(0)$-space and $x\in X$. Let $G < \textup{Isom}(X)$ be closed, totally disconnected, unimodular (resp. $\sigma$-controlled) and $D_0$-cocompact. Then for every $R\geq 0$ the group $\textup{Stab}_G(x,R)$ has index at most $N_0^\textup{u}(R)$ (resp. $N_0(\sigma,R)$) in $\textup{Stab}_G(x)$.
\end{cor}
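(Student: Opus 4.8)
The statement is an immediate consequence of Corollary \ref{cor-bounded-order-unimodular} (in the unimodular case) and Proposition \ref{prop-bounded-order-sigma-controlled} (in the $\sigma$-controlled case), once one identifies $\textup{Stab}_G(x,R)$ as the kernel of a suitable restriction homomorphism. The plan is as follows. Since every $g\in \textup{Stab}_G(x)$ is an isometry of $X$ fixing $x$, it maps the ball $\overline{B}(x,R)$ isometrically onto itself; hence restriction defines a group homomorphism
$$\rho\colon \textup{Stab}_G(x) \longrightarrow \textup{Isom}(\overline{B}(x,R)), \qquad \rho(g) = g|_{\overline{B}(x,R)}.$$
By the very definition recalled in Section \ref{subsection-isometries}, an element $g\in \textup{Stab}_G(x)$ lies in $\ker \rho$ if and only if $g$ fixes $\overline{B}(x,R)$ pointwise, i.e. $\ker \rho = \overline{S}_0(x,R) = \textup{Stab}_G(x,R)$.

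The first isomorphism theorem then gives a group isomorphism $\textup{Stab}_G(x)/\textup{Stab}_G(x,R) \cong \rho(\textup{Stab}_G(x))$, so that
$$[\textup{Stab}_G(x) : \textup{Stab}_G(x,R)] = \# \rho(\textup{Stab}_G(x)),$$
which is exactly the cardinality of the image of the map $\textup{Stab}_G(x)\to \textup{Isom}(\overline{B}(x,R))$ appearing in the two previous results. In the unimodular case Corollary \ref{cor-bounded-order-unimodular} bounds this cardinality by $N_0^\textup{u}(R) = N_0^\textup{u}(P_0,r_0,D_0,R)$, and in the $\sigma$-controlled case Proposition \ref{prop-bounded-order-sigma-controlled} bounds it by $N_0(\sigma,R) = N_0(P_0,r_0,D_0,\sigma,R)$. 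This yields the claimed index bound, and concludes the proof. I do not expect any genuine obstacle here: the only point worth stating explicitly is that $\textup{Stab}_G(x,R)$ is the kernel of $\rho$, so that the already-established uniform bounds on the size of the image translate verbatim into uniform bounds on the index.
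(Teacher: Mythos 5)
Your proof is correct and is exactly the paper's argument, just spelled out in more detail: the paper likewise invokes Corollary \ref{cor-bounded-order-unimodular} (resp.\ Proposition \ref{prop-bounded-order-sigma-controlled}) to bound $\#\bigl(\textup{Stab}_G(x)/\textup{Stab}_G(x,R)\bigr)$, and your identification of $\textup{Stab}_G(x,R)$ as the kernel of the restriction map $\textup{Stab}_G(x)\to\textup{Isom}(\overline{B}(x,R))$ is the implicit content of that step.
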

\begin{proof} 
	Corollary \ref{cor-bounded-order-unimodular} (resp. Proposition \ref{prop-bounded-order-sigma-controlled}) implies that the cardinality of the group $\text{Stab}_{G}(x)/\text{Stab}_G(x,R)$ is at most $N_0^\textup{u}(R)$ (resp. $N_0(\sigma,R)$), which is the thesis.
\end{proof}

\subsection{Convergence without collapsing} 
\label{sub-convergencenocollapsing}
We are ready to describe the limit group in the non-collapsed standard setting of convergence.
\begin{theo}
\label{theo-noncollapsed}
Assume that we are in the standard setting of convergence of 
$(X_j, G_j) \underset{\textup{eq-pGH}}{\longrightarrow} (X_\infty, G_\infty)$, 
without collapsing. Then the limit group $G_\infty$ is closed, totally disconnected, unimodular and $D_0$-cocompact.
\end{theo}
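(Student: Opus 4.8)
The plan is to reduce the statement to the $\sigma$-controlled formalism of Section~\ref{subsec-controlled} together with the Haar-convergence criterion, Corollary~\ref{cor-unimodularity-limit}. Since we are in the non-collapsed case, I would first pass to a subsequence along which $\textup{sys}^\diamond(G_j,X_j)\geq s$ for some fixed $s>0$; this does not change the equivariant pointed Gromov--Hausdorff limit. The function $\sigma_{P_0,r_0,D_0}$ of Theorem~\ref{theo-splitting-weak} is monotone non-decreasing (immediate from its inductive definition, since $\delta(P_0,r_0,2D_0,\cdot)$ is increasing), so the constants $\sigma(G_j,X_j)=\sigma_{P_0,r_0,D_0}\bigl(\min\{\varepsilon_0,\textup{sys}^\diamond(G_j,X_j)\}\bigr)$ of Corollary~\ref{cor-0-rank} are all bounded below by $\sigma_0:=\sigma_{P_0,r_0,D_0}(\min\{\varepsilon_0,s\})>0$. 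Using Corollary~\ref{cor-0-rank}.(ii) I would choose basepoints $x_j\in X_j$ for which $\overline{G}_{\sigma(G_j,X_j)}(x_j)$ fixes $x_j$; then every $g\in G_j$ either fixes $x_j$ or moves it by more than $\sigma(G_j,X_j)\geq\sigma_0$, so $(X_j,x_j,G_j)$ is $\sigma_0$-controlled. By Lemma~\ref{lemma-ultralimit-cocompact} and Proposition~\ref{prop-GH-ultralimit} the change of basepoints does not alter the limit, so $(X_j,x_j,G_j)\overset{\sigma_0}{\underset{\textup{eq-pGH}}{\longrightarrow}}(X_\infty,x_\infty,G_\infty)$ for a suitable $x_\infty$, and Proposition~\ref{prop-sigma-controlled-limit} already yields that $G_\infty$ is closed, totally disconnected and $D_0$-cocompact.

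The remaining, and main, task is unimodularity, and this is where I would use that the $G_j$ are unimodular. Let $\mu_j$ be a bi-invariant Haar measure of $G_j$. The subgroup $\textup{Stab}_{G_j}(x_j)$ is open (Theorem~\ref{theo-characterization-td}.(a)) and compact, so I would normalize $\mu_j$ so that $\mu_j(\textup{Stab}_{G_j}(x_j))=1$. Since the action is $\sigma_0$-controlled, the orbit $G_jx_j$ is $\sigma_0$-separated and $\overline{S}_r(x_j)=\textup{Stab}_{G_j}(x_j)$ for $0\leq r<\sigma_0$. I would then check the two hypotheses of Corollary~\ref{cor-unimodularity-limit}. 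For the upper bound: the orbit map $g\mapsto gx_j$ exhibits $\overline{S}_R(x_j)$ as a disjoint union of left cosets of $\textup{Stab}_{G_j}(x_j)$, one for each point of the $\sigma_0$-separated set $G_jx_j\cap\overline{B}(x_j,R)$, so $\mu_j(\overline{S}_R(x_j))=\#\bigl(G_jx_j\cap\overline{B}(x_j,R)\bigr)\leq\textup{Pack}(R,\sigma_0/2)$, which is bounded by a constant depending only on $P_0,r_0,D_0,R$ and $s$ by Proposition~\ref{prop-packing}.(i). For the lower bound: $\overline{S}_r(x_j,1/r)\supseteq\textup{Stab}_{G_j}(x_j,1/r)$, and by Corollary~\ref{cor-uniform-index} the latter has index at most $N_0^{\textup{u}}(1/r)$ in $\textup{Stab}_{G_j}(x_j)$, hence $\mu_j(\overline{S}_r(x_j,1/r))\geq\mu_j(\textup{Stab}_{G_j}(x_j,1/r))=1/[\textup{Stab}_{G_j}(x_j):\textup{Stab}_{G_j}(x_j,1/r)]\geq 1/N_0^{\textup{u}}(1/r)>0$, uniformly in $j$. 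Corollary~\ref{cor-unimodularity-limit} then gives that $G_\infty$ is unimodular, completing the proof.

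The step I expect to be the main obstacle is the uniform lower bound, i.e.\ condition (ii) of Corollary~\ref{cor-unimodularity-limit}: it forces one to control the stabilizers uniformly along the whole sequence, and this is precisely what the chain Corollary~\ref{cor-bounded-order-unimodular}~$\Rightarrow$~Corollary~\ref{cor-uniform-index} supplies, together with the bi-invariance of $\mu_j$ needed to convert the index bound into a measure bound. One also has to be careful to normalize the $\mu_j$ so that the upper and lower bounds hold simultaneously; choosing the $\sigma_0$-controlled basepoints of Corollary~\ref{cor-0-rank} is what makes both hold with the single normalization $\mu_j(\textup{Stab}_{G_j}(x_j))=1$. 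The assertions that $G_\infty$ is closed, totally disconnected and $D_0$-cocompact are then a formal consequence of Proposition~\ref{prop-sigma-controlled-limit} once the uniform $\sigma_0$-control has been arranged.
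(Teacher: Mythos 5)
Your proposal is correct and takes essentially the same route as the paper: pass to a non-collapsed subsequence, invoke Corollary~\ref{cor-0-rank}.(ii) to choose basepoints making each $(X_j,x_j,G_j)$ uniformly $\sigma$-controlled, apply Proposition~\ref{prop-sigma-controlled-limit} for the topological properties of $G_\infty$, normalize the bi-invariant Haar measures by $\mu_j(\textup{Stab}_{G_j}(x_j))=1$, and verify the two hypotheses of Corollary~\ref{cor-unimodularity-limit} via the orbit-separation counting bound (for the $\sup$) and Corollary~\ref{cor-uniform-index} (for the $\inf$). Your explicit check that $\sigma_{P_0,r_0,D_0}$ is monotone non-decreasing is a small but genuine clarification of a step the paper leaves implicit when it declares all $(X_j,G_j)$ to be $\sigma_{P_0,r_0,D_0}(\varepsilon)$-controlled.
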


\begin{proof} 
	Up to pass to a subsequence, which converges to the same limit, we can suppose that there exists $0<\varepsilon \leq \varepsilon_0$ such that
	$\text{sys}^\diamond(G_{j},X_{j}) \geq \varepsilon > 0$ for  every $j$. Therefore each $(X_j,G_j)$ is $\sigma := \sigma_{P_0,r_0,D_0}(\varepsilon)$-controlled, as explained at the beginning of Section \ref{subsec-controlled}. Proposition \ref{prop-sigma-controlled-limit} implies that $G_\infty$ is closed, $D_0$-cocompact and totally disconnected. \\
	Let $x_j$ be points of $X_j$ such that $\overline{G}_\sigma(x_j) = \text{Stab}_{G_j}(x_j)$.
	Let $\mu_j$ be bi-invariant Haar measures on $G_j$ normalized in such a way that $\mu_j(\text{Stab}_{G_j}(x_j)) = 1$.
	We claim that conditions (i) and (ii) of Corollary \ref{cor-unimodularity-limit} are satisfied, and so that $G_\infty$ is unimodular. We first show that $\sup_j\mu_j(\overline{S}_R(x_j)) < + \infty$ for every $R\geq 0$. The condition $\overline{G}_\sigma(x_j) = \text{Stab}_{G_j}(x_j)$ implies that the set $G_jx_j$ is made of $\sigma/4$-separated points. In particular the distinct points of the set $\overline{S}_R(x_j) x_j$ are at most $\text{Pack}(R,\frac{\sigma}{8}) =: P$, a number that does not depend on $j$ by Proposition \ref{prop-packing}. Let us take elements $g_1,\ldots,g_m \in \overline{S}_R(x_j)$ such that $\lbrace g_i x_j \rbrace_{i=1}^m = \overline{S}_R(x_j) x_j$. By the discussion above we know that $m \leq P$. Moreover if $g \in \overline{S}_R(x_j)$ is arbitrary then there exists $i \in \lbrace 1,\ldots,m\rbrace$ such that $gx_j = g_ix_j$, so $g_i^{-1}g \in \text{Stab}_{G_j}(x_j)$. This implies 
	$$\overline{S}_R(x_j) = \bigcup_{i=1}^m g_i \text{Stab}_{G_j}(x_j).$$
	Therefore the claim is proved since
	$$\mu_j(\overline{S}_R(x_j)) \leq \sum_{i=1}^m \mu_j (g_i \text{Stab}_{G_j}(x_j)) = \sum_{i=1}^m \mu_j (\text{Stab}_{G_j}(x_j)) = m \leq P.$$
	On the other hand Corollary \ref{cor-uniform-index} implies that for every $r>0$ we have 
	$$\inf_{j} \mu_j\left(\overline{S}_r\left(x_j,\frac{1}{r}\right)\right) \geq \inf_{j} \mu_j\left(\text{Stab}_{G_j}\left(x_j, \frac{1}{r}\right)\right) \geq \frac{1}{N_0^\text{u}(\frac{1}{r})} > 0.$$
\end{proof}

\subsection{Convergence with  collapsing}
\label{sub-convergencecollapsing} 

We now deal with the collapsing case.
\begin{theo}
\label{theo-collapsed} 
Assume that we are in the  standard setting of convergence of 
$(X_j, G_j) \underset{\textup{eq-pGH}}{\longrightarrow} (X_\infty, G_\infty)$
with collapsing. Then: 
\begin{itemize} 
\item[(i)]  $X_\infty$ splits isometrically and $G_\infty$-invariantly as $ X_\infty' \times \mathbb{R}^\ell$, for some $ \ell \geq 1$. In particular $X_\infty'$ is a proper, geodesically complete, $(P_0,r_0)$-packed, $\textup{CAT}(0)$-space.
\item[(ii)]  $G_\infty^\circ  =  \lbrace \textup{id}\rbrace \times \textup{Transl}(\mathbb{R}^\ell)$, and 
$ X_\infty' = G_\infty^\circ \backslash X_\infty$.	
\item[(iii)] $\ell$ is characterized as follows: there exists $\rho^* > 0$ such that for every $0<\rho \leq \rho^*$ there exists $j_\rho$ such that $\textup{rk}(\overline{G}_{j,\rho}(z_j)) = \ell$ for all $j\geq j_\rho$ and all $z_j\in X_j$. Here $\overline{G}_{j,\rho}(z_j) := \overline{(G_j)}_\rho(z_j)$ as defined in \eqref{defsigma}.
\item[(iv)] the group $G_\infty$ is unimodular.
\item[(v)] the projection $G_\infty'$ of $G_\infty$ on $\textup{Isom}(X_\infty')$ is closed, totally disconnected, unimodular and $D_0$-cocompact.
\item[(vi)] the  quotient spaces $M_j=G_j \backslash X_j$  converge to $M_\infty = G_\infty \backslash X_\infty$, which  is isometric to the quotient of $X_\infty'$ by $G_\infty'$. In particular $M_\infty \in \mathcal{O}\textup{-CAT}_0^{\textup{td,u}}(P_0,r_0,D_0)$.
\end{itemize} 
\end{theo}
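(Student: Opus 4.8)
The plan is to treat the collapsing case by first extracting the Euclidean factor, then transporting the non-collapsed analysis of Section \ref{sub-convergencenocollapsing} to the quotient by it. The key input is the Renormalization philosophy: since we are collapsing, \texttt{sys}$^\diamond(G_j,X_j)\to 0$ along a subsequence, so Theorem \ref{theo-splitting-weak} applies to each $(X_j,G_j)$ for a scale going to zero. First I would fix a small $\varepsilon\le\varepsilon_0$ and, passing to a subsequence, arrange $\textup{sys}^\diamond(G_j,X_j)<\sigma_{P_0,r_0,D_0}(\varepsilon)$, obtaining $G_j$-invariant splittings $X_j=Y_j\times\mathbb{R}^{k_j}$ with $k_j\ge 1$; by Corollary \ref{cor-rank-dimension} the $k_j$ are bounded by $n_0$, so up to a further subsequence $k_j\equiv k$ is constant. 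Then I would choose, via Proposition \ref{prop-minimal-close}, basepoints $x_j=(y_j,\mathbf{v}_j)$ for which $\overline{G}_{\varepsilon^\ast}(x_j)$ preserves $\{y_j\}\times\mathbb{R}^k$, and use Corollary \ref{cor-projection-td} to get that $(Y_j,y_j,\overline{G_{j,Y}})$ is $\sigma_{P_0,r_0,D_0}(\varepsilon)$-controlled. This gives the candidate: $\ell$ should be the maximal such splitting rank, and (iii) will come from iterating Theorem \ref{theo-splitting-weak} at a decreasing sequence of scales $\rho$ and noting the rank stabilizes — the argument is essentially the one in Remark \ref{rmk-splitting}, run simultaneously for all $j$, using that each $\overline{G}_{j,\rho}(z_j)$ contains a hyperbolic element of small translation length precisely when the systole is small.

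Next I would establish (i) and (ii). The equivariant convergence $(X_j,G_j)\to(X_\infty,G_\infty)$ is compatible with passing to ultralimits (Proposition \ref{prop-GH-ultralimit}), and the $G_j$-invariant splittings $X_j=Y_j\times\mathbb{R}^k$ pass to the limit: the $\mathbb{R}^k$-factors converge to an $\mathbb{R}^\ell$-factor (with possibly $\ell$ the total accumulated splitting rank from the iteration, which is why one must iterate rather than split once). Concretely, the almost abelian commensurated subgroups $A_j<G_j$ produced by Proposition \ref{prop-commensurated} have traces at infinity $\partial A_j\cong\mathbb{S}^{k-1}$, and the crystallographic projections have lattices $\mathcal{L}_j$ with $\lambda(\mathcal{L}_j)\to 0$ (by Theorem \ref{theo-splitting-weak}.(v), since $\varepsilon^\ast\to 0$); rescaling as in the proof of Theorem \ref{theo-bound-systole} and passing to the limit, one obtains that $G_\infty$ contains a subgroup acting as $\textup{Transl}(\mathbb{R}^\ell)$ on a splitting factor, and minimality of the $G_\infty$-action on $X_\infty$ (using Theorem \ref{theo-characterization-td} or \cite[Proposition 1.5]{CM09b} — note $G_\infty$ is $D_0$-cocompact by Proposition \ref{prop-sigma-controlled-limit} applied to the controlled pieces) forces $X_\infty=X_\infty'\times\mathbb{R}^\ell$. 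That $G_\infty^\circ=\{\textup{id}\}\times\textup{Transl}(\mathbb{R}^\ell)$ and $X_\infty'=G_\infty^\circ\backslash X_\infty$ should follow from Lemma \ref{prop-connected-component}: $G_\infty^\circ$ is generated by elliptic-or-hyperbolic elements of translation length $0$, but the only such in the limit are the translations of the collapsed flat, and anything acting nontrivially on $X_\infty'$ would violate total disconnectedness of the controlled model $(Y_j,\overline{G_{j,Y}})$ in the limit. Here $X_\infty'$ is $(P_0,r_0)$-packed because packing passes to ultralimits (\cite[Theorem 6.1]{CavS20}) and to convex factors.

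For (iv) and (v): $G_\infty'=G_\infty/G_\infty^\circ$ (equivalently the projection to $\textup{Isom}(X_\infty')$) is, by the previous paragraph, the equivariant-pGH limit of the $\sigma$-controlled actions $(Y_j,y_j,\overline{G_{j,Y}})$ — or more precisely of the iterated controlled models $(Y_j^{(m)},\cdot,\overline{G_{j,Y^{(m)}}})$ after peeling off all $m\le n_0$ splitting factors as in Remark \ref{rmk-splitting}. Thus $G_\infty'$ is closed, totally disconnected and $D_0$-cocompact by Proposition \ref{prop-sigma-controlled-limit}. For unimodularity I would copy the argument of Theorem \ref{theo-noncollapsed} verbatim at the level of the controlled model: choose basepoints realizing $\overline{G}_\sigma=\textup{Stab}$, normalize Haar measures, and check conditions (i)–(ii) of Corollary \ref{cor-unimodularity-limit} — the upper bound $\sup_j\mu_j(\overline{S}_R)\le\textup{Pack}(R,\sigma/8)$ from $\sigma/4$-separation of orbits, and the lower bound $\inf_j\mu_j(\overline{S}_r(x_j,1/r))\ge 1/N_0^{\textup{u}}(1/r)$ from Corollary \ref{cor-uniform-index} — so $G_\infty'$ is unimodular. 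Then $G_\infty$ itself is unimodular: it is an extension $1\to\textup{Transl}(\mathbb{R}^\ell)\to G_\infty\to G_\infty'\to 1$ with $\textup{Transl}(\mathbb{R}^\ell)$ abelian (hence unimodular) and $G_\infty'$ unimodular, and one checks the modular function is trivial — the conjugation action of $G_\infty$ on the normal factor $\textup{Transl}(\mathbb{R}^\ell)\cong\mathbb{R}^\ell$ is by elements of $O(\ell)$ (it preserves the flat metric), which have determinant $\pm1$, so it is measure-preserving; combined with unimodularity of $G_\infty'$ this gives unimodularity of $G_\infty$. Finally (vi) is formal: $M_j\to M_\infty$ by Lemma \ref{lemma-ultralimit-quotient}, and $M_\infty=G_\infty\backslash X_\infty=G_\infty'\backslash(X_\infty/\textup{Transl}(\mathbb{R}^\ell))=G_\infty'\backslash X_\infty'$, with $\textup{diam}\le D_0$, so $M_\infty\in\mathcal{O}\textup{-CAT}_0^{\textup{td,u}}(P_0,r_0,D_0)$.

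The main obstacle I expect is (ii)–(iii): pinning down that the collapsed directions are exactly a single product flat $\mathbb{R}^\ell$ on which $G_\infty^\circ$ acts as all translations, and that nothing else survives in $G_\infty^\circ$. This requires running the iterated splitting of Remark \ref{rmk-splitting} uniformly in $j$ and controlling the rescaling factors $L_j\to 0$ so that in the limit the Euclidean factors do not degenerate or spread into $X_\infty'$; the bookkeeping of which scale $\varepsilon_i^{(j)}$ one uses at each iteration, and the compatibility of the successive splittings (Proposition \ref{prop-commensurated-splitting}.(ii)) passing to the ultralimit, is the delicate point. The identification $G_\infty^\circ\backslash X_\infty=X_\infty'$ then reduces to showing $G_\infty^\circ$ has no hyperbolic part acting on $X_\infty'$, which is where unimodularity of the controlled models (forcing discreteness of limit orbits, Theorem \ref{theo-characterization-td}.(iv)) is essential.
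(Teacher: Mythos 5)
Your outline for (i)–(ii) and (vi) is close to the paper's argument: one $G_j$-invariant splitting $X_j=Y_j\times\mathbb{R}^k$, ultralimit to $X_\omega=Y_\omega\times\mathbb{R}^k$, identification of $G_\omega^\circ$ as a closed connected group of translations via Lemma \ref{lemma-bieber}, and then Lemma \ref{lemma-translations} to carve out $\mathbb{R}^\ell\subseteq\mathbb{R}^k$. Your added steps of ``rescaling as in Theorem~\ref{theo-bound-systole}'' and ``iterating'' the splitting on the source spaces are not how the paper determines $\ell$ and introduce unneeded complications; $\ell$ falls out of $G_\omega^\circ$ in the unscaled limit. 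Your observation that (iv) can be deduced \emph{from} (v) by an extension argument is correct and genuinely different from the paper: since conjugation by $G_\infty$ on the abelian normal subgroup $\textup{Transl}(\mathbb{R}^\ell)$ acts through $O(\ell)$ (and through automorphisms of the compact part of the kernel), it preserves Haar measure, and combined with unimodularity of the quotient this does give unimodularity of $G_\infty$. The paper goes the other way: it proves (iv) first by direct Haar-measure estimates on the $G_j$, then constructs a bi-invariant Haar measure on $G_\infty'$ from $\mu_\infty$ via the auxiliary sets $A^*=A\times\overline{S}_1(\mathbf{v}_\infty)$.

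The genuine gap is in your route to (v), which your route to (iv) then inherits. You want to ``copy the argument of Theorem~\ref{theo-noncollapsed} verbatim at the level of the controlled model'' $(Y_j,\overline{G_{j,Y}})$, invoking Corollary~\ref{cor-unimodularity-limit}. But that corollary requires the approximating groups to be unimodular, and the projection of a unimodular group to a direct factor of the isometry group need not be unimodular — so the controlled models $\overline{G_{j,Y}}$ (and likewise $\overline{G_{X_j'}}$ after iterating) carry no bi-invariant Haar measure you can normalize and pass to the limit. Applying the non-collapsed argument to them is circular: you are trying to prove that a limit of non-unimodular groups is unimodular, which is precisely what the corollary does not give. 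The paper sidesteps this by estimating Haar measures on the \emph{original} unimodular groups $G_j$, splitting $\overline{S}_{j,R}(x_j)$ into cosets along the $X_j'$-direction and then word-length-controlling the $\mathbb{R}^\ell$-direction via the lattices from Theorem~\ref{theo-splitting-weak}.(v), Lemma~\ref{lemma-sublattice-controlled}, Lemma~\ref{lemma-lattice-svarc-milnor} and Corollary~\ref{cor-measure-powers}. Note also that this requires knowing the correct $\ell$ \emph{before} running the estimate, which is why the paper proves (iii) before (iv)–(v); your sketch of (iii) as ``essentially Remark~\ref{rmk-splitting} run simultaneously for all $j$'' misses the actual proof's key step, namely the construction of elements $g_{\omega,i}^{R,\lambda}$ whose $Y_\omega$-projections fix $\overline{B}(y_\omega,R)$ (using Proposition~\ref{prop-bounded-order-sigma-controlled}), followed by letting $R\to\infty$ and $\lambda\to 0$ and exploiting the Lie-group structure of the kernel $H_\omega$ of the projection $G_\omega\to\textup{Isom}(Y_\omega)$ to conclude these elements eventually lie in $G_\omega^\circ$. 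Without this, the inequality $\lim_{\rho\to 0}\ell_\rho\le\ell$ is not established. One minor point: $G_\infty'$ is not $G_\infty/G_\infty^\circ$ but a quotient of it by a compact normal subgroup (the kernel of the action on $X_\infty'$); this does not harm your extension argument but should be stated correctly.
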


This theorem is stronger than and refines \cite[Theorem 7.11]{CS23}, which is stated for discrete groups. The main, and technically more important, improvement is the characterization of $\ell$ given in (iii).
We recall a couple of facts we need.

\begin{lemma}[\textup{\cite[Lemma 7.12]{CS23}}]
\label{lemma-ultralimit-product}
Let $(X_j, x_j, G_j), (X_j', x_j', G_j')$ be two sequences of isometric actions and $\omega$ be a non-principal ultrafilter. Then the ultralimit $\omega\text{-}\lim(X_j\times X_j', (x_j,x_j'), G_j \times G_j')$ is equivariantly isometric to $(X_\omega \times X_\omega', (x_\omega, x_\omega'), G_\omega \times G_\omega')$.
\end{lemma}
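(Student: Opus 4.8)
The plan is to exhibit an explicit equivariant isometry and then check the (few) defining conditions; the statement is essentially bookkeeping about how ultralimits interact with the $\ell^2$ product metric. Throughout I would equip $X_j\times X_j'$ with the product metric determined by $d((a,b),(a',b'))^2 = d(a,a')^2 + d(b,b')^2$ --- this is the metric making the product CAT$(0)$ and making $G_j\times G_j'$ a closed subgroup of $\textup{Isom}(X_j\times X_j')$, so that the ultralimit action $\omega\text{-}\lim(X_j\times X_j',(x_j,x_j'),G_j\times G_j')$ makes sense and is automatically a closed isometric action, the product of proper spaces being proper.

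First I would define the candidate isometry $\Phi$ from $X_\omega\times X_\omega'$ to $\omega\text{-}\lim(X_j\times X_j')$ on representatives by $\Phi(\omega\text{-}\lim a_j,\ \omega\text{-}\lim b_j) := \omega\text{-}\lim(a_j,b_j)$, and verify it is well posed: admissibility of $(a_j)$ for $x_j$ and of $(b_j)$ for $x_j'$ yields admissibility of $(a_j,b_j)$ for $(x_j,x_j')$ via $d((x_j,x_j'),(a_j,b_j)) \le \sqrt{d(x_j,a_j)^2+d(x_j',b_j)^2}$, and independence of the chosen representatives follows from $d((a_j,b_j),(a_j',b_j'))\le d(a_j,a_j')+d(b_j,b_j')$. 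Then I would check that $\Phi$ is isometric by passing the identity $d((a_j,b_j),(a_j',b_j'))=\sqrt{d(a_j,a_j')^2+d(b_j,b_j')^2}$ to the ultralimit, using the elementary fact that $\omega$-limits of bounded sequences commute with the continuous operations of addition, multiplication and square root; the outcome is exactly the product metric on $X_\omega\times X_\omega'$. Surjectivity is the one place needing a genuine (if tiny) observation: a class in $\omega\text{-}\lim(X_j\times X_j')$ is represented by an admissible sequence $(a_j,b_j)$, and since $d(x_j,a_j),\,d(x_j',b_j)\le d((x_j,x_j'),(a_j,b_j))$ the two coordinate sequences are separately admissible, so the class equals $\Phi(\omega\text{-}\lim a_j,\ \omega\text{-}\lim b_j)$. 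As $\Phi$ also sends $(x_\omega,x_\omega')$ to the ultralimit basepoint, it is an isometry of pointed spaces.

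Finally I would identify the conjugation isomorphism $\Phi_*$ on the groups. Given $g_\omega=\omega\text{-}\lim g_j\in G_\omega$ and $g_\omega'=\omega\text{-}\lim g_j'\in G_\omega'$, the sequence $(g_j,g_j')\in G_j\times G_j'$ is admissible (the same $\ell^2$ estimate applied to $(g_jx_j,g_j'x_j')$) and hence defines a limit isometry $\omega\text{-}\lim(g_j,g_j')$ of $\omega\text{-}\lim(X_j\times X_j')$; a one-line computation using $g_\omega a_\omega=\omega\text{-}\lim g_ja_j$ shows $\Phi\circ(g_\omega,g_\omega')\circ\Phi^{-1}=\omega\text{-}\lim(g_j,g_j')$, so $\Phi_*(G_\omega\times G_\omega')\subseteq(G_j\times G_j')_\omega$. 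For the reverse inclusion, any element of $(G_j\times G_j')_\omega$ is $\omega\text{-}\lim(g_j,g_j')$ with $(g_j,g_j')\in G_j\times G_j'$ admissible, and admissibility again forces $(g_j)$ and $(g_j')$ to be separately admissible, so it lies in the image of $\Phi_*$. Hence $\Phi$ is the desired equivariant isometry. I do not expect any real obstacle; the only thing to be careful about is fixing the product-metric convention once and for all and keeping the admissibility/equivalence-relation bookkeeping honest.
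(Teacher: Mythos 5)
Your proof is correct. Note, though, that the paper you are reading does not actually prove this lemma --- it is stated as a citation of \cite[Lemma 7.12]{CS23} and used as a black box --- so there is no in-text argument to compare against. Your argument is exactly the direct bookkeeping proof one would write for this statement: exhibit the map $\Phi$ on representatives, verify well-definedness, check the isometry by passing the $\ell^2$ product-metric identity through $\omega$-limits, get surjectivity from the coordinatewise admissibility of an admissible pair, and then observe that conjugation by $\Phi$ carries $G_\omega\times G_\omega'$ onto $(G_j\times G_j')_\omega$ because admissibility of $(g_j,g_j')$ for the product basepoint is equivalent to separate admissibility of $(g_j)$ and $(g_j')$. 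All of these steps are legitimate and in the right order; the only cosmetic remark is that the aside about properness is not needed for the statement as given (it becomes relevant only if one wants $G_\omega\times G_\omega'$ to be closed in $\textup{Isom}(X_\omega\times X_\omega')$, which the lemma does not assert), and you could just as well use the exact $\ell^2$ identity rather than the triangle-inequality bound when checking independence of representatives, though both work.
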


\begin{lemma}[\textup{\cite[Lemma 7.13]{CS23}}]
\label{lemma-translations}
Let $A$ be a group of translations of  $\mathbb{R}^k$.
Then $A \cong  \mathbb{R}^{\ell} \times \mathbb{Z}^{d}$ with $\ell + d \leq k$. Moreover, there is a corresponding $A$-invariant metric factorization of $\mathbb{R}^k$ as $\mathbb{R}^{\ell} \times \mathbb{R}^{k-\ell} $, such that the connected component $A^\circ \cong \mathbb{R}^{\ell}$ can be identified with $ \textup{Transl}(\mathbb{R}^\ell)$.
\end{lemma}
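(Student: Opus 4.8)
The plan is to reduce the statement to the classical structure theory of closed subgroups of $\mathbb{R}^k$, made explicit so as to extract the metric factorization. Identifying $\textup{Transl}(\mathbb{R}^k)$ with the additive group $\mathbb{R}^k$, the group $A$ becomes a subgroup of $\mathbb{R}^k$; as in all our applications (where $A$ arises as the identity component of a closed group of isometries), and as is needed for the conclusion to be literally true, we may assume that $A$ is closed, equivalently that it is locally compact for the subspace topology. First I would single out the largest linear subspace contained in $A$, namely
$$V := \{ v\in A \ :\ tv\in A \text{ for all } t\in\mathbb{R}\}.$$
This is a linear subspace: it is stable under scalar multiplication by definition, and if $v,w\in V$ then $t(v+w)=tv+tw\in A$ for every $t$, so $v+w\in V$. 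Being a connected subgroup of $A$ through the origin, $V\subseteq A^\circ$; the reverse inclusion will drop out below.

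Next I would pass to the quotient $B:=A/V$, which is a closed subgroup of $\mathbb{R}^{k}/V\cong\mathbb{R}^{k-\ell}$ (with $\ell:=\dim V$) and which, by maximality of $V$, contains no line through the origin. The key point is the sublemma: \emph{a closed subgroup of a Euclidean space containing no line is discrete}. To prove it, suppose $b_n\to 0$ with $b_n\neq 0$ in such a subgroup and, passing to a subsequence, assume $b_n/\|b_n\|\to u$; for a fixed $t>0$ choose $t_n\in\mathbb{Z}$ with $0\le t-t_n\|b_n\|<\|b_n\|$, so that $t_nb_n\to tu$, whence $tu$, and then the whole line $\mathbb{R}u$, lies in the subgroup --- a contradiction. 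Thus $B$ is discrete, hence free abelian of some rank $d$ spanned by $\mathbb{R}$-linearly independent vectors, with $d\le k-\ell$. Lifting a $\mathbb{Z}$-basis of $B$ to elements $\tilde f_1,\dots,\tilde f_d\in A$ (necessarily linearly independent modulo $V$), I would check that $A=V\oplus\mathbb{Z}\tilde f_1\oplus\cdots\oplus\mathbb{Z}\tilde f_d$: the sum is direct because the $\tilde f_i$ are independent modulo $V$, and it exhausts $A$ because $a-\sum_i n_i\tilde f_i\in V$ whenever $\sum_i n_if_i$ is the class of $a$ in $B$. Since $V$ is a closed embedded subspace and the remaining summand is discrete, this is an isomorphism of topological groups $A\cong\mathbb{R}^\ell\times\mathbb{Z}^d$, with $\ell+d\le k$; in particular its identity component is $\mathbb{R}^\ell\times\{0\}=V$, so $A^\circ=V\cong\mathbb{R}^\ell$.

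For the metric factorization I would take $W$ to be the orthogonal complement of $V=A^\circ$ in $\mathbb{R}^k$, so that $\mathbb{R}^k=V\times W$ is a metric product, which I identify with $\mathbb{R}^\ell\times\mathbb{R}^{k-\ell}$ so that $\mathbb{R}^\ell$ corresponds to $V$. Any translation preserves an orthogonal product decomposition --- translation by $a=a_V+a_W$ acts as translation by $a_V$ on the first factor and by $a_W$ on the second --- so this splitting is automatically $A$-invariant; and $A^\circ=V$ translates by the vectors of $\mathbb{R}^\ell$ while acting trivially on $\mathbb{R}^{k-\ell}$, i.e. it coincides with $\textup{Transl}(\mathbb{R}^\ell)$ (acting as the identity on the second factor), which is the last assertion. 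The only genuinely nontrivial ingredient is the structure theory for closed subgroups of $\mathbb{R}^k$ --- concretely the sublemma that a line-free closed subgroup is discrete together with the splitting $A\cong\mathbb{R}^\ell\times\mathbb{Z}^d$ it yields --- while everything else is bookkeeping; alternatively one could simply quote this classification from a standard reference.
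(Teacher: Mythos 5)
The paper does not actually prove this lemma: it is quoted verbatim from \cite[Lemma 7.13]{CS23} and used as an imported fact, so there is no in-paper argument to compare against. Your proof is correct and is the standard self-contained proof of the classification of closed subgroups of $\mathbb{R}^k$: single out the maximal $\mathbb{R}$-linear subspace $V\subseteq A$, show that the quotient $A/V$ is a closed, line-free subgroup of $\mathbb{R}^{k}/V$ and hence discrete (your sublemma, proved correctly by scaling $b_n\to 0$ by integer multiples $t_n=\lfloor t/\|b_n\|\rfloor$ to reach any point of the ray $\mathbb{R}_{>0}u$, hence the line), lift a $\mathbb{Z}$-basis of the quotient to obtain the topological splitting $A\cong V\oplus\mathbb{Z}^d$, and take $W=V^\perp$ for the metric factorization. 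The bookkeeping that the algebraic splitting is also a topological isomorphism is correctly handled (discreteness of $A/V$ forces the $\mathbb{Z}^d$-coordinates to stabilize under limits), and the final identification $A^\circ = V = \textup{Transl}(\mathbb{R}^\ell)$ is clean. You also correctly flagged that the statement, as literally written, requires $A$ to be \emph{closed}: for a dense subgroup such as $\mathbb{Z}+\sqrt{2}\,\mathbb{Z}\subset\mathbb{R}$ one has $A\cong\mathbb{Z}^2$ abstractly but $\ell+d=2>k=1$, so the stated bound fails. That hypothesis is indeed satisfied at the one place the lemma is invoked in the paper, since $A=G_\omega^\circ$ is the identity component of a closed group of isometries and is therefore closed; it is a genuine omission in the quoted statement worth keeping in mind.
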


Recall again the function $\sigma_{P_0,r_0,D_0}(\cdot)$ provided by Theorem \ref{theo-splitting-weak}, and the value $\sigma_0 = \sigma_{P_0,r_0,D_0}(\varepsilon_0)$.

\begin{proof}[Proof of Theorem \ref{theo-collapsed}]
The sets defined in \eqref{defsigma} will be denoted by $S_{j,r}(z_j, R)$, $\overline{S}_{j,r}(z_j, R)$, $G_{j,r}(z_j, R)$, $\overline{G}_{j,r}(z_j, R)$ if they refer to the group $G_j$. 
\vspace{2mm}

\noindent\textbf{Proof of (i) and (ii).} We fix a non-principal ultrafilter $\omega$ such that $\omega$-$\lim\textup{sys}^\diamond(G_j,X_j) = 0$. By Proposition \ref{prop-GH-ultralimit} it is enough to show the thesis for the ultralimit   $( X_\omega, G_\omega)$. By our choice of the ultrafilter $\omega$, we have  $\textup{sys}^\diamond(G_j,X_j) \leq \sigma_0$  for $\omega$-a.e.$(j)$. Thus $X_j$ splits as $Y_j\times \mathbb{R}^{k_j}$ with $k_j \geq 1$ for $\omega$-a.e.$(j)$, by Theorem \ref{theo-splitting-weak}. If $k = \omega$-$\lim k_j$, we have  $k\leq n_0$ and $k_j = k$ for $\omega$-a.e.$(j)$.
By Lemma \ref{lemma-ultralimit-cocompact} the limit does not depend on the choice of the basepoints, so we can assume that the basepoints are the points $x_j=(y_j, {\bf v_j})$ provided by Proposition \ref{prop-minimal-close} applied to $\varepsilon = \varepsilon_0$. In particular  $\overline{G}_{j,\sigma_0}(x_j)$ preserves the slice $\lbrace y_j \rbrace \times \mathbb{R}^{k}$.\\
Choose a positive 
$\eta 
\leq \frac12 \min \left\{ \sigma_0 ,  
\sqrt{2 \sin \left( \frac{\pi}{J(k)}\right) }\right\}.$  By Lemma \ref{lemma-bieber}, every element of a crystallographic group of $\mathbb{R}^{k}$ moving every point of $B_{\mathbb{R}^{k}} ({\bf v_j}, \frac{1}{2\eta})$ less than $2\eta$ is a translation. The group $G_\eta(x_\omega, 1/\eta)$ is open, so $G_\eta(x_\omega, 1/\eta) \cap G_\omega^\circ = G_\omega^\circ$. 
Remark that if 	$g_\omega = \omega$-$\lim g_j $ belongs to $S_\eta(x_\omega, 1/\eta)$,  then 
for $\omega$-a.e.$(j)$ the isometry $g_j$ belongs to $S_{j,2\eta}(x_j, 1/2\eta)$.
Every element $g_j \in S_{j,2\eta}(x_j, 1/2\eta)$ belongs to  $\overline{G}_{j,\sigma_0}(x_j)$ since $2\eta \leq \sigma_0$, therefore it acts on $X_j= Y_j \times \mathbb{R}^{k}$ as  $g_j = (g_j',g_j'') $, where   $g'_j$ fixes $y_j$ (because of our choice of basepoints); moreover,   $g_j''$ is  a global translation of $\mathbb{R}^{k}$, since it moves the points of   $B_{\mathbb{R}^{k}} ({\bf v_j}, \frac{1}{2\eta})$ less than $2\eta$. 
An application of Lemma \ref{lemma-ultralimit-product}	says that also $X_\omega$ splits isometrically as $Y_\omega \times \mathbb{R}^{k}$, where $Y_\omega$ is the ultralimit of the  $(Y_j,y_j)$'s. Moreover $G_\omega$ preserves the product decomposition.
Every element of $g_\omega \in G_\omega^\circ$ can be written as a  product $u_\omega(1) \cdots u_\omega(n)$, with each $u_\omega(i) $ in $ S_{\eta}(x_\omega, 1/\eta)$.
As $u_\omega(i) = \omega$-$\lim  u_j(i)$ with $u_j(i)= (u_j(i)', u_j(i)'') \in S_{j,2\eta}(x_j, 1/2\eta)$, where  $u_j(i)'$ fixes $y_j$ and $u_j(i)''$ is a translation, it follows that also
$g_\omega$ can be written as $(g_\omega',g_\omega'') \in \text{Isom}(Y_\omega) \times \text{Isom}(\mathbb{R}^{k})$ where  $g_\omega' y_\omega = y_\omega$  and $g_\omega''$ is a global translation   (being the ultralimit of Euclidean translations). Let us call $\pi\colon G_\omega \to \text{Isom}(Y_\omega)$ the projection map. The group $\pi(G_\omega^\circ)$ is normal in $\pi(G_\omega)$. The set of fixed points $\text{Fix}(\pi(G_\omega^\circ))$ is closed, convex, non-empty and $\pi(G_\omega)$-invariant (since $\pi(G_\omega^\circ)$ is normal in $\pi(G_\omega ))$. Since $G_\omega$ is clearly $D_0$-cocompact, so it is $\pi(G_\omega)$. Then the  action of $\pi(G_\omega)$ on $Y_\omega$ is minimal (cp. \cite[Lemma 3.13]{CM09b}) which implies that $\text{Fix}(\pi(G_\omega^\circ)) = Y_\omega$, that is $\pi(G_\omega^\circ) = \lbrace \text{id}\rbrace$. 
We conclude that $G_\omega^\circ$ is a connected subgroup of 
$\lbrace \text{id}\rbrace \times \text{Transl}(\mathbb{R}^{k})$. By Lemma \ref{lemma-translations},  $\mathbb{R}^{k}$ splits isometrically as $\mathbb{R}^\ell \times \mathbb{R}^{k-\ell}$ and $ G_\omega^\circ$ can be identified with the subgroup of translations of the factor  $\mathbb{R}^\ell$,
for some $\ell \leq k$. 
Setting $X_\omega':=(Y_\omega \times \mathbb{R}^{k_\omega-\ell})$, this is still a  proper, geodesically complete,  $(P_0,r_0)$-packed, $\text{CAT}(0)$-space, and clearly $X_\omega' =G_\omega^\circ \backslash X_\omega$. 
Notice that, since $G_\omega^\circ$ is normal in $G_\omega$,  then  the splitting $X_\omega =X_\omega' \times \mathbb{R}^\ell$ is $G_\omega$-invariant. Let us now  show that $ G_\omega^\circ$ is non-trivial, hence $ \ell \geq 1$.
Actually, 	if $ G_\omega^\circ$ was trivial then $G_\omega$ would be  totally disconnected, implying $\text{sys}^\diamond(G_\omega, X_\omega) > 0$ by Theorem \ref{theo-characterization-td}.(ii). However, we are able to exhibit hyperbolic isometries of $G_\omega$ with arbitrarily small translation length, which will prove  that $G_\omega^\circ$ is non-trivial.  Indeed, fix any $\lambda > 0$ and an error $\xi > 0$. 
By the collapsing assumption we can find hyperbolic isometries $g_j \in G_j$ with $\ell(g_j) \leq \xi$, for $\omega$-a.e.$(j)$. By $D_0$-cocompactness, up to conjugating $g_j$   we can suppose  $g_j$ has an axis  at distance at most $D_0$ from $x_j$. Take a power $m_j$ of $g_j$ such that $\lambda < \ell(g_j^{m_j}) \leq \lambda + \xi$. Then, the sequence $(g_j^{m_j})$ is admissible and defines a hyperbolic element of $G_\omega$ whose translation length is between $\lambda$ and $\lambda + \xi$. By the arbitrariness of $\lambda$ and  $\xi$ we conclude. 
\vspace{2mm}

At this point we notice the following. Suppose to be in the standard setting of convergence $(X_j,G_j)\underset{\text{eq-pGH}}{\longrightarrow} (X_\infty, G_\infty)$. Theorem \ref{theo-noncollapsed} shows that if along a subsequence $\lbrace j_h\rbrace$ we have $\lim_{h \to +\infty} \textup{sys}^\diamond(G_{j_h}, X_{j_h}) > 0$ then $G_\infty$ is totally disconnected. The proof above shows that if along a subsequence $\lbrace j_h\rbrace$ we have $\lim_{h \to +\infty} \textup{sys}^\diamond(G_{j_h}, X_{j_h}) = 0$ then $G_\infty$ has a non trivial connected component of the identity. Therefore there cannot be a mixed behaviour: either along any subsequence the above limit is positive or along any subsequence it is zero. In particular the fact that $\omega$-$\lim \textup{sys}^\diamond(G_{j}, X_{j})$ is zero or not does not depend on the non-principal ultrafilter $\omega$. As a consequence all what we proved and all we are going to prove is true \emph{for every possible non-principal ultrafilter $\omega$} and not only for the one we fixed at the beginning of the proof. So from now on we fix an arbitrary non-principal ultrafilter $\omega$.
\vspace{1mm}

\noindent \textbf{Proof of (iii).} We proceed by steps. We first show 
\begin{equation}
	\label{eq-delta-ultra}
	\lim_{\rho \to 0} \omega \text{-}\lim \text{rk}(\overline{G}_{j,\rho}(x_j)) = \ell
\end{equation}
for a generic non-principal ultrafilter $\omega$. 
For $0 < \rho \leq \varepsilon_0$ the group $\overline{G}_{j,\rho}(x_j)$ is almost abelian, so it is meaningful to speak about is rank. For simplicity we set $\ell_\rho := \omega$-$\lim \text{rk}(\overline{G}_{j,\rho}(x_j))$. Observe that $\ell_\rho$ is a non-decreasing sequence of integers, depending on $\rho$. 
We start proving $\ell \leq \lim_{\rho \to 0} \ell_\rho$. For every $0 < \rho < 2\eta$, where $\eta$ is as in the proof of (i) and (ii), we choose $\frac{\pi}{2}$-linearly independent translations $g_{\omega,1},\ldots,g_{\omega,\ell} \in G_\omega^o = \lbrace \id \rbrace \times \text{Transl}(\mathbb{R}^\ell)$ of length $\frac{\rho}{2}$. Each isometry can be written as $g_{\omega,i} = \omega$-$\lim g_{j,i}$. For $\omega$-a.e.$(j)$ the following conditions are true: $d(g_{j,i}x_j,x_j)\leq \rho$ for every $i$ and the projections of the $g_{j,i}$'s on $\mathbb{R}^{k}$ are $\frac{\pi}{4}$-linearly independent translations, in particular $\text{rk}(\overline{G}_{j,\rho}(x_j)) \geq \ell$. The second condition holds because the projection of the  $g_{j,i}$'s on $\mathbb{R}^{k}$ are isometries moving every point of $B_{\mathbb{R}^{k}}({\bf v_j}, \frac{1}{2\eta})$ less than $2\eta$, for $\omega$-a.e.$(j)$, and they belong to a crystallographic group. Therefore by Lemma \ref{lemma-bieber} they must be translations. The condition on the uniform linear independence is clear. Since this happens for $\omega$-a.e.$(j)$ we deduce that $\ell_\rho \geq \ell$ for every $\rho$ small enough, then $\lim_{\rho \to 0} \ell_\rho \geq \ell$.\\
We now move to the other inequality. Let $\ell' = \lim_{\rho \to 0}\ell_\rho$. Then there exists $\rho' > 0$ such that $\ell_{\rho} = \ell'$ for all $0<\rho\leq \rho'$ because the sequence $\ell_\rho$ takes integer values.
We claim that for every $R, \lambda > 0$ we can construct isometries $g_{\omega,1}^{R,\lambda}, \ldots g_{\omega,\ell'}^{R,\lambda} \in G_\omega$ such that their projections on $Y_\omega$ fix pointwise $\overline{B}(y_\omega,R)$ and whose projections on $\mathbb{R}^{k}$ are translations of length $\lambda$ that are $\vartheta_0$-linearly independent, where $\vartheta_0 = \min_{k\leq n_0} \vartheta_k$, and $\vartheta_k$ is provided by Lemma \ref{lemma-LLL-basis}. Let us first observe why this concludes the proof of \eqref{eq-delta-ultra}. Once we have isometries $g_{\omega,1}^{R,\lambda}, \ldots g_{\omega,\ell'}^{R,\lambda} \in G_\omega$ as above we let $R$ go to $+\infty$. We get limit isometries $g_{\omega,1}^{\lambda}, \ldots g_{\omega,\ell'}^{\lambda} \in G_\omega$ with the property that their projections on $Y_\omega$ is the identity and their projections on $\mathbb{R}^{k}$ are still translations of length $\lambda$ that are $\vartheta_0$-linearly independent. We now consider the group $H_\omega = \lbrace g_\omega \in G_\omega \text{ whose projection on } Y_\omega \text{ is trivial}\rbrace$. $H_\omega$ is a closed subgroup of $G_\omega$ that is topologically isomorphic to a closed subgroup of $\text{Isom}(\mathbb{R}^{k})$. Therefore it is a Lie group. In particular its connected component of the identity is open. Observe that each sequence $g_{\omega,i}^\lambda$ belongs to $H_\omega$ and by construction it converges to $\id$ as $\lambda$ goes to $0$, so there must be $\lambda$ small enough such that $g_{\omega,i}^\lambda \in H_\omega^o$ for every $i=1,\ldots,\ell'$. But $H_\omega^o \subseteq G_\omega^o$, because $H_\omega^o$ is a connected set containing $\id$. Then $g_{\omega,i}^\lambda \in G_\omega^o$ for every $i=1,\ldots,\ell'$, and these translations of $\mathbb{R}^{k}$ are $\vartheta_0$-linearly independent. It follows that $\ell' \leq \ell$. \\
In order to prove \eqref{eq-delta-ultra} it remains to show the construction of the isometries $g_{\omega,1}^{R,\lambda}, \ldots g_{\omega,\ell'}^{R,\lambda}.$ We fix $R,\lambda > 0$ and we choose $\rho < \min \left\lbrace \rho', \frac{\lambda}{2^{n_0 - 1}\cdot J_0 \cdot N_0(\sigma_0,R)} \right\rbrace$, where $N_0(\sigma_0,R)$ is the constant provided by Proposition \ref{prop-bounded-order-sigma-controlled}. By Corollary \ref{cor-projection-td} the closure of the projection $\overline{G_{Y_j}}$ of $G_j$ on $Y_j$ is $\sigma_0$-controlled (see also the beginning of Section \ref{subsec-controlled}), while the projection of $\overline{G}_{j,\rho}(x_j)$ on $\mathbb{R}^{k}$ is contained in a crystallographic group of rank $k\leq n_0$. Moreover another application of Theorem \ref{theo-splitting-weak} with $\varepsilon = \rho$ provides another splitting $X_j = Y_j \times \mathbb{R}^{k - \ell'} \times \mathbb{R}^{\ell'}$, where the splitted factor has dimensions $\ell'$ because this is the rank of $G_{j,\tau}(x_j)$ for every $\tau < \rho$. This splitting is also compatible with the original splitting because of Proposition \ref{prop-commensurated-splitting}. In particular by Theorem \ref{theo-splitting-weak}.(v) we can find isometries $g_{j,1}, \ldots, g_{j,\ell'} \in \overline{G}_{j,\rho}(x_j)$ whose projections on $\mathbb{R}^{\ell'}$ are translations of length $<\rho$ and generate a lattice $\mathcal{L}_j$. Hence $\lambda(\mathcal{L}_j) < \rho$. By Lemma \ref{lemma-LLL-basis} we can replace the $g_{j,i}$'s with other isometries, that we still denote $g_{j,i}$, whose projections on $\mathbb{R}^{\ell'}$ still generate $\mathcal{L}_j$, have length at most $2^{n_0-1}\rho$ and are $\vartheta_0$-linearly independent. The projection of each $g_{j,i}$ on the factor $\mathbb{R}^{k - \ell'}$ is elliptic by Theorem \ref{theo-splitting-weak}.(iv). Remember that each $g_{j,i}$ lives in a crystallographic group of $\mathbb{R}^k$. Proposition \ref{prop-Bieberbach} implies that a power not greater than $J_0$ of each $g_{j,i}$ acts as a translation on $\mathbb{R}^k$. We replace each $g_{j,i}$ with this power, without changing the notation. From what we said above we deduce that the projection of $g_{j,i}$ on $\mathbb{R}^{k - \ell'}$ is the identity. On the other hand they project as translations of length at most $2^{n_0 - 1}\cdot J_0 \cdot \rho$ that are $\vartheta_0$-linearly independent on $\mathbb{R}^{\ell'}$.
Since the projection $\overline{G_{Y_j}}$ is $\sigma_0$-controlled, then the projection of $g_{j,i}^{N_0(\sigma_0,R)}$ on $Y_j$ is the identity on $\overline{B}_{Y_j}(y_j,R)$, by Proposition \ref{prop-bounded-order-sigma-controlled}. We set  $g_{j,i}^{N_0(\sigma_0,R)} =: g_{j,i}^{R,\lambda}$ and we claim they do the job. By construction their projections on $Y_j$ are the identity on $\overline{B}_{Y_j}(y_j,R)$. Moreover their projections on $\mathbb{R}^{k - \ell'}$ are the identity. Finally their projections on $\mathbb{R}^{\ell'}$ are translations of length at most $2^{n_0 - 1}\cdot J_0 \cdot N_0(\sigma_0,R) \cdot \rho = \lambda$ that are $\vartheta_0$-linearly independent. This concludes the proof of \eqref{eq-delta-ultra}.\\
By the arbitrariness of the non-principal ultrafilter $\omega$ and using \cite[Lemma 6.3]{Cav21ter} we can promote \eqref{eq-delta-ultra} to
\begin{equation}
	\label{eq-delta.limit}
	\lim_{\rho \to 0} \liminf_{j\to +\infty} \textup{rk}(\overline{G}_{j,\rho}(x_j)) = \lim_{\rho \to 0} \limsup_{j\to +\infty} \textup{rk}(\overline{G}_{j,\rho}(x_j)) = \ell.
\end{equation}
Since all the quantities involved are integer-valued we conclude that there exists $\rho' > 0$ such that for all $0<\rho \leq \rho'$ it holds
$$\liminf_{j\to +\infty} \textup{rk}(\overline{G}_{j,\rho}(x_j)) = \limsup_{j\to +\infty} \textup{rk}(\overline{G}_{j,\rho}(x_j)) = \ell.$$
In particular for all $0<\rho \leq \rho'$ we have: $\exists \lim_{j\to + \infty} \textup{rk}(\overline{G}_{j,\rho}(x_j)) = \ell.$
As a consequence for all $0<\rho \leq \rho'$ there exists $j_\rho$ such that if $j\geq j_\rho$ then $\textup{rk}(\overline{G}_{j,\rho}(x_j)) = \ell$. In order to conclude the proof of (iii) we need to extend this result to arbitrary $z_j \in X_j$. We choose $\rho^* = \sigma_{P_0,r_0,D_0}(\rho')$ and we claim it satisfies the thesis of (iii). 
Fix an arbitrary $0<\rho \leq \rho^*$. We apply Theorem \ref{theo-splitting-weak} with $\varepsilon = \rho'$. Theorem \ref{theo-splitting-weak}.(ii) applied to $\varepsilon = \rho'$ says that there exists $\varepsilon^* \in (\rho^*, \rho')$ such that
$$\text{rk}(\overline{G}_{j,\rho}(z_j)) \leq \text{rk}(\overline{G}_{j,\varepsilon^*}(z_j)) = \text{rk}(\overline{G}_{j,\varepsilon^*}(x_j)) \leq \text{rk}(\overline{G}_{j,\rho'}(x_j)) = \ell$$
for every $z_j \in X_j$, where the last equality is true for all $j\geq j_{\rho'}$. On the other hand, if we apply Theorem \ref{theo-splitting-weak}.(ii) to $\varepsilon = \rho$ we find $\varepsilon^* \in (\sigma, \rho)$, where $\sigma = \sigma_{P_0,r_0,D_0}(\rho)$ such that
$$\text{rk}(\overline{G}_{j,\rho}(z_j)) \geq \text{rk}(\overline{G}_{j,\varepsilon^*}(z_j)) = \text{rk}(\overline{G}_{j,\varepsilon^*}(x_j)) \geq \text{rk}(\overline{G}_{j,\sigma}(x_j)) = \ell,$$
where the last equality is true for all $j\geq j_\sigma$. Therefore for all $j\geq \max \lbrace j_{\rho'}, j_{\sigma} \rbrace$ we have that $\text{rk}(\overline{G}_{j,\rho}(z_j)) = \ell$ for all $z_j \in X_j$. This ends the proof of (iii).

\vspace{2mm}

Let $\rho^*$ be the quantity provided by (iii) and set $\sigma^*=\sigma_{P_0,r_0,D_0}(\rho^*)$. Theorem \ref{theo-splitting-weak} applied to $\varepsilon = \rho^*$ says that, for $j$ big enough, each $X_j$ splits as $X_j' \times \mathbb{R}^\ell$, where $X_j' = Y_j \times \mathbb{R}^{k-\ell}$ by Proposition \ref{prop-commensurated-splitting}. This is because the rank of the group $\overline{G}_{j,\varepsilon^*}(x_j)$ with $\varepsilon^* \in (\sigma^*, \rho^*)$ is exactly $\ell$ for all $j\geq \max\lbrace j_{\sigma^*}, j_{\rho^*} \rbrace$. It follows that $X_j'$ converges to $X_\infty'$. Corollary \ref{cor-projection-td} applied to $\varepsilon = \rho^*$ shows that we can suppose, up to change the basepoint $x_j = (x_j', {\bf v_j}) \in X_j' \times \mathbb{R}^\ell$, that $\overline{G}_{j, \rho^*}(x_j)$ preserves the slice $\lbrace x_j' \rbrace \times \mathbb{R}^\ell$ and that the closure $\overline{G_{X_j'}}$ of each $G_j$ on $X_j'$ has $\sigma^*$-separated orbit. 
Moreover, by Theorem \ref{theo-splitting-weak}.(v), the maximal lattice of the projection of $\overline{G}_{j,\rho^*}(x_j)$ on $\mathbb{R}^\ell$ is generated by elements of length at most $\rho^*/2\sqrt{n_0}$.
\vspace{2mm}

\noindent\textbf{Proof of (iv).} We want to show that $G_\infty$ is unimodular. Let $\rho^*$ be the positive number provided by (iii). Take bi-invariant Haar measures $\mu_j$ of $G_j$ normalized in such a way that $\mu_j(\overline{S}_{j,2\rho^*}(x_j)) = 1$. It is enough to verify  that conditions (i) and (ii) of Corollary \ref{cor-unimodularity-limit} are satisfied. 
Every isometry $g \in G_j$ will be written as $(g', g'') \in \text{Isom}(X_j') \times \text{Isom}(\mathbb{R}^\ell)$.
As we noticed above, the orbit $\overline{G_{X_j'}}x_j'$ is $\sigma^*$-separated. In particular there are at most $\text{Pack}(R,\frac{\sigma^*}{2}) =: P$ points of this orbit inside $\overline{B}_{X_j'}(x_j',R)$. Notice that $P$ does not depend on $j$ by Proposition \ref{prop-packing}. Let us take $g_1,\ldots,g_m \in \overline{S}_{j,R}(x_j)$ such that for every $g\in \overline{S}_{j,R}(x_j)$ there exists $i \in \lbrace 1,\ldots,m\rbrace$ such that $g'(x_j')=g_i'(x_j')$. Therefore for every $g\in \overline{S}_{j,R}(x_j)$ there exists $i\in \lbrace 1,\ldots,m\rbrace$ such that $(g_i^{-1}g)'x_j'=x_j'$ and $d_{\mathbb{R}^\ell}((g_i^{-1}g)'' {\bf v_j}, {\bf v_j}) \leq 2R$. Let us denote by $S_j'$ the set of isometries of $G_j$ whose projection on $X_j'$ fixes $x_j'$ and whose projection on ${\mathbb{R}^\ell}$ moves ${\bf v_j}$ by at most $2R$. We just showed that 
$\overline{S}_{j,R}(x_j) \subseteq \bigcup_{i=1}^m g_i S_j',$
so $\mu_j(\overline{S}_{j,R}(x_j)) \leq P \cdot \mu_j(S_j')$. To conclude the thesis we just need to bound $\mu_j(S_j')$ from above by a quantity that does not depend on $j$. The projection of
$\overline{G}_{j,\rho^*}(x_j)$ on $\mathbb{R}^\ell$ is a crystallographic group whose maximal lattices $\mathcal{L}_{j,\rho^*}$ ia generated by elements of length at most $\rho^*/2\sqrt{n_0}$. We first apply Lemma \ref{lemma-sublattice-controlled} to find a sublattice $\mathcal{L}'_{j,\rho^*} < \mathcal{L}_{j,\rho^*}$ with $\lambda(\mathcal{L}_{j,\rho^*}) = \lambda(\mathcal{L}'_{j,\rho^*}) \leq 2\tau(\mathcal{L}'_{j,\rho^*}) \leq \rho^*/\sqrt{n_0}$. Secondly, by replacing each generator of $\mathcal{L}'_{j,\rho^*}$ by a fixed power, depending on $j$, we can suppose that 
$$\frac{\rho^*}{2\sqrt{n_0}} \leq \tau(\mathcal{L}'_{j,\rho^*}) \leq \lambda(\mathcal{L}'_{j,\rho^*}) \leq \frac{\rho^*}{\sqrt{n_0}}.$$
Let $g_j = (g'_j,g''_j) \in S_j'$. First we find an element $h_j = (h_j',h_j'') \in \overline{G}_{j,\rho^*}(x_j)$ such that $h_j'' \in \mathcal{L}'_{j,\rho^*}$ and $d_{\mathbb{R}^\ell}(g_j'' {\bf v_j}, h_j'' {\bf v_j}) \leq 2\rho^*$, because $\mathcal{L}'_{j,\rho^*}$ is $2\rho^*$-cocompact by \eqref{eq-lattice-relation}. In particular $h_j^{-1}g_j \in \overline{S}_{j,2\rho^*}(x_j)$ because $h_j'^{-1}g_j' x_j' = x_j'$.
Moreover $\Vert h_j'' \Vert \leq 2R + 1$, so by Lemma \ref{lemma-lattice-svarc-milnor} applied to $\mathcal{L}'_{j,\rho^*}$ we can find a word $w_j = (w_j',w_j'')$ of length at most $\frac{(2R + 1)2\sqrt{n_0}}{\rho^*} + 1$ in $\overline{S}_{j,2\rho^*}(x_j)$ such that $w_j'^{-1}h_j'$ fixes $x_j'$ and $w_j'' = h_j''$.
In particular $w_j^{-1}h_j^{-1} \in \overline{S}_{j,2\rho^*}(x_j)$. Combining all together we conclude that $g_j$ can be written as a word of length at most $\frac{(2R + 1)2\sqrt{n_0}}{\rho^*} + 3 =: p$ in the alphabet $\overline{S}_{j,2\rho^*}(x_j)$. Observe that $p$ does not depend on $j$. In other words
$$S_j' \subseteq (\overline{S}_{j,2\rho^*}(x_j))^{p}.$$
Corollary \ref{cor-measure-powers} ensures that 
$$\mu_j(S_j') \leq M_0^{p-1} \mu_j(\overline{S}_{j,2\rho^*}(x_j)) = M_0^{p-1},$$
which is independent of $j$. \\
We now move to the proof of condition (ii) of Corollary \ref{cor-unimodularity-limit}.
First we show
\begin{equation}
	\label{eq-liminf-delta}
	\inf_j \mu_j(\overline{S}_{j, \rho}(x_j)) > 0
\end{equation}
for every $0<\rho \leq \rho^*$.
For fixed $\rho$ we restrict the attention to the indices $j\geq j_\rho$, so that $\text{rk}(\overline{S}_{j, \rho}(x_j)) = \text{rk}(\overline{S}_{j, \rho^*}(x_j)) = \ell$. We can apply Theorem \ref{theo-splitting-weak}.(iv) with $\varepsilon = \rho$ to find a lattice $\mathcal{L}_{j,\rho}$ of the projection of $\overline{G}_{j,\rho}(x_j)$ on $\mathbb{R}^\ell$ generated by elements of length at most $\rho/2\sqrt{n_0}$. 
Using Lemma \ref{lemma-sublattice-controlled} as above we can find a sublattice $\mathcal{L}'_{j,\rho}$ of $\mathcal{L}_{j,\rho}$ satisfying
$$\frac{\rho}{4\sqrt{n_0}} \leq \tau(\mathcal{L}'_{j,\rho}) \leq \lambda(\mathcal{L}'_{j,\rho}) \leq \frac{\rho}{2\sqrt{n_0}}.$$
The lattice $\mathcal{L}'_{j,\rho}$ is $\rho$-cocompact because of \eqref{eq-lattice-relation}. Repeating word by word the same argument as before 
we can write each element of $\overline{S}_{j,2\rho^*}(x_j)$ as a word of length at most $8\sqrt{n_0}(\frac{\rho^*}{\rho} + 3) =: p$ in the alphabet $\overline{S}_{j,\rho}(x_j)$.
In other words
$$\overline{S}_{j, 2\rho^*}(x_j) \subseteq \overline{S}_{j,\rho}(x_j)^p.$$
Again by Corollary \ref{cor-measure-powers} we have
$$1 = \mu_j(\overline{S}_{j, 2\rho^*}(x_j)) \leq M_0^{p-1} \mu_j(\overline{S}_{j, \rho}(x_j)),$$
where $p$ is independent of $j$. This shows \eqref{eq-liminf-delta}. \\
We now fix $r >0$. We choose $\rho = \frac{r}{J_0\cdot N_0(\sigma^*,1/r)}$. Let $w$ be any word in the alphabet $\overline{S}_{j, \rho}(x_j)$ of length $J_0 \cdot N_0(\sigma^*,1/r)$, that we write as $w=(w', w'')$. Since the projection of $G_j$ on $X_j'$ is $\sigma^*$-controlled we deduce that $w'$ acts as the identity on $\overline{B}_{X_j'}(x_j,1/r)$ by Proposition \ref{prop-bounded-order-sigma-controlled}. On the other hand $w''$ is a translation of $\mathbb{R}^\ell$ by Proposition \ref{prop-Bieberbach}. So $w$ acts on $X_j'$ fixing the ball $\overline{B}_{X_j'}(x_j, 1/\varepsilon)$ and on $\mathbb{R}^\ell$ as a translation of length at most $\rho \cdot J_0\cdot N_0(\sigma^*, 1/r) \leq r$. In particular $w$ belongs to $\overline{S}_r(x_j, 1/r)$. This shows that $\overline{S}_{j,\rho}(x_j)$ can be covered with at most $J_0\cdot N_0(\sigma^*,1/r)$ translated of $\overline{S}_r(x_j, 1/r)$.
Therefore $\mu_j(\overline{S}_r(x_j, 1/r)) \geq \frac{1}{J_0\cdot N_0(\sigma^*,1/r)}\cdot  \mu_j(\overline{S}_{j,\rho}(x_j))$ for every $j$ big enough. This implies the thesis by \eqref{eq-liminf-delta}. In conclusion $G_\infty$ is unimodular.
\vspace{2mm}

\noindent{\textbf{Proof of (v)}.} We first show that $G_\infty'$ is closed, where $G_\infty'$ is the projection of $G_\infty$ on $X_\infty'$. Let $g_i'$ be a sequence of elements of $G_\infty'$ and suppose they converge to $g'$. By applying elements of $G_\infty^\circ = \lbrace \id \rbrace \times \text{Transl}(\mathbb{R}^\ell)$ we can suppose to have elements $g_i = (g_i', g_i'') \in G_\infty$, with $g_i'' {\bf v_\infty} = {\bf v_\infty}$. In particular $g_i''$ converges, up to subsequence, to some $g''$. Therefore $g_i$ converges to $g = (g',g'') \in G_\infty$, since $G_\infty$ is closed. Therefore $g' \in G_\infty'$. \\
Let us show now that $G_\infty'$ is unimodular. Let $\mu_\infty$ be a bi-invariant Haar measure of $G_\infty$, which exists by (iv). Let us denote by $\overline{S}_1({\bf v_\infty})$ the subset of the projection of $G_\infty$ on $\text{Isom}(\mathbb{R}^\ell)$ made of the isometries that move ${\bf v_\infty}$ by at most $1$. Let $A\subseteq G_\infty'$ be any set and consider the set 
$$A^* := A \times \overline{S}_1({\bf v_\infty}).$$
We claim that the formula $\nu(A):=\mu_\infty(A^*)$ defines a bi-invariant Haar measure of $G_\infty'$. Clearly $\nu(\emptyset)=0$, while if $A_i \subseteq G_\infty'$ are pairwise disjoint then also $A_i^* \subseteq G_\infty$ are pairwise disjoint. Therefore
$$\nu\left(\bigcup_i A_i\right) = \mu\left(\left(\bigcup_i A_i\right)^*\right) = \mu\left(\bigcup_i A_i^*\right) = \sum_i \mu\left(A_i^*\right) = \sum_i \nu\left(A_i\right).$$
This shows that $\nu$ is actually a measure on the Borel subsets of $G_\infty'$, since if $A$ is Borel then $A^*$ is Borel too. The next step is to show that it is finite on compact sets and positive on open sets. Observe that if $A$ is compact then $A^*$ is compact, so $\nu(A) = \mu(A^*) < +\infty$. On the other hand if $A$ is open then $A^*$ has non-empty interior, so $\nu(A) = \mu(A^*) > 0$. It remains to show it is left and right invariant. Every element $g' \in G_\infty'$ comes from some $g=(g',g'') \in G_\infty$. After a composition with an element of $G_\infty^\circ = \lbrace \id \rbrace \times \text{Transl}(\mathbb{R}^\ell)$ we can suppose that $g''$ fixes ${\bf v_\infty}$. In particular  $g'' \overline{S}_1({\bf v_\infty}) = \overline{S}_1({\bf v_\infty}) = \overline{S}_1({\bf v_\infty}) g''$.
Therefore we have
$$(g' A)^* = g'A \times \overline{S}_1({\bf v_\infty}) = g'A \times g'' \overline{S}_1({\bf v_\infty}) = (g',g'') (A\times \overline{S}_1({\bf v_\infty})) = (g',g'') A^*.$$
This implies 
$$\nu(g'A)=\mu((g'A)^*) = \mu ((g',g'') A^*) = \mu (A^*) = \nu (A).$$
In the same way we can prove the right invariance of $\nu$, so $G_\infty'$ is unimodular.\\
The last step is to show that it is totally disconnected. This can be done observing that, with the notation of the previous steps, each $\overline{G_{X_j'}}$ is $\sigma^*$-controlled and using Proposition \ref{prop-sigma-controlled-limit}. There is also another way to get the conclusion. Indeed let us consider the natural action of the totally disconnected group $G_\infty / G_\infty^o$ on $X_\infty'$. By construction the image of such action is $G_\infty'$. The kernel is
$$\lbrace g = (g',g'') \in G_\infty / G_\infty^o \text{ s.t. } g' = \id \rbrace.$$
Any class in the set above has a representative of the type $(\id, g'')$, with $g'' {\bf v_\infty} = {\bf v_\infty}$. Therefore the kernel of the action is a compact, normal subgroup of $G_\infty / G_\infty^o$. Then the thesis follows since the group $G_\infty'$ is the quotient of the locally compact, totally disconnected group $G_\infty / G_\infty^o$ by a closed, normal subgroup. Even if it is not necessary, observe that the fact that the kernel of the action is compact shows also that the group $G_\infty / G_\infty^o$ is unimodular. The fact that $G_\infty'$ is $D_0$-cocompact is obvious.
\vspace{2mm}

\noindent \textbf{Proof of (vi).} The quotients  $M_j=G_j \backslash X_j$  converge to $M_\infty = G_\infty \backslash X_\infty$, by Lemma \ref{lemma-ultralimit-quotient}. Moreover $$G_\infty \backslash X_\infty = (G_\infty /G_\infty^\circ) \backslash (G_\infty^\circ \backslash X_\infty) =  G_\infty'   \backslash X_\infty'.$$ 
\end{proof}

\subsection{Characterization of collapsing}
\label{sub-characterization}${}$

\noindent If  $X \in \textup{CAT}_0^\textup{td,u}(P_0,r_0,D_0)$  then dim$(X) \leq n_0=P_0/2$, by  Proposition \ref{prop-packing}(ii). Moreover, if  $ (X_j,x_j) \underset{\textup{pGH}}{\longrightarrow} (X_\infty,x_\infty)$ then,  by \cite[Theorem 6.5]{CavS20},
$$\text{dim}(X_\infty) \leq \liminf_{j \rightarrow +\infty} \text{dim}(X_j).$$
The following theorem precisely relates the collapsing (as defined in \ref{defsetting}) in the standard setting of convergence  to the dimension of the limit quotients, and is a direct consequence of the convergence Theorems \ref{theo-noncollapsed} \&  \ref{theo-collapsed}. The proof is the same of \cite[Theorem 7.15]{CS23}.

\begin{theo}[Characterization of collapsing]
\label{theo-collapsing-characterization}  ${}$\\
In  the standard setting of convergence  of 
$(X_j, G_j) \underset{\textup{eq-pGH}}{\longrightarrow} (X_\infty, G_\infty)$,
let  \linebreak $M_j = G_j \backslash X_j$ and  $M_\infty = G_\infty \backslash X_\infty$ be the quotient spaces. Then:
\begin{itemize}
\item[(i)] the sequence is non-collapsing iff $\textup{TD}(M_\infty) = \lim_{j\to +\infty} \textup{TD}(M_j)$;
\item[(ii)] the sequence is collapsing iff $\textup{TD}(M_\infty) < \lim_{j\to +\infty} \textup{TD}(M_j)$.
\end{itemize}
Moreover, in the above characterization,  the topological dimension \textup{TD} can be replaced by the Hausdorff dimension \textup{HD}.
\end{theo}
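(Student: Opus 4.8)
The plan is to obtain the statement as a bookkeeping consequence of the two convergence theorems \ref{theo-noncollapsed} and \ref{theo-collapsed}, exactly as \cite[Theorem 7.15]{CS23} is obtained in the discrete case. Two preliminary remarks are needed. First, whenever $X$ is a proper, geodesically complete, $\textup{CAT}(0)$-space and $G < \textup{Isom}(X)$ is closed, totally disconnected and cocompact, one has $\textup{TD}(G\backslash X) = \textup{TD}(X)$, and likewise for $\textup{HD}$: indeed the orbits are discrete (Theorem \ref{theo-characterization-td}.(iii)), so for every $x$ and every small enough $\rho$ the ball $\overline{B}_{G\backslash X}([x],\rho)$ is isometric to $F\backslash\overline{B}_X(x,\rho)$ for the finite group $F$ which is the image of $\textup{Stab}_G(x)$ in $\textup{Isom}(\overline{B}_X(x,\rho))$ (finiteness by \cite[Remark 6.2]{CM09b}, or by Corollary \ref{cor-bounded-order-unimodular} when $G$ is unimodular), and a finite quotient of a proper metric space preserves both dimensions point by point. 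Second, each $X_j$, being geodesically complete, $(P_0,r_0)$-packed and $\textup{CAT}(0)$, is Ahlfors regular of dimension $\textup{TD}(X_j)$ with constants depending only on $P_0,r_0$, by the volume estimates of \cite{CavS20bis}, \cite{Cav21bis}, \cite{Cav21}; hence along any subsequence on which $\textup{TD}(X_j)=n$ is constant, the pGH-limit $X_\infty$ is again Ahlfors $n$-regular, so $\textup{TD}(X_\infty)=\textup{HD}(X_\infty)=n$. Since $X_\infty$ is fixed, this forces $\textup{TD}(X_j)$ to be eventually constant and makes $\lim_j\textup{TD}(X_j)=\lim_j\textup{TD}(M_j)=\textup{TD}(X_\infty)=:n$ exist. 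Because the bound $\textup{TD}(M_\infty)\le n$ holds in both cases below, the alternative ``$\textup{TD}(M_\infty)=n$'' versus ``$\textup{TD}(M_\infty)<n$'' is exclusive and exhaustive, as is the collapsing/non-collapsing alternative of Definition \ref{defsetting}; so it will be enough to prove the two implications ``non-collapsing $\Rightarrow \textup{TD}(M_\infty)=n$'' and ``collapsing $\Rightarrow \textup{TD}(M_\infty)<n$''.

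In the non-collapsing case I would invoke Theorem \ref{theo-noncollapsed}, which gives that $G_\infty$ is closed, totally disconnected, unimodular and $D_0$-cocompact; hence $M_\infty=G_\infty\backslash X_\infty$ and, by the first remark, $\textup{TD}(M_\infty)=\textup{TD}(X_\infty)=n$, which is the first implication.

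In the collapsing case Theorem \ref{theo-collapsed} provides a $G_\infty$-invariant isometric splitting $X_\infty=X_\infty'\times\mathbb{R}^\ell$ with $\ell\ge 1$ and $X_\infty'$ a proper, geodesically complete, $(P_0,r_0)$-packed $\textup{CAT}(0)$-space, together with an isometry $M_\infty\cong G_\infty'\backslash X_\infty'$ where $G_\infty'$ is closed, totally disconnected and $D_0$-cocompact. By the first remark and the elementary product formula for the dimension (the factor $\mathbb{R}^\ell$ being a manifold),
$$\textup{TD}(M_\infty)=\textup{TD}(X_\infty')=\textup{TD}(X_\infty)-\ell=n-\ell<n,$$
which is the second implication. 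Combined with the exclusive-exhaustive alternatives recorded above, this proves all four implications in (i) and (ii).

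The Hausdorff-dimension version is literally the same argument: $\textup{HD}=\textup{TD}$ on the $\textup{CAT}(0)$-spaces $X_j$, $X_\infty$, $X_\infty'$ by \cite{LN19}, finite quotients preserve $\textup{HD}$, Ahlfors regularity is a metric notion, and $\textup{HD}(X_\infty'\times\mathbb{R}^\ell)=\textup{HD}(X_\infty')+\ell$. The one genuinely non-formal step will be the dimension stability used in the first remark, i.e. the lower bound $\textup{TD}(X_\infty)\ge\liminf_j\textup{TD}(X_j)$ (the reverse inequality being \cite[Theorem 6.5]{CavS20}), which rests entirely on the quantitative, two-sided volume control available for geodesically complete packed $\textup{CAT}(0)$-spaces; everything else is a direct assembly of Theorems \ref{theo-noncollapsed} and \ref{theo-collapsed}, exactly as in the proof of \cite[Theorem 7.15]{CS23}.
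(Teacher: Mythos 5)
Your overall architecture is the right one and matches the paper's stated route (assemble Theorems~\ref{theo-noncollapsed} and~\ref{theo-collapsed} exactly as in \cite[Theorem 7.15]{CS23}): your first remark, that $\textup{TD}(G\backslash X)=\textup{TD}(X)$ and $\textup{HD}(G\backslash X)=\textup{HD}(X)$ for a closed, totally disconnected, cocompact $G$ via the local identification $\overline{B}_{G\backslash X}([x],\rho)\cong F\backslash\overline{B}_X(x,\rho)$ with $F$ finite, is correct and is indeed the first ingredient; the reduction of (i) and (ii) to the two implications, using the mutual exclusivity of collapsing/non-collapsing noted in Definition~\ref{defsetting} and established inside the proof of Theorem~\ref{theo-collapsed}, is also fine.

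The step that is not justified as you wrote it is the second remark. You claim that a proper, geodesically complete, $(P_0,r_0)$-packed $\textup{CAT}(0)$-space $X$ is \emph{Ahlfors regular of dimension} $\textup{TD}(X)$ \emph{with constants depending only on} $P_0,r_0$. This is false in that generality: take $X=\mathbb{R}^2\vee\mathbb{R}^3$, the one-point wedge along a point $p$. This space is proper, geodesically complete (any geodesic hitting $p$ can be prolonged into either factor), $\textup{CAT}(0)$ by Reshetnyak gluing, and $(P_0,r_0)$-packed for suitable constants, with $\textup{TD}(X)=3$; yet for $x\in\mathbb{R}^2$ with $d(x,p)>1$ one has $\mathcal{H}^3(\overline{B}(x,r))=0$ for all $r<1$, so $X$ is not Ahlfors $3$-regular at any uniform scale. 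What saves the argument is precisely the hypothesis you dropped: the spaces $X_j$ in the standard setting carry $D_0$-cocompact actions, and what is actually needed (and what the paper relies on, via \cite{CS23}) is not full Ahlfors regularity of $X_j$ but only a uniform positive lower bound for $\mathcal{H}^{n_j}(\overline{B}(x_j,R))$ at a fixed macroscopic scale $R\ge D_0$, coming from the $D_0$-cocompactness together with the density and quantitative size of the top-dimensional stratum. That weaker statement suffices to pass dimension to the limit (together with $\textup{TD}(X_\infty)\le\liminf_j\textup{TD}(X_j)$ from \cite[Theorem~6.5]{CavS20}) and to conclude that $\lim_j\textup{TD}(X_j)$ exists and equals $\textup{TD}(X_\infty)$. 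So the conclusion and the rest of your assembly stand, but the Ahlfors-regularity phrasing should be replaced by the correct, cocompactness-dependent volume lower bound and supported by the right citation; as stated, ``Ahlfors regular with constants depending only on $P_0,r_0$'' is not a theorem of the cited references and is in fact not true.
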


Notice that since we proved that   $\lim_{j\to +\infty}\text{TD}(M_j)$ exists,  Theorem \ref{theo-collapsing-characterization} excludes to have  sequences $(X_j,G_j)$ converging   with mixed behaviour  (that is, such that along some subsequence the convergence is collapsed, and along other subsequences it is non-collapsed). We have already noticed this fact during the proof of Theorem \ref{theo-collapsed}. Finally we conclude with the
\begin{proof}[Proof of Theorem \ref{theo-intro-closure}]
	Let $M_j \in \mathcal{O}\text{-CAT}_0^{\text{td,u}}(P_0,r_0,D_0)$ and suppose it converges to some $M_\infty$ in the Gromov-Hausdorff sense. By definition there are isometric actions $(X_j, G_j) \in \text{CAT}_0^{\text{td,u}}(P_0,r_0,D_0)$ such that $G_j\backslash X_j = M_j$. Up to pass to a subsequence we can suppose that $(X_j,G_j) \underset{\text{eq-pGH}}{\longrightarrow} (X_\infty, G_\infty)$ by Proposition \ref{lemma-GH-compactness-packing}. We are in the standard setting of convergence. Observe that by Lemma \ref{lemma-ultralimit-quotient} the space $M_\infty$ is isometric to the quotient $G_\infty \backslash X_\infty$. If the sequence $(X_j,G_j)$ is non-collapsed then we get $M_\infty \in \mathcal{O}\text{-CAT}_0^{\text{td,u}}(P_0,r_0,D_0)$ by Theorem \ref{theo-noncollapsed}. If it is collapsed then $M_\infty \in \mathcal{O}\text{-CAT}_0^{\text{td,u}}(P_0,r_0,D_0)$ by Theorem \ref{theo-collapsed}. By the remark above these two cases cover all possible cases, so we conclude the proof of the closure of $\mathcal{O}\text{-CAT}_0^{\text{td,u}}(P_0,r_0,D_0)$. The compactness is a consequence of Proposition \ref{lemma-GH-compactness-packing} and Lemma \ref{lemma-ultralimit-quotient}.
\end{proof}

\bibliographystyle{alpha}
\bibliography{collapsing}

\begin{thebibliography}{Cav22b}

\bibitem[ADH13]{ADH13}
R.~Abraham, J.F. Delmas, and P.~Hoscheit.
\newblock A note on the gromov-hausdorff-prokhorov distance between (locally)
  compact metric measure spaces.
\newblock {\em Electron. J. Probab.}, 18:1--21, 2013.

\bibitem[Bab86]{Bab86}
L.~Babai.
\newblock On lov{\'a}sz’lattice reduction and the nearest lattice point
  problem.
\newblock {\em Combinatorica}, 6:1--13, 1986.

\bibitem[BGT11]{BGT11}
E.~Breuillard, B.~Green, and T.~Tao.
\newblock The structure of approximate groups.
\newblock {\em Publications mathématiques de l'IHÉS}, 116, 10 2011.

\bibitem[BH13]{BH09}
M.~Bridson and A.~Haefliger.
\newblock {\em Metric spaces of non-positive curvature}, volume 319.
\newblock Springer Science \& Business Media, 2013.

\bibitem[Cap09]{Cap09}
P.E. Caprace.
\newblock Amenable groups and hadamard spaces with a totally disconnected
  isometry group.
\newblock {\em Commentarii Mathematici Helvetici}, 84(2):437--455, 2009.

\bibitem[Car15]{Car15}
P.K. Carolino.
\newblock {\em The structure of locally compact approximate groups}.
\newblock University of California, Los Angeles, 2015.

\bibitem[Cav21]{Cav21bis}
N.~Cavallucci.
\newblock Topological entropy of the geodesic flow of non-positively curved
  metric spaces.
\newblock {\em arXiv preprint arXiv:2105.11774}, 2021.

\bibitem[Cav22a]{Cav21ter}
N.~Cavallucci.
\newblock Continuity of critical exponent of quasiconvex-cocompact groups under
  gromov--hausdorff convergence.
\newblock {\em Ergodic Theory and Dynamical Systems}, pages 1--33, 2022.

\bibitem[Cav22b]{Cav21}
N.~Cavallucci.
\newblock Entropies of non-positively curved metric spaces.
\newblock {\em Geometriae Dedicata}, 216(5):54, 2022.

\bibitem[CM09]{CM09b}
P.E. Caprace and N.~Monod.
\newblock Isometry groups of non-positively curved spaces: structure theory.
\newblock {\em Journal of topology}, 2(4):661--700, 2009.

\bibitem[CM19]{CM19}
P.E. Caprace and N.~Monod.
\newblock Erratum and addenda to" isometry groups of non-positively curved
  spaces: discrete subgroups".
\newblock {\em arXiv preprint arXiv:1908.10216}, 2019.

\bibitem[CS20]{CavS20bis}
N.~Cavallucci and A.~Sambusetti.
\newblock Discrete groups of packed, non-positively curved, gromov hyperbolic
  metric spaces.
\newblock {\em arXiv preprint arXiv:2102.09829}, 2020.

\bibitem[CS21]{CavS20}
N.~Cavallucci and A.~Sambusetti.
\newblock Packing and doubling in metric spaces with curvature bounded above.
\newblock {\em Mathematische Zeitschrift}, pages 1--46, 2021.

\bibitem[CS22]{CS22}
N.~Cavallucci and A.~Sambusetti.
\newblock Thin actions on cat(0) spaces.
\newblock {\em arXiv preprint arXiv:2210.01085}, 2022.

\bibitem[CS23]{CS23}
N.~Cavallucci and A.~Sambusetti.
\newblock Convergence and collapsing of cat(0)-group actions.
\newblock {\em arXiv preprint arXiv:2304.10763}, 2023.

\bibitem[DK18]{DK18}
C.~Dru{\c{t}}u and M.~Kapovich.
\newblock {\em Geometric group theory}, volume~63.
\newblock American Mathematical Soc., 2018.

\bibitem[dLH00]{dLH00}
P.~de~La~Harpe.
\newblock {\em Topics in geometric group theory}.
\newblock University of Chicago Press, 2000.

\bibitem[Far81]{farkas}
D.R. Farkas.
\newblock Crystallographic groups and their mathematics.
\newblock {\em Rocky Mountain J. Math.}, 11(4):511--551, 1981.

\bibitem[FSY04]{FSY04}
K.~Fujiwara, T.~Shioya, and S.~Yamagata.
\newblock Parabolic isometries of cat(0) spaces and cat(0) dimensions.
\newblock {\em Algebraic \& Geometric Topology}, 4(2):861--892, 2004.

\bibitem[Fuk86]{Fuk86}
K.~Fukaya.
\newblock Theory of convergence for riemannian orbifolds.
\newblock {\em Japanese journal of mathematics. New series}, 12(1):121--160,
  1986.

\bibitem[Jan17]{Jan17}
D.~Jansen.
\newblock Notes on pointed gromov-hausdorff convergence.
\newblock {\em arXiv preprint arXiv:1703.09595}, 2017.

\bibitem[Khe20]{Khe20}
A.~Khezeli.
\newblock Metrization of the gromov--hausdorff (-prokhorov) topology for
  boundedly-compact metric spaces.
\newblock {\em Stochastic Processes and their Applications}, 130(6):3842--3864,
  2020.

\bibitem[Kle99]{Kl99}
B.~Kleiner.
\newblock The local structure of length spaces with curvature bounded above.
\newblock {\em Mathematische Zeitschrift}, 231, 01 1999.

\bibitem[LLL82]{LLL82}
A.K. Lenstra, H.W. Lenstra, and L.~Lov{\'a}sz.
\newblock Factoring polynomials with rational coefficients.
\newblock {\em Mathematische annalen}, 261(ARTICLE):515--534, 1982.

\bibitem[LM21]{LM21}
I.J. Leary and A.~Minasyan.
\newblock Commensurating hnn extensions: nonpositive curvature and
  biautomaticity.
\newblock {\em Geometry \& Topology}, 25(4):1819--1860, 2021.

\bibitem[LN19]{LN19}
A.~Lytchak and K.~Nagano.
\newblock Geodesically complete spaces with an upper curvature bound.
\newblock {\em Geometric and Functional Analysis}, 29(1):295--342, Feb 2019.

\bibitem[PS21]{PS21}
E.~Pasqualetto and T.~Schultz.
\newblock Ultralimits of pointed metric measure spaces.
\newblock {\em arXiv preprint arXiv:2102.11365}, 2021.

\bibitem[Rud63]{Rud63}
W.~Rudin.
\newblock {\em Real and complex analysis}.
\newblock McGraw–Hill, 1963.

\bibitem[SRZ22]{SZ22}
J.~Santos-Rodriguez and S.~Zamora.
\newblock On fundamental groups of rcd spaces.
\newblock {\em arXiv preprint arXiv:2210.07275}, 2022.

\bibitem[TT21]{TT21}
R.~Tessera and M.~Tointon.
\newblock A finitary structure theorem for vertex-transitive graphs of
  polynomial growth.
\newblock {\em Combinatorica}, 41(2):263--298, 2021.

\end{thebibliography}

\end{document}